\newtheorem{thm}{Theorem}[section]
\newtheorem{lem}[thm]{Lemma}
\newtheorem{prop}[thm]{Proposition}
\newtheorem{cor}[thm]{Corollary}
\theoremstyle{definition}
\newtheorem{defn}[thm]{Definition}
\newtheorem{example}[thm]{Example}
\newtheorem{conv}[thm]{Convention}
\newtheorem{convention}[thm]{Convention}
\newtheorem{conj}[thm]{Conjecture}
\newtheorem{question}[thm]{Question}
\newcommand{\tr}{\textrm}
\newcommand{\mo}{{-1}}
\newcommand{\pmo}{{\pm 1}}
\renewcommand{\tilde}{\widetilde}
\newcommand{\FRS}{\ensuremath{F_{R(S)}}}
\newcommand{\Z}{\mathbb{Z}}
 \newcommand{\bra}{\langle}
\newcommand{\kett}{\rangle}
\newcommand{\ol}{\overline}
\newcommand{\rk}{\tr{rank}}
\newcommand{\F}{{\widetilde{F}}}
\newcommand{\x}{{\ol{x}}}
\newcommand{\z}{{\ol{y}}}
\newcommand{\y}{{\ol{y}}}
\newcommand{\B}{\mathcal{B}}
\newcommand{\stab}{\tr{stab}}
\newcommand{\R}{\mathcal{R}}
\newcommand{\FRSP}{F_{R(S')}}
\newcommand{\bfxk}{{\bra F, \x \kett}}
\newcommand{\brakett}[1]{{\bra #1 \kett}}
\newcommand{\fix}{\tr{fix}}
\newcommand{\te}{{e}}
\newcommand{\Hom}{\tr{Hom}_F}
\newcommand{\FRSi}{F_{R(S_i)}}
\newcommand{\FRSj}{F_{R(S_j)}}
\newcommand{\G}{\mathcal{G}}
\renewcommand{\B}{\mathcal{B}}
\newcommand{\axis}{\tr{Axis}}
\newcommand{\bk}{\brakett}
\renewcommand{\L}{\mathcal{L}}
\newcommand{\W}{\mathcal{W}}
\newcommand{\uhd}{\tr{{\bf uhd}}}
\newcommand{\uhdf}{\uhd_F}
\newcommand{\Haches}{\{h_1,\ldots,h_s\}}
\newcommand{\define}{\emph}
\newcommand{\relpres}[2]{\left\{\begin{array}{l}#1; \\
      #2 \end{array}\right.}
\newcommand{\Fh}{\widehat{F}}
\newcommand{\writeabove}[3]{\draw (#1.north) +(0,#3) node {#2}}
\newcommand{\dottedcircle}[2]{\draw[dotted,fill=black!10!white] (#1) circle (#2)}
\newcommand{\writeright}[3]{\draw (#1.east) +(#3,0.2) node {#2}}
\title{The fully residually $F$ quotients of $F*\bk{x,y}$} \author{Nicholas
  W.M. Touikan\footnote{Supported by NSERC PDF}\\Centre Interuniversitaire de Recherche en
  G\'eom\'etrie et Topologie\\UQ\`AM \\email: \texttt{nicholas.touikan@gmail.com}}
\begin{document}

\maketitle \abstract{We describe the fully residually $F$ groups; or
  limit groups relative to $F$; that are quotients of $F*\bk{x,y}$. We
  use the structure theory of finitely generated fully residually free
  groups to produce a finite list of possible types of cyclic JSJ
  decompositions modulo $F$ that can arise. We also give bounds on
  uniform hierarchical depth.}  \tableofcontents

\section{Introduction}
Systems of equations over free groups have been a very important and
fruitful subject of study in the field of combinatorial and geometric
group theory. A major achievement was the algorithm due to
Makanin and Razborov \cite{Makanin-1982, Razborov-1987} which produces
a complete description of the solution set of an arbitrary finite
system of equations over a free group. The method, however, is
algorithmic and uses surprisingly little algebra.

For a free group $F$, the classical algebraic geometry viewpoint given
by Baumslag, Myasnikov and Remeslennikov in \cite{BMR-1998}
established fully residually $F$ groups as the key algebraic
structures in the theory of systems of equations with coefficients
over $F$.

These groups still remained rather intractable until the work of
Kharlampovich and Myasnikov \cite{KM-IrredI,KM-IrredII,KM-JSJ}, and
independently Sela in \cite{Sela-DiophI}, which shows that finitely
generated fully residually free, or limit groups, in fact have a very
nice structure. We apply this structure theory to prove the following
result:

\begin{thm}[The Main Theorem]\label{thm:the-main-result} 
  Let $\FRS$, a fully residually $F$ quotient of $F*\bk{x,y}$, be
  freely indecomposable modulo $F$. The underlying graph $X$ of its
  cyclic JSJ decomposition modulo $F$ has at most 3 vertices, and $X$
  has at most two cycles. All vertex groups except maybe the vertex
  group $F\leq \F \leq \FRS$ are either free of rank 2 or free
  abelian. In all cases the vertex group $\F$ itself can be generated
  by $F$ and two other elements. Finally $\FRS$ has uniform
  hierarchical depth relative to $F$ at most 4.
\end{thm}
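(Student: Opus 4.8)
The plan is to play the structure theory of limit groups relative to $F$ against the sharp rank hypothesis that $\FRS$ is generated by $F$ together with only two further elements; every assertion of the theorem should fall out of a single ``budget of two'' that is distributed among the pieces of the JSJ decomposition. First I would fix the cyclic JSJ decomposition of $\FRS$ modulo $F$, write $X$ for its underlying graph, and recall that, because $\FRS$ is freely indecomposable modulo $F$, this is a genuine graph-of-groups decomposition with infinite cyclic edge groups in which $F$ is elliptic, lying in a distinguished vertex group $F \le \F \le \FRS$. By the standard classification the remaining vertex groups are of abelian, quadratically hanging (surface), or rigid type, so the theorem reduces to bounding the combinatorics of $X$ and the isomorphism types of the non-distinguished vertex groups.

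The heart of the argument is a \emph{relative-rank inequality}. Writing $\mathrm{rk}_F$ for the least number of elements needed to generate a group together with $F$, the hypothesis says $\mathrm{rk}_F(\FRS) \le 2$. On the other hand, the action of $\FRS$ on the Bass--Serre tree of $X$ lets one bound the complexity of $X$ from above by this relative rank: a folding/accessibility argument (in the spirit of Dunwoody, Gersten and Weidmann, adapted to the relative setting where $F$ is carried elliptically) shows that a generating tuple of size $\mathrm{rk}_F$ over the elliptic subgroup $F$ cannot produce a decomposition with too many loops or too many ``expensive'' vertices. Concretely I would prove
\[
b_1(X) \;+\; \sum_{v \neq \F} c(v) \;\le\; \mathrm{rk}_F(\FRS) \;\le\; 2,
\]
where $b_1(X)$ is the first Betti number of $X$ (the number of independent cycles) and $c(v) \ge 1$ is the number of generators each non-distinguished vertex group requires modulo its incident (cyclic) edge groups. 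This single inequality immediately forces $b_1(X)\le 2$ (at most two cycles) and caps the number of non-distinguished vertices, and hence, via connectedness, the total number of vertices at $3$.

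It then remains to identify the allowable vertex types under this budget. For each non-distinguished vertex the contribution $c(v)$ is at least one, so the residual budget is tiny; a case analysis shows that an abelian vertex must be free abelian and that a quadratically hanging vertex is a surface so small that its group is free of rank $2$ (a once-punctured torus or a pair of pants), higher genus or extra boundary being ruled out precisely because each would raise $c(v)$ above the budget. A rigid non-distinguished vertex is likewise forced to be free of rank $2$ or abelian. Whatever budget is not spent on loops and other vertices is what is available to generate $\F$ over $F$, yielding that $\F$ is generated by $F$ and at most two elements. The delicate point here, and the main obstacle, is the exact bookkeeping for the quadratically hanging vertices: one must control how the boundary curves of a surface piece are absorbed into the incident edge groups and confirm that a surface contributing to the relative rank in the cheapest way is indeed of rank $2$, using that JSJ edge groups are maximal cyclic and that each such vertex meets the rest of the graph in a prescribed pattern.

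Finally I would bound the uniform hierarchical depth $\uhdf$ by induction on a complexity invariant that strictly decreases as one passes down the hierarchy. One step of the JSJ decomposition replaces $\FRS$ by its vertex groups; the non-distinguished ones are free abelian or free of rank $2$ and so contribute hierarchical depth $0$ or $1$, while the distinguished vertex $\F$ is again a fully residually $F$ quotient generated by $F$ and at most two elements---hence again in the scope of the theorem---but now strictly simpler (fewer free generators survive, or $b_1$ drops). Tracking that each descent consumes part of the original budget of two shows the recursion terminates after at most four levels, giving $\uhdf \le 4$. The subtlety to watch is ensuring the chosen complexity is genuinely monotone along every branch of the hierarchy, so that the surface and distinguished pieces cannot regenerate complexity lower down.
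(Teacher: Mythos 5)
There is a genuine gap, and it sits exactly where you put all the weight: the master inequality
\[
b_1(X) \;+\; \sum_{v \neq \F} c(v) \;\le\; \mathrm{rk}_F(\FRS) \;\le\; 2, \qquad c(v)\ge 1,
\]
is inconsistent with the actual classification. The paper's list (Section \ref{sec:allfree}, case C) contains JSJs whose underlying graph has two vertices joined by \emph{three} edges, so $b_1(X)=2$, while the non-distinguished vertex group $H$ is generated by (conjugates of) its incident edge groups, i.e.\ $c(H)=0$. With your normalization $c(v)\ge 1$ the left side would be $\ge 3$, so your inequality would wrongly exclude these groups; and if you weaken it to allow $c(v)=0$, it no longer caps the number of vertices, which was the main thing you wanted from it. Moreover, the inequality cannot be obtained by a generic folding/accessibility argument ``in the spirit of Dunwoody--Gersten--Weidmann,'' because the cyclic JSJ modulo $F$ is not acylindrical in general: edge groups need not be maximal cyclic and may sit inside non-cyclic abelian vertex groups. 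The paper has to manufacture acylindricity first --- passing to the maximal abelian collapse, performing balancing folds (whose effect on rank is controlled only via Theorem \ref{thm:weidmann-add-a-root} and Lemma \ref{lem:adjoin-a-root}), and separately killing non-cyclic abelian subgroups in two-cycle splittings (Corollary \ref{cor:no-ab}, via strict resolutions and exponent-sum arguments) --- and then runs a transmission-avoiding folding analysis case by case (Sections \ref{sec:decomp-classification}--\ref{sec:1v-classification}). That preprocessing and case analysis is the substance of the proof; your sketch assumes its conclusion.

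The depth bound has the same problem in miniature: your induction needs a complexity that strictly decreases when passing from $\FRS$ to $\F$, but $\F$ is again generated by $F$ and two elements, so the ``budget of two'' does not decrease and by itself gives no termination, let alone the constant $4$. The paper's termination mechanism is different and has to be built: $b_1$ is pinned in the range $N \le b_1 \le N+2$, with $b_1 = N$ forcing the group to be $F$ (Proposition \ref{prop:b1-complexity}); strict quotients in the one-vertex case force a genuine Betti drop (Corollary \ref{cor:betti-drop}, Lemma \ref{lem:1v-1e-b1-N+1}); and, crucially, one needs that $\uhd$ and $\uhdf$ do not increase under passing to finitely generated subgroups (Theorem \ref{thm:uhd}, Corollary \ref{cor:uhdf-monotonic}), which is itself a non-trivial statement requiring the comparison of induced splittings with JSJ splittings (Lemma \ref{lem:maximal-vs-jsj}). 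Without these ingredients, ``each descent consumes part of the budget, so the recursion stops after four levels'' is an assertion, not an argument.
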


This result can be seen as a generalization of Chiswell and
Remeslennikov 's classification in \cite{CR-2000} of the fully
residually $F$ quotients of $F*\bk{x}$, which enabled them to finally
give a proof of a result on the solution sets of equations in one
variable over $F$ claimed independently by Appel and Lorenc
\cite{Appel-1969,Lorenc-1968}. The fully residually free groups
generated by at most three elements were classified in
\cite{FGMRS-1998}. We will give examples which show that the class of
fully residually $F$ quotients of $F*\bk{x,y}$ is considerably richer.

\subsection{Acknowledgements}
The author wishes to thank Olga Kharlampovich for numerous extremely
useful discussions and the anonymous referee who found many mistakes
and made good suggestions for improving the exposition.

\subsection{Definitions and notation}
We will denote the commutator $x^\mo y^\mo x y = [x,y]$. For
conjugation we will use the following notation:\begin{eqnarray*} x^w &
  = & w^{-1}xw\\ {}^wx & = & w x w^{-1}
\end{eqnarray*} We use this convention since
${}^x({}^yw)={}^{xy}w$. We shall also  denote by $\G(X)$ a graph of groups
with underlying graph $X$. We shall denote by $\rk(G)$ the minimal cardinality among all
generating sets of $G$.

\subsubsection{Fully residually $F$ groups}
Throughout this paper $F$ will denote a fixed free group of rank $N$.

\begin{defn}
  A group $G$ equipped with a distinguished monomorphism \[
  i:F\hookrightarrow G\] is called an \emph{$F$-group} we denote this
  $(G,i)$. Given $F$-groups $(G_1,i_1)$ and $(G_2,i_2)$, we define an
  $F$-homomorphism to be a homomorphism of groups $f$ such that the
  following diagram commutes:\[\xymatrix{ G_1 \ar[r]^f & G_2 \\ F
    \ar[u]^{i_1} \ar[ur]_{i_2}&}\] We denote by $\Hom(G_1,G_2)$ the
  set of $F$-homomorphisms from $(G_1,i_1)$ to $(G_2,i_2)$.
\end{defn}

In the rest of the paper the distinguished monomorphisms will not be
explicitly mentioned, seeing as the inclusions will always be obvious.

\begin{defn}
  Let $G$ and $H$ be groups. A collection of homomorphism $\Phi$ from $G$
  to $H$ \emph{discriminates} $G$ if for every finite subset $P
  \subset G$ there is some $f \in \Phi$ such that the restriction $f|_P$
  is injective.
\end{defn}

\begin{defn}
	A group $G$ is \emph{fully residually $F$} if $\Hom(G,F)$ discriminates $G$.
\end{defn}

This definition is slightly non-standard in that we are
\emph{requiring} fully residually $F$ groups to be discriminated by
\emph{$F$-homomorphisms}. More generally we shall say that a group $G$
is \emph{fully residually free} to mean that it is discriminated by
some arbitrary set of homomorphisms into some free group.
 
For the rest of the paper $\FRS$ shall denote a fully
residually $F$ quotient of $F*\bk{x,y}$. The notation reflects the
fact that $\FRS$ is the coordinate group of system of equations $S(x,y)$
over $F$. Since $\FRS$ is fully residually $F$ it is in fact the
coordinate group of an irreducible system of equations with
coefficients in $F$ and variables $x,y$. We refer the reader to
\cite{BMR-1998} for a complete treatment of the interpretation of 
fully residually $F$ groups in algebraic geometry.

The following facts will be used throughout this paper and follow
easily from the definitions:

\begin{thm}Let $G$ be fully residually free.
\begin{itemize}
\item $G$ is torsion free.
\item Two elements $g,h$ either commute or generate a free subgroup of rank 2.
\item $G$ is a CSA group, i.e. maximal abelian subgroups groups are malnormal (which implies commutation transitivity).
\end{itemize}
\end{thm}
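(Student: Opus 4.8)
The plan is to exploit the defining feature of discrimination: any property that holds in every free group and is \emph{expressible as an implication between (non)vanishing of group words} transfers to $G$, because a discriminating $F$-homomorphism $f$ automatically preserves the hypothesis equalities $w_i = 1$, and, after choosing $f$ injective on a suitable finite set, it also preserves the hypothesis \emph{inequalities} $w_j \neq 1$. Thus each of the three assertions reduces to an elementary fact about the fixed free group $F$: that $F$ is torsion-free, that two elements of $F$ either commute or generate a rank-$2$ free subgroup (Nielsen--Schreier), that commutation is transitive on $F\setminus\{1\}$, and that $F$ is CSA, i.e.\ its maximal cyclic subgroups are malnormal.

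For torsion-freeness, suppose $1 \neq g \in G$ with $g^n = 1$ for some $n \geq 2$, and pick $f \in \Hom(G,F)$ injective on $\{1,g\}$; then $f(g) \neq 1$ while $f(g)^n = f(g^n) = 1$, contradicting torsion-freeness of $F$. For the second item, suppose $[g,h] \neq 1$ and consider the surjection $\bk{a,b} \to \bk{g,h}$, $a\mapsto g$, $b \mapsto h$, from the rank-$2$ free group. Choosing $f$ injective on $\{1,[g,h]\}$ makes $f(g),f(h)$ non-commuting in $F$, so by Nielsen--Schreier they generate a rank-$2$ free subgroup; any nontrivial $w \in \bk{a,b}$ in the kernel of $\bk{a,b}\to\bk{g,h}$ would then satisfy $w(f(g),f(h)) = 1$, which is impossible. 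Hence the surjection is an isomorphism and $\bk{g,h}$ is free of rank $2$.

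The CSA property is where the real work lies, since ``maximal abelian'' is not directly a finite configuration. I would first prove commutation transitivity: if $b \neq 1$, $[a,b]=1$, $[b,c]=1$ but $[a,c]\neq 1$, then $f$ injective on $\{1,b,[a,c]\}$ produces in $F$ an element $f(b)\neq 1$ commuting with both $f(a)$ and $f(c)$ yet with $[f(a),f(c)]\neq 1$, contradicting transitivity of commutation in $F$. Transitivity shows that centralizers of nontrivial elements are abelian and that each nontrivial element lies in a unique maximal abelian subgroup, so malnormality reduces to the self-conjugacy criterion: for $a\neq 1$, if $[a,a^g]=1$ then $[a,g]=1$. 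Assuming $[a,g]\neq 1$, an $f$ injective on $\{1,a,[a,g]\}$ yields $f(a)\neq 1$ not commuting with $f(g)$ yet commuting with its conjugate $f(a)^{f(g)}$; malnormality of the maximal cyclic subgroup of $F$ containing $f(a)$ then forces $f(g)$ into that subgroup, whence $[f(a),f(g)]=1$, a contradiction. The step needing the most care is precisely the translation of malnormality of maximal abelian subgroups of $G$ into this conjugacy criterion, together with invoking the CSA-ness of $F$ as the base case; the remaining items are direct applications of the discrimination principle.
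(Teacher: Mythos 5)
Your proof is correct. The paper gives no proof of this theorem at all — it is presented as a list of classical facts that ``follow easily from the definitions'' — and your argument (for each finite configuration, choose a discriminating homomorphism to a free group injective on the relevant elements, then invoke torsion-freeness, the rank-2 subgroup fact, commutation transitivity, and CSA-ness of free groups, with the correct reduction of malnormality of maximal abelian subgroups to the criterion $[a,a^g]=1 \Rightarrow [a,g]=1$) is precisely the standard derivation the paper is alluding to.
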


\begin{conv}
  Throughout this paper $F$ shall denote a free group of rank $N$, and
  $\FRS$ will denote a fully residually $F$ quotient of
  $F*\bk{x,y}$. The elements $\x,\y \in \FRS$ denote the images of
  $x,y$ respectively via the epimorphism $F*\bk{x,y} \rightarrow
  \FRS$.
\end{conv}

\subsubsection{Generalized JSJ decompositions and uniform hierarchical
  depth}\label{sec:uhd-and-jsj}
We assume some familiarity with Bass-Serre theory and cyclic JSJ
theory for groups (as introduced in \cite{R-S-JSJ}.) If $F\neq\FRS$
then it follows, for example from Corollary \ref{cor:get-split}, that
it has an essential cyclic or free splitting modulo $F$. If $\FRS \neq
F$ is freely indecomposable modulo $F$ then it has a non-trivial
cyclic JSJ decomposition modulo $F$. As a matter of terminology, \define{MQH stands for ``maximal quadratically hanging''. QH
  (i.e. ``quadratically hanging'') subgroups are the surface-type
  subgroups that arise from hyperbolic-hyperbolic pairs of
  splittings.}

\begin{conv} Unless stated otherwise, instead of saying the
  \emph{cyclic JSJ decomposition of $\FRS$ modulo $F$}, we will simply
  say the \emph{JSJ of $\FRS$}. 
\end{conv}  

\begin{defn}\label{defn:gen-JSJ}
  Let $\FRS = H_0*H_1*\ldots*H_m*F(Y)$ be a Grushko decomposition of
  $\FRS$ modulo $F$, with $F \leq H_0$. A \emph{generalized JSJ} of
  $\FRS$ is a graph of groups decomposition $\FRS = \pi_1(\G(X))$
  modulo $F$, where $\G(X)$ is a graph of groups, such
  that:\begin{itemize}
  \item Edge groups are either trivial or cyclic. 
  \item If $e_1,\ldots,e_k$ are the the edges of $X$ with trivial edge
    group, and $X_0, X_1,\ldots,X_{+1}$ are the connected components
    of $X \setminus \big(e_1 \cup \ldots \cup e_k\big)$ then, up to
    reordering, each graph of groups $\G(X_i)$ is the JSJ of $H_i$ for
    $i=1,\ldots m$, $\pi_1(\G(X_{m+1})) = F(Y)$, and $\G(X_0)$ is the
    JSJ of $\F$ modulo $F$
\end{itemize}
\end{defn}

\begin{conv}
  The JSJ of $\FRS$ will always have a vertex group containing $F$, we
  shall always denote this vertex group as $\F$. 
\end{conv}

Further details on JSJ theory are deferred to Section
\ref{sec:JSJ}. Uniform hierarchical depth is a very natural notion,
however the definition is complicated by the fact that subgroups of
fully residually $F$ groups need not themselves be fully residually
$F$.
\begin{defn}
  For a finitely generated fully residually free group $G$ we define its
  \define{uniform hierarchical depth}, denoted $\uhd(G)$, as
  follows:\begin{itemize}
    \item If $G$ is trivial, abelian, free or a surface group, then
      $\uhd(G)=0$.
    \item Otherwise let $G_1,\ldots,G_n$ be the vertex groups of the
      generalized JSJ of $G$. We set\[\uhd(G) =
      \max\big(\uhd(G_1),\ldots,\uhd(G_n)\big) +1.\] 
\end{itemize} 
If $G$ is a finitely generated fully residually $F$  group then the vertex groups of its
generalized JSJ are not necessarily $F$-groups. We define its
\define{uniform hierarchical depth relative to $F$}, denoted
$\uhdf(G)$, as follows:\begin{itemize}
      \item If $G=F$ then $\uhdf=0$.
      \item Otherwise let $F\leq \F, H_1,\ldots,H_n$ be the vertex
        groups of $G$'s generalized JSJ modulo $F$, then \[\uhdf(G) =
        \max\big(\uhdf(\F),\uhd(H_1),\ldots,\uhd(H_n)\big)+1.\]
      \end{itemize}
\end{defn}

For a finitely generated fully residually free group $G$, $\uhd(G)$ is
essentially the number of levels of the \emph{canonical analysis
  lattice} defined in \S4 of \cite{Sela-DiophI}. The main difference
is that a free product of $\uhd$ 0 groups has $\uhd$ 1, whereas the
analysis lattice of such a group has only a 0-level. It follows that
$\uhd(G$) is at least $l$, the number of levels of an analysis
lattice, and at most $l+1$. By Theorem 4.1 of \cite{Sela-DiophI}
$\uhd(G) < \infty$ for a finitely generated fully residually free
group. $\uhdf(\FRS) < \infty$ is an easy consequence of Theorem 4 of
\cite{KM-IrredII} (it is restated in this paper as Theorem
\ref{thm:ICE}.)

\subsubsection{Relative presentations}
Let $G_1,\ldots G_n$ be groups with presentations $\bk{X_1 \mid
  R_1},\ldots, \bk{X_n \mid R_n}$ respectively and $t_1,\ldots, t_k$ a set
of letters.  Let $R$ denote a set of words in $\bigcup X_i^\pmo \cup
\{t_1,\ldots, t_k\}^\pmo$ then we will define the \emph{relative
  presentation}
\[\bk{G_1,\ldots,G_n,t_1,\ldots t_k\mid R}\] to be the group defined
by the presentation \[\bk{X_1,\ldots,X_n,t_1,\ldots t_k \mid
  R_1,\ldots R_n, R}\] We assume the reader is familiar with the relative
presentation that can be given to the fundamental group of a graph of
groups $\G(X)$ which depends on some maximal spanning tree $T\subset X$
(see \S5 of \cite{Serre-arbres} for details.)

\begin{convention}
  The ``generators'' of the relative presentations will be the vertex
  groups of $\G(X)$ and \define{stable letters} corresponding to edges
  of $X\setminus T$, where $T \subset X$ is a maximal spanning
  tree. Vertex groups will always be written using capital letters,
  and stable letters will always be denoted in lower case.

The ``relations'' will always involve stable letters. Moreover we
will abuse notation and, for example, abbreviate the HNN extension
associated to the isomorphism $\psi:A_1 \rightarrow A_2$, $A_1 \leq G
\geq A_2$, simply as $\bk{G,t \mid A_1^t = A_2}; A_1,A_2 \leq G$.

For each edge $e \in T$ we will consider the images of the
corresponding edge group in the vertex groups to be identified. This
means that the corresponding edge group will be given as the
intersection of two vertex groups.
\end{convention}

\section{The classification of the fully residually $F$ quotients
  of $F*\bk{x,y}$}\label{sec:descriptions}

\subsection{Auxiliary results}

So far the only comprehensive classification theorems of fully
residually free groups in terms of the number of generators are the
following, recall that the \emph{rank} of a group is the minimum
cardinality of its generating sets.

\begin{thm}\cite{FGMRS-1998}\label{thm:FGMRS}
If $G$ is fully residually free group, then\begin{itemize}
\item if $\rk(G)=1$ then $G$ is infinite cyclic.
\item If $\rk(G)=2$ then $G$ is free or free abelian of rank 2.
\item If $\rk(G)=3$ then $G$ is either free, free abelian of rank 3,
  or $G$ is isomorphic to a rank 1 centralizer extension
  of a free group of rank 2.
\end{itemize}
\end{thm}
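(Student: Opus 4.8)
The plan is to treat the three values of $\rk(G)$ in turn, dispatching ranks $1$ and $2$ from the elementary properties and reducing the rank-$3$ case to the analysis of a single abelian splitting, with the rank-$2$ classification reused inside the rank-$3$ argument. When $\rk(G)=1$ the group is cyclic, and being torsion free it is infinite cyclic. When $\rk(G)=2$, write $G=\bk{a,b}$: by the dichotomy for pairs of elements either $a,b$ commute, so that $G$ is a finitely generated torsion-free abelian group of rank $2$ and hence $\Z^2$, or they generate a free group and $G$ is free of rank $2$. For $\rk(G)=3$ I would pass to a Grushko decomposition, which reduces the problem to the freely indecomposable case (the free factors of positive rank contribute the free summands).

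So suppose $G$ is freely indecomposable of rank $3$. If $G$ is abelian it is $\Z^3$; otherwise $G$ is non-abelian, and since a non-abelian free group is freely decomposable, $G$ is also non-free. Because $G$ is non-free and freely indecomposable, the structure theory of finitely generated fully residually free groups furnishes a non-trivial splitting of $G$ over abelian subgroups. Were every abelian subgroup cyclic, the vertex groups would be free or surface groups, and the rank bound would force $G$ to be free or a rank-$3$ surface group; but no non-free fully residually free surface group has rank $3$. Hence $G$ contains a non-cyclic abelian subgroup, and a maximal such subgroup $A$ is free abelian of rank $2$ (rank $3$ being excluded by the count below). Realising $A$ as an abelian vertex group, the $\tr{CSA}$ property forces each incident edge group to be infinite cyclic, generated by some $u$ equal to its own centralizer $\bk u=C_V(u)$ in the adjacent vertex group $V$.

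The final step is a rank count. Writing $A=\bk u\times\bk t$, we see that $G$ is generated by $V$ together with the single element $t$, so $\rk(V)\geq 2$; conversely a Mayer--Vietoris computation of rational first Betti numbers, bounded by $b_1(G)\leq\rk(G)=3$, controls the contribution of $V$ and eliminates any further edges or loops. One concludes that the splitting is a single amalgam $G=V*_{\bk u}(\bk u\times\bk t)$ with $\rk(V)=2$; by the rank-$2$ case $V$ is free or free abelian, and it is non-abelian (else $G$ would be an amalgam of two abelian groups over a cyclic subgroup, contradicting $\tr{CSA}$), hence $V\cong F_2$ and $G$ is the rank-$1$ centralizer extension of $F_2$ extending $C_V(u)=\bk u$. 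The step I expect to be the main obstacle is exactly this bookkeeping: excluding, purely from $\rk(G)=3$, a more elaborate graph of groups---several cyclic edges, an HNN loop, or a rigid complementary vertex group of rank $3$---which is where the rank and Betti-number bounds must be pushed hardest, using that each extra edge or loop forces an independent generator of $G$.
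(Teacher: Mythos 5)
Your rank $1$ and rank $2$ arguments are fine, but note first that there is no proof in the paper to compare against: Theorem \ref{thm:FGMRS} is quoted from \cite{FGMRS-1998} as an external result, so your proposal has to stand on its own, and in rank $3$ it does not. The first (smaller) gap is the freely decomposable case. Grushko plus rank additivity and the rank $\leq 2$ classification give $G \cong F_3$ or $G \cong \Z^2 * \Z$, and the second possibility is silently dropped by your parenthetical remark that free factors ``contribute the free summands'': $\Z^2 * \Z$ is neither free nor free abelian of rank $3$, and it never enters your freely indecomposable analysis. It is on the list only because of the easy but necessary observation that $\Z^2 * \Z \cong \bk{a,t \mid [a,t]=1} * \bk{b} \cong F(a,b)*_{\bk{a}}\big(\bk{a}\oplus\bk{t}\big)$, i.e.\ it \emph{is} a rank $1$ centralizer extension of $F_2$; as written, your proof simply omits this case.

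The serious gaps are in the freely indecomposable case, at exactly the two places you acknowledge waving at. First, the claim ``were every abelian subgroup cyclic, the vertex groups would be free or surface groups, and the rank bound would force $G$ to be free or a rank-$3$ surface group'' has no justification: vertex groups of a cyclic splitting of a limit group are merely limit groups, and the assertion that a freely indecomposable rank-$3$ limit group with only cyclic abelian subgroups must be free is essentially the theorem itself. Concretely, take $w \in F(a,b)$ neither primitive nor a proper power (e.g.\ $w=a^2b^3$) and let $G_w = F(a,b)*_{w(a,b)=w(c,d)}F(c,d)$ be the double; $G_w$ is a freely indecomposable limit group (it embeds into the centralizer extension $F*_{\bk{w}}(\bk{w}\oplus\bk{t})$ as $\bk{F, t^\mo F t}$), it is not free, it is not a surface group for this $w$, and all of its abelian subgroups are cyclic. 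The content of the theorem includes proving $\rk(G_w) \geq 4$, and nothing in your sketch does so. Second, your Betti-number bookkeeping cannot be pushed to do it, because $b_1$ does not bound rank from above for limit groups: this same $G_w$ has $b_1(G_w)=3$ (its abelianization is $\Z^4$ modulo the primitive vector $(2,3,-2,-3)$, hence $\Z^3$) although its rank is $4$. The same problem sinks your claim $\rk(V)=2$: generation $G=\bk{V,t}$ gives only $\rk(V)\geq \rk(G)-1 = 2$, and an \emph{upper} bound on the rank of a vertex group of a cyclic splitting is a genuine theorem about acylindrical actions, not arithmetic. The tools that actually deliver such bounds are Weidmann's Nielsen method for groups acting on trees (Theorem \ref{thm:weidmann-nielsen}) and the Kharlampovich--Myasnikov--Weidmann folding machinery (Theorem \ref{thm:folded}), which is precisely what this paper deploys whenever it needs analogous rank statements (e.g.\ Proposition \ref{prop:free-decomp-FRS} and Lemma \ref{lem:adjoin-a-root}). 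So your proposal correctly isolates where the difficulty lies, but it does not overcome it.
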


\define{Let $Ab(x,y)$ denote the free abelian group with basis
  $\{x,y\}$.}

\begin{thm}\cite{CR-2000} \label{thm:onevariable} The fully residually $F$
  quotients of $F*\bk{x}$ are:\begin{equation}\label{eqn:onevariable}
    \FRS = \left\{
  \begin{array}{l} F \\ F*\brakett{x} \\ F*_\bk{\alpha}Ab(\alpha,r)
  \end{array}\right.\end{equation} 
\end{thm}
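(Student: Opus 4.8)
The plan is to study the single generator $\bar x$ of $\FRS$ over $F$ by passing successively to the free and then the cyclic splittings of $\FRS$ modulo $F$. First I would dispose of the degenerate possibility $\bar x \in F$: in that case $\FRS = \bk{F,\bar x} = F$, the first line of \eqref{eqn:onevariable}. So from now on assume $\bar x \notin F$, whence $\FRS \neq F$.

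Next I would take the Grushko decomposition of $\FRS$ modulo $F$, say $\FRS = H_0 * \cdots * H_m * F(Y)$ with $F \le H_0$. If this is non-trivial, then since $\FRS$ is generated by $F$ together with the single extra element $\bar x$ (the relative rank of $\FRS$ over $F$ being $1$), at most one free factor beyond $H_0 = F$ can be non-trivial and it must be infinite cyclic, so $\FRS = F * \bk{\bar x} = F * \bk{x}$. This case genuinely occurs because a free product of fully residually $F$ groups is again fully residually $F$. There remains the case where $\FRS$ is freely indecomposable modulo $F$ and $\FRS \neq F$; by Corollary \ref{cor:get-split} it then has a non-trivial cyclic JSJ modulo $F$, with $F \le \F$.

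For this last case I claim the JSJ is a single amalgamation $\FRS = \F *_{\bk{\alpha}} A$ in which $\F = F$, the edge group $\bk{\alpha}$ is a maximal cyclic subgroup of $F$, and $A$ is abelian. Granting this, I would argue as follows. The vertex group $A$ is generated by $\bk{\alpha}$ together with the single element $\bar x$, so $A = \bk{\alpha,\bar x}$; being a finitely generated abelian fully residually free group it is free abelian, and since $\bar x \notin F \supseteq \bk{\alpha}$ it has rank exactly $2$, i.e. $A = Ab(\alpha,\bar x)$. As $\bk{\alpha}$ is maximal cyclic in $F$ we have $C_F(\alpha) = \bk{\alpha}$, so writing $r = \bar x$ exhibits $\FRS = F *_{\bk{\alpha}} Ab(\alpha,r)$ as a rank-$1$ centralizer extension, the third line of \eqref{eqn:onevariable}. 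The equality $\F = F$ itself follows from the normal form in the amalgam: $\FRS = \bk{F,\bar x} = \bk{F,A}$, and $\bk{F,A} = \F *_{\bk{\alpha}} A$ forces $\bk{F} = \F$. Conversely such extensions are well known to be fully residually $F$, so this case occurs as well.

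The hard part will be justifying the claimed shape of the cyclic JSJ, i.e. showing that adjoining a single generator to $F$ cannot produce anything more complicated than one cyclic edge carrying a rank-$2$ abelian vertex. This is a rank (and Euler-characteristic) bookkeeping argument relative to $F$, built on the estimate $\rk(\FRS) \le N+1$: I would rule out quadratically hanging (surface) vertices, whose genus-plus-boundary data already demand at least two generators beyond the boundary subgroups; rule out abelian vertices of rank $\ge 3$; show the underlying graph carries no extra edges or loops, since these would either raise the rank or resurrect a free splitting modulo $F$; and confirm there is no second non-abelian vertex. Because these counts must be taken modulo $F$ rather than absolutely, keeping the bookkeeping honest is the delicate point; once it is carried out, the three decompositions of \eqref{eqn:onevariable} are the only survivors.
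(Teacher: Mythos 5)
The first thing to note is that the paper contains no proof of this statement: Theorem \ref{thm:onevariable} is imported wholesale from Chiswell--Remeslennikov \cite{CR-2000}, so your attempt has to stand entirely on its own. Its skeleton is the right one, and it mirrors the methodology this paper uses for the two-variable case (trichotomy: $\x \in F$; freely decomposable modulo $F$; freely indecomposable with a cyclic JSJ modulo $F$, controlled by Betti-number and rank bookkeeping). But as written it is a plan, not a proof. The entire content of the theorem sits in the freely indecomposable case, namely the claim that the cyclic JSJ modulo $F$ is a single amalgamation $\F *_{\bk{\alpha}} A$ with $\F = F$, $\bk{\alpha}$ maximal cyclic in $F$, and $A$ abelian --- and you explicitly defer this (``the hard part'', ``once it is carried out'') to an unexecuted bookkeeping argument. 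For calibration: the analogous classification for two generators occupies Sections 4 through 7 of this paper and requires the folding machinery of Theorem \ref{thm:folded}, Weidmann--Nielsen normalization (Theorem \ref{thm:weidmann-nielsen}), and the Betti-number estimates around Proposition \ref{prop:b1-complexity}. Even your free-decomposition step silently invokes a relative Grushko fact (that a generating set of size one modulo $F$ distributes across free factors), which is exactly the kind of statement the paper has to prove by folding in Proposition \ref{prop:free-decomp-FRS}.

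There are also concrete gaps in the part you do argue, even granting the claimed JSJ shape. First, $A = \bk{\alpha,\x}$ does not follow from $\FRS = F *_{\bk{\alpha}} A$ together with $\FRS = \bk{F,\x}$: the element $\x$ need not lie in $A$, nor in any conjugate of $A$, a priori. What one can extract cheaply (e.g.\ by killing the normal closure of $F$, which collapses the amalgam to $A/\bk{\alpha}$) is that $A/\bk{\alpha}$ is cyclic, i.e.\ $A = \bk{\alpha, a}$ for \emph{some} $a \in A$; identifying a generating pair requires a folding argument, not normal forms alone. Second, the inference ``$\x \notin F$, hence $A$ has rank exactly $2$'' is a non sequitur: a two-generated abelian fully residually free group $\bk{\alpha,a}$ with $a \notin F$ can still be cyclic, with $\alpha = a^k$ a proper power. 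Ruling this out needs the $F$-group structure: since $\bk{\alpha}$ is maximal cyclic in $F$, the element $\alpha$ has no proper roots in $F$, so no $F$-homomorphism $\FRS \rightarrow F$ could send $a$ anywhere, contradicting the requirement that $\FRS$ be discriminated by $F$-homomorphisms. Without that observation your case analysis admits root-adjunction quotients such as $F *_{\bk{\alpha} = \bk{a^2}} \bk{a}$, which do not appear in (\ref{eqn:onevariable}).
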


Our proofs build on these previous classifications. This next result
which is interesting in its own right is also important for
proving some of the corollaries of our classification.

\begin{prop}\label{prop:b1-complexity} Let $F$ be a free group of rank
  $N$ and let $G$ be a fully residually $F$ group. Then $b_1(G)=N$ if
  and only if $G=F$.
\end{prop}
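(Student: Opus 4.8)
The ``if'' direction is immediate, since the abelianization of $F$ is $\mathbb{Z}^N$ and so $b_1(F)=N$. For the ``only if'' direction, I would first exploit that $G$ is an $F$-group, so that $F\leq G$, and that being fully residually $F$ it admits at least one $F$-homomorphism $r\colon G\to F$. Since any such $r$ restricts to the identity on $F$, it is a retraction, and $G=\ker(r)\rtimes F$. Writing $K=\ker(r)$, the whole problem becomes showing that $b_1(G)=N$ forces $K=1$. The retraction immediately shows that the inclusion $\iota\colon F\hookrightarrow G$ induces a split injection $H_1(F;\mathbb{Q})\hookrightarrow H_1(G;\mathbb{Q})$; as $b_1(G)=N=b_1(F)$, this is a split injection between $\mathbb{Q}$-vector spaces of the same finite dimension $N$, hence an isomorphism $H_1(F;\mathbb{Q})\xrightarrow{\sim}H_1(G;\mathbb{Q})$.

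The next step is to record that $G$ is residually torsion-free nilpotent. Indeed, taking the product of a discriminating family of $F$-homomorphisms embeds $G$ into a direct product of copies of $F$; free groups are residually torsion-free nilpotent by Magnus, and this property is inherited by subgroups of direct products, so $G$ has it too. I would use this to conclude that the rational lower central series $\gamma_n^{\mathbb{Q}}(G)$ (the isolators of the ordinary lower central series, whose successive quotients are torsion-free) has trivial intersection: given $1\neq g\in G$, pick a torsion-free nilpotent quotient $q\colon G\to Q$ with $q(g)\neq 1$; if $Q$ has class $c$ then $\gamma_{c+1}^{\mathbb{Q}}(Q)=1$, so by functoriality $g\notin\gamma_{c+1}^{\mathbb{Q}}(G)$. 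Hence $\bigcap_n\gamma_n^{\mathbb{Q}}(G)=1$.

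Now I would upgrade the $H_1$-isomorphism. Because $F$ is free we have $H_2(F;\mathbb{Q})=0$, so the $H_2$-hypothesis of Stallings' theorem is automatic, and the isomorphism on $H_1(-;\mathbb{Q})$ from the first step lets me invoke the rational form of Stallings' theorem: the inclusion induces isomorphisms $F/\gamma_n^{\mathbb{Q}}(F)\xrightarrow{\sim}G/\gamma_n^{\mathbb{Q}}(G)$ for every $n$. Composing with $r$, which on each quotient is forced to be the inverse of this isomorphism, shows that $K=\ker(r)$ maps to $1$ in every $G/\gamma_n^{\mathbb{Q}}(G)$, i.e. $K\subseteq\gamma_n^{\mathbb{Q}}(G)$ for all $n$. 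Therefore $K\subseteq\bigcap_n\gamma_n^{\mathbb{Q}}(G)=1$, so $G=F$. (Equivalently, and without quoting Stallings, one can argue directly on the graded Lie algebra $\bigoplus_n\big(\gamma_n^{\mathbb{Q}}(G)/\gamma_{n+1}^{\mathbb{Q}}(G)\big)\otimes\mathbb{Q}$: it is generated in degree $1$, where $\iota$ is already an isomorphism, so a generation-plus-splitting argument forces $\iota$ to induce isomorphisms in all degrees, giving the same conclusion.)

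The only genuine obstacle is precisely this passage from a rational $H_1$-isomorphism to an honest equality of groups; the thing to watch is integral torsion in the lower central quotients, which is exactly why I work with the rational lower central series (whose graded pieces are torsion-free) rather than the ordinary one. Residual torsion-free nilpotence, coming for free from the hypothesis that $G$ is fully residually $F$, is what guarantees that this rational series still separates the points of $G$ and hence detects $K$.
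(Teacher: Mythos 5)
Your route is genuinely different from the paper's, which never leaves the world of splittings: there the retraction gives $b_1(G)\geq N$, free decomposability or a multi-vertex JSJ forces $b_1(G)>N$ via the estimate (\ref{eqn:b1-bound}), and a single-vertex (bouquet) JSJ over $F$ is shown, using property CC (Lemma \ref{lem:CC}), to be a centralizer extension, which raises $b_1$ by Lemma \ref{lem:b1-eoc}; finiteness of $\uhdf$ then lets the paper iterate down the hierarchy $\F>\F^1>\cdots>\F^r>F$ to a contradiction. Your homological argument avoids all of that machinery, and in essence it works --- but there is one genuine misstep. You apply Stallings' theorem to the inclusion $\iota\colon F\hookrightarrow G$ and assert that the $H_2$-hypothesis is ``automatic'' because $H_2(F;\mathbb{Q})=0$. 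For the map $\iota$, Stallings requires that $H_2(F;\mathbb{Q})\to H_2(G;\mathbb{Q})$ be \emph{surjective}, i.e.\ it requires $H_2(G;\mathbb{Q})=0$, which is not known at that point of the proof and is not a general feature of fully residually free groups (closed surface groups are limit groups with $H_2\neq 0$). You have the direction of the $H_2$-condition backwards, and the condition cannot be dropped: the abelianization map $F_2\to\mathbb{Z}^2$ induces an isomorphism on $H_1(\cdot;\mathbb{Q})$ with $H_2(F_2;\mathbb{Q})=0$, yet $F_2/\gamma_3^{\mathbb{Q}}(F_2)$ is the Heisenberg group while $\mathbb{Z}^2/\gamma_3^{\mathbb{Q}}(\mathbb{Z}^2)=\mathbb{Z}^2$, so the conclusion of Stallings' theorem fails for that map.

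The repair is immediate and stays entirely inside your argument: apply Stallings' theorem to the retraction $r\colon G\to F$ instead of to $\iota$. Its map on $H_1(\cdot;\mathbb{Q})$ is the inverse of $\iota_*$, hence an isomorphism, and the $H_2$-hypothesis now asks that $H_2(G;\mathbb{Q})\to H_2(F;\mathbb{Q})=0$ be onto, which is trivially true. Stallings then gives isomorphisms $G/\gamma_n^{\mathbb{Q}}(G)\to F/\gamma_n^{\mathbb{Q}}(F)$ for all $n$, so $K=\ker(r)$ lies in $\gamma_n^{\mathbb{Q}}(G)$ for every $n$; your residual torsion-free nilpotence argument (which is correct: $G$ embeds in a direct product of copies of $F$, and the property passes to subgroups of such products) gives $\bigcap_n\gamma_n^{\mathbb{Q}}(G)=1$, hence $K=1$ and $G=F$. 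With this one change your proof is complete, and it buys something the paper's does not: it needs neither the JSJ theory nor the finiteness of $\uhdf$ (which the paper gets from Theorem \ref{thm:ICE}), and it applies verbatim without any finite generation assumptions beyond $b_1(G)<\infty$; the trade-off is the appeal to Stallings' theorem, which is external to the paper's toolkit.
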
 

If $G$ is the fundamental group of a graph of groups $\G(X)$ with
cyclic edge groups, then it is easy to estimate $b_1(G)$, the first
Betti number of $G$, in terms of the first Betti numbers of its vertex
groups. We have the following lower bound: for $T\subset X$ a maximal
spanning tree we have\begin{equation}\label{eqn:b1-bound}b_1(G)\geq
  \sum_{v \in T} b_1(G_v) - E\end{equation} where $E$ is the number of
edges in $T$. If there is an epimorphism $G\rightarrow H$ then $b_1
(G)\geq b_1(H)$. This fact is used to derive many of the corollaries
regarding first Betti numbers.

\subsection{When $\FRS$ is freely decomposable modulo $F$}\label{sec:prop-decompo}
This proposition is proved in Section \ref{sec:decomp-classification}
\begin{prop}\label{prop:decomposable}
  If $\FRS$ is freely decomposable modulo $F$ then,\[ \FRS = \left\{
    \begin{array}{l} F*\bk{t}\\ F*H; \tr{~where~}$H$\tr{~is fully
        residually free of rank 2}\\ \big(F*_\bk{\alpha}Ab(\alpha,r)\big)*\brakett{s}\\
    \end{array}\right.\]
\end{prop}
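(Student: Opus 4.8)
The plan is to reduce the whole statement to an arithmetic argument on the Grushko decomposition modulo $F$, driven by the fact that $\FRS$ is generated by $F$ together with only two elements.

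First I would record the basic observation that $\FRS = \bk{F,\x,\y}$, so the minimal number of elements needed to generate $\FRS$ together with $F$ — call it $\rk_F(\FRS)$ — is at most $2$. I would then write down a Grushko decomposition modulo $F$,
\[\FRS = H_0 * H_1 * \cdots * H_m * F(Y), \qquad F \le H_0,\]
in which $H_0$ is freely indecomposable modulo $F$, each $H_i$ with $i\ge 1$ is nontrivial, freely indecomposable and not infinite cyclic, and $F(Y)$ is free; here ``$\FRS$ freely decomposable modulo $F$'' is exactly the statement $m + \rk(F(Y)) \ge 1$. The engine of the proof is the form of Grushko's theorem relative to the fixed factor $F \le H_0$, which yields the additivity
\[\rk_F(\FRS) = \rk_F(H_0) + \sum_{i=1}^m \rk(H_i) + \rk(F(Y)) \le 2,\]
together with the elementary facts that $\rk(H_i)\ge 2$ for $i\ge 1$ (freely indecomposable and non-cyclic) and that $\rk_F(H_0)=0$ if and only if $H_0=F$. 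I would also note that any subgroup of $\FRS$ containing $F$ is again fully residually $F$ (restrict the discriminating $F$-homomorphisms), and that any free factor is fully residually free (restrict discriminating homomorphisms into $F$); thus each $H_i$ and $F(Y)$ is fully residually free while $H_0$ is fully residually $F$.

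With the displayed inequality in hand the proposition becomes bookkeeping. If some $H_i$ with $i\ge 1$ is present then $\sum\rk(H_i)\ge 2$ forces $m=1$, $\rk(H_1)=2$, $\rk_F(H_0)=0$ and $\rk(F(Y))=0$; hence $H_0=F$ and $\FRS = F*H_1$ with $H_1$ fully residually free of rank $2$ (which by Theorem \ref{thm:FGMRS} is free or free abelian of rank $2$, though the statement only asks for the weaker conclusion), giving the second case. Otherwise $m=0$ and $\rk(F(Y))\ge 1$. If $\rk(F(Y))=2$ then $H_0=F$ and $\FRS = F*F(Y)$, again the second case with $H$ free of rank $2$. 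If $\rk(F(Y))=1$, write $F(Y)=\bk s$, so $\rk_F(H_0)\le 1$: when $\rk_F(H_0)=0$ we get $H_0=F$ and $\FRS = F*\bk t$, the first case; when $\rk_F(H_0)=1$ the factor $H_0$ is a fully residually $F$ quotient of $F*\bk x$ that is freely indecomposable modulo $F$ and is not $F$, so by Theorem \ref{thm:onevariable} it is $F*_{\bk\alpha}Ab(\alpha,r)$, whence $\FRS = \big(F*_{\bk\alpha}Ab(\alpha,r)\big)*\bk s$, the third case.

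The main point to justify carefully is the additivity of relative rank over the decomposition modulo $F$: ordinary Grushko additivity must be upgraded to its version relative to the fixed free factor $F\le H_0$, and one must confirm that passing to a maximal (Grushko) decomposition modulo $F$ realizes rather than raises $\rk_F(\FRS)$. The only other delicate step is the final identification of the freely indecomposable rank-one factor $H_0$, where I rely on $H_0$ being generated by $F$ and one further element (hence a fully residually $F$ quotient of $F*\bk x$) together with its free indecomposability modulo $F$ to single out $F*_{\bk\alpha}Ab(\alpha,r)$ from the one-variable list. Everything else is the elementary arithmetic of the displayed inequality.
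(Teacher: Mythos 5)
Your proposal is correct, and it reaches the classification by a route that is organized differently from the paper's, though the two share the same skeleton. The paper never writes down the full Grushko decomposition modulo $F$: it takes a single two-factor splitting $\FRS=\F*H$ with $F\leq\F$ and proves the rank-distribution fact directly as Proposition \ref{prop:free-decomp-FRS} --- $\F$ is generated by $F$ together with $2-\rk(H)$ elements --- by running the $\G(A)$-graph folding machinery on the wedge $\W(F,\x,\y)$, observing that exactly two F4 collapses must occur and that each contributes one generator to one of the two factors (the paper notes this is essentially the proof of Theorem 6.2 of \cite{KMW-2005}); Corollary \ref{cor:free-decomp-FRS} then performs the same bookkeeping you do, quoting Theorems \ref{thm:FGMRS} and \ref{thm:onevariable}. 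The additivity of $F$-rank that you call the engine of your proof, and which you correctly flag as the one step needing careful justification, is therefore not something you may simply assume: it is precisely the content of that proposition (its two-factor case, iterated together with ordinary Grushko additivity, yields your multi-factor identity), so your argument is complete only once you either cite such a relative Grushko theorem or reproduce the folding argument. Granting that input, your bookkeeping is correct and matches the paper's: torsion-freeness gives $\rk(H_i)\geq 2$ for the non-free Grushko factors, free factors are fully residually free by restricting the discriminating family, subgroups containing $F$ are fully residually $F$, and the freely indecomposable one-generated factor is pinned down as $F*_{\bk{\alpha}}Ab(\alpha,r)$ by Theorem \ref{thm:onevariable} exactly as in the paper. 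What your organization buys is a uniform treatment of all factor configurations and the observation that Theorem \ref{thm:FGMRS} is not strictly needed for the statement as phrased; what the paper's buys is self-containedness, since the folding machinery used there is developed anyway and reused heavily in Sections \ref{sec:2-nonab-classification} and \ref{sec:1v-classification}, and only the two-factor case of the additivity is ever required.
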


\begin{cor}
  If $\FRS$ is freely decomposable modulo $F$ then $\uhdf(\FRS) \leq 1$.
\end{cor}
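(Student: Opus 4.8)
The plan is to treat each of the three isomorphism types furnished by Proposition~\ref{prop:decomposable} separately, in each case writing down the generalized JSJ of $\FRS$ modulo $F$ explicitly and then reading $\uhdf$ directly off its recursive definition. The decisive feature common to all three cases is that the vertex group containing $F$ is $F$ itself: in $F*\bk{t}$ and $F*H$ the factor $F$ is already a vertex group, while in $\big(F*_\bk{\alpha}Ab(\alpha,r)\big)*\bk{s}$ the displayed amalgam exhibits $F$ as the vertex group of the JSJ of $F*_\bk{\alpha}Ab(\alpha,r)$ modulo $F$ that retains $F$. Hence in every case $\F=F$, and the term $\uhdf(\F)=\uhdf(F)=0$ contributes nothing to the maximum.

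Next I would verify that every remaining vertex group of the generalized JSJ is free, free abelian, or a surface group, so that each has $\uhd=0$ by the base case of the definition of $\uhd$. For $F*\bk{t}$ the only other vertex group is $\bk{t}\cong\Z$. For $F*H$ the remaining vertex groups arise from $H$; since $H$ is fully residually free of rank $2$, Theorem~\ref{thm:FGMRS} forces it to be free of rank $2$ or free abelian of rank $2$, so whether $H$ survives as a single vertex or is further refined, every vertex group it contributes has $\uhd=0$. For $\big(F*_\bk{\alpha}Ab(\alpha,r)\big)*\bk{s}$ the non-$F$ vertex groups are the free abelian group $Ab(\alpha,r)$ and the free factor $\bk{s}\cong\Z$, again both of $\uhd$ zero.

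Combining these two observations, the computation is immediate: in each case
\[
\uhdf(\FRS)=\max\big(\uhdf(\F),\uhd(H_1),\ldots,\uhd(H_n)\big)+1=\max(0,\ldots,0)+1=1,
\]
whence $\uhdf(\FRS)\leq 1$, with equality in fact holding throughout.

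I expect the only point demanding care---rather than a genuine obstacle---to be the bookkeeping in the third case: one must confirm that the amalgam-and-free-product presentation of $\FRS$ is indeed its generalized JSJ modulo $F$ in the sense of Definition~\ref{defn:gen-JSJ}, so that $\bk{\alpha}$ is the cyclic JSJ edge group and $F$ is the vertex group carrying the embedded copy of $F$. Once that identification is in place, the remainder is a routine unwinding of the definitions of $\uhd$ and $\uhdf$.
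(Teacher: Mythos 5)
Your proof is correct and is exactly the argument the paper leaves implicit: the corollary is stated without proof because, given Proposition \ref{prop:decomposable}, one just observes that in each of the three cases the vertex group containing $F$ is $F$ itself and all remaining vertex groups of the generalized JSJ are free or free abelian, so the recursive definition of $\uhdf$ yields $1$. Your extra care in the third case (identifying $F*_\bk{\alpha}Ab(\alpha,r)$'s JSJ modulo $F$ as the displayed amalgam) is the right point to check and matches the paper's description of JSJs of centralizer extensions.
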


\subsection{When all the vertex groups of the JSJ of $\FRS$ except $\F$ are
  abelian}\label{sec:prop-abelian}

The proof of this proposition takes up Section
\ref{sec:abelian-classification} and Section
\ref{sec:abelian-2-gen-mod-F}.

\begin{prop}\label{prop:abelian}
  If JSJ of $\FRS$ has abelian vertex groups, but only the non-abelian
  vertex group $F\leq\F$, then the possible underlying graphs of the
  JSJ are:\[ \xymatrix{u\bullet \ar@{-}[r] &\bullet v},
  \xymatrix{v\bullet \ar@{-}[r] &\bullet u\ar@{-}[r]& \bullet w},
  \tr{~or~~~~~~} \xymatrix{u\bullet \ar@{-}[r] \ar@(ul,dl)@{-}
    &\bullet v}.\] Moreover if there are two abelian vertex groups, or
  one of the abelian vertex groups has rank 3 then $\F=F$. In all
  cases $\F$ is generated by $F$ and at most two other elements and
  $b_1(\F) < b_1(\FRS).$
\end{prop}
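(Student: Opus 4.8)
The plan is to convert the entire statement into a budget on first Betti numbers. Since $\FRS$ is a quotient of $F*\bk{x,y}$ we have $b_1(\FRS)\le N+2$, and since $\FRS\neq F$ (the JSJ is non-trivial) Proposition \ref{prop:b1-complexity} gives $b_1(\FRS)\ge N+1$. A single $F$-homomorphism $\FRS\to F$ exists and restricts to the identity on $F$, so its restriction to $\F$ splits the map $\Z^N=F^{ab}\to\F^{ab}$; hence $b_1(\F)\ge N$, with equality iff $\F=F$ by Proposition \ref{prop:b1-complexity} applied to $\F$. Writing the abelianization of the graph of groups from its relative presentation, with abelian vertices $A_1,\dots,A_k$ of ranks $n_i$ and cyclic edge groups, one obtains
\[ b_1(\FRS)=b_1(X)+b_1(\F)+\textstyle\sum_i n_i-\rk(R), \]
where $R$ is generated by the $|E|$ vectors $[\alpha_e]-[\omega_e]\in\F^{ab}\oplus\bigoplus_i\Z^{n_i}$ coming from the edges.

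First I would extract the numerical conclusions. Since $\rk(R)\le|E|=b_1(X)+k$, the displayed identity gives $b_1(\FRS)\ge b_1(\F)+\sum_i(n_i-1)$. In a reduced JSJ the abelian vertex groups are non-cyclic, so each $n_i\ge2$ and $\sum_i(n_i-1)\ge k$; hence $N+2\ge b_1(\FRS)\ge N+k$, forcing $k\le2$ and therefore at most three vertices. The same inequality yields both dichotomies at once: if some $n_i=3$, or if $k=2$, then $b_1(\FRS)\ge b_1(\F)+2\ge N+2$, so all inequalities are equalities, $b_1(\F)=N$, i.e. $\F=F$, and when $k=2$ both abelian ranks equal $2$. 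Finally, since $k\ge1$ (otherwise $\FRS=\F$ has trivial JSJ), we get $b_1(\FRS)\ge b_1(\F)+1>b_1(\F)$, the last assertion.

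Next I would pin down the underlying graph. Collapsing every vertex group maps $\FRS$ onto $\pi_1(X)$, a free group of rank $b_1(X)$; as $F$ lies in a vertex group it dies, so this free group is generated by the images of $\x$ and $\y$, giving $b_1(X)\le2$. Together with $k\le2$ this already restricts $X$ severely, and the three displayed graphs are exactly the connected possibilities with $k\le2$ that survive once one rules out parallel edges and a second independent cycle. The main obstacle is precisely this elimination: the pure Betti-number budget still permits, for instance, a two-vertex graph carrying two loops at $\F$, and excluding such configurations requires the finer structure of the cyclic JSJ — that it is reduced, that the abelian vertex groups are malnormal by the CSA property, so that no two edges are parallel and the edge relations in $R$ are genuinely independent, together with the fact that $\FRS$ is generated by $F$ and only two elements, which caps the number of stable letters. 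I expect this structural case analysis to be the most delicate part of the argument.

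It remains to bound the generation of $\F$. When $X$ is a tree (the first two graphs) I would project each abelian vertex $A_i\cong\Z^{n_i}$ onto the cyclic edge subgroup on the path toward $\F$ — a direct summand, since JSJ edge groups are maximal cyclic — and collapse along the tree to obtain a retraction $r\colon\FRS\to\F$. Then $\F=r(\FRS)=\bk{F,r(\x),r(\y)}$ is generated by $F$ and two elements. In the remaining (loop) case the budget forces $b_1(\F)=N+1$, and since $\F$ is a freely indecomposable-modulo-$F$ vertex group of small excess, Theorem \ref{thm:onevariable} together with Theorem \ref{thm:FGMRS} identifies $\F$ as a rank-one centralizer extension $F*_{\bk{\alpha}}Ab(\alpha,r)$, again generated by $F$ and at most two further elements. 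This completes all four assertions.
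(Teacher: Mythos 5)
Your Betti-number budget is sound, and it does cleanly recover the purely numerical assertions: at most $k\le 2$ abelian vertices, the dichotomy forcing $\F=F$ when $k=2$ or when some abelian vertex has rank $3$, and $b_1(\F)<b_1(\FRS)$; this is essentially how the paper itself handles those parts (Lemma \ref{lem:abelian-good-enough}, Proposition \ref{prop:abelian-classification}). But two of the four assertions are not actually proved. First, the graph enumeration: you concede that the budget permits, for instance, two loops at $\F$ or the two-vertex graph whose two edges form a cycle, and your proposed fix (``the abelian vertex groups are malnormal by CSA, so no two edges are parallel'') does not work. The configuration $\bk{\F,A,t \mid \alpha^t=\beta}$ with $\alpha\in\F$, $\beta\in A$, $\bk{\delta}=\F\cap A$ is CSA-consistent, satisfies every numerical constraint you impose, and is generated by $F$ and two elements; excluding it is Proposition \ref{prop:simply-connected-abelian} in the paper, which requires strict resolutions and the exposed-summand Lemma \ref{lem:abelian-exposed}. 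Likewise, excluding graphs with two cycles is Corollary \ref{cor:no-ab}, resting on the exponent-sum argument of Lemma \ref{lem:exp-sum-zero-abelian}, again via strict resolutions. None of this machinery, nor any substitute for it, appears in your proposal, and it cannot be replaced by CSA plus counting.

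Second, the generation bound on $\F$ in the loop case is circular. The budget only yields $b_1(\F)\le N+1$, and $b_1$ bounds rank from \emph{below}, not above, so knowing $b_1(\F)=N+1$ gives no control on how many elements generate $\F$ over $F$. Theorem \ref{thm:onevariable} applies to quotients of $F*\bk{x}$ and Theorem \ref{thm:FGMRS} to fully residually free groups of rank at most $3$; you cannot invoke either without already having a generation bound for $\F$, which is precisely what is to be proved. The paper's proof of this step (Case II of Proposition \ref{prop:abelian-Ftilde-2gen}) is where the real work lies: it uses Lemma \ref{lem:2v-2e-abelian} to place exactly one edge class in $F$, then retractions together with the folding and Weidmann--Nielsen results of Section \ref{sec:1v-classification} (Lemmas \ref{lem:1v-2e-F} and \ref{lem:1v-1e-2genmodF}). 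Finally, even in your tree case the retraction exists only once the edge group $\bk{\alpha}$ is a direct summand of $A$: edge groups of a cyclic JSJ need \emph{not} be maximal cyclic (this is exactly why the paper performs balancing folds and then uses Lemma \ref{lem:adjoin-a-root} to transfer the generation bound from $\F_f$ back to $\F$), so your parenthetical justification is false as stated, although this particular gap is repairable with the paper's tools.
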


This corollary follows from Proposition \ref{prop:b1-complexity} and
an easy estimation of the first Betti number.

\begin{cor}\label{cor:abelian-low-betti-uhd}
  If $\FRS$ is as in Proposition \ref{prop:abelian} and
  $b_1(\FRS)=N+1$ then $\FRS = F*_\bk{\alpha} Ab(\alpha,r)$. In
  particular $\uhdf(\FRS) = 1$.
\end{cor}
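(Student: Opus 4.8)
The plan is to pin down the vertex group $\F$ first and then read everything else off an estimate of the first Betti number, handling the three graphs of Proposition~\ref{prop:abelian} in turn. To see that $\F=F$, note that since $\FRS$ is fully residually $F$ the family $\Hom(\FRS,F)$ is non-empty and discriminates $\FRS$; restricting any such map to $\F$ gives an $F$-homomorphism $\F\to F$ that is the identity on $F$, hence a retraction of $\F$ onto $F$, and the restricted family still discriminates $\F$, so $\F$ is itself fully residually $F$. The retraction splits $H_1(F;\mathbb{Q})$ off $H_1(\F;\mathbb{Q})$, so $b_1(\F)\geq b_1(F)=N$. Proposition~\ref{prop:abelian} gives $b_1(\F)<b_1(\FRS)=N+1$, whence $b_1(\F)=N$, and Proposition~\ref{prop:b1-complexity} forces $\F=F$.

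Next I would estimate $b_1(\FRS)$ from the bound \eqref{eqn:b1-bound}, using that every vertex group other than $F$ is free abelian of rank at least $2$ (a degree-one abelian vertex of a reduced JSJ contains its cyclic edge group as a proper direct summand). Each such vertex contributes at least $2-1=1$ to the excess of $b_1(\FRS)$ over $b_1(F)=N$, so the hypothesis $b_1(\FRS)=N+1$ leaves room for exactly one abelian vertex, of rank exactly $2$. This immediately eliminates the three-vertex path $\xymatrix{v\bullet \ar@{-}[r] &\bullet u\ar@{-}[r]& \bullet w}$, whose two end vertices are abelian and force $b_1(\FRS)\geq N+2$.

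The main obstacle is the loop graph $\xymatrix{u\bullet \ar@{-}[r] \ar@(ul,dl)@{-} &\bullet v}$, which survives a naive count because the stable letter of the loop contributes an independent class to $H_1$. Here I would exploit $\F=F$: writing the loop as an HNN extension $\bk{F,t \mid \alpha^t=\beta}$, any $F$-homomorphism $\FRS\to F$ restricts to the identity on $F$ and sends $t$ to some $w\in F$ with $w^{-1}\alpha w=\beta$, so $\alpha$ and $\beta$ are conjugate in $F$ and therefore homologous, $\bar\alpha=\bar\beta$ in $H_1(F;\mathbb{Q})$. Consequently the HNN relation imposes no relation among the first homologies of the vertex groups while the loop still contributes an independent stable-letter class, so $b_1(\FRS)=b_1\big(F*_{\bk\gamma}A\big)+1\geq(N+2-1)+1=N+2$, contradicting $b_1(\FRS)=N+1$ and ruling out the loop graph.

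Only the single edge $\xymatrix{u\bullet \ar@{-}[r] &\bullet v}$ then remains, with $\F=F$ amalgamated over a cyclic group $\bk\alpha$ to a rank-two free abelian vertex $A$; as $\bk\alpha$ is a primitive direct summand of this abelian JSJ vertex, $A=Ab(\alpha,r)$ and $\FRS=F*_{\bk\alpha}Ab(\alpha,r)$. The value $\uhdf(\FRS)=1$ is then immediate from the definition: the generalized JSJ modulo $F$ has vertex groups $\F=F$ with $\uhdf(F)=0$ and the abelian group $Ab(\alpha,r)$ with $\uhd=0$, so $\uhdf(\FRS)=\max(0,0)+1=1$.
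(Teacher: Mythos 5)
Your overall route is the paper's own: the paper disposes of this corollary in one sentence (Proposition \ref{prop:b1-complexity} plus ``an easy estimation of the first Betti number''), and your steps fill in exactly that outline. Deducing $\F=F$ from $b_1(\F)\geq N$ (via the retraction), $b_1(\F)<b_1(\FRS)=N+1$ (Proposition \ref{prop:abelian}) and Proposition \ref{prop:b1-complexity} is correct; the count from (\ref{eqn:b1-bound}) ruling out the three-vertex path and a rank-$3$ abelian vertex is correct; and your treatment of the loop graph --- $\alpha$ and $\beta$ are conjugate in $F$ because any $F$-homomorphism fixes $F$ pointwise and conjugates one to the other, so the stable letter contributes a free class to $H_1$ and $b_1(\FRS)\geq N+2$ --- is precisely the mechanism (property CC, Lemma \ref{lem:CC}, together with Lemma \ref{lem:b1-eoc}) that the paper itself uses inside the proof of Proposition \ref{prop:b1-complexity}.

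There is, however, one unsupported step: the final identification $A=Ab(\alpha,r)$. You justify it by the parenthetical principle that a valence-one abelian vertex of a reduced JSJ contains its cyclic edge group as a \emph{proper direct summand}. Proper containment and non-cyclicity do follow from the paper's conventions (almost reduced, hairless), and that is all your rank counts need; but ``direct summand'' is exactly the assertion that $\alpha$ is not a proper power in $A$, and nothing in the paper grants this as a general JSJ fact --- indeed the paper introduces balanced edge groups and balancing folds (Definitions \ref{defn:balanced} and \ref{defn:balancing-fold}) precisely because JSJ edge groups need not be maximal cyclic in adjacent vertex groups. As written, your argument has not excluded $\FRS=F*_{\alpha=a^m}\big(\bk{a}\oplus\bk{r}\big)$ with $m\geq 2$, which is a different group from $F*_{\bk{\alpha}}Ab(\alpha,r)$. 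The gap is easy to close using full residual $F$-ness: suppose $\alpha=a^m$ in $A$ with $m\geq 2$, and take any $F$-homomorphism $\phi:\FRS\rightarrow F$ (these exist since $\Hom(\FRS,F)$ discriminates $\FRS$); then $c^m=\phi(a)^m=\phi(\alpha)=\alpha$ with $c=\phi(a)\in F$. Now $c$ and $a$ both commute with $\alpha\neq 1$, so commutation transitivity gives $[c,a]=1$; but $c\in F\setminus\bk{\alpha}$ and $a\in A\setminus\bk{\alpha}$ (if either lay in $\bk{\alpha}$ then $\alpha=\alpha^{km}$ for some $k$, impossible in a torsion-free group for $m\geq 2$), so $[c,a]\neq 1$ by normal forms in the amalgam $F*_{\bk{\alpha}}A$ --- a contradiction. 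Hence $m=1$ and $A=Ab(\alpha,r)$. (Note that the conclusion $\uhdf(\FRS)=1$ needs none of this: it only uses that the JSJ has vertex groups $F$ and a free abelian group.)
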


This next proposition is proved in Section \ref{sec:one-edge-case}.

\begin{prop}\label{prop:abelian-uhd}
  If the JSJ of $\FRS$ is as in Proposition \ref{prop:abelian}, then
  $\uhd_F(\FRS)\leq 3$.
\end{prop}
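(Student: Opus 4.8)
The plan is to peel off one level of the definition of $\uhdf$ and reduce everything to a statement about the distinguished vertex group $\F$. Since every vertex group of the JSJ of $\FRS$ other than $\F$ is abelian, each such group has $\uhd$ equal to $0$, so the defining formula collapses to $\uhdf(\FRS) = \uhdf(\F) + 1$. It therefore suffices to prove $\uhdf(\F) \le 2$.

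The engine driving the bound will be the first Betti number. First I would record that for every fully residually $F$ quotient $G$ of $F*\bk{x,y}$ the inclusion $F\hookrightarrow G$ is split by any $F$-homomorphism $G\to F$ (such maps exist and fix $F$ pointwise by definition), so $F$ is a retract and $b_1(G)\ge N$; since $G$ is a quotient of $F*\bk{x,y}$ we also have $b_1(G)\le N+2$, which squeezes $b_1(G)\in\{N,N+1,N+2\}$, and by Proposition~\ref{prop:b1-complexity} the value $N$ occurs exactly when $G=F$. Proposition~\ref{prop:abelian} tells us that $\F$ is itself generated by $F$ together with at most two elements, hence is again such a quotient, and that $b_1(\F)<b_1(\FRS)\le N+2$, so $b_1(\F)\le N+1$.

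With this in hand I would iterate the $F$-vertex recursion one or two more times. If $\F=F$ then $\uhdf(\F)=0$; if $\F$ is freely decomposable modulo $F$, the corollary to Proposition~\ref{prop:decomposable} gives $\uhdf(\F)\le 1$. Otherwise $\F$ is freely indecomposable modulo $F$ with a non-trivial JSJ whose non-distinguished vertex groups are abelian, free of rank $2$, or surface groups, and hence have $\uhd$ zero; thus $\uhdf(\F)=\uhdf(\tilde\F)+1$, where $\tilde\F$ is the distinguished vertex group of $\F$. Feeding $b_1(\F)\le N+1$ and $b_1(\tilde\F)\ge N$ into the Betti estimate \eqref{eqn:b1-bound} for this JSJ confines the possibilities to a short list: in each case either $\tilde\F=F$ outright, or $b_1(\tilde\F)=N+1$ with every other vertex group contributing a single generator to the abelianization. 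In the first situation $\uhdf(\F)=1$ and $\uhdf(\FRS)\le 2$.

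The hard part will be the surviving case $b_1(\tilde\F)=N+1$, in which the passage from $\F$ to $\tilde\F$ does \emph{not} lower the first Betti number, so the clean descent $N+2\to N+1\to N$ is not automatic and an extra level must be accounted for. Such a non-strict step is precisely what an HNN extension over a cyclic subgroup, or an abelian vertex hung on $\tilde\F$, produces. The crux is to show that a non-strict step can happen at most once: after it the distinguished vertex group is generated by $F$ and strictly fewer than two further elements—the generator budget supplied by $\x,\y$ having been exhausted—so that a final appeal to Proposition~\ref{prop:b1-complexity} and Theorem~\ref{thm:onevariable} forces the next distinguished vertex to be $F$, giving $\uhdf(\tilde\F)\le 1$, hence $\uhdf(\F)\le 2$ and $\uhdf(\FRS)\le 3$. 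Making this generator-budget argument precise for the single-edge and loop graphs of Proposition~\ref{prop:abelian}, and checking that no non-distinguished vertex group of $\F$ raises $\uhd$, is the technical heart of Section~\ref{sec:one-edge-case}.
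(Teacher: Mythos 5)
Your reduction and the easy cases track the paper's own proof almost exactly: since the non-distinguished vertex groups are abelian, $\uhdf(\FRS)=\uhdf(\F)+1$; Proposition \ref{prop:abelian} gives $b_1(\F)<b_1(\FRS)\leq N+2$; if $b_1(\F)=N$ then $\F=F$ by Proposition \ref{prop:b1-complexity}; and if $b_1(\F)=N+1$ while the JSJ of $\F$ has more than one vertex group, Corollaries \ref{cor:abelian-low-betti-uhd} and \ref{cor:nonab-uhd} give $\uhdf(\F)\leq 1$. All of that is sound and is precisely the skeleton of the paper's argument.

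The gap is your final paragraph, which is exactly the case the proposition is really about: $b_1(\F)=N+1$ and the JSJ of $\F$ has a single vertex group. You do not prove this case --- you label it ``the crux'' and defer it --- and the mechanism you propose for closing it is wrong. You claim the generator budget gets exhausted, i.e.\ that after one non-strict step the distinguished vertex group is generated by $F$ and \emph{strictly fewer} than two further elements, so that Theorem \ref{thm:onevariable} and Proposition \ref{prop:b1-complexity} finish the descent. The paper proves the opposite: in both one-vertex situations the vertex group (after balancing folds) is still generated by $F$ and \emph{two} other elements --- $\F_f=\bk{F,\alpha',\beta'}$ in Lemma \ref{lem:1v-2e-F} and $\F_f=\bk{F,\x,\beta'}$ in Lemma \ref{lem:1v-1e-2genmodF} --- so the budget never drops, the one-variable classification never becomes applicable, and a priori the one-vertex configuration could recur level after level. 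What actually terminates the descent is structural, not a generator count: for the two-edge case, Lemma \ref{lem:1v-2e-not-one-vertex} (proved via strict resolutions and an analysis of induced splittings) shows the JSJ of $\F_f$ must have at least two vertex groups, whence Corollary \ref{cor:nonab-uhd} gives $\uhdf(\F_f)=1$ and the subgroup-monotonicity results (Theorem \ref{thm:uhd}, Corollary \ref{cor:uhdf-monotonic}) transfer the bound to $\F$; for the one-edge case, Lemma \ref{lem:strict-split}, the root-adjunction Lemma \ref{lem:add-a-root} and Corollary \ref{cor:betti-drop} show that any strict epimorphism forces the first Betti number to drop, which combined with Proposition \ref{prop:b1-complexity} yields Lemma \ref{lem:1v-1e-b1-N+1}. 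None of these ingredients, nor substitutes for them, appear in your sketch, so the needed bound $\uhdf(\F)\leq 2$ --- and with it the proposition --- is not established.
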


\subsection{When the JSJ of $\FRS$ has at least two non-abelian vertex
  groups}\label{sec:2-nonab}
This situation is covered in Section \ref{sec:2-nonab-classification}.
We now give a list of the possible JSJs for $\FRS$. Throughout this
section let $\F = F*_\bk{u}Ab(u,r)$ be a rank 1 centralizer extension
of $F$ (see Definition \ref{defn:ICE}) and let $H$ be a free group of
rank 2.



\begin{defn}\label{defn:almost-conjugate}
  A collection of elements $\alpha_1,\ldots,\alpha_n \in H$ are
  \define{almost conjugate in $H$} if there exists a cyclic subgroup
  $\bk{\gamma} \leq H$ and elements $g_1,\ldots,g_n \in H$ such that
  $\bk{g_i^\mo \alpha_ig_i} \leq \bk{\gamma}$ for $i=1,\ldots,n$.
\end{defn}

\subsubsection{When all the vertex groups are free non-abelian}\label{sec:allfree}
{\bf A:} If the underlying graph of the JSJ has only \emph{one edge} the possibilities
are:
\begin{enumerate}
\item $\FRS = F*_\bk{\alpha} H$.
\item $\FRS = F*_\bk{\alpha} Q$ with $Q$ a QH subgroup so that in fact \[\FRS =
  \bk{F,s,t \mid [s,t]=\alpha}; \alpha \in F.\]
\end{enumerate}
{\bf B:} If the underlying graph of the JSJ has \emph{two edges} the possibilities
are:
\begin{enumerate}
\item \[\FRS =
  \relpres{\bk{F,H,t \mid \beta^t = \beta'}}{\beta,\beta' \in H;
    \bk{\alpha} = \F \cap H} \]where $\alpha$ is not almost
  conjugate to $\beta$ or $\beta'$ in $H$. The subgroup $\bk{H,t}$
  is also free of rank 2.
\item \[ \FRS = \relpres{\bk{F,H,t \mid \beta^t = \gamma}}{\beta
    \in \F, \gamma \in H, \bk{\alpha} = \F \cap H}. \] 
\end{enumerate}
{\bf C:} If the underlying graph of the JSJ has \emph{three edges} the possibilities
are:
\begin{enumerate}
\item  \[\FRS =  \relpres{\bk{F,H,t,s \mid \beta^t = \gamma, \delta^s = \delta'}}{\beta
        \in F; \gamma,\delta,\delta' \in H; \bk{\alpha} = F \cap H}\]
     where $\bk{H,s}$ is also free of rank 2, moreover 
     $\alpha$ and  $\gamma$ are not almost conjugate in $H$.
   \item \[\FRS = \relpres{\bk{F,H,t,s, \mid \beta^t = \delta,
         \gamma^s = \epsilon}}{\beta,\gamma \in F, \delta,\epsilon \in
       H, \bk{\alpha} = F \cap H}\] where $H$ is generated by
     $\alpha,\delta,\epsilon$.
\end{enumerate}

\subsubsection{When there is an abelian vertex group}\label{sec:abvertgr}
{\bf A:} If the underlying graph of the JSJ has \emph{two edges} the only
possibility is \[\FRS = F*_\bk{\alpha}H*_\bk{\beta}Ab(\beta,r).\] 
{\bf B:} If the underlying graph of the JSJ has \emph{three edges} the
possibilities are:\begin{enumerate}

\item \[\FRS = \relpres{\bk{F,H,Ab(p,r),t \mid \alpha^t =
        \alpha'}}{\alpha,\alpha' \in H, \bk{u} = \F\cap H \\ \bk{p} =
      H \cap Ab(p,r)}\] moreover $u,p$ are not almost conjugate
    to either $\alpha$ or $\alpha'$ in $H$.

  \item \[\FRS = \relpres{\bk{F,H, Ab(\delta,r),t \mid \beta^t =
        \gamma }}{\beta \in F, \gamma \in H, \bk{\alpha} = F \cap H,
      \bk{\delta} = H \cap Ab(\delta,r)}.\] Moreover $\gamma$ and
    $\alpha$ are not almost conjugate in $H$.
\end{enumerate}

\subsubsection{When $\F$ is a rank 1 centralizer extension of $F$}\label{sec:ICEICE}
The possibilities for the JSJ are \begin{enumerate}
\item $\FRS = \F*_\bk{\alpha} H$.
\item \[\FRS = \relpres{ \bk{\F,H,t \mid \beta^t =
      \beta'}}{\beta,\beta' \in H; \bk{\alpha} = \F \cap H} \] where
  $\alpha$ is not almost conjugate to $\beta$ or $\beta'$ in $H$. The
  subgroup $\bk{H,t}$ is also free of rank 2.
\item \[ \FRS = \relpres{\bk{\F,H,t \mid \beta^t = \gamma}}{\beta \in
    \F, \gamma \in H, \bk{\alpha} = \F \cap H.} \]
\end{enumerate}

\begin{prop}\label{prop:2v-class}
  If the JSJ of $\FRS$ has more than one one non-abelian vertex group,
  then its JSJ is one of those given in sections \ref{sec:allfree},
  \ref{sec:abvertgr} or \ref{sec:ICEICE}.
\end{prop}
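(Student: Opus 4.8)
The plan is to analyze the underlying graph $X$ of the JSJ of $\FRS$ under the standing hypothesis that it has at least two non-abelian vertex groups. By the Main Theorem (Theorem \ref{thm:the-main-result}), which we may invoke, $X$ has at most $3$ vertices and at most two cycles, every vertex group other than $\F$ is free of rank $2$ or free abelian, and $\F$ is generated by $F$ and two further elements. Since we are assuming at least two of the vertex groups are non-abelian, and since every non-$\F$ non-abelian vertex group must be free of rank $2$ (call a representative $H$), the possibilities for $\F$ itself split into two cases: either $\F = F$ (so one of the two non-abelian vertices is a free group $H$ of rank $2$ disjoint from $F$), or $\F = F*_{\bk{u}}Ab(u,r)$ is a rank $1$ centralizer extension of $F$. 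These two alternatives correspond exactly to the split between \S\ref{sec:allfree}--\S\ref{sec:abvertgr} on the one hand and \S\ref{sec:ICEICE} on the other.

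First I would treat the case $\F = F$. Here there are at least two non-abelian free-rank-$2$ vertex groups, so $X$ has at least two vertices carrying free groups (one being $F\leq\F$ and one being $H$), and possibly a third abelian vertex. I would enumerate by the number of edges of $X$, which by the bound on vertices and cycles ranges over $1$, $2$, and $3$. For each edge count I would use Bass--Serre theory to reconstruct the relative presentation: each edge contributes either an amalgamation identifying cyclic edge subgroups $\bk{\alpha}=\F\cap H$, or an HNN stable letter conjugating one cyclic subgroup to another. The constraints that certain elements are \emph{not almost conjugate} (Definition \ref{defn:almost-conjugate}) are exactly the conditions guaranteeing that the resulting splitting is a genuine JSJ splitting rather than one that could be further refined or collapsed; I would derive these by examining when a hyperbolic--hyperbolic pair would produce a QH (surface) subgroup instead, which is how case \textbf{A.2} of \S\ref{sec:allfree} arises. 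The subcase with an abelian vertex (\S\ref{sec:abvertgr}) is handled identically, replacing one free vertex by $Ab(\cdot,r)$ and tracking the edge identifications $\bk{p}=H\cap Ab(p,r)$ etc.

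For the case $\F = F*_{\bk{u}}Ab(u,r)$, the rider in the Main Theorem forces $\F=F$ whenever there are two abelian vertices or an abelian vertex of rank $3$; hence when $\F$ is a proper centralizer extension the only other non-abelian vertex is a single free $H$ of rank $2$ and there is no abelian vertex, which cuts the graph $X$ down to (essentially) one- and two-vertex configurations with one, two, or three edges. The same edge-by-edge reconstruction then yields precisely the three presentations listed in \S\ref{sec:ICEICE}. The main obstacle I expect is not the enumeration of graphs, which is finite and controlled by the Main Theorem, but rather verifying that the listed non-almost-conjugacy and generation conditions (e.g. that $\bk{H,t}$ is again free of rank $2$, or that $H$ is generated by $\alpha,\delta,\epsilon$) are exactly the ones forced by JSJ minimality — that is, showing no further essential cyclic splitting or free decomposition survives. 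This requires the CSA/commutation-transitivity properties of fully residually free groups together with a careful analysis of which edge identifications can be absorbed into a QH vertex or collapsed, and it is where the bulk of the work in Section \ref{sec:2-nonab-classification} will lie.
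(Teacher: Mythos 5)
Your proposal has a fatal circularity: you invoke Theorem \ref{thm:the-main-result} to obtain the structural bounds (at most $3$ vertices, at most two cycles, all non-$\F$ vertex groups free of rank $2$ or free abelian, $\F$ generated by $F$ and two further elements), but in the paper the Main Theorem is itself deduced from Proposition \ref{prop:2v-class} together with Propositions \ref{prop:abelian} and \ref{prop:1v} --- its proof consists of nothing more than citing these three propositions. None of those structural facts is available when proving Proposition \ref{prop:2v-class}; establishing them \emph{is} the content of the proposition. The paper's actual argument is bottom-up: it first passes to the maximal abelian collapse of the JSJ (Section \ref{sec:collapses}), whose underlying graph is one of the three two-vertex graphs in (\ref{eqn:reg-norm-split}); then, in each of the one-, two- and three-edge cases (Sections \ref{sec:MAC-1e}, \ref{sec:MAC-2e}, \ref{sec:MAC-3e}), it uses Weidmann--Nielsen normalization (Theorem \ref{thm:weidmann-nielsen}, Lemma \ref{lem:make-elliptic}) to arrange that $\x$ is elliptic or a stable letter, runs the $\G(A)$-graph folding process on the wedge $\W(F,\x,\z)$ while avoiding transmissions (Lemmas \ref{lem:yellow-fold} and \ref{lem:long-range-crit}) to control exactly how the vertex groups are generated (Propositions \ref{prop:2-v-1-e-classification} and \ref{prop:two-edges}, Lemma \ref{lem:3e-y}), and finally reconverts the collapse into a cyclic splitting (Propositions \ref{prop:1e-h-3gen} and \ref{prop:2e-h-3gen}, Corollary \ref{cor:2v-3e}). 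Proposition \ref{prop:2v-class} is then exactly the conjunction of Corollaries \ref{cor:2v-1e}, \ref{cor:2v-2e} and \ref{cor:2v-3e}.

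Even setting the circularity aside, the steps you sketch are under-determined at precisely the points where the work lies. The dichotomy you assert for $\F$ (either $F$ or a rank $1$ centralizer extension) is not part of the Main Theorem's statement and is itself a conclusion requiring proof; in the paper it emerges from Lemma \ref{lem:F-and-abelian} and the case analyses in Propositions \ref{prop:1e-h-3gen} and \ref{prop:2e-h-3gen}. Likewise the generation statements (that $H$ is three-generated, that $\bk{H,t}$ is free of rank $2$) come out of the folding/transmission bookkeeping, not from a generic ``Bass--Serre reconstruction.'' Finally, the non-almost-conjugacy conditions are not derived from the formation of QH subgroups, as you suggest: Propositions \ref{prop:2-2-A-noconj} and \ref{prop:2-3-A-noconj} obtain them by showing that almost conjugacy would force free decomposability of $\FRS$ modulo $F$ (via a Tietze transformation) or would contradict the choice of maximal abelian collapse.
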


\begin{cor}\label{cor:nonab-uhd}
  Suppose the JSJ of $\FRS$ has more than one non-abelian vertex
  group then:
  \begin{itemize}
    \item If the JSJ of $\FRS$ has three vertex groups then $\uhd_F(\FRS) = 1$.
    \item $\uhd_F(\FRS) = 2$ if and only if the vertex group $F \leq
      \F$ is a centralizer extension of $F$. In particular $\FRS$ must
      have a non-cyclic abelian subgroup.
    \item If $b_1(\FRS) = N+1$ then it has no non-cyclic
      abelian subgroups, so $\uhd_F(\FRS)=1$.
\end{itemize}
\end{cor}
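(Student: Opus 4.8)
The plan is to read off each case from Proposition~\ref{prop:2v-class} and apply the definition of $\uhdf$ directly, invoking first Betti numbers only for the statements that mention $b_1$. The crucial preliminary observation is that in every JSJ appearing in Sections~\ref{sec:allfree}, \ref{sec:abvertgr} and~\ref{sec:ICEICE}, every vertex group other than $\F$ is either free of rank $2$ or free abelian, hence has $\uhd=0$. Since $\uhdf(\F)\geq 0$, the defining formula collapses to $\uhdf(\FRS)=\uhdf(\F)+1$, so the entire computation reduces to determining $\uhdf(\F)$.

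Next I would compute $\uhdf(\F)$. By the classification $\F$ is either $F$ itself, in which case $\uhdf(\F)=0$, or a rank~$1$ centralizer extension $\F=F*_{\bk{u}}Ab(u,r)$. In the latter case the generalized JSJ of $\F$ modulo $F$ is the single edge carrying the abelian vertex group $Ab(u,r)$ (which has $\uhd=0$) attached to $F$, so $\uhdf(\F)=\max(\uhdf(F),\uhd(Ab(u,r)))+1=1$. Combining with the previous paragraph gives $\uhdf(\FRS)=1$ when $\F=F$ and $\uhdf(\FRS)=2$ when $\F$ is a centralizer extension; this is exactly the second bullet. For the first bullet I would observe, by inspecting the three families, that a third vertex group occurs precisely when there is an abelian vertex group, i.e.\ only in the cases of Section~\ref{sec:abvertgr}, and that in all of those cases $\F=F$; hence three vertex groups forces $\uhdf(\FRS)=1$.

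It remains to handle the Betti-number claims, which is where the real work lies. The ``in particular'' of the second bullet is immediate: a centralizer extension contains $Ab(u,r)$, which is free abelian of rank $2$ and hence a non-cyclic abelian subgroup. For the third bullet I would argue contrapositively, showing that the presence of any non-cyclic abelian subgroup forces $b_1(\FRS)\geq N+2$. In a freely indecomposable (modulo $F$) fully residually $F$ group the maximal non-cyclic abelian subgroups are, up to conjugacy, the abelian vertex groups of the JSJ together with the non-cyclic abelian subgroups internal to $\F$; the free rank~$2$ vertex groups contribute none. Thus a non-cyclic abelian subgroup arises only from an abelian vertex group (the three-vertex cases of Section~\ref{sec:abvertgr}) or from $\F$ being a centralizer extension. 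Using the bound~\eqref{eqn:b1-bound} along a maximal spanning tree, a three-vertex decomposition with vertices $F$, $H$ and $Ab(u,r)$ gives $b_1(\FRS)\geq N+2+2-2=N+2$; and a Mayer--Vietoris computation shows a centralizer extension has $b_1(\F)=N+1$, so amalgamating it with a further free rank~$2$ vertex group $H$ yields $b_1(\FRS)\geq (N+1)+2-1=N+2$. Hence $b_1(\FRS)=N+1$ rules out both sources, so $\FRS$ has no non-cyclic abelian subgroup and $\F=F$; by the second bullet $\uhdf(\FRS)=1$.

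The main obstacle I anticipate is this last paragraph: correctly cataloguing all sources of non-cyclic abelian subgroups (so that no case is missed) and carrying out the Betti-number estimates. In particular I must verify via Mayer--Vietoris that the cyclic edge identifications drop the first Betti number by exactly $1$ at each amalgamation, so that the centralizer extension contributes only $N+1$ rather than $N+2$ while the extra free rank~$2$ vertex still pushes the total to $N+2$. Everything else is a direct application of the definition of $\uhdf$ to the finite list furnished by Proposition~\ref{prop:2v-class}.
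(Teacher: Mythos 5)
Your proposal is correct and follows essentially the argument the paper leaves implicit: read the finite list of JSJs from Proposition \ref{prop:2v-class}, note that every vertex group other than $\F$ is free of rank $2$ or free abelian (so $\uhdf(\FRS)=\uhdf(\F)+1$ with $\uhdf(\F)\in\{0,1\}$ according to whether $\F=F$ or $\F$ is a centralizer extension), and rule out the abelian-vertex and centralizer-extension cases when $b_1(\FRS)=N+1$ via the bound (\ref{eqn:b1-bound}) together with Lemma \ref{lem:b1-eoc}. Your Mayer--Vietoris justification of $b_1(\F)=N+1$ is just the paper's Lemma \ref{lem:b1-eoc}, and your appeal to ellipticity of non-cyclic abelian subgroups is the paper's Convention \ref{conv:abelian-elliptic}, so there is no substantive divergence.
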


\begin{cor}\label{cor:QH}
  If the JSJ of $\FRS$ has a QH subgroup, then $\FRS = \bk{F,s,t \mid
    [s,t]=\alpha}; \alpha \in F.$
\end{cor}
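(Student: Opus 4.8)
\section*{Proof proposal for Corollary \ref{cor:QH}}

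The plan is to deduce the corollary from the explicit classification already obtained, using a single structural fact about QH subgroups to isolate which listed decomposition can possibly contain one. First I would record that a QH subgroup is the fundamental group of a compact surface of negative Euler characteristic, and that, since by definition it arises from a hyperbolic--hyperbolic pair of splittings, its underlying surface carries two essential simple closed curves that cross. A QH subgroup is therefore non-abelian, so by the Main Theorem (Theorem \ref{thm:the-main-result}) it is free of rank $2$: the only non-abelian vertex groups permitted are $\F$ and the rank-$2$ free vertices, and one checks that $\F$ cannot be QH, since $\F$ must carry the coefficient group $F$ and is consequently rigid modulo $F$ (were $\F$ a surface vertex, $F$ would have to be peripheral, which is impossible for $N\geq 2$ and collapses $\FRS$ to a free group when $N=1$). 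A free group of rank $2$ realized as a surface group is either a once-punctured torus or a thrice-punctured sphere; only the once-punctured torus admits a crossing pair of essential simple closed curves, so the QH subgroup must be a once-punctured torus $Q=\bk{s,t}$ whose unique boundary curve is $[s,t]$.

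Next I would exploit that the once-punctured torus has a single boundary component. In the graph of groups this forces the QH vertex carrying $Q$ to have exactly one incident edge, whose edge group is the boundary subgroup $\bk{[s,t]}$ (there is no loop at $Q$, as that would require a second boundary curve). Writing the remainder of the graph of groups as a single vertex group $A$, we obtain $\FRS = A *_{\bk{\alpha}} Q$ with $\alpha=[s,t]$ identified with an element of $A$. It then remains to show that $A=F$.

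For this final step I would run a rank argument. Since $\FRS$ is a quotient of $F*\bk{x,y}$ we have $\rk(\FRS)\leq N+2$, while $Q$ already contributes the two new generators $s,t$; hence $A$ can be generated by $F$ alone, which forces $A=F$. If instead $A$ were strictly larger than $F$ --- for example a rank-$1$ centralizer extension $F*_{\bk{u}}Ab(u,r)$, or a graph of groups carrying a further vertex --- then $A$ would demand at least one generator beyond those of $F$ in addition to $s,t$, giving $\rk(\FRS)\geq N+3$, a contradiction. Comparing with Propositions \ref{prop:abelian} and \ref{prop:2v-class} confirms this reading: a once-punctured-torus QH vertex appears in none of the listed decompositions except case A.2 of \S\ref{sec:allfree}, because in every other case the rank-$2$ free vertices are rigid, their incident edge groups being malnormal cyclic subgroups that are not surface boundary curves of the form $[s,t]$. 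We therefore conclude $\FRS = F*_{\bk{\alpha}}Q = \bk{F,s,t \mid [s,t]=\alpha}$ with $\alpha\in F$, as claimed.

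I expect the main obstacle to be this last step --- cleanly excluding the possibility that the complement $A$ is a proper centralizer extension of $F$ or carries an additional vertex. The difficulty is that the first Betti number does not settle the matter here: in the centralizer-extension case one computes $b_1(\FRS)=N+2$, so the bound coming from Proposition \ref{prop:b1-complexity} is consistent, and one must argue directly that the generating rank genuinely jumps to $N+3$ rather than relying on homology alone.
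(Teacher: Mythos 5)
Your first two steps are essentially fine: identifying the QH vertex as a once-punctured torus $Q=\bk{s,t}$ with boundary $[s,t]$, and collapsing the rest of the graph to get $\FRS = A*_{\bk{\alpha}}Q$, $\alpha=[s,t]$. (The paper reaches the same point by a different route, Lemma \ref{lem:QH-possibility}: killing $\F$ and all edge groups leaves a quotient generated by two elements that contains the closed surface group $\ol{Q}$ as a free factor, forcing $\ol{Q}=\Z\oplus\Z$.) The genuine gap is your final step. You exclude the case where $A$ is a rank~$1$ centralizer extension of $F$ by asserting that such an $A$ ``would demand at least one generator beyond those of $F$ in addition to $s,t$, giving $\rk(\FRS)\geq N+3$.'' That rank-additivity principle is false for cyclic amalgams, and the paper itself contains the counterexample: in Example \ref{eg:D-I-1}, $G=\F*_{\bk{u=w(x,y)}}\bk{x,y}$ with $\F=\bk{F,t\mid [t,a]=1}$ a rank~$1$ centralizer extension of $F$ and $u=[a,b]a^\mo b^\mo a t$, yet $G$ is generated by $F$ and the two elements $x,y$, because the amalgamation relation lets one solve for $t$ as a word in $F,x,y$. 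So when the amalgamating element of $A$ involves the extra generator of the centralizer extension, that generator is absorbed and the rank does \emph{not} jump to $N+3$. Nothing in your counting argument distinguishes this configuration from the QH one, so the argument cannot be repaired by counting alone. (Your fallback, that in the listed case \ref{sec:ICEICE}.1 the edge group ``is not a surface boundary curve of the form $[s,t]$,'' is exactly the assertion to be proved; the classification list imposes no such restriction on $\alpha$.)

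What actually distinguishes the QH case, and what the paper uses, is that $\alpha=[s,t]$ lies in the commutator subgroup of $Q$. Hence the homomorphism $\FRS\to\Z\oplus\Z$ that kills $\F$ and sends $\x,\y$ to a basis is well defined (both sides kill $\alpha$), so every element of $\F$, written as a word in $F,\x,\y$, has exponent sum zero in both $\x$ and $\y$. Lemma \ref{lem:exp-sum-zero-abelian} then says $\F$ cannot contain a non-cyclic abelian subgroup, contradicting $Ab(u,r)\leq\F$ when $\F$ is a centralizer extension; this leaves only $\F=F$, i.e.\ $\FRS=\bk{F,s,t\mid [s,t]=\alpha}$, $\alpha\in F$. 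Note that in Example \ref{eg:D-I-1} the amalgamating word $w(x,y)=[x,y]^2x$ has exponent sum $1$ in $x$, which is precisely why that group exists and why the exponent-sum argument, unlike rank counting, separates the two situations. Your proof needs its last paragraph replaced by this argument (together with the reduction, via Propositions \ref{prop:1e-h-3gen} and \ref{prop:1e-2v-3genH}, showing the only alternatives are $F*_{\bk{\alpha}}Q$ and $\F*_{\bk{\alpha}}Q$ with $\F$ a rank~$1$ centralizer extension).
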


\subsection{When the JSJ of $\FRS$ has one vertex group}\label{sec:prop-1v}
The proof of this proposition takes up Section
\ref{sec:1v-classification}. It should also be noted that the
arguments rely heavily upon the results of Sections
\ref{sec:prop-decompo} through \ref{sec:2-nonab}.

\begin{prop}\label{prop:1v}
  If the JSJ of $\FRS$ has only one vertex group $\F$, then
  $\F\neq F$ and it is generated by $F$ and two other
  elements. Moreover we have the following possibilities:
  \begin{itemize}
  \item[(I)] The JSJ of $\FRS$ has two edges, $\F$ doesn't
    contain any non-cyclic abelian subgroups and $\uhd_F(\FRS) = 2$.
  \item[(II)] The JSJ of $\FRS$ has one edge and $\uhd_F(\FRS) \leq 4$.
  \end{itemize}
\end{prop}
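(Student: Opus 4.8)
The goal is to prove Proposition~\ref{prop:1v}: when the JSJ of $\FRS$ has a single vertex group $\F$, then $\F \neq F$, $\F$ is generated by $F$ together with two extra elements, and one of the two structural alternatives (I) or (II) holds. The plan is to leverage the classification already established in the previous sections. Since the JSJ has one vertex group, the underlying graph $X$ is a bouquet of loops at a single vertex carrying $\F$, with each loop contributing a stable letter and a cyclic edge group; by the Main Theorem's bound on the number of cycles we may assume $X$ has either one or two such loops. The strategy is to treat $\F$ itself as a fully residually $F$ quotient of a smaller free product and apply the earlier structural results to pin down $\F$, then analyze how the stable letters interact with $\F$.

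First I would establish $\F \neq F$: if $\F = F$, then $\FRS$ would be an HNN extension (or double HNN extension) of $F$ over cyclic subgroups, and I would argue that such a group is freely decomposable modulo $F$ or otherwise reducible, contradicting that we are in the genuinely one-vertex freely indecomposable case --- here I would invoke Proposition~\ref{prop:decomposable} and the $b_1$ machinery of Proposition~\ref{prop:b1-complexity} to rule out the degenerate possibilities. Next, to bound the rank of $\F$ over $F$, I would use that $\FRS$ is generated by $F, \x, \y$ (by the Convention following Definition~\ref{defn:gen-JSJ}), so $\FRS$ is generated by $F$ and two elements; since the stable letters and $\F$ together generate $\FRS$, a counting argument on generators --- combined with the fact that each loop consumes one of the two ``extra'' generators as a stable letter --- forces $\F$ to be generated by $F$ and at most two further elements. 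In the one-edge case this leaves two generators for $\F$ beyond $F$; in the two-edge case the two stable letters already use up the generator budget, so $\F$ must be generated by $F$ alone plus the images forced by the edge identifications.

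For the dichotomy, I would split on the number of edges. In the \textbf{two-edge case (I)}, the two stable letters force $\F$ to be a small extension of $F$ with no room for a non-cyclic abelian subgroup; I would show $\F$ contains no non-cyclic abelian subgroup by noting that any such subgroup would make $\F$ a centralizer extension (by Theorem~\ref{thm:FGMRS} and the structure theory), which would require an extra generator that the two stable letters have already exhausted. The uniform hierarchical depth computation $\uhdf(\FRS)=2$ then follows from the definition of $\uhdf$: $\F$ itself is one level down, and since $\F$ has no non-cyclic abelian subgroups and the edges are cyclic, $\uhdf(\F)=1$, giving $\uhdf(\FRS)=2$. In the \textbf{one-edge case (II)}, $\F$ may itself be a more complicated fully residually $F$ group generated by $F$ and two elements, so I would recursively apply the earlier classification (Propositions~\ref{prop:abelian}, \ref{prop:2v-class}, and the decomposable case) to $\F$, obtaining that $\uhdf(\F) \leq 3$; since the single cyclic edge adds one level, $\uhdf(\FRS) \leq 4$.

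The main obstacle I expect is the \emph{recursive bookkeeping in case (II)}: because $\F$ need not be $F$ and may carry its own nontrivial JSJ, controlling $\uhdf(\F)$ requires feeding $\F$ back through the full classification of the preceding sections and checking that the generator bound ``$F$ plus two elements'' is preserved and that no configuration escapes the depth-$3$ ceiling on $\F$. In particular, the interaction between the cyclic edge group $\bk{\alpha} = \F \cap \F$ of the single loop and the internal splitting of $\F$ must be handled carefully, since the edge identification can be absorbed into $\F$'s own JSJ in ways that affect the depth count; ensuring that the worst case genuinely attains $\uhdf(\F)=3$ (so the bound $4$ is not spurious) while never exceeding it is the delicate part. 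I would resolve this by exhibiting the extremal configuration explicitly --- a centralizer extension layered appropriately --- and checking it against the recursive definition of $\uhdf$.
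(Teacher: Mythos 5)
Your high-level outline (split by the number of edges, recurse on $\F$, bound the depth) matches the paper's skeleton, but each load-bearing step is replaced by a shortcut that fails. First, the ``counting argument on generators'' does not prove that $\F$ is generated by $F$ and two elements: rank does not pass to vertex groups of splittings in this naive way, and establishing exactly this statement is the entire content of Lemmas \ref{lem:1v-2e-F} and \ref{lem:1v-1e-2genmodF}, which require Weidmann--Nielsen normalization on the Bass--Serre tree, balancing folds (which replace $\F$ by a group $\F_f$ with roots adjoined, so one also needs Lemma \ref{lem:adjoin-a-root} to transfer generation back to $\F$), and a case-by-case transmission analysis of the positions of $\x$ and $\y$. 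Second, the absence of non-cyclic abelian subgroups in case (I) is not a consequence of any ``generator budget'': it is Corollary \ref{cor:no-ab}, proved by exponent-sum considerations (Lemma \ref{lem:double-hnn-exposum}) combined with strict resolutions and exposed summands (Lemmas \ref{lem:abelian-exposed} and \ref{lem:exp-sum-zero-abelian}). Third, your argument for $\F\neq F$ is off: an HNN extension of $F$ over cyclic subgroups need not be freely decomposable modulo $F$ (a centralizer extension is freely indecomposable), and Proposition \ref{prop:b1-complexity} gives no contradiction here since nothing forces $b_1(\FRS)=N$; the paper's argument is that property CC (Lemma \ref{lem:CC}) would make $\beta$ and $\beta'$ conjugate inside $\F=F$, contradicting Lemma \ref{lem:1v-eoc}.

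The most serious gap is in the depth bounds. In case (I) you assert $\uhdf(\F)=1$ ``since $\F$ has no non-cyclic abelian subgroups and the edges are cyclic''; that is a non sequitur. The paper needs Lemma \ref{lem:1v-2e-not-one-vertex} (the JSJ of $\F_f$ has more than one vertex group --- a long argument through strict quotients), then Corollary \ref{cor:nonab-uhd} applied to $\F_f$, and then monotonicity of $\uhdf$ under passing to finitely generated subgroups (Corollary \ref{cor:uhdf-monotonic}), which is itself a theorem the paper must prove (Theorem \ref{thm:uhd}) because $\F$ is only a subgroup of $\F_f$. In case (II) you correctly identify the danger --- the JSJ of $\F$ may again have one vertex and one edge, so the recursion has no a priori bound --- but your proposed fix, exhibiting an extremal configuration, can only certify a lower bound and proves nothing about the upper bound $\uhdf(\FRS)\leq 4$. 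The paper terminates the recursion with a first Betti number descent: if a strict epimorphism $\pi:\FRS\to\FRSP$ lands in a group whose JSJ again has one vertex and one edge, then the induced splitting of $\F$ has one edge and $b_1(\FRSP)<b_1(\FRS)$ (Lemmas \ref{lem:strict-split} and \ref{lem:add-a-root}, Corollary \ref{cor:betti-drop}), and the borderline value $b_1=N+1$ is closed off by Lemma \ref{lem:1v-1e-b1-N+1} together with Proposition \ref{prop:b1-complexity}. Without this descent mechanism, or a substitute for it, your proof of (II) does not close.
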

\subsection{Proof of the main theorem}
\begin{proof}[proof of Theorem \ref{thm:the-main-result}]
  $\FRS$ must fall into one of the situations of Sections
  \ref{sec:prop-abelian} through \ref{sec:prop-1v}. It therefore
  follows that $\FRS$ must conform to one of the descriptions given by
  propositions \ref{prop:abelian}, \ref{prop:2v-class}, and
  \ref{prop:1v}.
\end{proof}

\subsection{Examples}
We give some examples of the groups given in Section
\ref{sec:descriptions}. We first note that it is very easy to
construct examples that are freely decomposable modulo $F$. Examples of
Proposition \ref{prop:abelian} are easy to construct by taking centralizer
extensions of $F$ or $F*\bk{r}$ or by taking iterated centralizer
extensions of height 2. The next few examples are more delicate.

\begin{example}[Example of a group in Section \ref{sec:ICEICE} of type
  1.] \label{eg:D-I-1} Let $F=F(a,b)$. It is proved in
  \cite{Touikan-2007} that the group \begin{eqnarray*}
    G&=&\brakett{F,x,y|[a^\mo b a[b,a][x,y]^2x,a] =1}\\ &&=
    \brakett{F,x,y,t | [x,y]^2x= [a,b]a^\mo b^\mo a t;
      [t,a]=1}\end{eqnarray*} is freely indecomposable modulo $F$. Let
  $u=[a,b]a^\mo b^\mo a t$, and $w(x,y) = [x,y]^2x $ then \[ G =
  \F*_\brakett{u=w(x,y)}\bk{x,y}\] where $\F$ is a rank 1 centralizer
  extension of $F$. Moreover $G$ is shown to be fully residually $F$
  by the $F$-embedding into the iterated centralizer
  extension \[F_2=\bk{F,t,s\mid [t,a]=1, [s,u]=1}\] via the mapping,
  $x \mapsto s^\mo(b^\mo t)s$ and $y \mapsto s^\mo(b^\mo a b)s$.
\end{example}

This example above is also interesting since it is a one relator fully
residually $F$ group that cannot embed in a rank 1 centralizer
extension of $F$. See Corollary \ref{cor:countereg}.

\begin{example}[Example of a group in Section \ref{sec:abvertgr} of type
  B.1.]\label{eg:D-III} Let $F=F(a,b)$ and consider the iterated
  centralizer extension \[F_2=\bk{F,s,t \mid [s,a]=1,
    [t,(a^2(b^\mo a b)^2)^s]=1}\] One can check that the subgroup
  $K\leq\bk{F,s^\mo b s, t}$ has induced splitting:
  \[K=\relpres{\bk{F,H,Ab(p,t),r \mid \gamma'^r = \gamma}}{
    \gamma,\gamma' \in H, \bk{a} = F \cap H, \bk{p} = H \cap
    Ab(p,t)}\]
  where $H = s^\mo\bk{a,b^\mo a b}s$, $\gamma=s^\mo a s$,
  $\gamma'=s^\mo b^\mo a b s$, $r=s^\mo b s,$ and $p=(a^2(b^\mo a
  b)^2)^s$. Moreover it is freely indecomposable, fully residually $F$
  and generated by two elements modulo $F$.
\end{example}

\begin{example}[Example of a group in Section \ref{sec:ICEICE} of type
  3.] We modify Example \ref{eg:D-I-1}. Let $F=F(a,b)$ and let
\[F_1= \brakett{F,s,t,r|[t,a]=1,[s,b^\mo a b]=1,[u,r]=1}\] where
$u=[a,b] a^\mo b^\mo a t$. $F_1$ in iterated centralizer
extension. Let $x'=b^\mo t, y'=b^\mo a b$ and let $G=\brakett{F,r^\mo
  x' r, sr}$. Let $H=r^\mo\brakett{x',y'}r$ and consider $G\cap H$. We
see that $(sr)^\mo b^\mo a b (sr) = r^\mo b^\mo a b r$ so $H \leq G$,
on the other hand letting $y=(sr)$ and $x=r^\mo x' r$ and by Britton's
lemma we have a splitting: \[G = \relpres{\bk{\F,H,y\mid (b^\mo a b)^y
    = \gamma}}{b^\mo a b \in \F, \gamma \in H, \bk{u} = \F \cap H}\]
with $\F=\brakett{F,t} = {F(a,b)*_\bk{a} Ab(a,t)}, \gamma = {r^\mo
  b^\mo abr}, u =[a,b] a^\mo b^\mo a t$, and $H$ free of rank 2 and not
freely decomposable modulo edge groups.
\end{example}

\subsection{A conjecture and a question}
Conspicuously absent from the list is an example of a fully residually
$F$ quotient of $F*\bk{x,y}$ whose JSJ has only one vertex
group. Which leads to the following conjecture:

\begin{conj}
There are no fully residually $F$ quotients of $F*\bk{x,y}$ whose JSJ has only one vertex group.
\end{conj}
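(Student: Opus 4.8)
The plan is to argue that the single-vertex configuration of Proposition \ref{prop:1v} is in fact vacuous, by deriving a contradiction in each of its two regimes. If the JSJ of $\FRS$ has the unique vertex group $\F$, then every edge of the underlying graph is a loop, so $\FRS$ is an HNN extension of $\F$ — a single loop in regime (II) or a double loop in regime (I) — over maximal cyclic edge groups. From Proposition \ref{prop:1v} we also know $F\subsetneq\F$ and that $\F$ is generated by $F$ and two further elements, and by Corollary \ref{cor:QH} the vertex $\F$ cannot be QH (a QH subgroup would instead produce the genuinely two-vertex group $\bk{F,s,t \mid [s,t]=\alpha}$). Thus $\F$ is a rigid vertex strictly larger than $F$, and the whole task reduces to showing that such an $\F$ cannot carry these essential cyclic loops while $\FRS$ stays simultaneously $2$-generated modulo $F$, freely indecomposable modulo $F$, and fully residually $F$.

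The first thing to try is a homological count. Every element of $\Hom(\F,F)$ restricts to the identity on $F$, so $F$ is a retract of $\F$ and $b_1(\F)\geq N$; since $\F\neq F$, Proposition \ref{prop:b1-complexity} upgrades this to $b_1(\F)\geq N+1$, while $\rk(\F)\leq N+2$ forces $b_1(\F)\leq N+2$. Abelianising a cyclic HNN loop adds $1$ to $b_1$ when the two edge elements become rationally distinct in the abelianisation of $\F$ and $0$ otherwise, so $b_1(\FRS)=b_1(\F)+1-\varepsilon$ in regime (II) and $b_1(\FRS)=b_1(\F)+2-\varepsilon_1-\varepsilon_2$ in regime (I), with each $\varepsilon\in\{0,1\}$; together with $b_1(\FRS)\leq N+2$ this is restrictive but, frustratingly, consistent (for instance $b_1(\F)=N+2$ with both loops homologically trivial). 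A raw generator count is no better, because an HNN stable letter can absorb a generator of the vertex group. So neither $b_1$, nor $\rk$, nor $\uhdf$ alone produces the contradiction.

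To close the gap I would instead use the embedding of $\FRS$ into an iterated centralizer extension of bounded height supplied by Theorem \ref{thm:ICE}, and study the splitting this embedding induces on the image. The aim is to show that the induced decomposition already separates $\F$ from a free or abelian complementary piece, so that $\FRS$ in fact conforms to one of the genuinely multi-vertex descriptions of Sections \ref{sec:allfree}, \ref{sec:abvertgr}, or \ref{sec:ICEICE}, contradicting the one-vertex hypothesis. In regime (II) I would analyse directly when $\bk{\F,t \mid \alpha^t=\beta}$ can be fully residually $F$: the two cyclic edge subgroups must be suitably separated inside $\F$, and I expect this separation to force either a free factor modulo $F$ (contradicting free indecomposability) or an honest splitting of $\F$ itself (contradicting its being a single rigid JSJ vertex). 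In regime (I), the two independent loops over a rigid vertex of rank at most $N+2$ should be excluded by the same mechanism, reinforced in low rank by comparison with the classification of Theorem \ref{thm:FGMRS}.

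The crux — and the reason the statement is recorded only as a conjecture — is exactly this last step: excluding essential cyclic HNN loops over a rigid vertex that properly contains $F$. Because every soft invariant is consistent with the hypothetical configuration, the proof must extract a genuinely structural contradiction, and I expect the main obstacle to be showing that $2$-generation modulo $F$ is incompatible with a rigid vertex strictly larger than $F$ supporting an essential loop. Making that incompatibility precise, presumably through the splitting induced on an iterated centralizer extension, is where the real work lies.
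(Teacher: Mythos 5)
You have not proved this statement, and neither does the paper: it appears there only as a conjecture, recorded precisely because no one-vertex example and no exclusion argument is known, so there is no paper proof to compare yours against. Your text is a research plan rather than a proof, and you concede this yourself: the final paragraph admits that the decisive step --- excluding essential cyclic HNN loops over a rigid vertex $\F$ with $F\subsetneq\F$ while $\FRS$ stays $2$-generated modulo $F$, freely indecomposable modulo $F$, and fully residually $F$ --- is exactly what is missing. That step is the entire content of the conjecture; everything preceding it merely restates constraints the paper already establishes in Proposition \ref{prop:1v}, Corollary \ref{cor:QH} and Corollary \ref{lem:Ftilde-not-F}. Moreover, the mechanism you propose for closing the gap --- embedding $\FRS$ into an iterated centralizer extension via Theorem \ref{thm:ICE} and analysing the induced splitting, together with strict resolutions --- is precisely the technique the paper already deploys throughout Section \ref{sec:1v-classification} (see Lemma \ref{lem:1v-2e-not-one-vertex} and Lemma \ref{lem:strict-split}); with those tools the author obtains the structural restrictions and the bound $\uhdf(\FRS)\leq 4$, but cannot rule the configuration out. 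So there is every reason to expect that strategy, as described, to fall short as well, and the proposal cannot be accepted as a proof.

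Within the preparatory part there is also one factual inversion worth flagging. For a cyclic HNN loop $\bk{\F,t\mid \alpha^t=\beta}$, abelianising adds $1$ to $b_1$ exactly when $[\alpha]$ and $[\beta]$ become rationally \emph{equal} in $H_1(\F)\otimes_\Z\mathbb{Q}$, and adds $0$ when they are rationally distinct; your verbal description asserts the opposite, although the formula $b_1(\FRS)=b_1(\F)+1-\varepsilon$ and the conclusion you draw from it --- that Betti numbers, rank and $\uhdf$ are all consistent with the hypothetical configuration, so no soft invariant yields a contradiction --- are correct (and agree with the paper's use of Proposition \ref{prop:b1-complexity} and Corollary \ref{cor:betti-drop}). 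In short: you have correctly identified the open problem, the known constraints, and the existing toolkit, but at the one point where an argument is required, none is given.
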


A related question is the following:

\begin{question}\label{question:free-vg}
  Is there a finitely generated fully residually free group whose JSJ
  has a single vertex group that is free?
\end{question}

By Theorem \ref{thm:FGMRS} the answer to Question
\ref{question:free-vg} is ``no'' in the case of three-generated fully
residually free groups.

\section{Graphs of groups and folding processes}

\subsection{Graphs of groups}
The main result of Bass-Serre theory is that minimal actions of groups
(without inversions) on simplicial trees correspond to splittings as
fundamental groups of graphs of groups. The account we give here is
only to fix the notation. We refer the reader to \cite{Serre-arbres}
for a full treatment of the subject.

\begin{defn} A \emph{graph of groups} $\mathcal{G}(A)$ consists of a
  connected directed graph $A$ with vertex set $VA$ and edges $EA$.
  $A$ is directed in the sense that to each $e\in EA$ there are
  functions $i:EA\rightarrow VA, t:EA\rightarrow VA$ corresponding to
  the \emph{initial and terminal} vertices of edges. To $A$ we
  associate the following:\begin{itemize}
    \item To each $v \in VA$ we assign a \emph{vertex group} $A_v$.
    \item To each $e \in EA$ we assign an \emph{edge group} $A_e$.
    \item For each edge $e \in EA$ we have monomorphisms
      \[i_e:A_e\rightarrow A_{i(e)}, ~t_e:A_e\rightarrow
      A_{t(e)}\] we call the maps $i_e,t_e$ \emph{boundary
    monomorphisms} and the images of these maps \emph{boundary subgroups}.
\end{itemize} We also formally define the following expressions: for
each $e \in EA$ \[(e^\mo)^\mo = e, ~i(e^\mo)=t(e),~ t(e^\mo)=i(e),~ i_{e^\mo}=t_e,~
  t_{e^\mo}=t_e\]
\end{defn}
We denote by $\pi_1(\mathcal{G}(A))$ the fundamental group of a graph
of groups.

\begin{defn}\label{defn:splitting}
  We say that a group \emph{splits} as the fundamental group as a
  graph of groups if $G=\pi_1(\mathcal{G}(A))$ and refer to the data
  $D=(G,\mathcal{G}(A))$ as a \emph{splitting}. A \define{cyclic
    splitting} is a splitting such that the edge groups are all
  cyclic. A \define{$(\leq \Z)$-splitting} is a splitting
  whose edge groups are trivial or infinite cyclic.
\end{defn}

\begin{defn}\label{defn:ga-path} A sequence of the form \[ a_0,
  e_1^{\epsilon_1}, a_1, e_2^{\epsilon_2},\ldots e_n^{\epsilon_n},
  a_n\] where $e_1^{\epsilon_1},\ldots e_n^{\epsilon_n}$ is an edge
  path of $A$ and where $a_i \in A_{i(e_{i+1}^{\epsilon_i+1})} =
  A_{t(e_1^{\epsilon_i})}$ is called a $\G(A)$-path.
\end{defn}

\begin{defn}
  We denote by $\pi_1(\G(A),u)$ the group generated by $\G(A)$-paths
  based at the vertex $u$
  equipped with the obvious multiplication (i.e. concatenation and reduction) rules.
\end{defn}
As usual, if $A$ is connected the isomorphisms class of
$\pi_1(\G(A),u)$ doesn't depend on $u$.

\subsection{Induced splittings and $\mathcal{G}(A)$-graphs}

\begin{defn}
  Suppose that $G$ has a splitting $D$ as the fundamental group of a
  graph of groups and let $H$ be a subgroup of $G$. Then $G$ acts on a
  tree $T$ and $H$ acts on the minimal $H$-invariant subtree
  $T(H)\subset T$. Therefore $H$ also splits as a graph of groups.  We
  call this splitting of $H$ the \emph{induced splitting of $H$.}
\end{defn}

We now present the folding machinery developed in \cite{KMW-2005},
which is a more combinatorial version of
Stallings-Bestvina-Feighn-Dunwoody folding sequences. We will use it
to find induced splittings. This gives an alternative to normal
forms when dealing with fundamental groups of graphs of groups which
simplifies and unifies the arguments of Sections
\ref{sec:decomp-classification}, \ref{sec:2-nonab-classification} and
\ref{sec:1v-classification}.

\subsubsection{Basic definitions}
We follow \cite{KMW-2005}. 
\begin{defn}\label{defn:G(A)-graph} Let $\G(A)$ be a graph of
  groups. A $\G(A)$-graph $\B$ consists of an underlying graph $B$
  with the following data:\begin{itemize}
    \item A graph morphism $[.]:B \rightarrow A$
    \item For each $u \in VB$ there is a group $\B_u$ with $\B_u \leq
      A_{[u]}$, called a $\B$-vertex group. We give $u$ the \emph{label} $(\B_u,[u])$
    \item To each edge $e \in EB$ there are two associated elements
      $e_i \in A_{[i(f)]}$ and $e_t \in A_{[t(f)]}$. If we flip the
      orientation of $e$ we have the convention $(e^\mo)_i =
      (e_t)^\mo$. We give the edge $e$ the \emph{label}
      $(e_i,[e],e_t)$.
    \end{itemize}
\end{defn}

\begin{convention}We will usually denote $\G(A)$-graphs by $\B$ and
    will assume that the underlying graph of $\B$ is some graph $B$.
\end{convention}

\begin{defn} Let $\B$ be a $\G(A)$-graph and suppose that
  $e_1^{\epsilon_1},\ldots,e_n^{\epsilon_n}$; where $e_j \in EB,
\epsilon_j \in \{\pmo\}$; is an edge path of $B$. A sequence of the
form\[b_0,e_1^{\epsilon_1},b_1,e_2^{\epsilon_2},\ldots,e_n^{\epsilon_n},b_n\]
where $b_j \in \B_{t(e_j^{\epsilon_j})}$ is called a $\B$-path. To each
$\B$-path $p$ we associate a label\[\mu(p)=a_0[e_1]^{\epsilon_{1}} 
a_1 [e_2]^{\epsilon_{2}} \ldots [e_n]^{\epsilon_n} a_n\] where
$a_0=b_0(e_1^{\epsilon_1})_i, a_j=(e_j^{\epsilon_1})_t b_j
(e_{j+1}^{\epsilon_1})_i$ and $a_n=(e_n^{\epsilon_n})_t b_n$ which is
a $\G(A)$-path (see Definition \ref{defn:ga-path}.
\end{defn}

\begin{defn} Let $\B$ be a $\G(A)$-graph with a basepoint $u$. Then we
  define the subgroup $\pi_1(\B,u) \leq \pi_1(\G(A),[u])$ to be the
  subgroup generated by the $\mu(p)$ where $p$ is a $\B$-loop based at
  $u$.
\end{defn}

\begin{example}\label{eg:1}
  Let $G=\pi_1(\G(X),u)= A*_C E$. The underlying graph is
\[\xymatrix{u \bullet \ar@{-}[r]_e & \bullet v}\]
with $X_u = A, X_v=E$ and $X_e = C$. Let $g =a_1b_2a_3b_4a_5$ be a
word in normal form where $a_i \in A$ and $b_j \in E$. Then the
$\G(X)$-graph $\B$ given by:\[ \xymatrix{ & \bullet \ar[dr]^{(b_2,e^\mo,1)} & \\
  u \bullet\ar[ur]^{(a_1,e,1)} & &\bullet \ar[dl]^{(a_3,e,1)} \\ &
  \bullet \ar[ul]^{(b_4,e^\mo,a_5)}}\] whose $\B$-vertex groups are
all trivial is such that $\pi_1(\B,u)=\brakett{g}$
\end{example}

This example motivates a definition.
\begin{defn}\label{defn:gloop} Let $g =
  b_0,e_1^{\epsilon_1},b_1,e_2^{\epsilon_2},\ldots,e_n^{\epsilon_n},b_n$
  be some element of $\pi_1(\G(X),u)$. Then we call the based
  $\G(X)$-graph $\L(g;u)$ a \emph{$g$-loop} if $\L(g;u)$ consists of a
  cycle starting at $u$ whose edges have labels
  \[(b_0,e_1^{\epsilon_1},1),(b_1,e_2^{\epsilon_2},1),\ldots
  (b_{n-2},e_{n-1}^{\epsilon_{n-1}},1),(b_{n-1},e_n^{\epsilon_n},b_n)\]
\end{defn}

\begin{defn}
  $(\G(A),v_0)$ be be a graph of groups decomposition of $\FRS$. Let
  the \emph{$\x,\z$-wedge}, $\W(F,\x,\z;u)$, be the based
  $\G(A)$-graph formed from a vertex $v$ with label $(F,v_0)$ and
  attaching the loops $\L(\x;v_0)$ and $\L(\z;v_0)$.
\end{defn}

It is clear that $\pi_1(\W(F,\x,\z;u),u)= \brakett{F,\x,\z} =
\FRS$.

\subsubsection{Folding moves on $\G(A)$-graphs}\label{sec:graph-folds}
Let $\B$ be a $\G(A)$ graph, with underlying graph $B$. We now briefly
define the moves on $\B$ given in \cite{KMW-2005} that we will use, we
will sometimes replace an edge $e$ by $e^\mo$ to shorten the
descriptions:
\begin{itemize}
\item[A0:] {\bf Conjugation at $v$:} For some vertex $v$ of and some
  $g \in A_{[v]}$ do the following: replace $\B_v$ by $g\B_v g^\mo$,
  up to changing $e$ by $e^\mo$ we may assume that each edge $e$
  incident to $v$ are such that $i(e)=v$. Such an edge has label
  $(e_i,[e],e_t)$. Replace $e_i$ by $ge_i$. 
\item[A1:] {\bf Bass-Serre Move at $e$:} For some edge $e$, replace its label
  $(a,[e],b)$ by $(a i_e(c),[e],t_e(c^\mo)b)$ for some $c$ in $A_{[e]}$.
\item[A2:] {\bf Simple adjustment at $u$ on $e$:} For some vertex $u$
  and some edge $e$ such that w.l.o.g $i(e)=u$, we replace the label
  $(a,[e],b) \tr{~by~} (ga,[e],b)$ where $g \in \B_u$.
\item[F1:] {\bf Simple fold of $e_1$ and $e_2$ at the vertex $u$:} For
  a vertex $u$ and edges $e_1,e_2$ such that w.l.o.g.
  $i(e_1)=i(e_2)=u$ but $t(e_1)=v_1 \neq t(e_2)=v_2$ and $[v_1] =
  [v_2]$, if $e_1$ and $e_2$ have the same label, then make a new
  graph by identifying the edges $e_1$ and $e_2$.  The resulting edge
  has the same label as $e_1$ and the $\B$-vertex group associated to
  the result of the identification of $v_1$ and $v_2$ is
  $\brakett{\B_{v_1},\B_{v_2}}$.
\item[F4:] {\bf Double edge fold (or collapse) of $e_1$ and $e_2$ at
    the vertex $u$:} For edges $e_1,e_2$ such that w.l.o.g.
  $i(e_1)=i(e_2)=u$, $t(e_2)=t(e_2)=v$, and $[e_1]=[e_2]=f$ if they
  have labels $(a,f,b_1)$ and $(a,f,b_2)$ respectively. Then we can
  identify the edges $e_1$ and $e_2$, the resulting edges has label
  $(a,f,b_1)$ and the the group $\B_v$ is replaced by
  $\brakett{\B_v,b_1^\mo b_2}$.  \emph{We will also call such a fold a
    collapse from $u$ towards $v$}.
\end{itemize}
The moves F2 and F3 in \cite{KMW-2005} are analogous to F1 and F4,
respectively only they involve simple loops. However, because these
moves only show up implicitly in Section \ref{sec:1v-F-class}, we do not
describe them explicitly. We also introduce three new moves:
\begin{itemize}
\item [T1:] {\bf Transmission from $u$ to $v$ through $e$:} For an
  edge $e$ such that $i(e) = u$ and $t(e)=v$ with label
  $(a,[e],b)$. Let $g \in A_{[e]}$ be such that $a i_{[e]}(g)a^\mo = c
  \in \B_u$, then replace $\B_v$ by $\brakett{\B_v, b^\mo t_{[e]}(g)
    b}$. Transmissions are assumed to be \emph{proper}, i.e. they
  result in a change of the $\B$-vertex groups.
\item [L1:] {\bf Long range adjustment:} Perform a sequence of
  transmissions through edges $e_1,\ldots e_n$ followed by a simple
  adjustment at some vertex $v$ that changes the label of some edge
  $f$ but leaves unchanged the labels of the edges $e_1,\ldots
  e_n$. Finally replace all the modified $\B$-vertex groups by what
  they were before the sequence of transmissions (See Figure \ref{fig:L1}.) 
\item [S1:]{\bf Shaving move:} Suppose that $u$ is a vertex of valence
  1 such that $u=t(e)$ and $v=i(e)$, $e$ has label $(a,[e],b)$ and
  $\B_u = b^\mo (t_{[e]}(C)) b$, where $C \leq A_{[e]}$. Then collapse
  the edge $e$ to its endpoint $v$ and replace $\B_v$ by
  $\brakett{\B_v, a (i_{[e]}(C)) a^\mo}$.
\end{itemize}

\begin{convention}
Although formally applying a move to a $\G(A)$-graph $\B$ gives a new
graph $\B'$. Unless noted otherwise we will denote this new
$\G(A)$-graph as $\B$ as well.
\end{convention}

\begin{figure}
\centering
\begin{tikzpicture}
  [inner sep=0.15, worbit/.style={circle, draw=black,fill=black,minimum size
    = 0.15cm}, scale=1.5]
  \node (a) at (-0.75,0.75) [worbit]{}; 
  \node (b)  at (-0.75,-0.75) [worbit] {}; 
  \node (c) at (0,0) [worbit]{}; 
  \node (d)  at (1.2,0) [worbit] {}; 
  \draw (a)++(0,0.2) node{\small{$(G_1,v_1)$}};
  \draw (b)++(0,-0.2) node{\small{$(G_2,v_2)$}};
  \draw (c) ++(-1,0) node {\small{$(\langle G_3,g_1,g_2 \rangle,v_3)$}};
   \draw (0.1,0.5) node{\scriptsize{$(a_1,e_1,b_1)$}};
  \draw (0.1,-0.5 )node{\scriptsize{$(a_2,e_2,b_2)$}};
  \draw (0.6,0.2) node{\scriptsize{$(a_3,e_3,b_3)$}};
  \draw[-angle 45,very thick] (a) -- (c); 
  \draw[-angle 45, very thick] (b) -- (c);
  \draw[-angle 45] (c)-- (d);

  \node (e) at (2.25,0.75) [worbit]{}; 
  \node (f)  at (2.25,-0.75) [worbit] {}; 
  \node (g) at (3,0) [worbit]{}; 
  \node (h)  at (4.2,0) [worbit] {}; 
  \draw (e)++(0,0.2) node{\small{$(G_1,v_1)$}};
  \draw (f)++(0,-0.2) node{\small{$(G_2,v_2)$}};
  \draw (g) ++(-0.6,0) node {\small{$(G_3,v_3)$}};
  \draw (3.1,0.5) node{\scriptsize{$(a_1,e_1,b_1)$}};
  \draw (3.1,-0.5 )node{\scriptsize{$(a_2,e_2,b_2)$}};
  \draw (3.6,0.2) node{\scriptsize{$(g_3a_3,e_3,b_3)$}};
  \draw[-angle 45] (e) -- (g); 
  \draw[-angle 45] (f) -- (g);
  \draw[-angle 45] (g)--(h);
\end{tikzpicture}
\caption{An example of a L1 long range adjustment. First make T1
  transmissions through the thickened edges labeled $(a_1,e_1,b_1)$
  and $(a_2,e_2,b_2)$. These change the $\B$-vertex group $G_3$ to
  $\bk{G_3,g_1,g_2}$. We then perform an A2 simple adjustment which
  changes $(a_3,e_3,b_3)$ to $(g_3a_3,e_3,b_3)$ for some $g_3 \in
  \bk{G_3,g_1,g_2}$ (but maybe not in $G_3$.) Finally we change
  $\bk{G_3,g_1,g_2}$ back to $G_3$. The end result is the
  $\G(A)$-graph on the right.}\label{fig:L1}
\end{figure}
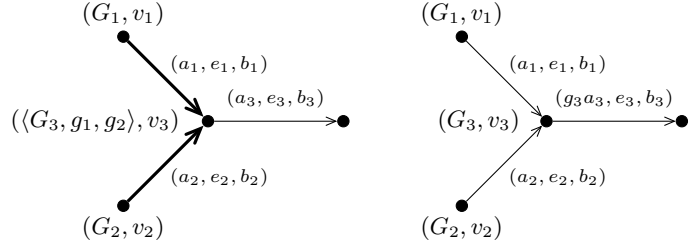

We regard the T1 transmission as the group $\B_u$ sending the
element $c$ to $\B_v$ through the edge $e$. In this paper, since all
the edge groups are finitely generated abelian, we can use finitely
many T1 transmission moves instead of the edge equalizing moves F5-F6
in \cite{KMW-2005}. The moves T1,L1, and S1 do not change the group
$\pi_1(\B,u)$. Note moreover that vertices $v$ of valence 1 with
$\B_v=\{1\}$ can be shaved off.

\subsubsection{The folding process}

\begin{defn} A $\G(A)$-graph such that it is impossible to apply any
  of the above moves other than A0-A2 is called \emph{folded}.
\end{defn}

This next important result is essentially a combination of Proposition
4.3, Lemma 4.16 and Proposition 4.15 of \cite{KMW-2005}.
\begin{thm}\label{thm:folded} \cite{KMW-2005} Applying the moves
  A0-A2, F1-F4, and T1 to $\B$ do not change $H=\pi_1(\B,u)$, moreover
  if $\B$ is folded, then the associated data (see Definition
  \ref{defn:G(A)-graph}) gives the graph of groups decomposition of $H$ induced
  by $H \leq \pi_1(\G(A))$
\end{thm}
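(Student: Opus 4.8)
The plan is to prove the two assertions separately: first, that each of the moves A0--A2, F1--F4, and T1 leaves $H=\pi_1(\B,u)$ unchanged; and second, that a folded $\B$ reads off the induced splitting. The cleanest framework is the standard dictionary between a $\G(A)$-graph and an $H$-equivariant morphism to the Bass--Serre tree $T$ of $\pi_1(\G(A))$, so I would set that up first: the data of $\B$ (Definition \ref{defn:G(A)-graph}) develops to a tree $\tilde B$ carrying an action of $H=\pi_1(\B,u)$ together with a morphism $\phi\colon \tilde B\to T$, with $B=\tilde B/H$. Throughout I would use the defining edge relation $[e]^\mo\, i_{[e]}(c)\,[e]=t_{[e]}(c)$ of a graph of groups, which is what makes the label $\mu(p)$ of a $\B$-path a well-defined element of $\pi_1(\G(A))$.

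For the invariance statement I would check the moves one at a time by their effect on the labels $\mu(p)$ of $\B$-loops. The moves A0 and A2 are absorbed by conjugation and by the observation that the adjusting element $g\in\B_u$ is itself the label of a constant loop at $u$, so the generated subgroup is unchanged. The only moves that genuinely use the structure of $\G(A)$ are A1 and T1, and both reduce to the edge relation above. For A1, that relation shows that replacing $(a,[e],b)$ by $(a\,i_e(c),[e],t_e(c^\mo)b)$ leaves the element $a[e]b$ of $\pi_1(\G(A))$ literally unchanged. For T1, the same relation shows that the new generator $b^\mo\,t_{[e]}(g)\,b$ of $\B_v$ is exactly the label of the loop $e^\mo\,c\,e$, where $c=a\,i_{[e]}(g)\,a^\mo\in\B_u$: one computes $b^\mo[e]^\mo a^\mo\,(a\,i_{[e]}(g)\,a^\mo)\,a[e]b=b^\mo\,t_{[e]}(g)\,b$, so the transmitted element already lies in $H$ (via an old loop based at $u$ through the connecting path). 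The folds F1 and F4 are Stallings-type identifications of edges that $\phi$ sends to the same edge of $T$; the enlarged $\B$-vertex group records precisely the double-edge loop thereby created (e.g.\ for F4 the new generator $b_1^\mo b_2$ at $v$ is carried by the old loop $e_2\,e_1^\mo$), so $H$ is again preserved.

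For the folded case I would argue that the non-applicability of F1 and F4 (and their loop analogues) forces $\phi$ to be locally injective, i.e.\ an immersion, while the non-applicability of T1 forces every $\B$-vertex group to be \emph{saturated}: it already contains each element that could be transmitted into it across an incident edge. An immersion with saturated vertex and edge groups maps $\tilde B$ injectively and $H$-equivariantly onto an $H$-invariant subtree of $T$, and because $\tilde B$ was developed from the loops based at $u$ that generate $H$, this subtree is minimal. Identifying $\tilde B$ with the minimal invariant subtree $T(H)$, the quotient graph of groups $\tilde B/H=B$ is by definition the splitting of $H$ induced by $H\leq\pi_1(\G(A))$, which is the assertion.

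The main obstacle is this second part: converting each ``no move applies'' condition into the correct injectivity or saturation statement, and then certifying that $\phi$ embeds $\tilde B$ as exactly the minimal invariant subtree rather than merely mapping into $T$. One must in particular rule out that two locally distinct edges of $\tilde B$ become identified in $T(H)$ only after passing through a vertex group --- this is precisely what the interaction of the fold moves with T1 is designed to prevent --- and one must verify that the edge groups read off from the folded $\B$ coincide with the genuine edge stabilizers in $T(H)$. These are the points established in Proposition 4.3, Lemma 4.16 and Proposition 4.15 of \cite{KMW-2005}, on which I would ultimately rely.
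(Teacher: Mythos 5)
The paper does not prove this statement at all: it is quoted from \cite{KMW-2005}, and the sentence immediately preceding it identifies it as ``essentially a combination of Proposition 4.3, Lemma 4.16 and Proposition 4.15'' of that paper, so there is no internal argument to compare yours against. Your reconstruction is sound and is, in outline, how the cited result is actually proved. The move-by-move invariance checks are correct under the paper's label conventions: for T1, the loop $1,e^\mo,c,e,1$ based at $t(e)$ with $c=a\,i_{[e]}(g)\,a^\mo$ has label $b^\mo[e]^\mo i_{[e]}(g)[e]\,b=b^\mo t_{[e]}(g)\,b$, and for F4 the loop $1,e_1^\mo,1,e_2,1$ has label $b_1^\mo b_2$ --- exactly the elements adjoined to the $\B$-vertex groups by those moves. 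Where your route differs from \cite{KMW-2005} is the folded case: you phrase it geometrically (develop $\B$ to a tree $\tilde{B}$ with an $H$-action and an equivariant morphism $\phi$ to the Bass--Serre tree, then argue that non-applicability of folds makes $\phi$ an immersion and non-applicability of T1 makes vertex groups saturated, whence $\tilde{B}$ embeds as the minimal invariant subtree $T(H)$), whereas the cited paper argues combinatorially that reduced $\B$-paths in a folded graph have reduced, hence nontrivial, labels, which yields the same injectivity. The geometric formulation is conceptually cleaner, but, as you acknowledge, the two delicate steps --- global injectivity of $\phi$ onto $T(H)$ and identification of the edge groups with genuine edge stabilizers --- are precisely the content of the propositions you cite, so your argument has the same logical status as the paper's: a reduction to \cite{KMW-2005}. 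Two minor caveats: an A0 conjugation performed at the basepoint $u$ replaces $\pi_1(\B,u)$ by an $A_{[u]}$-conjugate rather than literally fixing it (a looseness already present in the statement, resolved by basepoint conventions in \cite{KMW-2005}), and ``F1--F4'' includes the loop folds F2 and F3, which you, like the paper, treat only implicitly.
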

 
This theorem implies the existence of a folding process. Consider the
three following classes of moves:
\begin{itemize}
\item {\bf Adjustment:} Apply a sequence of moves A0-A2, L1, and S1. 
\item{\bf Folding:} Apply moves F1 or F4.
\item{\bf Transmission:} Apply move T1.
\end{itemize}
First note that each folding decreases the number of edges in the
graph, and that adjustments (except for shavings) are essentially
reversible. In the folding process there is therefore a finite number
of foldings and between foldings there are adjustments and
transmissions. 

\section{When $\FRS$ is freely decomposable modulo $F$}\label{sec:decomp-classification}
This next proof is essentially the proof of Theorem 6.2 of
\cite{KMW-2005}.
\begin{prop}\label{prop:free-decomp-FRS}
Suppose that $\FRS=\F*H$, then $\F$ is generated by $F$ and
$\big(2 - \rk{(H)}\big)$ elements.
\end{prop}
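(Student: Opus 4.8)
The plan is to realize the free decomposition as a graph of groups and run the folding machinery of Section~\ref{sec:graph-folds} on the defining wedge, tracking a single counting invariant. First I would view $\FRS=\F*H$ as $\pi_1(\G(A),u)$, where $A$ is the single edge with \emph{trivial} edge group and vertex groups $\F$ and $H$; since $F\le\F$, the distinguished copy of $F$ sits inside the vertex group over the $\F$-vertex $v_0$. I then form the $\x,\z$-wedge $\W(F,\x,\z;u)$ over $\G(A)$, whose basepoint $u$ carries the label $(F,v_0)$, so that $\pi_1(\W,u)=\bk{F,\x,\z}=\FRS$ by the remark following its definition. By construction $\W$ has exactly one non-trivial $\B$-vertex group, namely $F$ at $u$; every intermediate vertex of the two loops $\L(\x;v_0)$ and $\L(\z;v_0)$ has trivial $\B$-vertex group, and the underlying graph $B$ has first Betti number $b_1(B)=2$.

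The engine is the invariant
\[
  \Psi(\B) = \rk_F(\B_u) + \sum_{v\neq u}\rk(\B_v) + b_1(B),
\]
where $\rk_F(\B_u)$ counts the generators of the $F$-containing vertex group \emph{in addition to} $F$. On the wedge $\Psi(\W)=0+0+2=2$. I would then run the folding process. Because every edge group of $\G(A)$ is trivial, the T1 transmissions are vacuous, so the only substantive moves are the folds F1--F4 and the adjustments A0--A2, S1, L1. The key claim, which is the technical heart of the argument, is that none of these moves increases $\Psi$. The adjustments only relabel or conjugate vertex groups, and an S1 shaving merely deletes a trivial-group leaf, so $\Psi$ is unchanged. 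A simple fold F1 merges two vertices lying over the same vertex of $A$ into one whose group is generated by the two old ones, decreasing the vertex and edge counts by one each, hence preserving $b_1$ while not increasing $\sum\rk$. A collapse F4, and likewise its loop variant, identifies two edges between the same pair of vertices: it lowers $b_1$ by exactly one while adding at most one generator to a single vertex group, so $\Psi$ again does not increase. Verifying these inequalities move by move --- and in particular checking that the factor $F$ is never torn across a fold but stays a free factor of one vertex group throughout --- is exactly the content of the proof of Theorem~6.2 of \cite{KMW-2005}, and I expect it to be the main obstacle.

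Finally, when the process terminates at a folded graph $\B_\infty$, Theorem~\ref{thm:folded} identifies its associated graph of groups with the splitting of $\pi_1(\W,u)=\FRS$ induced by $\FRS\le\pi_1(\G(A))=\FRS$. Since the induced splitting of the whole group in its own free decomposition is that decomposition itself, $\B_\infty$ is, after collapsing trivial leaves, the single edge $A$, with the $\F$-vertex group equal to $\F\ge F$ and the $H$-vertex group equal to $H$; in particular $b_1(B_\infty)=0$. Hence $\Psi(\B_\infty)=\rk_F(\F)+\rk(H)$, and monotonicity gives $\rk_F(\F)+\rk(H)\le\Psi(\W)=2$, i.e.\ $\F$ is generated by $F$ together with at most $2-\rk(H)$ elements, as claimed. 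Everything outside the per-move monotonicity of $\Psi$ and the bookkeeping confining $F$ to a single vertex group is then formal once the folding theorem is in hand.
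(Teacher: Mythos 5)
Your proposal is correct and takes essentially the same route as the paper: both realize $\FRS=\F*H$ as a one-edge graph of groups with trivial edge group, run the KMW folding process on the wedge $\W(F,\x,\z)$, and deduce the bound by noting that the two cycles of the wedge must be destroyed by F4 collapses, each contributing at most one generator to a vertex group. Your monotone invariant $\Psi$ is just a formalized version of the paper's direct count of collapses (the paper itself defers the move-by-move verification to Theorem 6.2 of \cite{KMW-2005}, exactly as you do).
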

\begin{proof} First note that the underlying graph of the splitting
  $\FRS=\F*H$ consists of an edge and two distinct vertices. Let
  $\G(A)$ denote this graph of groups and let $\B$ be any $\G(A)$-graph.
  Only the moves A0-A3,F1,and F4 can be applied.

  Take $\W$ to be the wedge $\W(F,\x,\z)$. Since $\pi_1(\W)=\FRS$ we
  have by Theorem \ref{thm:folded} that $\W$ can be brought to a graph
  with a single edge and two distinct vertices. The underlying graph
  of $\W$ has two cycles and $A$ has no cycles, which means that two
  F4 collapses must occur.  Moreover each collapse, maybe after
  applying F1 moves, either contributes a generator to $H$ or to
  $\F$. The result now follows.
\end{proof}

\begin{cor}\label{cor:free-decomp-FRS}
  If $\FRS$ is freely decomposable modulo $F$ then either it is one
  generated modulo $F$ or \[ \FRS = \left\{ \begin{array}{l}
      F*\brakett{x,y}\\
      F*_\bk{u}Ab(u,t)*\brakett{x}\\
      F*Ab(x,y)\\
\end{array}\right.\]
\end{cor}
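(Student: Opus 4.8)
The plan is to bootstrap off Proposition \ref{prop:free-decomp-FRS} together with the two classification theorems already in hand (Theorem \ref{thm:FGMRS} and Theorem \ref{thm:onevariable}). Write the free decomposition modulo $F$ as $\FRS = \F*H$ with $F\leq\F$ and $H\neq 1$ a free factor. By Proposition \ref{prop:free-decomp-FRS}, $\F$ is generated by $F$ together with $(2-\rk(H))$ elements; since this count of extra generators cannot be negative and $H$ is nontrivial, a rank count forces $\rk(H)\in\{1,2\}$, and the argument splits on this value. Before the case analysis I would record two inheritance facts that let the classification theorems apply. First, $H$ is a retract of $\FRS$ (via the map killing $\F$), and restricting the discriminating family of $F$-homomorphisms $\FRS\to F$ to $H$ shows that a retract of a fully residually free group is fully residually free; thus $H$ is fully residually free. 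Second, $\F$ is likewise a retract of $\FRS$, so those same $F$-homomorphisms restrict to a discriminating family of $F$-homomorphisms $\F\to F$, whence $\F$ is itself fully residually $F$.

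When $\rk(H)=2$, Proposition \ref{prop:free-decomp-FRS} forces $\F=F$, and $H$ is a fully residually free group of rank $2$. By Theorem \ref{thm:FGMRS} such an $H$ is either free of rank $2$ or free abelian of rank $2$, giving $\FRS = F*\bk{x,y}$ or $\FRS = F*Ab(x,y)$, the first and third listed possibilities.

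When $\rk(H)=1$, Theorem \ref{thm:FGMRS} gives $H\cong\bk{x}$ infinite cyclic, while $\F$ is generated by $F$ and one element. Hence $\F$ is a fully residually $F$ quotient of $F*\bk{x}$, and Theorem \ref{thm:onevariable} applies: $\F$ is one of $F$, $F*\bk{x}$, or $F*_\bk{\alpha}Ab(\alpha,r)$. Substituting into $\FRS=\F*\bk{x}$ yields, respectively, $\FRS=F*\bk{x}$ (which is one-generated modulo $F$), $\FRS=F*\bk{x,y}$ (merging the two infinite cyclic factors recovers the first listed case), and $\FRS=F*_\bk{u}Ab(u,t)*\bk{x}$ (the second listed case). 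Collecting the outputs of both cases gives exactly the stated list.

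The step I expect to require the most care is the identification of $\F$ as a fully residually $F$ quotient of $F*\bk{x}$: one must confirm both that $\F$ inherits relative full residual freeness from $\FRS$ (the retract argument above) and that ``generated by $F$ and one element'' genuinely realizes $\F$ as an $F$-quotient of $F*\bk{x}$, so that Theorem \ref{thm:onevariable} is legitimately applicable. The only other thing to watch is the bookkeeping that collapses the apparently separate possibility $\F=F*\bk{x}$ back into a single rank-$2$ free factor $\bk{x,y}$, since the free decomposition modulo $F$ is not unique.
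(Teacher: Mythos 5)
Your proof is correct and follows exactly the route the paper intends: the paper's own proof is the one-line instruction to apply Proposition \ref{prop:free-decomp-FRS} together with Theorems \ref{thm:FGMRS} and \ref{thm:onevariable}, which is precisely the case analysis on $\rk(H)\in\{1,2\}$ that you carry out. The details you supply --- that $H$ and $\F$ inherit (relative) full residual freeness by restricting the discriminating $F$-homomorphisms of $\FRS$, and that ``generated by $F$ and one element'' realizes $\F$ as a fully residually $F$ quotient of $F*\bk{x}$ so that Theorem \ref{thm:onevariable} applies --- are exactly what the paper leaves implicit.
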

\begin{proof}
Apply Proposition \ref{prop:free-decomp-FRS} and Theorems
\ref{thm:FGMRS} and \ref{thm:onevariable}.
\end{proof}

\section{When all the vertex groups of the JSJ of $\FRS$ except $\F$ are
  abelian}\label{sec:abelian-classification}
We consider the case where the JSJ of $\FRS$ has abelian vertex groups
but only one non-abelian vertex group $\F\geq F$. Before we continue
we need some extra machinery.

\subsection{Preliminaries}

\subsubsection{The (generalized) JSJ decomposition}\label{sec:JSJ}
As noted earlier $\FRS$ always has a generalized JSJ as given in Definition
\ref{defn:gen-JSJ}. We give some more details that will be necessary
to our work.

\begin{defn}
  Let $G$ act on a simplicial tree $T$ without inversions. An element
  or subgroup of $G$ is called \define{elliptic} if it fixes a point
  of $T$. Otherwise it is called \define{hyperbolic}. If $D_1$ and
  $D_2$ are two splittings of $G$ then $D_1$ is hyperbolic
  w.r.t. $D_2$ if an edge group of $D_1$ is hyperbolic w.r.t. the
  action of $G$ on the Bass-Serre tree associated to $D_2$.
\end{defn}

Consider now the following moves that can be made on a
graph of groups.

\begin{defn}[Moves on $\mathcal{G}(A)$]\label{defn:moves} 
  We have the following moves on a graph of groups $\mathcal{G}(A)$
  that do not change the fundamental group.
\begin{itemize}
\item \emph{Conjugate boundary monomorphisms:} Replace $i_e$ by
  $\gamma_g\circ i_e$ where $\gamma_g$ denotes conjugation by $g$
  and $g \in A_{i(e)}$.
\item \emph{Slide:} If there are edges $e,f$ such that $i_e(A_e) \geq
  i_f(A_f)$ then we change $A$ by redefining $i:EA \rightarrow VA$ so
  that $f \mapsto t(e)$ and replacing the homomorphisms $i_f$ by
  $t_e\circ i_e^{-1} \circ i_f$.
\item \emph{Folding:} If $i_e(A_e)\leq A \leq A_{i(e)}$, then
  replace $A_{t(e)}$ by $A_{t(e)}*_{t_{e}(A_e)}A$, replace $A_e$ by a
  copy of $A$ and change the boundary monomorphism accordingly. (We
  remark that the name ``folding'' comes from the effect of the move
  on the Bass-Serre tree, we could also call it ``edge group
  enlargement'' or ``vertex group expansion''.)
\item \emph{Collapse an edge $e$:} For some edge $e \in EA$, let $A
  \rightarrow A'$ be the quotient obtained by collapsing $e$ to a point
  $[e] \in A'$. We get a new graph of groups with underlying graph
  $A'$ as follows: if $i(e)\neq t(e)$ we set $A_{[e]}$ to be the free
  product with amalgamation $A_{i(e)}*_{A_e}A_{t(e)}$, if $i(e) =
  t(e)$ we set $A_{[e]}$ to be the HNN extension
  $A_{i(e)}*_{A_e}$. The boundary monomorphisms are the natural ones.
\end{itemize}
\end{defn}

\begin{defn}\label{defn:almost-reduced} \begin{itemize}
  \item A splitting $D$ is \emph{almost reduced} if vertex groups of
    vertices of valence one and two properly contain the images of
    edge groups, except possibly the vertex groups of vertices between
    two MQH subgroups that may coincide with one of the edge groups.
  \item A splitting $D$ of $\FRS$ is \emph{unfolded} if $D$ cannot
      be obtained from another splitting $D'$ via a folding move (See
      Definition \ref{defn:moves}).\end{itemize}
\end{defn}

\begin{defn}\label{defn:elementary-splitting}
  An \define{elementary splitting} is a splitting whose underlying
  graph has one edge.
\end{defn}

\begin{thm}[Proposition 2.15 of \cite{KM-JSJ}]\label{thm:jsj}  Suppose that $\FRS$
  is freely indecomposable modulo $F$. Then there exists an almost reduced unfolded cyclic splitting
  $D$ called the \emph{cyclic JSJ splitting of $\FRS$ modulo $F$} with
  the following properties:\begin{enumerate}
    \item[(1)] Every MQH subgroup of $\FRS$ can be conjugated into a vertex
    group in $D$; every QH subgroup of $\FRS$ can be conjugated into one
    of the MQH subgroups of $\FRS$; non-MQH [vertex] subgroups in $D$ are
    of two types: maximal abelian and non-abelian [rigid], every
    non-MQH vertex group in $D$ is elliptic in every cyclic splitting
    of $H$ modulo $F$.
    \item[(2)] If an elementary cyclic splitting $\FRS=A*_CB$ or $\FRS=A*_C$ is
    hyperbolic in another elementary cyclic splitting, then $C$ can be
    conjugated into some MQH subgroup.
    \item[(3)] Every elementary cyclic splitting $\FRS=A*_CB$ or $\FRS=A*_C$
    modulo $F$ which is elliptic with respect to any other elementary
    cyclic splitting modulo $F$ of $\FRS$ can be obtained from $D$ by a
    sequence of moves given in Definition \ref{defn:moves}.
    \item[(4)] If $D_1$ is another cyclic splitting of $\FRS$ modulo $F$ that
    has properties (1)-(2) then $D_1$ can be obtained from $D$ by a
    sequence of slidings, conjugations, and modifying boundary
    monomorphisms by conjugation (see Definition \ref{defn:moves}.)
  \end{enumerate}
\end{thm}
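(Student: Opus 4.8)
The plan is to read this as the existence-and-canonicity statement for the \emph{relative} cyclic JSJ decomposition of a freely indecomposable (modulo $F$) finitely generated fully residually free group, and to build $D$ from the collection of all elementary cyclic splittings of $\FRS$ modulo $F$, organized by the elliptic/hyperbolic dichotomy in the manner of Rips and Sela \cite{R-S-JSJ}. Two inputs make the machinery available and I would record them first: $\FRS$ is finitely presented, being a limit group relative to $F$, so that Dunwoody accessibility together with acylindrical accessibility bound the complexity of the graphs of groups in play; and free indecomposability modulo $F$ guarantees that $F$ fixes a point in the Bass--Serre tree of every cyclic splitting under consideration, so that every such splitting is genuinely ``modulo $F$''. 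The target object $D$ is then characterized abstractly: its non-MQH vertices must be elliptic in every elementary cyclic splitting modulo $F$, and this \emph{universal ellipticity} is what pins $D$ down.

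The technical heart, and the step I expect to be the main obstacle, is the production of the MQH (surface-type) subgroups and the verification of property (2). Here I would take a \emph{hyperbolic--hyperbolic} pair of elementary cyclic splittings and pass to the action of $\FRS$ on the real tree obtained by combining them; the Rips machine (Bestvina--Feighn) decomposes this action into simplicial, axial, and surface (interval-exchange) pieces, and a hyperbolic--hyperbolic pair forces a surface piece, whose fundamental group is the desired quadratically hanging subgroup. Collecting all such subgroups into maximal ones gives the MQH vertices, and the same analysis shows that any cyclic edge group carried by a hyperbolic--hyperbolic pair is conjugate into an MQH subgroup — this is exactly (2). Every QH subgroup being conjugate into an MQH one is then a maximality statement.

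With the QH part in hand I would assemble the remaining structure. The universally elliptic complement splits into maximal abelian subgroups (the CSA property from the early structure theorem controls these, as they are malnormal) and the \emph{rigid} non-abelian vertices, which by construction are elliptic in every cyclic splitting modulo $F$; enclosing the MQH subgroups and recording the maximal abelian subgroups produces a graph of groups, which I then normalize to be almost reduced and unfolded by collapsing redundant edges and undoing folding moves (Definition \ref{defn:moves}). This gives (1). Finally, canonicity, i.e. properties (3) and (4), would follow from deformation-space uniqueness: any elementary cyclic splitting modulo $F$ that is universally elliptic is dominated by $D$ and hence recovered from it by slides, conjugations, and conjugation of boundary monomorphisms, while any other splitting satisfying (1)--(2) lies in the same deformation space as $D$ and so differs from it only by these moves.

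The genuine difficulty throughout is not any single step but keeping the entire argument \emph{relative to $F$} — ensuring $F$ stays elliptic under every move and that the real-tree analysis respects the peripheral structure — together with invoking acylindrical accessibility to guarantee the process terminates in a canonical finite object. Since this is precisely Proposition 2.15 of \cite{KM-JSJ}, one may simply cite it; the sketch above indicates the shape that such a proof must take.
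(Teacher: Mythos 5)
This theorem is not proved in the paper at all: it is imported verbatim as Proposition 2.15 of \cite{KM-JSJ}, and your bottom line---that one may simply cite that result---is exactly what the paper does. Your Rips--Sela-style sketch (hyperbolic--hyperbolic pairs, the Rips machine producing QH pieces, universal ellipticity and deformation-space uniqueness) is a reasonable outline of how such a relative JSJ theorem is established, but none of it is required here and none of it appears in the paper, so the citation is the whole of the ``proof.''
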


\begin{defn}\label{defn:acylindrical} An action of $G$ on a simplicial tree $T$
  is said to be $k$-\emph{acylindrical} if the the diameter of a
  subset of $T$ fixed by a non-trivial element of $G$ is at most
  $k$. A \emph{splitting} of $G$ is said to be $k$-acylindrical if the
  action of $G$ on the induced Bass-Serre tree is $k$-acylindrical.
\end{defn}

\begin{conv}\label{conv:abelian-elliptic}
  It is, for example, possible to write an amalgam $G*_uAb(u,t)$ as an
  HNN extension $\bk{G,t \mid t^\mo u t = u}$. \emph{We will always
    chose our JSJ so that non-cyclic abelian subgroups of $\FRS$ are
    elliptic.} This is necessary to ensure 2-acylindricity of the
  splitting, and in our situation this is always possible.
\end{conv}

\begin{defn}\label{defn:hair} Let $D$ be the JSJ of
  $\FRS$ modulo $F$. If $e$ is a an edge ending in a vertex $v$ of
  valence 1 in $A$, the graph underlying $D$, and $A_v$ is cyclic,
  then $e$ is called a \emph{hair}. Let $D'$ be splitting of $\FRS$
  obtained by collapsing hairs into the adjacent vertex groups. Then
  $D'$ is called the \emph{hairless JSJ of $\FRS$}.
\end{defn}

We note that since we require $D$ to be almost reduced, it is a simple
exercise involving the use of commutation transitivity to see that
after removing all the hairs of $D$,  the hairless splitting $D'$ will indeed not
have any hairs. We also note that passing to a hairless splitting
doesn't change the group of canonical (or modular) automorphisms (see
Section 2.15 of \cite{KM-JSJ} or Definition 5.2 of \cite{Sela-DiophI}
for details.)

\begin{conv}
Unless stated otherwise, we will always replace the JSJ by the
hairless JSJ.
\end{conv}

\subsubsection{Strict resolutions}\label{sec:resolutions}
Strict resolutions are an extremely useful tool for studying the
vertex groups of a JSJ. 

\begin{defn} An epimorphism $\rho:\FRS \rightarrow F_{R(S')}$ of fully
  residually $F$ groups is called (weakly) \emph{strict} if it satisfies the
  following conditions on the (generalized) JSJ modulo $F$.
  \begin{enumerate}
    \item For each abelian vertex group $A$, $\rho$ is injective on
    the subgroup $A_1 \leq A$ generated by the boundary subgroups in
    $A$.
    \item $\rho$ is injective on edge groups.
    \item The images of QH subgroups are non-abelian.
    \item For every rigid subgroup (as defined in (1) of Theorem
      \ref{thm:jsj}) $R$, $\rho$ is injective on $R$.
    \item Distinct factors of the Grushko decomposition of $\FRS$
      modulo $F$ are mapped onto distinct free factors a free
      decomposition of $F_{R(S')}$ modulo $F$.
\end{enumerate}
\end{defn}

\begin{conv}
Weakly strict differs from strict as defined in \cite{Sela-DiophI}
only in item 3. We have simplified the definition for the convenience
of the reader. Throughout this paper we shall say \emph{strict}
  instead of \emph{weakly strict.}
\end{conv}

\begin{defn}
  A strict resolution of $\FRS$ \[\mathcal{R}:\xymatrix{F_{R(S_0)}=\FRS \ar[r]_{\pi_1}
    &\ldots \ar[r]_{\pi_p}& F_{R(S_p)}\ar[r]_{\pi_{p+1}}& F*F(Y)}\] is a
  sequence of proper epimorphisms of fully residually $F$
  groups such that all the epimorphism are 
strict.
\end{defn}

\begin{thm}\label{thm:strict-discriminate} If $\FRS$ is fully
  residually $F$ then it admits a strict resolution $\R$. 
\end{thm}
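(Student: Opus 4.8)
The plan is to build $\R$ one proper quotient at a time and to control the length of the resulting chain by the finiteness of the analysis lattice of a finitely generated fully residually free group. Since $\FRS$ is fully residually $F$, the set $\Hom(\FRS,F)$ discriminates $\FRS$; this is the raw material from which the first proper quotient is extracted. The core construction produces a single proper strict epimorphism $\pi_1:\FRS \rightarrow \FRSP$ with $\FRSP$ again fully residually $F$, and the resolution is obtained by iterating this construction and arguing that the chain terminates at $F*F(Y)$.

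To produce $\pi_1$ I would run the shortening argument relative to the JSJ of $\FRS$ modulo $F$ (Theorem \ref{thm:jsj}). Fix a finite generating set of $\FRS$ and a word metric on $F$. The canonical (modular) automorphisms arising from the JSJ --- Dehn twists along the cyclic edge groups, the mapping class elements of the QH subgroups, and the unimodular automorphisms of the abelian vertex groups --- act on $\Hom(\FRS,F)$, fixing $F$ throughout. For each $\phi \in \Hom(\FRS,F)$ replace it by a representative $\phi'$ of minimal length in its modular orbit. If the family of these shortest homomorphisms takes only finitely many values, then, because it still discriminates $\FRS$, the group $\FRS$ embeds into $F$ and we are already at the terminal free $F$-group $F*F(Y)$. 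Otherwise the shortest homomorphisms have unbounded length, and a Bestvina--Paulin limit yields a nontrivial isometric action of $\FRS$ on an $\mathbb{R}$-tree $T$; setting $K$ to be the kernel of this action gives $\FRSP = \FRS/K$. The quotient is proper precisely because the homomorphisms were chosen shortest in their modular orbits: by the Rips--Sela shortening machinery, a nontrivial limit action that is modular-minimal cannot be faithful unless $\FRS$ was already terminal, so $K\neq 1$.

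Verifying that $\pi_1:\FRS\rightarrow\FRSP$ is strict amounts to checking the five clauses of the definition against the limiting action on $T$, decomposed via the Rips machinery. Rigid (non-MQH, non-abelian) vertex groups act elliptically and are untouched by the modular automorphisms, so $\pi_1$ is injective on them, giving clause (4); the edge groups and the boundary-generated subgroups $A_1$ of abelian vertices fix points in $T$ and inject for the same reason, giving clauses (1) and (2). The image of a QH subgroup remains non-abelian because the shortening exploits the full mapping class group of the underlying surface, and an abelian image would permit a further length reduction along the surface, giving clause (3). Clause (5) holds because distinct Grushko factors land in distinct components of the limit and hence in distinct free factors of $\FRSP$. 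Finally $\FRSP$ is itself fully residually $F$, being discriminated into $F$ by the very family of shortened homomorphisms that it was built to absorb.

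The iteration terminates because this construction is exactly the construction of the analysis lattice, whose finiteness (Theorem 4.1 of \cite{Sela-DiophI}, with the $F$-relative version following from \cite{KM-IrredII} and restated here as Theorem \ref{thm:ICE}) bounds the number of strict descents; equivalently $\uhdf(\FRS)<\infty$ forbids an infinite chain of proper strict quotients. The main obstacle is the simultaneous control of properness and strictness at each step: one must show both that $K$ is nontrivial, so that the step genuinely descends, and that $K$ meets none of the edge, rigid, or boundary subgroups, so that the step is strict. Both facts rest on the fine analysis of the stable $\mathbb{R}$-tree action of a fully residually free group, which is precisely where the Kharlampovich--Myasnikov and Sela machinery does the heavy lifting.
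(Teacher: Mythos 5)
Your proposal is correct and takes essentially the same route as the paper: the paper obtains this theorem as an immediate corollary of the definition of the canonical (modular) automorphisms together with the finite Makanin--Razborov (Hom) diagram encoding of $\Hom(\FRS,F)$ (citing Theorem 5.12 of \cite{Sela-DiophI} and Theorem 11.2 of \cite{KM-JSJ}), and your shortening-argument construction is precisely an unpacking of that cited machinery. In particular, your strictness verification --- resting on the fact that modular automorphisms restrict to inner automorphisms on rigid vertex groups, edge groups, and the peripheral subgroups of abelian vertices --- is the same observation underlying the paper's citation, and your termination argument matches the paper's appeal to the finiteness of the analysis lattice.
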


As formulated Theorem \ref{thm:strict-discriminate}, is an easy
Corollary of the definitions of the canonical (or modular)
automorphisms and the fact that $\Hom(\FRS,F)$ can be encoded in a
finite Hom (also called a Makanin-Razborov) diagram (see Theorem 5.12
of \cite{Sela-DiophI} or Theorem 11.2 of \cite{KM-JSJ} for details.)

\subsubsection{Iterated centralizer extensions}\label{sec:ice}

\begin{defn}\label{defn:ICE}
  A (rank $n$) centralizer extension of $F$ is an amalgam $F*_\bk{u}A_u$
  where $u \in F$ is malnormal and $A_u$ is free abelian (of rank
  $n+1$.) $G$ is an iterated centralizer extension of $F$ if
  either: \begin{itemize}
    \item $G$ is a centralizer extension of $F$; or
    \item $G = H*_\bk{w}A_w$ where $H$ is an iterated centralizer
      extension, the centralizer of $w$ in $H$ is $\bk{w}$ and $A_w$ is
      free abelian.
    \end{itemize}
    We say it is \emph{finite} if it was obtained from $F$ by finitely
    many centralizer extensions.
\end{defn}

\begin{thm}[Theorem 4 of \cite{KM-IrredII}]\label{thm:ICE}
  $G$ is finitely generated and fully residually $F$ if and only if
  it embeds in a finite iterated centralizer extension of $F$.
\end{thm}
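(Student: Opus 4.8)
The plan is to treat the two implications separately; the forward one carries essentially all the difficulty.

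For the easy direction, suppose $G$ $F$-embeds into a finite iterated centralizer extension $E$. The key fact is that every finite iterated centralizer extension is itself fully residually $F$, which I would prove by induction along the tower of Definition \ref{defn:ICE}. A single extension $H*_\bk{w}A_w$, where the centralizer of $w$ in $H$ equals $\bk{w}$, is discriminated into $H$ by $F$-homomorphisms that fix $H$ and send the abelian generators of $A_w$ to high powers of $w$; injectivity on a prescribed finite subset follows from the malnormality of $\bk{w}$ by a standard ``big powers'' argument. Composing discriminating families up the tower shows $E$ is fully residually $F$, and then restricting $\Hom(E,F)$ to $G$ yields a discriminating family of $F$-homomorphisms, so $G$ is fully residually $F$ (finite generation being the standing hypothesis).

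For the forward direction I would argue by descending induction along a strict resolution $\R\colon F_{R(S_0)}=\FRS \to \cdots \to F_{R(S_p)} \to F*F(Y)$, whose existence is Theorem \ref{thm:strict-discriminate}. The terminal group $F*F(Y)$ is free, hence a (trivial) iterated centralizer extension, which is the base case. The inductive claim is: if $\pi\colon F_{R(S_i)} \to F_{R(S_{i+1})}$ is strict and $F_{R(S_{i+1})}$ $F$-embeds into a finite iterated centralizer extension $E$, then $F_{R(S_i)}$ $F$-embeds into some finite iterated centralizer extension $E' \supseteq E$.

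The inductive step is where the real work lies, and I would drive it using the cyclic JSJ of $F_{R(S_i)}$ modulo $F$ (Theorem \ref{thm:jsj}). Its vertex groups are of three types---rigid (non-abelian, non-QH), maximal abelian, and MQH surface-type---and strictness of $\pi$ guarantees injectivity on rigid vertex groups, on edge groups, and on the boundary-generated subgroup of each abelian vertex, together with non-abelianity of the images of QH subgroups. The embedding of $F_{R(S_i)}$ is then assembled from $E$ piece by piece: rigid vertices already sit inside $E$ via $\pi$; each free-abelian vertex group is recovered by performing one further centralizer extension on the image of its (self-centralizing, by the CSA property) boundary element; and each MQH vertex is embedded using the classical fact that a surface group with prescribed boundary words embeds into an iterated centralizer extension, realized by substituting high powers along the newly adjoined centralizers. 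Gluing these pieces over the injectively embedded edge groups, and using the modular automorphisms of $F_{R(S_i)}$ to absorb conjugacy ambiguities, produces the desired $F$-embedding into the enlarged tower $E'$.

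The main obstacle is verifying that the assembled $E'$ is \emph{genuinely} an iterated centralizer extension, not merely a fully residually $F$ group: at each newly adjoined centralizer one must check that the element being extended has cyclic centralizer in the group built so far, and that the amalgamations over edge groups create no unintended identifications. Here the malnormality hypotheses of Definition \ref{defn:ICE}, together with the $2$-acylindricity secured by Convention \ref{conv:abelian-elliptic}, are indispensable. The surface-type vertices are the hardest case: certifying injectivity on an MQH subgroup after replacing its boundary data by powers requires a careful test-sequence (big powers) argument, and this is the genuinely technical core of \cite{KM-IrredII}.
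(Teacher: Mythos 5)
The first thing to note is that the paper does not prove this statement at all: it is imported verbatim as Theorem 4 of \cite{KM-IrredII} and used as a black box (for instance in Corollary \ref{cor:get-split}, and to conclude that $\uhdf(\FRS)<\infty$). So there is no internal argument to compare your attempt against; the only meaningful comparison is with the literature the paper cites, where this theorem is a deep result whose proof occupies a substantial part of \cite{KM-IrredII}.

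Judged on its own merits, your sketch follows the right overall skeleton, and your easy direction is sound (big-powers discrimination of a single centralizer extension, induction up the tower, then restriction of the discriminating family to the embedded subgroup). But the forward direction has a genuine gap, plus a structural worry. The gap: in your inductive step you assemble a map of $F_{R(S_i)}$ into an enlarged tower $E'$ by sending rigid vertices through $\pi$, adjoining new centralizers for abelian vertices, and treating QH vertices by ``substituting high powers'' --- yet injectivity of this assembled map on all of $F_{R(S_i)}$, and not merely on the vertex groups, is exactly the content of the theorem. Gluing maps that are injective on the pieces of a graph-of-groups decomposition does not give an injective map on the fundamental group without a substantial argument (a test-sequence/big-powers analysis of normal forms, twisted by modular automorphisms); that is where the real work of \cite{KM-IrredII} lives, and your sketch explicitly defers it. The structural worry: your induction is anchored on Theorem \ref{thm:strict-discriminate}, whose justification in this paper runs through the Makanin--Razborov/Hom-diagram machinery of \cite{KM-JSJ} and \cite{Sela-DiophI}; in the Kharlampovich--Myasnikov development that machinery is built after, and partly on top of, the embedding theorem you are trying to prove. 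To avoid circularity you must base the existence of strict resolutions on a development independent of Theorem \ref{thm:ICE} (Sela's construction qualifies), and this needs to be stated rather than assumed.
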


\begin{cor}\label{cor:get-split}
  Any subgroup $F < \hat{F} \leq \FRS$ has a $(\leq \Z)$-splitting
  $D$ modulo $F$. Moreover any element $\beta'  \in \hat{F}$ that is
  conjugate in $\FRS$ to some $\beta \in F$ is elliptic in this
  splitting of $\hat{F}$.
\end{cor}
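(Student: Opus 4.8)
The plan is to realize $\hat{F}$ as a subgroup of a finite iterated centralizer extension and to read off the desired splitting from an induced splitting over one of the centralizer edges. By Theorem~\ref{thm:ICE} fix an embedding $\FRS \hookrightarrow G$, where $G = G_k$ sits at the top of a chain
\[ F = G_0 \leq G_1 \leq \cdots \leq G_k = G, \qquad G_{i+1} = G_i *_{\bk{w_i}} A_{w_i}, \]
each $A_{w_i}$ being free abelian and equal to the centralizer of $w_i$ in $G_{i+1}$ (Definition~\ref{defn:ICE}). Each extension is an amalgam over the infinite cyclic group $\bk{w_i}$, hence a splitting of $G_{i+1}$; denote its Bass--Serre tree by $T_{i+1}$ and note that the vertex group $G_i \supseteq F$ fixes a vertex $v_0 \in T_{i+1}$. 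Since $\hat{F} \leq G_k$, let $j$ be the smallest index with $\hat{F} \leq G_j$; it exists and satisfies $j \geq 1$, because $F < \hat{F}$ forces $\hat{F} \not\leq G_0 = F$. I will produce $D$ as the splitting induced on $\hat{F}$ by its action on $T_j$. As the edge groups of $T_j$ are infinite cyclic, every edge group of $D$ is a subgroup of a conjugate of $\bk{w_{j-1}}$, hence trivial or infinite cyclic, so $D$ is a $(\leq \Z)$-splitting.

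To see that $D$ is non-trivial I check that $\hat{F}$ acts on $T_j$ without a global fixed point; here I use that $F$ is non-abelian, treating the main case $\rk(F) = N \geq 2$. If $\hat{F}$ were elliptic it would fix a vertex $v$; then $F \leq \hat{F}$ fixes $v$, and as $F \leq G_{j-1} = \stab(v_0)$ it fixes $v_0$ as well. Were $v \neq v_0$, $F$ would fix the geodesic $[v,v_0]$ and hence an edge, placing $F$ inside a conjugate of the cyclic edge group $\bk{w_{j-1}}$ --- impossible for a non-abelian group. Thus $v = v_0$ and $\hat{F} \leq \stab(v_0) = G_{j-1}$, contradicting the minimality of $j$. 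So $\hat{F}$ acts without a global fixed point and $D$ is a non-trivial $(\leq \Z)$-splitting.

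That $D$ is a splitting \emph{modulo} $F$ follows from the same circle of ideas. Writing $Y \subseteq T_j$ for the minimal $\hat{F}$-invariant subtree (the Bass--Serre tree of $D$), the nearest-point projection $\pi \colon T_j \to Y$ is equivariant for $\hat{F}$, and hence for $F \leq \hat{F}$. Since $F$ fixes $v_0$, every $f \in F$ satisfies $f \cdot \pi(v_0) = \pi(f \cdot v_0) = \pi(v_0)$, so $F$ fixes $\pi(v_0) \in Y$ and lies in a vertex group of $D$. The same projection principle reduces the ``moreover'' clause to a statement about $T_j$: any $\beta' \in \hat{F}$ that is elliptic in $T_j$ preserves $Y$ and fixes the projection of one of its fixed points, hence is elliptic in $D$.

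The crux --- and the step I expect to be the main obstacle --- is to show that such a $\beta'$ really is elliptic in $T_j$, knowing only that $\beta' = g^{-1}\beta g$ for some $\beta \in F$ and some $g \in \FRS \leq G_k$. The difficulty is that the conjugator $g$ lives in the full group $G_k$, while $T_j$ sees only the level-$j$ structure. I would resolve this with a one-step conjugacy-descent lemma: \emph{if $x \in G_i$ is conjugate in $G_{i+1} = G_i *_{\bk{w_i}} A_{w_i}$ to an element $\beta \in F \leq G_i$, then $x$ is already conjugate to $\beta$ inside $G_i$.} By the conjugacy theorem for amalgams, either $x$ and $\beta$ are conjugate in $G_i$ --- and we are done --- or both are conjugate in $G_i$ into the edge group, say $x \sim_{G_i} w_i^m$ and $\beta \sim_{G_i} w_i^n$. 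In the latter case $w_i^m \sim_{G_{i+1}} w_i^n$; but $A_{w_i}$ is a maximal abelian, hence (by the CSA property of fully residually free groups) malnormal, subgroup of $G_{i+1}$ containing both powers, so malnormality forces the conjugator into $A_{w_i}$ and therefore $m = n$, giving $x \sim_{G_i} w_i^m = w_i^n \sim_{G_i} \beta$. Applying this lemma $k-j$ times descends the conjugacy $\beta' \sim_{G_k} \beta$ to $\beta' \sim_{G_j} \beta$. As $\beta \in F \leq G_{j-1} = \stab(v_0)$ is elliptic in $T_j$ and ellipticity is a $G_j$-conjugacy invariant, $\beta'$ is elliptic in $T_j$, and the projection argument above completes the proof. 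The degenerate case $\rk(F) = 1$, in which the non-abelian argument is unavailable, can be treated directly, since then every subgroup in sight is abelian.
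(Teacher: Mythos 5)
Your proposal is correct and takes essentially the same route as the paper: both embed $\FRS$ in a finite iterated centralizer extension via Theorem \ref{thm:ICE}, pass to the lowest level of the extension at which $\hat{F}$ fails to be elliptic to obtain the non-trivial $(\leq\Z)$-splitting modulo $F$, and establish ellipticity of $\beta'$ by descending the conjugacy $\beta'\sim\beta$ one centralizer extension at a time. The differences are cosmetic: the paper runs the descent top-down through the star-shaped JSJ of the extension and justifies the one-level conjugacy descent by inspecting normal forms, whereas you fix the defining chain, take the minimal level containing $\hat{F}$, and prove the same descent lemma via the amalgam conjugacy theorem together with malnormality of the abelian factor.
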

\begin{proof}
  Let $\FRS \leq G$ be the embedding of $\FRS$ into an iterated
  centralizer extension of $F$. The cyclic JSJ of $G$ modulo $F$ is
  very simple: it is a star of groups with a vertex group containing
  $F$ at its center and all the other vertex groups are free
  abelian. The central vertex group is itself either $F$ or an
  iterated centralizer extension of $F$.

  If $\hat{F}$ is elliptic in the JSJ of $G$ then we can replace $G$ by its
  central vertex group. Since $\hat{F} \neq F$ eventually there some
  iterated centralizer extension $\hat{F} \leq H \leq G$ in which $\hat{F}$ has
  a non-trivial induced splitting $D$. 

  \emph{Claim: $\beta'$ is conjugate to $\beta$ in $H$}. Suppose
  towards a contradiction that this was not the case. Obviously
  $\beta$ is conjugate to $\beta'$ in $G$ so if $H=G$ we have a
  contradiction. We have $G = G'*_\bk{u}A_u$ with $A_u$ free abelian and the
  centralizer of $\bk{u}$ in $G'$ cyclic. Now by hypothesis there is
  some element $W(G',A_u) \in G$ such that $W(G',A_u) \beta
  W(G',A_u)^\mo = \beta'$. Looking at normal forms, we immediately see
  that if such a product is to lie in $G'$ we must have that $\beta$
  and $\beta'$ are conjugate to $u$ in $G'$, so $\beta,\beta'$ are
  conjugate in $G'$.

  We repeat replacing $G$ by $G'$. Continuing in this fashion, we
  eventually get that $\beta,\beta'$ are conjugate in $H$ --
  contradiction. \emph{The claim is therefore proved}.

  It therefore follows that $\beta'$ is conjugate to $\beta$ in $H$ so
  $\beta'$ must be elliptic in the induced splitting of $\hat{F}$ modulo
  $F$, which has either cyclic or trivial edge groups.
\end{proof}

\subsubsection{The first Betti number}\label{sec:complexity}

The following useful fact is obvious from a relative
presentation:

\begin{lem}\label{lem:b1-eoc} Let $H<G$ be a rank $n$ centralizer
  extension of $H$ (see Definition \ref{defn:ICE},) then $b_1(G)=b_1(H)+n$.
\end{lem}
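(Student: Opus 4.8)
The plan is to read $b_1(G)$ directly off a relative presentation of the amalgam $G = H *_{\bk{w}} A_w$, exactly as the remark preceding the statement suggests. By Definition \ref{defn:ICE}, $A_w$ is free abelian of rank $n+1$ and is amalgamated with $H$ along the centralizer $\bk{w}$ of $w$ in $H$. In the extension-of-centralizers construction $\bk{w}$ is a direct factor of $A_w$, so I would first choose a basis $w, t_1, \ldots, t_n$ of $A_w$. This yields the relative presentation
\[
G = \bk{H, t_1, \ldots, t_n \mid [t_i, w] = 1, \ [t_i, t_j] = 1 \ (1 \le i < j \le n)},
\]
whose only generators beyond those of $H$ are $t_1, \ldots, t_n$, and in which every added relation is a commutator.

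The second step is simply to abelianize. Passing to $G^{\mathrm{ab}} = G/[G,G]$ kills every commutator, so each relation $[t_i, w] = 1$ and $[t_i, t_j] = 1$ becomes vacuous; the abelianization therefore coincides with that of the free product $H * F(t_1, \ldots, t_n)$, namely
\[
G^{\mathrm{ab}} \cong H^{\mathrm{ab}} \oplus \Z^n.
\]
Tensoring with $\mathbb{Q}$ and taking dimensions gives $b_1(G) = b_1(H) + n$, the desired conclusion. The whole point is that all the defining relations of a centralizer extension are commutators, so they impose no constraint on the abelianization and merely adjoin the $n$ free summands coming from $t_1, \ldots, t_n$.

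The one genuine subtlety, and the only step I would regard as a possible obstacle, is the assertion that $\bk{w}$ is a direct factor of $A_w$, which is what licenses the clean basis above. To sidestep any worry about this, the same count follows from the Mayer--Vietoris sequence with $\mathbb{Q}$ coefficients for $G = H *_{\bk{w}} A_w$: since $H$, $A_w$ and $\bk{w}$ have connected classifying spaces, the $H_0$ terms force the connecting map $H_1(G;\mathbb{Q}) \to H_0(\bk{w};\mathbb{Q})$ to vanish, leaving the exact sequence
\[
H_1(\bk{w};\mathbb{Q}) \xrightarrow{\ \phi\ } H_1(H;\mathbb{Q}) \oplus H_1(A_w;\mathbb{Q}) \to H_1(G;\mathbb{Q}) \to 0.
\]
The map $\phi$ sends a generator to $([w], -[w])$, whose second component is nonzero because $w$ has infinite order in the torsion-free abelian group $A_w$; hence $\phi$ is injective. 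Therefore $b_1(G) = b_1(H) + (n+1) - 1 = b_1(H) + n$, using only torsion-freeness of $G$, which holds since $G$ is fully residually free.
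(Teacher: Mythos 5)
Your proposal is correct and its main argument is precisely the paper's: the paper offers no written proof beyond the remark that the lemma ``is obvious from a relative presentation,'' which is exactly your first step of writing $G=\bk{H,t_1,\ldots,t_n \mid [t_i,w]=1,\ [t_i,t_j]=1}$ and abelianizing. Your Mayer--Vietoris backstop is a harmless bonus that also disposes of the one subtlety you flagged (whether $w$ is part of a basis of $A_w$), so nothing is missing.
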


\begin{lem}\label{lem:CC} The subgroup $F\leq \FRS$ has property \emph{CC}
  (conjugacy closed), that is to say for $f,f' \in F$ \[\exists g \in \FRS \tr{~such
  that~} f^g = f' \Rightarrow \exists k \in F \tr{~such that~}
  f^k = f'\]
\end{lem}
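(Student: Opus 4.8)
The plan is to exploit the single most important feature of the paper's (non-standard) definition of ``fully residually $F$'': the discriminating homomorphisms are required to be \emph{$F$-homomorphisms}, and an $F$-homomorphism is, by the commuting triangle in the definition of $\Hom$, the identity on the copy of $F$ sitting inside its domain. First I would record that $\Hom(\FRS,F)$ is nonempty — this is immediate from the standing hypothesis that $\FRS$ is fully residually $F$, since a family that discriminates $\FRS$ must in particular contain at least one map — and then fix any $\phi \in \Hom(\FRS,F)$.

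The second step is the observation that makes everything collapse. Because $\phi$ is an $F$-homomorphism, the defining diagram forces $\phi|_F = \mathrm{id}_F$; in particular $\phi(f)=f$ and $\phi(f')=f'$ for the given elements $f,f'\in F$.

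The third step is simply to apply $\phi$ to the hypothesized relation $f^g=f'$. Since $\phi$ is a homomorphism into $F$, we get $\phi(g^{-1}fg)=\phi(g)^{-1}f\phi(g)=f^{\phi(g)}$, while $\phi(f')=f'$; hence $f^{\phi(g)}=f'$ holds inside $F$, with $\phi(g)\in F$ since $\phi$ maps into $F$. Setting $k=\phi(g)$ yields exactly the desired conclusion $f^k=f'$.

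There is essentially no hard part here: the entire content is the recognition that the $F$-homomorphism hypothesis furnishes a retraction $\FRS\to F$ fixing $F$ pointwise, which transports any conjugacy relation realized in the ambient group $\FRS$ down to one realized inside $F$ itself. Note that one does not even need the full strength of discrimination — mere nonemptiness of $\Hom(\FRS,F)$ suffices — though if one wanted $k$ to be injective on some prescribed finite set one could instead invoke discrimination to choose $\phi$ accordingly. The only point worth verifying is that the inclusion $F\hookrightarrow\FRS$ under which ``$f,f'\in F$'' is read is indeed the distinguished monomorphism through which $\phi$ factors as the identity; this is guaranteed by the paper's convention that these inclusions are always the obvious ones.
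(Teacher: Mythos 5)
Your proof is correct and is essentially identical to the paper's: the paper likewise takes a retraction $r:\FRS\to F$ (which exists precisely because $\Hom(\FRS,F)$ consists of $F$-homomorphisms and is nonempty by full residual $F$-ness) and sets $k=r(g)$. You have merely spelled out the verification that the retraction transports the conjugacy relation into $F$, which the paper leaves implicit.
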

\begin{proof} Let $f,f'$, and $g$ be as in the statement of the
  Lemma. Let $r:\FRS \rightarrow F$, be a retraction. Then $k=r(g) \in
  F$ has the desired property.
\end{proof}

\begin{proof}[proof of Proposition \ref{prop:b1-complexity}] Suppose towards
  a contradiction that $G\neq F$ but $b_1(G)=N$. Then, by being fully
  residually $F$, there is a retraction $G\rightarrow F$. It follows
  that $b_1(G)\geq N$. If $G$ is freely decomposable modulo $F$, then
  one of its free factors retracts onto $F$ and any other free factor
  maps onto an infinite cyclic group so $b_1(G) > N$ -- contradiction.

  It follows that $G$ is freely indecomposable and since $G \neq F$,
  $G$ has $D$, a non-trivial JSJ decomposition.  Let $F\leq \F \leq G$
  be the vertex group containing $F$, obviously $\F$ is also fully
  residually $F$. By formula (\ref{eqn:b1-bound}), $b_1(G)\geq
  b_1(\F)$ and if $D$ has more than one vertex group then the
  inequality is proper which forces $b_1(G)>N$ -- contradiction. $D$
  is therefore a bouquet of circles with a single
  vertex group $\F$. By Lemma \ref{lem:CC} if $\F=F$, then the stable
  letters of the splitting $D$ in fact extend centralizers of elements
  of $F$, so by Lemma \ref{lem:b1-eoc}, $b_1(G)>b_1(F)$ --
  contradiction.

  It follows that we cannot have $\F = F$. We therefore look JSJ of
  $\F$. Again it must have a unique vertex group
  $\F^1$. Since $\F^1 \neq F$, it must have an essential cyclic
  splitting. Since $\uhdf(G)$ is finite we have a terminating
  sequence \[\F > \F^1 >\ldots \F^r > \F^{r+1}=F\] where $\F^{i+1}$ is
  the unique vertex group of the JSJ of $\F^i$. Now, by assumption, $N=b_1(\F)\geq b_1(\F^1) \ldots \geq b_1(\F^{r+1})=N$ but
  $\F^r$ has a splitting $D^r$ that is a bouquet of
  circles with vertex group $F$, so it is a centralizer extension of
  $F$. It follows that $b_1(\F^r) > b_1(F)$ -- contradiction.
\end{proof}

\subsubsection{Splittings with two cycles}\label{sec:two-stable-letters}

Suppose $\FRS=\bra F, \x, \z\kett$ can be collapsed to a graph of
groups modulo $F$ with one vertex and two edges, i.e.
\begin{equation}\label{eqn:two-stable-letter-splitting} \FRS =
s  \relpres{\bk{\hat{F},t,s \mid A^t = A', B^s = B' }}{A,A',B,B' \leq \hat{F}}\end{equation} 
where $F \leq \hat{F}$. 

\begin{defn} For a generating set $X$ and a word $W=W(X)$ in $X$, for
  a letter $x \in X$ we denote the exponent sum of $x$ in $W$ by
  $\sigma_x(W)$. \end{defn}

\begin{defn} Let $\bk{X,t \mid R}$ be some relative presentation for
  $G$ where $t$ is a stable letter. Any $g \in G$ can be expressed as
  a word $g=W(X,t)$. We define the \emph{exponent sum} $\sigma_t(g)$ of
  $t$ in $g$ as \[\sigma_t(g)= \sigma_t(W(X,t)).\] By Britton's lemma
  this is well defined.\end{defn}

This next lemma follows immediately from abelianizing.

\begin{lem}\label{lem:double-hnn-exposum}
Suppose $\FRS$ split as (\ref{eqn:two-stable-letter-splitting}), then
if some word $W(F,\x,\z)$ lies in $\hat{F}$ then it must have exponent sum
0 in both $\x$ and $\z$.
\end{lem}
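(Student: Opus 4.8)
The plan is to build the homomorphism that records the exponent sums of the two stable letters $t$ and $s$ of the presentation (\ref{eqn:two-stable-letter-splitting}), and to observe that on $\FRS=\bk{F,\x,\z}$ this homomorphism is detected entirely by $\x$ and $\z$. First I would define a map $\phi\colon \FRS\to\Z\oplus\Z$ by sending $t\mapsto(1,0)$, $s\mapsto(0,1)$, and every element of $\hat F$ to $0$. To see this is well defined one only checks that it respects the two defining relations: the relation $A^t=A'$ reads $t^\mo a t=a'$ with $a\in A$, $a'\in A'$, and since $\phi(a)=\phi(a')=0$ while the two occurrences of $t^{\pm 1}$ cancel, this relation maps to $0$; the relation $B^s=B'$ is handled identically. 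This is exactly the ``abelianizing'' remark preceding the statement: abelianizing a conjugation relation $t^\mo a t=a'$ yields a relation $a=a'$ living entirely inside $\hat F^{\mathrm{ab}}$ and imposing no constraint on the images of $t$ or $s$, so those images span a free-abelian summand onto which $\phi$ is the projection.

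With $\phi$ in hand the computation is immediate. Since $\phi$ is a homomorphism killing $F\le\hat F$, reading the word $W(F,\x,\z)$ letter by letter gives
\[
\phi(W)\;=\;\sigma_\x(W)\,\phi(\x)\;+\;\sigma_\z(W)\,\phi(\z).
\]
If $W$ represents an element of $\hat F$ then $\phi(W)=0$, so I obtain the relation $\sigma_\x(W)\,\phi(\x)+\sigma_\z(W)\,\phi(\z)=0$ inside $\Z\oplus\Z$.

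The step that actually carries the conclusion, and the only one requiring a moment's thought, is showing that $\phi(\x)$ and $\phi(\z)$ are linearly independent over $\mathbb{Q}$. For this I use that $\phi$ is surjective, since its image contains $\phi(t)=(1,0)$ and $\phi(s)=(0,1)$, together with $\FRS=\bk{F,\x,\z}$ and $\phi(F)=0$; hence $\Z\oplus\Z=\phi(\FRS)=\bk{\phi(\x),\phi(\z)}$. Two elements that generate the rank-$2$ free abelian group $\Z\oplus\Z$ are necessarily $\mathbb{Q}$-linearly independent, so the single linear relation above forces $\sigma_\x(W)=\sigma_\z(W)=0$, as claimed. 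No serious obstacle arises; the surjectivity of $\phi$ is precisely the fact that promotes one linear relation into the vanishing of both exponent sums.
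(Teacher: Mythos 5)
Your proof is correct and is exactly the paper's argument: the paper disposes of this lemma with the single remark that it ``follows immediately from abelianizing,'' and your map $\phi$ is precisely the composition of the abelianization with the projection onto the free summand spanned by the images of $t$ and $s$. Your verification that $\phi(\x),\phi(\z)$ generate (hence form a basis of) $\Z\oplus\Z$ is the detail the paper leaves implicit, and you have filled it in correctly.
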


\begin{defn}\label{defn:exposed}
  Let $\FRSP$ be a fully residually $F$ group and let $A$ be a
  subgroup of an abelian abelian vertex group $\hat{A}$ of
  $\FRSP$'s JSJ. A direct summand $A'\leq A$, where $A = A' \oplus A''$,
  that does not intersect the images of the edge groups incident to
  $\hat{A}$ is said to be \emph{exposed}.
\end{defn}

\begin{lem}\label{lem:abelian-exposed}
  Let $A \leq \FRS$ be a non-cyclic abelian subgroup, and
  let \[\mathcal{R}:\xymatrix{F_{R(S_0)}=\FRS \ar[r]_{\pi_1} &\ldots
    \ar[r]_{\pi_p}& F_{R(S_p)}\ar[r]_{\pi_{p+1}}& F*F(Y)}\] be a
  strict resolution of $\FRS$. $A$ must eventually be mapped
  monomorphically inside an abelian vertex group $\hat{A}$ of the
  (generalized) JSJ of some quotient $F_{R(S_i)}$ occuring in $\R$ and
  the isomorphic image of $A$ should have an exposed direct summand.
\end{lem}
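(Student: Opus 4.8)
The plan is to follow the non-cyclic abelian subgroup $A$ down the resolution $\R$ and pinpoint the stage at which it first fails to be preserved, then extract both conclusions from the strictness of the offending map together with elementary free-abelian-group bookkeeping.

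First I would set up the composite epimorphisms $\phi_i = \pi_i \circ \cdots \circ \pi_1 \colon \FRS \to F_{R(S_i)}$ (with $\phi_0 = \mathrm{id}$) and the terminal map $\phi_{p+1} \colon \FRS \to F*F(Y)$. Since $\FRS$ is fully residually $F$ it is torsion-free, so the non-cyclic abelian subgroup $A$ contains a copy of $\Z \times \Z$; as $F*F(Y)$ is free it contains no such subgroup, so $\phi_{p+1}|_A$ is not injective. Because $\phi_0|_A$ is injective, there is a largest index $i_0 \in \{0,\dots,p\}$ with $\phi_{i_0}|_A$ injective, and then $\pi := \pi_{i_0+1}$ fails to be injective on $B := \phi_{i_0}(A) \cong A$.

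Next I would locate $B$ in the JSJ of $F_{R(S_{i_0})}$. As $B$ is a non-cyclic abelian subgroup of the fully residually $F$ group $F_{R(S_{i_0})}$, Convention \ref{conv:abelian-elliptic} makes it elliptic, and by the structure of the cyclic JSJ of limit groups a non-cyclic abelian subgroup is conjugate into an abelian vertex group (rigid vertex groups, having no essential abelian splitting, contain no $\Z\times\Z$, and QH subgroups are surface groups). After composing $\phi_{i_0}$ with the appropriate inner automorphism we may assume $B \leq \hat A$ for an abelian vertex group $\hat A$; since $\hat A$ is finitely generated free abelian, so is $B$. This already proves the first assertion: $A$ is mapped monomorphically into $\hat A$ at stage $i_0$. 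For the exposed summand I would use strictness. Let $\hat A_1 \leq \hat A$ be the subgroup generated by the images of the edge groups incident to $\hat A$; condition (1) in the definition of strict gives that $\pi$ is injective on $\hat A_1$, so $K := \ker(\pi|_{\hat A})$ satisfies $K \cap \hat A_1 = \{1\}$, while the maximality of $i_0$ forces $L := B \cap K \neq \{1\}$. Taking $B'$ to be the isolator (pure closure) of $L$ in $B$, purity of $B'$ inside the finitely generated free abelian group $B$ makes it a direct summand $B = B' \oplus B''$, and $B' \cap \hat A_1 = \{1\}$: any $b \in B' \cap \hat A_1$ has $b^n \in L \leq K$ for some $n \geq 1$, whence $b^n \in K \cap \hat A_1 = \{1\}$ and $b = 1$ by torsion-freeness. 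Thus $B'$ is a nontrivial exposed direct summand in the sense of Definition \ref{defn:exposed}.

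The step I expect to be the main obstacle is the middle one, namely justifying that $B$ lands in an \emph{abelian} vertex group rather than in a rigid one. This rests on the fact that a freely indecomposable limit group containing $\Z \times \Z$ admits an essential abelian splitting, so that rigid (cyclic-JSJ) vertex groups carry no non-cyclic abelian subgroups; I would combine this with Convention \ref{conv:abelian-elliptic} to pin $B$ inside $\hat A$. Once that placement is secured, the remaining work — the injectivity of $\pi$ on $\hat A_1$ from strictness and the isolator/direct-summand argument — is routine.
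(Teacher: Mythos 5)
Your argument is correct in substance and reaches both conclusions, but it takes a genuinely different route from the paper's proof, and one step of it needs a stronger justification than you give. The paper never isolates the stage where injectivity on $A$ first fails, and in particular it never needs to know that a non-cyclic abelian image cannot sit inside a \emph{rigid} vertex group. Instead it runs the following dichotomy: as long as the image of $A$ lies in a rigid vertex group or in the subgroup $\hat{A}_1$ generated by the boundary subgroups of an abelian vertex group, conditions (4) and (1) of the definition of a strict epimorphism preserve injectivity; if this persisted through all of $\R$, the non-cyclic abelian group $A$ would embed in the free group $F*F(Y)$ --- absurd. So at some stage the (still isomorphic) image must lie in an abelian vertex group but outside $\hat{A}_1$, and the paper stops there. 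Your proof instead pins the critical stage as the last index $i_0$ with $\phi_{i_0}|_A$ injective, and then must place $B=\phi_{i_0}(A)$ inside an abelian vertex group; this is exactly the step the paper's bookkeeping is designed to avoid. Your parenthetical justification of that placement is not right as stated: rigidity of a vertex group means ellipticity in every elementary cyclic splitting of the \emph{ambient} group (equivalently, no essential splitting relative to the incident edge groups), not that the vertex group abstractly admits no essential abelian splitting; and even granting that, the implication ``no essential abelian splitting $\Rightarrow$ no $\Z\times\Z$'' is itself a nontrivial theorem of the structure theory (provable, e.g., from the embedding into an iterated centralizer extension, Theorem \ref{thm:ICE}, together with a Bass--Serre intersection argument). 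The fact you invoke is true, but as written this is the one genuine gap in your proposal --- and it is avoidable, as the paper's proof shows.

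On the other hand, your treatment of the exposed summand is more complete than the paper's. The paper's ``the result now follows'' conceals real content: for arbitrary subgroups of a free abelian group, $B\not\leq\hat{A}_1$ does not by itself yield a direct summand of $B$ meeting $\hat{A}_1$ trivially (if $\hat{A}_1$ had finite index in $\hat{A}$, no nontrivial subgroup of $B$ could avoid it). Your argument supplies exactly what is missing: the kernel $K$ of the next strict map meets $\hat{A}_1$ trivially by condition (1), it meets $B$ nontrivially by the maximality of $i_0$, and the isolator of $B\cap K$ in $B$ is a pure, hence direct, nontrivial summand avoiding $\hat{A}_1$ by torsion-freeness. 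If you repair the placement step --- or reorganize along the paper's lines so that sojourns through rigid vertex groups are rendered harmless by condition (4) --- your kernel/isolator computation makes the conclusion of the lemma fully rigorous.
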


\begin{proof}
  By Convention \ref{conv:abelian-elliptic} non-cyclic abelian groups
  are always elliptic in a JSJ. Suppose first that throughout $\R$ the
  subgroup $A$ is always mapped inside a rigid vertex group, or inside
  the subgroup generated by the boundary subgroups of some abelian
  vertex group. Since the terminal group of a strict resolution is always
  free non-abelian, $A$ is eventually mapped to a cyclic
  group. This means that some strict quotient was not injective on a
  rigid vertex group or a subgroup of an abelian vertex group
  generated by the boundary subgroups -- contradiction.

  It therefore follows that at some point $A$ is mapped inside
  some abelian vertex group $\hat{A}$ and is not contained in the
  subgroup generated by the incident edge groups. The result now follows.
\end{proof}

\begin{lem}\label{lem:exp-sum-zero-abelian}
  Suppose $A\leq\FRS$ is a non-cyclic abelian subgroup of $\FRS$. Then
  $A$ cannot be generated by words $W_i(F,\x,\z)$ such that for each
  $W_i$ the exponent sums in $\x$ and $\z$ are zero.
\end{lem}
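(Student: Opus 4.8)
The plan is to detect the exposed summand supplied by Lemma~\ref{lem:abelian-exposed} inside rational homology relative to $F$, and to observe that the zero–exponent–sum hypothesis forces every element of $A$ to vanish there. For any fully residually $F$ group $G$ containing $F$, write $Q(G) = H_1(G;\mathbb{Q})/\langle \operatorname{im} H_1(F;\mathbb{Q})\rangle$ and let $q_G\colon G \to Q(G)$ be the natural map. Since $\FRS=\bk{F,\x,\z}$, the space $Q(\FRS)$ is spanned by $q_{\FRS}(\x)$ and $q_{\FRS}(\z)$, and the class of any word $W=W(F,\x,\z)$ equals $\sigma_\x(W)\,q_{\FRS}(\x)+\sigma_\z(W)\,q_{\FRS}(\z)$, because the $F$–letters are killed in $Q(\FRS)$. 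Suppose for contradiction that $A=\bk{W_1,\dots,W_k}$ with $\sigma_\x(W_i)=\sigma_\z(W_i)=0$ for each $i$. As $A$ is abelian and generated by the $W_i$, every element $w\in A$ is a product of the $W_i$, so $\sigma_\x(w)=\sigma_\z(w)=0$ and hence $q_{\FRS}(A)=0$.

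Now take a strict resolution $\R\colon \FRS=F_{R(S_0)}\to\cdots\to F_{R(S_p)}\to F*F(Y)$, which exists by Theorem~\ref{thm:strict-discriminate}. Each $\pi_j$ fixes $F$, so it induces a linear map $\bar\pi_j\colon Q(F_{R(S_{j-1})})\to Q(F_{R(S_j)})$ with $q_{F_{R(S_j)}}\circ\pi_j=\bar\pi_j\circ q_{F_{R(S_{j-1})}}$. By Lemma~\ref{lem:abelian-exposed}, for the composite $\rho=\pi_i\circ\cdots\circ\pi_1$ the subgroup $A$ maps isomorphically into an abelian vertex group $\hat A$ of the JSJ of $\FRSi$, and $\rho(A)$ has an exposed direct summand (Definition~\ref{defn:exposed}). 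Choose $0\neq a\in\rho(A)$ lying in that summand and write $a=\rho(w)$ with $w\in A$. Functoriality together with $q_{\FRS}(w)=0$ yields
\[
q_{\FRSi}(a)=\bar\pi_i\cdots\bar\pi_1\,q_{\FRS}(w)=0 .
\]

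The crux is to show that, \emph{on the contrary}, $q_{\FRSi}(a)\neq0$. Collapse the JSJ of $\FRSi$ onto the single vertex carrying $\hat A$, expressing $\FRSi=C*_{\hat A_1}\hat A$ (or the corresponding HNN extension if $\hat A$ carries a loop edge), where $\hat A_1\leq\hat A$ is generated by the images of the edge groups incident to $\hat A$, and where $F\leq C$ since $\hat A$ is abelian while $F$ is not. Because $\hat A$ is free abelian and $a$ lies in the exposed summand, which by Definition~\ref{defn:exposed} meets $\hat A_1$ trivially, we have $a\notin\hat A_1$, so $a$ is nonzero in $\hat A/\hat A_1$. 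A Mayer–Vietoris computation over $\mathbb{Q}$ then shows that the image of $\hat A/\hat A_1$ in $H_1(\FRSi;\mathbb{Q})$ injects as a complement to $\operatorname{im}H_1(C;\mathbb{Q})\supseteq\operatorname{im}H_1(F;\mathbb{Q})$; hence the class of $a$ survives in $Q(\FRSi)$, giving $q_{\FRSi}(a)\neq0$ and contradicting the previous paragraph.

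The main obstacle is this last homological survival statement. One must confirm that $F\leq C$ (automatic once $F$ is non-abelian, the rank-one case being covered by Theorem~\ref{thm:onevariable}), treat loop edges at $\hat A$ via the HNN form of Mayer–Vietoris, and use that ``exposed'' really gives a summand of $\rho(A)$ meeting $\hat A_1$ trivially so that $a$ has nontrivial image in $\hat A/\hat A_1$. Passing to $\mathbb{Q}$–coefficients removes any torsion subtleties and makes the independence of $a$ from $\operatorname{im}H_1(F)$ a clean dimension count.
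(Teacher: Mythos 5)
Your proposal is correct in substance and shares the paper's skeleton---both arguments take a strict resolution (Theorem \ref{thm:strict-discriminate}) and invoke Lemma \ref{lem:abelian-exposed} to place an isomorphic image of $A$ inside an abelian vertex group $\hat{A}$ of some $\FRSi$ with an exposed summand---but the finishing move is genuinely different. The paper extends the projection $\hat{A}\rightarrow \hat{A}'\oplus\bk{\eta}$ (where $\hat{A}'$ is generated by the incident edge groups and $\eta$ is the maximal root of a generator $r$ of the exposed summand, $r=\eta^n$) to a further quotient $\ol{\FRSi}$, realizes that quotient as an HNN extension with stable letter $\eta$, and derives the contradiction from the stable-letter exponent sum; you stay inside $\FRSi$ and detect the exposed element in $Q(\FRSi)=H_1(\FRSi;\mathbb{Q})/\operatorname{im}H_1(F;\mathbb{Q})$ via Mayer--Vietoris and functoriality. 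In effect both proofs produce a character of $\FRSi$ vanishing on $F$ but not on the exposed element, which no zero-exponent-sum word can survive; your $\mathbb{Q}$-coefficients absorb exactly the divisibility issue the paper handles by passing from $r$ to its root $\eta$, and you avoid constructing auxiliary quotients, at the cost of the homological bookkeeping. One imprecision needs repair: collapsing the JSJ of $\FRSi$ around $\hat{A}$ does \emph{not} in general give an amalgam $C*_{\hat{A}_1}\hat{A}$. If $\hat{A}$ meets several edges, or if deleting $\hat{A}$ disconnects the underlying graph, the collapsed graph of groups has several edges (and possibly several non-abelian vertices), and ``amalgamating over the subgroup $\hat{A}_1$ generated by all incident edge groups'' is not a splitting of $\FRSi$ at all. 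The fix is routine and preserves your conclusion: run the Mayer--Vietoris sequence of the actual collapsed graph of groups; the $\hat{A}$-coordinate of the image of the boundary map $\bigoplus_e H_1(E_e;\mathbb{Q})\rightarrow\bigoplus_v H_1(G_v;\mathbb{Q})$ lies in $\hat{A}_1\otimes\mathbb{Q}$, so any nontrivial $a$ in the exposed summand (no nonzero multiple of which lies in $\hat{A}_1$, by Definition \ref{defn:exposed}) has image outside $\operatorname{im}H_1(F;\mathbb{Q})$ in $H_1(\FRSi;\mathbb{Q})$, and the contradiction with $q_{\FRSi}(a)=0$ stands.
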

\begin{proof} Let $\R$ be a strict resolution with fully residually
  $F$ groups $\{\FRSi\}$. By Lemma \ref{lem:abelian-exposed}
  in some $\FRSi$ $A$ is mapped monomorphically into an abelian vertex
  group $\hat{A}$ and has an exposed cyclic summand $\bk{r}$. Let
  $\hat{A'}$ be the subgroup of $\hat{A}$ generated by the
  incident edge groups and let $\bk{\eta}\leq \hat{A}$ be the
  maximal cyclic subgroup containing $\bk{r}$, so that $r =
  \eta^n$. Then we can extend the projection $\hat{A} \rightarrow
  \hat{A'} \oplus \bk{\eta}$ to $\FRSj$. The resulting quotient
  can be seen as an HNN extension:
  \[ \ol{\FRSi} = \bk{G,\eta \mid \forall a \in \hat{A'}, [\eta,a]=1}\] On one
  hand $\x$ and $\z$ are sent to elements with normal forms
  $\x'(G,\eta),\z'(G,\eta)$ on the other hand, $\eta^n$ is in the
  image of $A$ and by hypothesis we can write
  $r=R(F,\x'(G,\eta),\z'(G,\eta))$ where $R$ has exponent sum
  zero in $\x'(G,\eta)$ and $\z'(G,\eta)$, but 
  $R(F,\x'(G,\eta),\z'(G,\eta))=r = \eta^n, n \neq 0$ must have exponent sum
  zero in $\eta$ which is a contradiction.
\end{proof}

\begin{cor}\label{cor:no-ab} If $\FRS$ has a cyclic splitting
  modulo $F$ with two cycles, then none of the vertex groups of this
  splitting can contain non-cyclic abelian subgroups.
\end{cor}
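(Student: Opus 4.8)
The plan is to reduce the given splitting to the double HNN form of (\ref{eqn:two-stable-letter-splitting}) and then play the two exponent-sum lemmas against each other. Let $X$ be the underlying graph of the given cyclic splitting. Saying that $X$ has two cycles means $b_1(X)=2$, so collapsing a maximal spanning tree $T\subset X$ leaves a single vertex carrying exactly two loops, and produces a presentation
\[\FRS = \relpres{\bk{\hat{F},t,s \mid A^t = A', B^s = B'}}{A,A',B,B'\leq \hat{F}}\]
exactly of the shape (\ref{eqn:two-stable-letter-splitting}), where $\hat{F}=\pi_1(\G(T))$ is the fundamental group of the collapsed tree of groups. Since the vertex group containing $F$ lies in $T$, we have $F\leq\hat{F}$, so the presentation has the required form and Lemma \ref{lem:double-hnn-exposum} applies.

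Now suppose, for contradiction, that some vertex group $V$ of the original splitting contains a non-cyclic abelian subgroup $A$. Each vertex group of a tree of groups embeds directly (not merely up to conjugacy) into its fundamental group, so $V$, and hence $A$, is a subgroup of $\hat{F}$. In particular every element of $A$ lies in $\hat{F}$.

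I would finish by choosing any generating set $\{W_i(F,\x,\z)\}$ of $A$. Since each $W_i\in A\leq\hat{F}$, Lemma \ref{lem:double-hnn-exposum} forces every $W_i$ to have exponent sum zero in both $\x$ and $\z$. But $A$ is a non-cyclic abelian subgroup of $\FRS$, and Lemma \ref{lem:exp-sum-zero-abelian} asserts precisely that such an $A$ admits no generating set consisting of words with exponent sums zero in $\x$ and $\z$. This contradiction shows that no vertex group can contain a non-cyclic abelian subgroup.

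The only delicate point I expect is the reduction step: one must check that collapsing the spanning tree genuinely realizes the form (\ref{eqn:two-stable-letter-splitting}) with $A$ landing \emph{inside} $\hat{F}$ rather than merely conjugate into it, since the exponent-sum argument is applied to an honest generating set of $A$. This is exactly what the on-the-nose embedding of vertex groups of a tree of groups guarantees, so no conjugation is needed and the argument goes through cleanly; the ellipticity of abelian subgroups (Convention \ref{conv:abelian-elliptic}) is not even required here because $A$ is assumed from the outset to sit in a vertex group.
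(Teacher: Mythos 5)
Your proof is correct and takes essentially the same route as the paper: both reduce the two-cycle splitting to the double HNN form (\ref{eqn:two-stable-letter-splitting}) by collapsing a maximal spanning tree and then play Lemma \ref{lem:double-hnn-exposum} against Lemma \ref{lem:exp-sum-zero-abelian} to get the contradiction. Your extra care about the on-the-nose embedding of vertex groups simply replaces the paper's implicit appeal to the fact that exponent sums are conjugation-invariant (the paper speaks of elements ``conjugable into'' a vertex group); both versions are fine.
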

\begin{proof}
  By Lemma \ref{lem:double-hnn-exposum} any element conjugable into a
  vertex group of the splitting must be expressible by words in
  $F,\x,\y$ that have exponent sum zero in $\x$ and $\y$, Lemma
  \ref{lem:exp-sum-zero-abelian} now gives a contradiction.
\end{proof}

\subsection{The possible JSJs when all the vertex groups of the JSJ of
$\FRS$ except $\F$ are abelian}\label{sec:only-abelian}
In this section we prove that the JSJs given in Proposition
\ref{prop:abelian} are the only possibilities.

\begin{lem}\label{lem:abelian-good-enough} If the cyclic JSJ
  decomposition of $\FRS$ modulo $F$ contains only one non-abelian
  vertex group then the JSJ is given by the graph of groups $\G(X)$
  where $X$ is one of the following:
  \[\left.\begin{array}{cc}
      \xymatrix{ v\bullet \ar@{-}[r] & \bullet u}&
      \xymatrix{ u\bullet \ar@{-}[r] & \bullet v \ar@{-}[r] & \bullet
        w}\\
      \xymatrix{ v\bullet \ar@{-}[r] \ar@(ul,dl)@{-} & \bullet u}&
      \xymatrix{ v\bullet \ar@{-}[r] \ar@{-}@/^/[r] & \bullet u}\\
      \end{array}\right.\] with $\F \leq X_v$ and $X_w,X_u$ abelian.
\end{lem}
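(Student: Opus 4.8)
The plan is to reduce everything to a count of the vertices and independent cycles of the underlying graph $X$: I would use the CSA structure to pin down where edges can go, formula (\ref{eqn:b1-bound}) to bound the number of abelian vertices, and Corollary \ref{cor:no-ab} to bound the number of independent cycles. Write $v$ for the vertex with $X_v=\F$ and $a_1,\dots,a_k$ for the abelian vertices, so $X$ has $k+1$ vertices. Since $\FRS$ is CSA, two maximal abelian vertex groups cannot share a nontrivial edge group: commutation transitivity would force them to generate an abelian group and hence to coincide, so no edge joins two abelian vertices. Likewise malnormality of a maximal abelian subgroup forbids a loop at an abelian vertex, since a stable letter would conjugate a nontrivial element of $X_{a_i}$ to another element of $X_{a_i}$, forcing the stable letter into $X_{a_i}$. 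I also use that the abelian vertex groups of the (hairless) JSJ are non-cyclic, i.e. $\rk(X_{a_i})\geq 2$. Consequently every edge is incident to $v$: a spanning tree $T$ is the star joining $v$ to each $a_i$, and the non-tree edges are loops at $v$ or extra parallel edges of a $v$--$a_i$ bigon. Writing $m_i$ for the number of $v$--$a_i$ edges and $\ell$ for the number of loops at $v$, we get $b_1(X)=\sum_i(m_i-1)+\ell$.

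Next I would bound $k$. By (\ref{eqn:b1-bound}), $b_1(\FRS)\geq b_1(\F)+\sum_i\rk(X_{a_i})-k$; since $\F$ retracts onto $F$ and each $\rk(X_{a_i})\geq 2$, this gives $b_1(\FRS)\geq N+k$. On the other hand $\FRS=\bk{F,\x,\z}$ is generated by $N+2$ elements, so $b_1(\FRS)\leq N+2$. Hence $k\leq 2$.

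The step I expect to be the main obstacle is to show $b_1(X)\leq 1$, and moreover that $b_1(X)=1$ forces $k=1$. Here I would invoke Corollary \ref{cor:no-ab}: a cyclic splitting modulo $F$ with two cycles has no vertex group containing a non-cyclic abelian subgroup. If $b_1(X)\geq 2$ this applies to the JSJ itself and contradicts the presence of an abelian (hence non-cyclic) vertex group, so $b_1(X)\leq 1$ whenever $k\geq 1$. If $b_1(X)=1$ and $k\geq 2$, I would manufacture a second cycle: a rank-$2$ abelian vertex group $\bk{c}\times\bk{y}$ whose incident edge group lies inside $\bk{c}$ refines, over the cyclic group $\bk{c}$, into the HNN loop $\bk{\bk{c},y\mid [y,c]=1}$. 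Refining one abelian vertex $a_i$ this way yields a cyclic splitting modulo $F$ with $b_1=2$ in which the other abelian vertex $a_j$ survives unchanged as a non-cyclic abelian vertex group, again contradicting Corollary \ref{cor:no-ab}. Thus $b_1(X)=1\Rightarrow k=1$. The delicate point is exactly that this refinement keeps $F$ elliptic, since it only touches an abelian vertex disjoint from $\F$, and keeps all edge groups cyclic, so that the hypotheses of Corollary \ref{cor:no-ab} genuinely hold.

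Finally I would enumerate. If $b_1(X)=0$ then $X=T$ is a star with $k\in\{1,2\}$ leaves, giving the single edge or the path with $\F$ at its central vertex. If $b_1(X)=1$ then $k=1$ and the unique cycle is either a loop at $v$ (edge-plus-loop) or a second $v$--$a_1$ edge (the bigon); it cannot be both, as that would give $b_1(X)=2$. These are exactly the four graphs claimed, with $\F\leq X_v$ and the remaining vertex groups abelian.
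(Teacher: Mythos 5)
Your proof is correct and follows essentially the same route as the paper's: CSA/commutation transitivity to force every edge to meet the vertex carrying $\F$, the first Betti number bound to limit the number (and ranks) of abelian vertices, Corollary \ref{cor:no-ab} to forbid two independent cycles, and the rewriting of a rank-2 abelian vertex over a cyclic direct factor as an HNN loop to kill the remaining mixed case. If anything, your case analysis is slightly more complete than the paper's, which explicitly excludes only the edge-plus-bigon configuration with two abelian vertices and passes silently over the star-plus-loop-at-$v$ configuration with two abelian leaves; your uniform ``refine one abelian leaf into a loop and apply Corollary \ref{cor:no-ab}'' argument disposes of both.
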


\begin{proof}
  By CSA and commutation transitivity $\G(X)$ cannot have subgraphs of
  groups \[\xymatrix{ u\bullet \ar@{-}[r] & \bullet w} \tr{~or~}
  \xymatrix{ u\bullet \ar@(ur,dr)@{-}} \] with $X_u,X_w$ non-cyclic
  abelian. Each abelian vertex group $A$ contributes $\rk(A)-1$ to
  $b_1(\FRS)$, so by Proposition \ref{prop:b1-complexity} there are at
  most two of them of rank 2, or one of them of rank at most 3. By
  Corollary \ref{cor:no-ab} the resulting underlying graph of groups
  cannot have two cycles. So far the only possibilities are the graphs
  of groups given in the statement of the lemma
  and \[\xymatrix{u\bullet \ar@{-}[r] & v \bullet \ar@{-}@/^/[r]
    \ar@{-}[r] & \bullet w}\] with $\F \leq X_v$ and $X_w,X_u$
  abelian. But note that $X_u$ must have rank 2 and since we can
  rewrite $G*_\bk{\alpha}A$ as $\bk{G,t | \alpha^t = \alpha}$ if $A$
  is free abelian of rank 2. We can again apply Corollary
  \ref{cor:no-ab} to get a contradiction to the fact that $X_w$ is
  non-cyclic abelian.
\end{proof}

\begin{prop}\label{prop:simply-connected-abelian}
  If $\FRS$ is as in the statement of Lemma
  \ref{lem:abelian-good-enough} then it cannot have the JSJ with
  underlying graph\[X = \xymatrix{ v\bullet \ar@{-}[r] \ar@{-}@/^/[r]
    & \bullet u}\]
  
\end{prop}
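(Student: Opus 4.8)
The plan is to rule out the two-edge graph with both edges running between the same pair of vertices $v$ and $u$, where $\F \leq X_v$ is the non-abelian vertex group and $X_u$ is abelian. The underlying graph
\[ X = \xymatrix{ v\bullet \ar@{-}[r] \ar@{-}@/^/[r] & \bullet u}\]
has exactly one independent cycle topologically, so the corresponding splitting has one cycle; but the key structural feature is that \emph{two} distinct edges connect the non-abelian vertex to the \emph{abelian} vertex $X_u$. First I would set up the relative presentation: collapsing a maximal spanning tree (one of the two edges), we can write
\[ \FRS = \relpres{\bk{\F, X_u, t \mid C_1 = C_1',\ C_2^t = C_2'}}{C_1, C_1', C_2, C_2' \leq \F,\ X_u}, \]
where the amalgamating edge identifies cyclic subgroups $\bk{\gamma_1}\leq\F$ and $\bk{\delta_1}\leq X_u$ along the spanning-tree edge, and $t$ is the stable letter coming from the second edge, conjugating $\bk{\gamma_2}\leq\F$ to $\bk{\delta_2}\leq X_u$.

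**The main obstruction and the key idea.**

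The hard part will be producing the contradiction, and I expect the central tool to be Corollary \ref{cor:no-ab} together with the exponent-sum machinery of Section \ref{sec:two-stable-letters}. The abelian vertex group $X_u$ is non-cyclic (it has rank at least 2, contributing $\rk(X_u)-1\geq 1$ to $b_1$), so if I can arrange $\FRS$ to split with \emph{two} cycles and $X_u$ sitting in a vertex group, Corollary \ref{cor:no-ab} immediately forbids the non-cyclic abelian subgroup. The obstacle is that the given graph $X$ has only one cycle, so I cannot invoke Corollary \ref{cor:no-ab} directly. The trick I would use is that the two edges both land in the \emph{abelian} group $X_u$: since $X_u$ is abelian, the two boundary subgroups $\bk{\delta_1}$ and $\bk{\delta_2}$ commute, and commutation transitivity (CSA) forces them into a common maximal cyclic (or abelian) subgroup of $X_u$. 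This collinearity lets me perform a slide or fold move (Definition \ref{defn:moves}) that reorganizes the two edges, effectively converting the configuration into one where both edges emanate as independent cycles at the non-abelian vertex $\F$, producing a splitting with two cycles.

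**Executing the reduction.**

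Concretely, I would argue as follows. Because $X_u$ is abelian and both edge images $\bk{\delta_1},\bk{\delta_2}$ lie in it, after conjugating boundary monomorphisms (Definition \ref{defn:moves}) they can be taken to lie in a single maximal cyclic subgroup $\bk{\eta}\leq X_u$. Then one edge's boundary inclusion factors through the other's, so a \emph{slide} move applies: I slide one edge endpoint along $X_u$ so that both cyclic edge groups are amalgamated along $\bk{\eta}$, collapsing the abelian vertex group's role to that of a single cyclic edge and thereby reattaching the second edge as a loop at $\F$. After this move $\FRS$ admits a splitting modulo $F$ whose underlying graph is a one-vertex graph (vertex group $\F$ or $\bk{\F, X_u}$) with two independent loops — i.e. a splitting with two cycles in the sense of equation (\ref{eqn:two-stable-letter-splitting}). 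In this new splitting the non-cyclic abelian subgroup $X_u$ (or the part of it not consumed by the edge groups) is conjugable into a vertex group, contradicting Corollary \ref{cor:no-ab}. The delicate point to verify is that the slide does not trivialize $X_u$ entirely: since $X_u$ has rank at least $2$, at least one direct summand survives as a genuine non-cyclic abelian subgroup of a vertex group, which is exactly what Corollary \ref{cor:no-ab} prohibits. This contradiction shows the graph $X$ cannot occur, completing the proof.
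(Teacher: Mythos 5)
Your argument has two fatal gaps. The first is the slide move itself: a slide (Definition \ref{defn:moves}) requires one edge group's image to contain the other's, and nothing forces the two boundary subgroups $\bk{\delta_1},\bk{\delta_2}\leq X_u$ to be nested, or even commensurable. Your appeal to CSA/commutation transitivity is vacuous inside the abelian group $X_u$: it yields a common maximal \emph{abelian} subgroup (namely $X_u$ itself), not a common \emph{cyclic} one, and two cyclic subgroups of $\Z^2$ (e.g.\ $\bk{(1,0)}$ and $\bk{(0,1)}$) lie in no common cyclic subgroup. In the configuration that actually needs to be excluded the two edge groups are independent in $X_u$, so no slide is available at all.

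The second gap is that even when the slide does apply, the contradiction you want never materializes. To produce a splitting with two cycles you must rewrite the amalgam $\F*_{\bk{\delta_1}}X_u$ as an HNN extension $\bk{\F,r \mid \delta_1^r=\delta_1}$, and this converts a basis element $r$ of $X_u$ into a \emph{stable letter}: the unique vertex group of the resulting two-cycle splitting is $\F$, which meets every conjugate of $X_u$ in a cyclic group. So $X_u$ is \emph{not} conjugate into any vertex group of the new splitting, and Corollary \ref{cor:no-ab} --- which only forbids non-cyclic abelian subgroups \emph{of vertex groups} --- gives nothing (consistently, Lemma \ref{lem:exp-sum-zero-abelian} is not violated either, since the generator $r$ of $X_u$ has nonzero stable-letter exponent sum). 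Indeed the paper uses exactly your trick in the proof of Lemma \ref{lem:abelian-good-enough}, but only in the configuration where a \emph{second} abelian vertex group $X_w$ remains elliptic and furnishes the contradiction; in the present proposition there is only one abelian vertex group, and it is precisely the one consumed by the rewriting. The paper's actual proof is entirely different: it first shows $\F\neq F$ by a discrimination argument ($F$-retractions fix $\delta$ and send $\beta$ to a conjugate of $\alpha$, so they cannot discriminate the non-cyclic abelian group $\bk{\beta,\delta}$), then follows a strict resolution, applies Lemma \ref{lem:abelian-exposed} to reach a quotient in which $\bk{\beta,\delta}$ has an exposed direct summand while $\F$ is still embedded and elliptic, and derives a contradiction in the Bass-Serre tree of that quotient, where $\delta$ would have to fix a single vertex with abelian stabilizer and simultaneously lie in the monomorphic image of the non-abelian group $\F$.
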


\begin{proof} Suppose towards a contradiction that $\FRS$ had the
  JSJ\[\relpres{\bk{\F,A,t \mid \alpha^t = \beta}}{ \alpha \in \F,
    \beta \in A, \bk{\delta} = \F \cap A} \] with $F \leq \F$ and
  $A$ abelian. Let $A' = \bk{\beta,\delta}\leq A$. \emph{Claim:} $F
  \neq \F$. Suppose this were not the case, then $\alpha$ and $\delta$
  lie in $F$ but to discriminate $A'$, either $\beta$ or $\delta$ must
  be sent to arbitrarily high powers via retractions $\FRS \rightarrow
  F$, which is impossible since $\beta$ is conjugate to $\alpha$ and
  $\delta \in F$.
    
  Since $\F\neq F$, $\F$ must have a $(\leq \Z)$-splitting modulo $F$,
  but because our splitting of $\FRS$ is a JSJ $\alpha$ or $\delta$
  must be hyperbolic in any $(\leq \Z)$-splitting of $\F$. We apply
  Lemma \ref{lem:abelian-exposed} to $A' \leq \FRS$. Let $\FRSi$ be
  the quotient in the strict resolution where $A'$ has an exposed
  summand. Since in the initial segment $\xymatrix{\FRS \ar[r]_{\pi_1}
    &\ldots \ar[r]_{\pi_p}& F_{R(S_i)}}$ of the strict resolution $A'$
  was always elliptic, so were the elements $\alpha,\delta$ which
  means $\F$ never split and hence was always mapped monomorphically.
    
  Let $\hat{A}$ be the abelian vertex group of $\FRSi$ containing $A'$
  and let $E\leq \hat{A}$ be the subgroup generated by incident edge
  groups. Since $A'$ has an exposed summand, w.l.o.g. $\delta \not\in
  E$ which means that in the Bass-Serre tree $T$ for the JSJ of
  $\FRSi$, the minimal invariant subtree $T(\bk{\delta})$ consists of
  a vertex whose stabilizer is abelian. $T(\bk{\beta})$ on the other
  hand consists of a vertex whose stabilizer is non-abelian, but
  $\delta \in \F$ -- contradiction.
\end{proof}

\begin{prop}\label{prop:abelian-classification}
If we have the JSJs \begin{itemize}
\item $\FRS=\F*_\bk{\alpha} A_1$ and $\rk(A_1)\geq 3$, or 
\item $\FRS=A_2*_\bk{\beta}\F*_\bk{\gamma} A_3$,
\end{itemize}
  with $A_1,A_2,A_3$ free abelian then $\F=F$.
\end{prop}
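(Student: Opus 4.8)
The plan is to reduce everything to a first Betti number count and then invoke Proposition \ref{prop:b1-complexity}, which says that a fully residually $F$ group $G$ satisfies $b_1(G)=N$ exactly when $G=F$. Since $\F$ is the vertex group of the JSJ of $\FRS$ containing $F$, it is again fully residually $F$, so it suffices to show $b_1(\F)=N$. I will sandwich $b_1(\F)$ between $N$ and $N$ using two opposite inequalities.

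First I would record the two global constraints. On one side, because $\F$ contains $F$ as a retract (the retraction $\FRS\to F$ coming from full residual $F$-ness restricts to $\F$), there is an epimorphism $\F\to F$, so $b_1(\F)\geq b_1(F)=N$. On the other side, $\FRS$ is a quotient of $F*\bk{x,y}$, whose abelianization is $\Z^{N+2}$; since the first Betti number cannot increase under an epimorphism, $b_1(\FRS)\leq N+2$.

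Next I would feed each of the two graph-of-groups descriptions into the lower bound (\ref{eqn:b1-bound}), $b_1(G)\geq\sum_{v\in T}b_1(G_v)-E$. In the first case the underlying graph is a single edge ($E=1$), so $b_1(\FRS)\geq b_1(\F)+b_1(A_1)-1$; using $\rk(A_1)=b_1(A_1)\geq 3$ this gives $N+2\geq b_1(\FRS)\geq b_1(\F)+2$, hence $b_1(\F)\leq N$. In the second case the graph is a path on three vertices ($E=2$), so $b_1(\FRS)\geq b_1(\F)+b_1(A_2)+b_1(A_3)-2$. Here the point to check is that each $A_i$ has rank at least $2$: since $A_2$ and $A_3$ are the valence-one vertices and we have passed to the hairless JSJ, a cyclic $A_i$ would have been a hair and would already have been collapsed, so each $b_1(A_i)=\rk(A_i)\geq 2$. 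This again yields $N+2\geq b_1(\F)+2$, i.e.\ $b_1(\F)\leq N$.

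Combining with $b_1(\F)\geq N$ forces $b_1(\F)=N$ in both cases, and Proposition \ref{prop:b1-complexity} then gives $\F=F$. There is no serious obstacle in the argument itself; the only points requiring care are the bookkeeping that $\F$ really is fully residually $F$ (so that Proposition \ref{prop:b1-complexity} applies) and the observation that in the two-abelian-vertex case the hairless convention forces the peripheral abelian groups to have rank at least $2$. It is precisely the rank hypothesis $\rk(A_1)\geq 3$ in the first case, and these rank-$2$ lower bounds in the second, that make the counting just tight enough to pin $b_1(\F)$ down to $N$.
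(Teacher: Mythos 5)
Your proposal is correct and is exactly the paper's argument: the paper's proof of this proposition is the one-line remark that it ``follows immediately by estimating $b_1(\FRS)$ and applying Proposition \ref{prop:b1-complexity},'' and your write-up simply fills in that estimate, using the bound (\ref{eqn:b1-bound}), the inequality $b_1(\FRS)\leq N+2$ coming from the epimorphism $F*\bk{x,y}\rightarrow\FRS$, the retraction giving $b_1(\F)\geq N$, and the hairless convention to guarantee $\rk(A_i)\geq 2$ in the second case. No gaps; the details you flag (full residual $F$-ness of $\F$ and the rank bounds on the peripheral abelian vertex groups) are exactly the right ones.
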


\begin{proof}
  This follows immediately by estimating $b_1(\FRS)$ and applying
  Proposition \ref{prop:b1-complexity}.
\end{proof}

The proof that $\F$ is two-generated modulo $F$ is deferred to Section
\ref{sec:abelian-2-gen-mod-F}.

\section{When the JSJ of $\FRS$ has at least two non-abelian vertex
  groups}\label{sec:2-nonab-classification}
\subsection{Preliminaries}
The approach here is to see what kind of graphs of groups we
can obtain as images of $F*\bk{x,y}$. To make the problem tractable we
first consider a coarser splitting. It will turn out that this
is an effective way to get started.
\subsubsection{Maximal abelian collapse}\label{sec:collapses}
Suppose that $D = (\G(X),\FRS)$ the JSJ of $\FRS$ contains at least two non-abelian
vertex groups. Take $D$ and do the following (see Definition
\ref{defn:moves}):\begin{itemize}
\item[(i)] If any boundary subgroup $\brakett{\alpha}=i_e(X_e)$ is a
  proper subgroup of a maximal abelian subgroup $A$, then do a folding
  move (as in Definition \ref{defn:moves}) where we replace $X_e$ by a
  copy of $A$.
\item[(ii)] Ensuring that the resulting graph of groups always has at least
  two non-abelian vertex groups, perform sliding and collapsing moves
  until it is no longer possible to decrease the number of vertices or
  edges
\end{itemize}

In the end the resulting graph of groups $\G(X)$ will have one of
three possible forms:
\begin{equation}\label{eqn:reg-norm-split}\xymatrix{\Fh
    \ar@{-}[r] & H},~ \xymatrix{\Fh \ar@{-}[r] \ar@{-}@/^/[r] &
    H},~\tr{or}~ \xymatrix{\Fh \ar@{-}[r] \ar@{-}@/^/[r] \ar@{-}@/_/[r] &
    H}\end{equation}
where the vertex group $\Fh$ contains $F$ and boundary subgroups are
maximal abelian in their vertex groups. 

\begin{defn}We will call such a splitting a \emph{maximal abelian
  collapse of $\FRS$}.
\end{defn}

\begin{lem}
  The maximal abelian collapse of $\FRS$ is a 1-acylindrical splitting.
\end{lem}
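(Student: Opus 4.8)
The plan is to prove $1$-acylindricity in its tree formulation: letting $T$ be the Bass--Serre tree of the maximal abelian collapse, I would show that no non-trivial element of $\FRS$ fixes a geodesic segment of length $2$. Since a fixed set of diameter $\geq 2$ always contains such a segment, this is exactly what is needed. Throughout I would lean on two standard consequences of $\FRS$ being CSA: commutation transitivity, which gives that two maximal abelian subgroups meeting in a non-trivial element must coincide; and malnormality, which gives that a non-trivial maximal abelian subgroup $M$ is self-normalizing, i.e.\ $N(M)=M$.

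So suppose toward a contradiction that $1\neq g$ fixes a geodesic $v_0,v_1,v_2$, with distinct edges $e_1=[v_0,v_1]$ and $e_2=[v_1,v_2]$ meeting at $v_1$. Because the collapse was arranged so that every boundary subgroup is maximal abelian in its vertex group, the edge stabilizers $\stab(e_1)$ and $\stab(e_2)$ are maximal abelian subgroups of $\stab(v_1)$ (conjugates of boundary subgroups, hence still maximal abelian). As $g\in\stab(e_1)\cap\stab(e_2)$ is non-trivial, commutation transitivity forces $\stab(e_1)=\stab(e_2)=:M$. Writing the two edges at $v_1$ as cosets $g_1B_1$ and $g_2B_2$ of boundary subgroups $B_1,B_2$ of $\stab(v_1)$, with $g_1,g_2\in\stab(v_1)$, the equality $g_1B_1g_1^{-1}=g_2B_2g_2^{-1}$ splits into two cases according to whether $e_1$ and $e_2$ project to the same edge of the underlying graph $X$.

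If they project to the same edge, then $B_1=B_2=:B$ and $g_2^{-1}g_1\in N(B)=B$, so $g_1B=g_2B$ and hence $e_1=e_2$, contradicting distinctness. The remaining case — which I expect to be the main obstacle — is when $e_1,e_2$ project to two distinct edges of $X$: here $B_1$ and $B_2$ are \emph{distinct} boundary subgroups of a single vertex group that are nonetheless conjugate to one another. I would rule this out by contradicting the minimality built into step (ii) of the collapse construction. Concretely: a conjugation-of-boundary-monomorphism move makes the two boundary subgroups literally equal; then a slide of one of the two parallel edges over the other (the containment hypothesis of the slide move holds because the relevant boundary subgroups now coincide) turns that edge into a loop at the opposite vertex, without changing $\FRS$; finally, collapsing that loop — absorbing it as an HNN factor of the adjacent vertex group — lowers the number of edges while still leaving two non-abelian vertex groups $\Fh$ and $H$. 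This contradicts the defining property of the collapse, that no admissible move decreases the number of edges or vertices.

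The delicate points I would have to verify carefully are exactly in this last case: that the slide legitimately converts a parallel edge into a loop (orienting both edges to start at the common vertex and checking the edge-group containment after the conjugating move), and that collapsing the loop genuinely reduces the edge count without merging the two non-abelian vertices, so that the resulting decomposition still satisfies the constraints of step (ii). Granting this, the two cases are exhausted, no non-trivial element fixes a length-$2$ segment, and the maximal abelian collapse is $1$-acylindrical.
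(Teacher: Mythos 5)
Your proof is correct and follows essentially the same route as the paper: the paper's two-line argument invokes exactly your two ingredients, namely malnormality of the maximal abelian boundary subgroups (your Case A) and the minimality built into step (ii) of the collapse construction (your Case B). Your unpacking of Case B --- conjugate a boundary monomorphism, slide, collapse the resulting loop --- is precisely the reduction the paper compresses into the assertion that distinct edges have distinct (more accurately, conjugacy separated) boundary subgroups.
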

\begin{proof}
  On one hand by item (ii) of our construction, distinct edges have
  distinct boundary subgroups. On the other hand we also have that
  they are maximal abelian in their vertex groups, and maximal abelian
  subgroups are malnormal in fully residually free groups. The result
  now follows.
\end{proof}

Although we may have sacrificed some information in passing to a
maximal abelian collapse, we now have a 1-acylindrical splitting. This
will enable us to use the very useful Lemmas \ref{lem:make-elliptic}
and \ref{lem:long-range-crit}.

\subsubsection{Balancing folds and adjoining roots}

It may happen that the image of an edge group is not maximal cyclic in
one of the adjacent vertex groups.

\begin{defn}\label{defn:balanced}
  Let $\G(X)$ be a $(\leq \Z)$-graph of groups. An edge group $X_e$ is
  said to be \emph{balanced} if its images are are maximal cyclic in
  the vertex groups. A graph of group is called \emph{balanced} if all
  its edge groups are balanced.
\end{defn}

The main technical advantage of having a balanced $(\leq\Z)$-graph of
groups is that if all the edge groups are maximal abelian (i.e. they
don't lie in non-cyclic abelian subgroups) \emph{then our graph of
  groups is 1-acylindrical.}

By commutation transitivity one of the image of an edge group must
be maximal in the vertex groups. Our graphs of groups aren't always
balanced, however we may do the following.

\begin{defn}\label{defn:balancing-fold}
  Let $X_e$ be a non-balanced edge group of a $(\leq \Z)$-graph of
  group $\G(X)$, then the folding move (as in Definition
  \ref{defn:moves}) which enlarges $X_e$ so that its images in the
  adjacent vertex groups is maximal cyclic is called a \emph{balancing
    fold.}
\end{defn}

The result of a balancing fold on the ``enlarged'' vertex group is the
adjunction of a proper root. We will want to make balancing folds, but
we also want to keep the rank of the vertex groups under control. In
\cite{Weidmann-adjoin-root} Weidmann proves the following:

\begin{thm}[Theorem 1 of \cite{Weidmann-adjoin-root}]\label{thm:weidmann-add-a-root}
  Let $G$ be a group, $g \in G$ be an element of order $n \in \mathbb{N} \cup
  \{\infty\}$ and $k\geq 2$ an integer. Then\[
  \rk{~G*_\bk{g=z^k}\bk{z \mid z^{nk}}} \geq \rk{~G}.
  \]
\end{thm}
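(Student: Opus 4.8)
The plan is to recognize the group in question as an amalgam over a cyclic subgroup and then to establish the corresponding Grushko-type rank estimate. Write $C = \bk{g} = \bk{z^k}$ and $Z = \bk{z}$, so that $\hat{G} := G *_{\bk{g}=\bk{z^k}} \bk{z}$ is the amalgamated free product $G *_C Z$, where $Z$ is cyclic (infinite if $n=\infty$, of order $nk$ otherwise) and $C$ sits inside $Z$ with index $[Z:C]=k$. The hypothesis $k\geq 2$ is exactly what guarantees $Z\neq C$, so that this is a genuine amalgam in which the vertex group $Z$ strictly contains the edge group; when $k=1$ the element $g$ already has a $k$-th root and $\hat G = G$. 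Since $Z$ and $C$ are both cyclic, $\rk(Z)=\rk(C)=1$, so the desired inequality $\rk(\hat G)\geq \rk(G)$ is the instance $\rk(G *_C Z)\geq \rk(G)+\rk(Z)-\rk(C)$ of the rank bound for amalgams over a cyclic edge group.

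First I would set up the Bass--Serre tree $T$ of the splitting $\hat G = G *_C Z$ (two vertex orbits, with groups $G$ and $Z$, one edge orbit with group $C$) and reformulate the goal: every generating tuple $(h_1,\dots,h_m)$ of $\hat G$ should yield a generating set of $G$ of size at most $m$. To produce such a set I would use the Nielsen method for groups acting on trees: apply elementary Nielsen transformations and reductions --- which preserve both the property of generating $\hat G$ and the cardinality $m$ --- to bring the tuple into a reduced form adapted to $T$. A reduced tuple has a controlled interaction with the vertex groups across the single edge, and from it one can read off a generating set of the non-cyclic vertex group $G$; the cyclic vertex group $Z$ and cyclic edge group $C$ can contribute only a net of $\rk(Z)-\rk(C)=0$, leaving at least $\rk(G)$ generators accounted for on the $G$-side. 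The cyclic edge group $C$ is what keeps the reduction process under control and forces it to terminate in a usable normal form.

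A complementary, more self-contained route is to pass to the index-$k$ kernel. The homomorphism $\phi:\hat G \to \Z/k$ sending $G\mapsto 0$ and $z\mapsto 1$ is well defined (since $\phi(z^{nk})=0$ and $\phi(z^k)=\phi(g)=0$), and its kernel $K$ contains $G$. Computing the induced splitting of $K$ from the action on $T$ shows that $K$ has $k$ vertex orbits with group isomorphic to $G$, one vertex orbit with group $C$, and $k$ edge orbits with group $C$; collapsing the central vertex (whose group is exactly $C$) identifies $K \cong G *_C G *_C \cdots *_C G$, the amalgam of $k$ conjugate copies of $G$ over the common subgroup $C=\bk{g}$. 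On one hand the Reidemeister--Schreier bound gives $\rk(K)\leq 1 + k(\rk(\hat G)-1)$; on the other hand, iterating the cyclic-edge amalgam estimate gives $\rk(K)\geq 1 + k(\rk(G)-1)$. Comparing the two inequalities yields $\rk(\hat G)\geq \rk(G)$.

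Either way, the main obstacle is the rank lower bound for the amalgam over the cyclic subgroup $C$. Purely homological input is insufficient: one only gets $\rk\geq \dim_{\mathbb F} H_1(-;\mathbb F)$, which by Mayer--Vietoris yields $\rk(\hat G)\geq \dim H_1(G;\mathbb F)$, strictly weaker than $\rk(G)$ whenever $G$ has rank exceeding its first Betti number. Likewise one cannot simply retract onto $C$, because $g$ need not be a retract of $G$ (indeed it need not even admit a $k$-th root in $G$). The real work is therefore the combinatorial argument controlling how a reduced generating tuple can distribute across the cyclic edge of $T$; this is precisely the technical heart of Weidmann's theorem and is handled by the Nielsen-method machinery for groups acting on trees.
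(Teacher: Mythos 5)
This statement is quoted in the paper verbatim from Weidmann's work: the paper gives no proof of it at all (it only gestures at the argument later, in the sketch of Lemma \ref{lem:adjoin-a-root}, by saying that Weidmann's folding-sequence analysis ``goes through''), so your attempt has to stand on its own, and it does not. Your first route correctly identifies Weidmann's actual strategy --- Nielsen/folding methods for the action on the Bass--Serre tree of $G*_{\bk{g}}\bk{z}$ --- but the decisive step, namely that from a reduced generating tuple ``one can read off a generating set of the non-cyclic vertex group $G$'' with $Z$ and $C$ contributing a net of $\rk(Z)-\rk(C)=0$, \emph{is} the theorem, and you explicitly delegate it to ``the Nielsen-method machinery.'' There is no off-the-shelf black box here: the general Nielsen-method rank bounds for one-edge splittings require malnormality or acylindricity hypotheses on the edge group, and those fail in this splitting by design --- $C=\bk{z^k}$ has index $k\geq 2$ in the cyclic vertex group $Z$, and $\bk{g}$ is an arbitrary cyclic subgroup of $G$. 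Controlling how reduced tuples back-track through the non-malnormal cyclic edge is precisely the technical content of Weidmann's paper, and it is absent from your write-up.

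Your second, ``more self-contained'' route has a concrete circularity rather than just an omission. The kernel computation is fine: $K=\ker\big(\hat G\to\Z/k\big)$ is indeed the amalgam of the $k$ conjugates $z^iGz^{-i}$, $i=0,\ldots,k-1$, over the common subgroup $C$, and Reidemeister--Schreier gives $\rk(K)\leq 1+k(\rk(\hat G)-1)$. But the lower bound $\rk(K)\geq 1+k(\rk(G)-1)$ that you then need is a $k$-fold iteration of the ``rank bound for amalgams over a cyclic edge group'' $\rk(A*_CB)\geq\rk(A)+\rk(B)-\rk(C)$ --- and, as your own opening paragraph observes, the theorem to be proven is exactly the instance of that bound in which one factor is cyclic. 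No such bound is known in this generality (the known results again demand malnormal or acylindrical edge groups, which $C=\bk{g}$ sitting arbitrarily inside each copy of $G$ does not provide), so you are deriving the statement from an unproven assertion that is at least as strong as, and in fact strictly stronger than, the conclusion. In short: route one is a correct description of the proof that still needs to be written, and route two replaces the theorem by a harder open-ended claim; neither closes the gap.
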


We will need the following variant of this result that isn't an
immediate corollary.

\begin{lem}\label{lem:adjoin-a-root}
  Let $\FRS$ be a finitely generated fully residually $F$ group and
  let \[\widehat{\FRS} = \FRS*_\bk{z} \bk{\sqrt[n]{z}},\] where
  $(\sqrt[n]{z})^n = z$, be a fully residually $F$ quotient of of
  $F*\bk{x_1,\ldots,x_m}$. Then $\FRS$ is also a fully residually $F$ quotient of of
  $F*\bk{x_1,\ldots,x_m}$. 
\end{lem}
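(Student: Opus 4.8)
The plan is to strip the statement down to a single relative rank inequality. Since $\FRS$ is fully residually $F$ by hypothesis, the only thing left to prove is that $\FRS$ is an $F$-quotient of $F*\bk{x_1,\ldots,x_m}$, i.e. that $\FRS$ is generated by $F$ together with at most $m$ elements. Writing $\rk_F(G)$ for the least number of elements needed to generate an $F$-group $G$ together with $F$, the hypothesis on $\widehat{\FRS}$ reads $\rk_F(\widehat{\FRS})\leq m$ and the desired conclusion reads $\rk_F(\FRS)\leq m$; so it suffices to establish the \emph{relative} Weidmann inequality $\rk_F(\FRS)\leq\rk_F(\widehat{\FRS})$. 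This is exactly what fails to be an immediate corollary of Theorem~\ref{thm:weidmann-add-a-root}: that theorem only yields $\rk(\FRS)\leq\rk(\widehat{\FRS})\leq N+m$, which says nothing about whether $F$ can be taken to be part of a minimal generating set.

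First I would set up the folding machinery of Section~\ref{sec:graph-folds}. Realize $\widehat{\FRS}=\pi_1(\G(A))$ where $\G(A)$ is the one-edge graph of groups with vertex groups $\FRS$ (containing $F$) and $\bk{\sqrt[n]{z}}\cong\Z$, edge group $\bk{z}$ included by $z\mapsto z$ and $z\mapsto(\sqrt[n]{z})^n$. Choose $g_1,\ldots,g_m$ with $\widehat{\FRS}=\bk{F,g_1,\ldots,g_m}$ and form the $\G(A)$-graph $\B_0$ from a basepoint $u$ with $\B$-vertex group $F\leq\FRS$ together with the wedged loops $\L(g_i;u)$, so $\pi_1(\B_0,u)=\widehat{\FRS}$ and $b_1(B_0)=m$. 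Running the folding process and invoking Theorem~\ref{thm:folded}, the folded graph $\B$ has $\pi_1(\B,u)=\widehat{\FRS}$ and realizes the splitting of $\widehat{\FRS}$ induced by $\widehat{\FRS}\leq\pi_1(\G(A))$; since this subgroup is everything, that induced splitting is $\G(A)$ itself. Hence $\B$ carries (up to conjugacy) one vertex with group $\FRS$, one with group $\bk{\sqrt[n]{z}}$, a single connecting edge, and $b_1(B)=0$.

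The core of the argument is to track the relative generating complexity
\[\kappa(\B)=\sum_{v\in VB}\rk_F(\B_v)+b_1(B),\]
where $\rk_F$ is taken relative to $F$ on the unique vertex group containing $F$ and is ordinary rank elsewhere. Initially $\kappa(\B_0)=0+m=m$, and on the folded graph $\kappa(\B)=\rk_F(\FRS)+1$. I would then check that the moves are non-increasing on $\kappa$: conjugations and the adjustment moves A0--A2 leave it unchanged; a simple fold F1 merges two vertices, replacing $\rk(\B_{v_1})+\rk(\B_{v_2})$ by $\rk\bk{\B_{v_1},\B_{v_2}}$ while leaving $b_1(B)$ fixed; and a collapse F4 destroys one cycle, dropping $b_1(B)$ by one while contributing only the single generator $b_1^\mo b_2$ to a vertex group. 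Monotonicity then gives $\rk_F(\FRS)+1\leq m$, so $\rk_F(\FRS)\leq m-1<m$, which is more than enough; and as $\FRS$ is already fully residually $F$, it is a fully residually $F$ quotient of $F*\bk{x_1,\ldots,x_m}$.

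The hard part will be the transmission move T1, which enlarges a vertex group $\B_v$ by an element $b^\mo t_{[e]}(g)b$ and so \emph{a priori} increases $\kappa$. The fact to exploit is that this transmitted element always lies in a conjugate of the cyclic edge group $\bk{z}$, together with the hypothesis $n\geq 2$, which makes $\bk{\sqrt[n]{z}}$ strictly larger than the edge group $\bk{z}=\bk{(\sqrt[n]{z})^n}$. Transmissions into, and F4 collapses oriented towards, the $\bk{\sqrt[n]{z}}$-vertex then contribute elements that already lie in that full cyclic group and cost nothing; the danger is only a transmission into the $\FRS$-vertex, and one must pair each such transmission with the fold it is performed to enable so that the pair is jointly non-increasing on $\kappa$. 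Making this pairing precise—establishing monotonicity of $\kappa$ across each complete fold step rather than after every individual move, in the manner of Weidmann's and the KMW bookkeeping—is the delicate point on which the proof turns.
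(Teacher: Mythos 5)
Your setup is exactly the paper's (the paper's own proof is a sketch that sets up the same wedge $\B_0$ with $\B$-vertex group $F$ and loops $\L(g_i;u)$, then defers the case analysis to Weidmann's proof of Theorem \ref{thm:weidmann-add-a-root}), but the bookkeeping you propose is genuinely broken, and the clearest symptom is that it proves too much. Your monotonicity claim would yield $\rk_F(\FRS)+1\leq m$, i.e.\ $\rk_F(\FRS)\leq m-1$, and this is false: take $\FRS=F*\bk{x}$ and $n=2$, so that $\widehat{\FRS}=\FRS*_{\bk{x}}\bk{\sqrt{x}}=F*\bk{\sqrt{x}}$ is a fully residually $F$ quotient of $F*\bk{x_1}$ (so $m=1$, by Theorem \ref{thm:onevariable}), while $\FRS=F*\bk{x}\neq F$, so $\rk_F(\FRS)=1\not\leq 0$. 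Hence your $\kappa$ cannot be non-increasing, and in this example one sees exactly where it jumps: the folding sequence consists of a single F4 collapse (after which $\B_u=F$, $\B_w=\bk{\sqrt{x}}$ and $\kappa$ still equals $1$), followed by one T1 transmission pushing $x=(\sqrt{x})^2$ from the root vertex into the $F$-vertex, which raises $\kappa$ to $2$ and only then produces the folded graph guaranteed by Theorem \ref{thm:folded}.

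The same example kills your proposed repair. That dangerous transmission into the $\FRS$-side is not ``performed to enable a fold'': it is the terminal move of the entire process, required only because a graph admitting a proper transmission is by definition not folded, and no fold follows it. So no regrouping of the moves into ``complete fold steps'' can restore monotonicity --- the net change of $\kappa$ over the whole sequence is $+1$ however you parenthesize it. What the correct argument must exploit (and what the case analysis in Weidmann's proof, invoked by the paper, actually does) is that any element transmitted out of a root-type vertex is a power of a conjugate of $z$, hence a proper power of a root generator that was already paid for when that root vertex group was created by an earlier fold; such transmissions must be amortized against that earlier credit, and the root vertex groups themselves must count for nothing in the final tally, since only their transmitted powers enter the generation of $\FRS$ (in the example, $\FRS=\bk{F,x}$ with $x=(\sqrt{x})^2$, so the one generator $\sqrt{x}$ accounts for both the root vertex and the transmitted element). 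Your $\kappa$, which charges the root vertex $+1$ in the folded graph and charges each transmission as a fresh generator, cannot be made to work by any local pairing; one needs Weidmann's amortized accounting, which is precisely the content the lemma borrows.
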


\begin{proof}[sketch]
  The argument used to prove Theorem 1 of \cite{Weidmann-adjoin-root}
  is in fact perfectly suitable for our purposes. Let $\G(X)$ be the
  graph of groups for $\FRS*_\bk{z} \bk{\sqrt[n]{z}}$ and let $\G(X')$
  be the graph of groups for $\FRS*_\bk{z} \bk{z} = \FRS$. We start a
  folding sequence for $\widehat{\FRS}$ with $\B_0$ the $\G(X)$-graph
  that consists of a vertex $u$ with $\B_{0u} = F$ and $\x_i$-loops
  $\mathcal{L}(\x_i;u)$, i.e. the graph underlying $\B_0$ is a bouquet
  of $m$-circles. After this initial setup the arguments of the proof
  Theorem \ref{thm:weidmann-add-a-root}, that consider different types
  of folds in the folding sequence, go through and we find that $\FRS$
  is also generated by $F$ and $m$ other elements.
\end{proof}

\subsubsection{Weidmann-Nielsen normalization for groups acting on trees}
We present some of the techniques developed by Weidmann in
\cite{Weidmann-2002}. Let $G$ act on a simplicial tree $T$.

\begin{defn}\label{defn:weidmann-nielsen}
  Let $M\subset G$ be partitioned as \[ M = S_1 \sqcup \ldots \sqcup S_p
  \sqcup \Haches.\] We say that $M$ has a \emph{marking}
  $(S_1,\ldots,S_p;\Haches)$. The elementary Weidmann-Nielsen transformations
  on \emph{marked sets} are:
\begin{itemize}
\item[WN1:] Replace some $S_i$ by $g^\mo S_ig$ where $g\in M - S_i$.
\item[WN2:] Replace some element $h_i \in \Haches$ by $g_1h_ig_2$ where
  $g_1,g_2 \in M -\{h_i\}$
\end{itemize}
\end{defn}

\begin{defn}\label{defn:T_F}
For a subgroup $K \leq G$ we denote the minimal $K$-invariant
subtree $T(K)$ and we denote:
\[T_{\bk{S_i}} = \{x \in T \mid \exists g \in \bra S_i \kett
\setminus \{1\} \tr{~such that~}
gx=x \} \cup T(\bk{S_i})\] 
\end{defn}

 We now formulate the main results
in \cite{Weidmann-2002}.

\begin{thm}\label{thm:weidmann-nielsen} Let $M$ be a set with markings
  $(S_1,\ldots,S_p;\Haches)$. Then either \[ \bra M \kett = \bra S_1
  \kett * \ldots *\bra S_p \kett * F(\Haches) \] or by successively
  applying transformations WN1 and WN2 we can bring
  $(S_1,\ldots,S_p;\Haches)$ to a \emph{normalized} marked set
  \[\tilde{M}=(\tilde{S_1},\ldots,\tilde{S_p};\{\tilde{h_1},\ldots,\tilde{h_s}\})\]
  such that one of the following must hold:
  \begin{enumerate}
  \item $T_\bk{\tilde{S_i}} \cap T_\bk{\tilde{S_j}} \neq \emptyset$, for some $i
    \neq j$.
  \item $\exists \tilde{h_i} \in \{\tilde{h_1},\ldots,\tilde{h_s}\},
    \tilde{h_i}T_\bk{\tilde{S_i}} \cap T_\bk{\tilde{S_i}} \neq
    \emptyset$
  \item There is some $\tilde{h_i} \in \{\tilde{h_1},\ldots,\tilde{h_s}\}$ that fixes a point of $T$.
  \end{enumerate}
\end{thm}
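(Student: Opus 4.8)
The plan is to follow the Nielsen-method strategy of \cite{Weidmann-2002}, which adapts classical Nielsen reduction for free groups to the setting of groups acting on trees. The transformations WN1 and WN2 are the analogues of the elementary Nielsen transformations, and the three alternatives (1)--(3) are precisely the obstructions to running a ping-pong argument on $T$. The whole proof is a well-founded descent: one introduces a complexity on marked sets, shows the complexity can be strictly decreased by some WN1 or WN2 move whenever none of (1)--(3) holds, and concludes that a complexity-minimal (``normalized'') marking either realizes one of (1)--(3) or is in ``general position'', in which case $\bra M\kett$ is the claimed free product.

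First I would attach to each factor $S_i$ its minimal invariant subtree together with the set $T_{\bk{S_i}}$ of Definition \ref{defn:T_F}, and record the positions of the $h_j$ relative to these subtrees. From this data I would build a complexity taking values in a well-ordered set --- roughly, a measure of how much the subtrees $T_{\bk{S_i}}$ overlap, together with the distances by which the $h_j$ fail to move them apart. The transformations act naturally on this data: WN1 conjugates a factor, hence translates $T_{\bk{S_i}}$ along $T$, while WN2 multiplies an $h_j$ on both sides, hence adjusts where $h_j$ sends the relevant subtree. The key reduction lemma is that if a marking is \emph{not} normalized --- i.e. none of (1)--(3) holds and yet $\bra M\kett$ is not visibly the free product --- then two of the subtrees, or an $h_j$-translate of a subtree and the subtree itself, come close enough that a suitable WN1 or WN2 move strictly lowers the complexity.

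Once the descent stabilizes at a normalized marking $\tilde{M}$, I would argue by ping-pong. If none of (1)--(3) holds for $\tilde{M}$, then the subtrees $T_{\bk{\tilde{S_i}}}$ are pairwise disjoint, each $h_j$ moves $T_{\bk{\tilde{S_i}}}$ off itself, and no $h_j$ is elliptic. These are exactly the separation hypotheses needed to play table-tennis on $T$: a nontrivial reduced word in $\bra \tilde{S_1}\kett * \cdots * \bra \tilde{S_p}\kett * F(\{\tilde{h_1},\ldots,\tilde{h_s}\})$ pushes a chosen base region strictly into a disjoint region and therefore acts nontrivially. Hence the natural surjection onto $\bra M\kett$ is injective, giving the asserted free decomposition (the value of $\bra M\kett$ being unchanged under Nielsen transformations).

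The hard part, as in \cite{Weidmann-2002}, is the bookkeeping in the middle step: defining a complexity that is simultaneously bounded below / well-ordered even though the $T_{\bk{S_i}}$ may be unbounded subtrees, and guaranteed to strictly decrease under an appropriate Nielsen move whenever the marking is not yet normalized. One must handle the elliptic case with care --- an $h_j$ fixing a point of $T$ cannot serve as a free generator in the naive ping-pong, which is exactly why alternative (3) is isolated --- and one must ensure that WN1 and WN2 do not merely trade one overlap for another without net progress. Since this is precisely Weidmann's theorem, in practice I would cite \cite{Weidmann-2002} for the delicate termination argument and reproduce only the ping-pong step, which is what the applications in the sequel actually require.
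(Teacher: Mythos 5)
The paper does not prove this statement at all: it is quoted as a formulation of the main results of \cite{Weidmann-2002}, so your concluding move --- citing Weidmann for the delicate complexity/termination argument and retaining only the ping-pong step --- coincides exactly with the paper's treatment. Your sketch of the descent-plus-ping-pong structure is a fair summary of how Weidmann's argument actually runs, but none of those details are developed or needed in the paper itself.
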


Notice that in passing from a marked set $M$ to a normalized marked
set $\tilde{M}$ as in Theorem \ref{thm:weidmann-nielsen} the subgroups
$\bk{S_i}$ and $\bk{\tilde{S_i}}$ differ only in that there is some $g
\in G$ such that $\bk{S_i} = g^\mo \bk{S_i}g$. From this it is not
hard to see that we can chose our sequence of transformations WN1 and
WN2 so that one of the subsets $S_j$ in $M$ remains invariant.

\begin{convention}
  Suppose $F \leq \bk{S_i}$ . When we apply Weidmann-Nielsen
  transformations to a marked generating set of
  $(S_1,\ldots,S_p;\Haches)$ of $\FRS$ we want to make sure that the
  elements of $F$ remain fixed pointwise. We therefore do not use any
  moves of type WN1 that will change $S_1$. This restriction doesn't
  alter the applicability of Theorem \ref{thm:weidmann-nielsen}.
\end{convention}

\begin{defn}\label{defn:W-N-normalization}
  \define{Weidmann-Nielsen normalization} is the process of using
  moves WN1 and WN2 to bring a marked generating set to a
  \emph{normalized} generating set as in Theorem
  \ref{thm:weidmann-nielsen} .
\end{defn}

\subsubsection{Elements with small translation lengths}

We always act on a tree $T$ from the left i.e. for all $g,h \in \FRS$ and
for all $v \in T$ we have \[ ghv=g(hv)\] it follows that for any point
$v \in T$, and for any $g \in \FRS$ we will have $\stab(gv) = g(\stab(v))
g^{-1} = {}^g\stab(x)$. Recall the definition of $T_F$ given
in Definition \ref{defn:T_F}.

\begin{lem}\label{lem:tf-transitive}
  Let $\FRS$ act on a simplicial tree $T$ with edge stabilizers
  maximal abelian in their vertex groups and with $F$ elliptic. If
  there is some edge $e \subset T_F$ and some $g \in \FRS$ such that
  $ge \subset T_F$, then there is some $f \in F$ such that $f^\mo ge = e$.
\end{lem}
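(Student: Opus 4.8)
The plan is to push the whole configuration down to the free group $F$ via a retraction, where freeness pins everything down, and then to use $1$-acylindricity to turn a conjugacy statement into an equality of edges.

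First I would extract a single nontrivial element of $F$ fixing each of the two edges. Since $F$ is elliptic its minimal invariant subtree is a point, so an interior point of $e$ lying in $T_F$ (Definition~\ref{defn:T_F}) must be fixed by some nontrivial element of $F$, and as the action is without inversions that element fixes all of $e$. Let $c\in F$ generate the maximal cyclic subgroup of $F$ containing it, so $c\neq 1$, $c$ is not a proper power in $F$, and (an elliptic isometry of a tree having the same fixed-point set as each of its nonzero powers) $c$ fixes $e$. Running the same argument for $ge$ produces a $d\in F$, not a proper power in $F$, with $d$ fixing $ge$; hence $g^\mo d g$ fixes $e$, so both $c$ and $g^\mo dg$ lie in the abelian group $\stab(e)$ and therefore commute.

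Next I would apply a retraction $r\colon\FRS\to F$ to the relation $[c,g^\mo dg]=1$. Since $c,d\in F$ this yields $[c,r(g)^\mo d\, r(g)]=1$ in $F$; as $c$ is not a proper power, its centralizer in the free group $F$ is $\bk{c}$, so $r(g)^\mo d\, r(g)=c^{\ell}$ for some $\ell$. Because $d$ is conjugate in $F$ to $c^{\ell}$ and $d$ is not a proper power, neither is $c^{\ell}$, forcing $\ell=\pm1$. Writing $\mu:=r(g)\in F$ we obtain $d=\mu\,c^{\pm1}\mu^\mo$, so that $\mu^\mo d\mu=c^{\pm1}$ fixes $\mu^\mo(ge)$.

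Finally I would compare fixed edges: $c$ (equivalently $c^{\pm1}$, which has the same fixed-point set) now fixes both $e$ and $\mu^\mo(ge)$, so by $1$-acylindricity --- available because in our applications $T$ is a maximal abelian collapse, which we have shown to be $1$-acylindrical --- the nontrivial $c$ fixes a subtree of diameter at most $1$, i.e. at most a single edge, whence $e=\mu^\mo(ge)$ and $f:=\mu\in F$ satisfies $f^\mo ge=e$. This last step is the main obstacle: a priori $c$ only controls the abelian edge group $\stab(e)$, and without acylindricity a nontrivial $c$ could fix a long segment, so that $e$ and $\mu^\mo(ge)$ would merely share the stabilizer $\stab(e)$ rather than coincide. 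This is exactly why the maximal abelian hypothesis on edge stabilizers (which, together with malnormality of maximal abelian subgroups from the CSA property, underlies acylindricity of the collapse) is needed, and why in the presence of non-cyclic abelian vertex groups one also invokes Convention~\ref{conv:abelian-elliptic} to keep such subgroups elliptic; one should likewise record carefully the tree fact $\Fix(c)=\Fix(c^{n})$ for elliptic $c$ and $n\neq 0$, used above when passing between $a$, $d$ and the roots $c$, $c^{\pm1}$.
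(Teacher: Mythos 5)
Your proof takes essentially the same route as the paper's. The core of both arguments is identical: nontrivial elements of $F$ fix $e$ and $ge$ (the paper writes $\bk{\gamma}=\stab(e)\cap F$ and $\bk{\beta}=\stab(ge)\cap F$), these commute after conjugating by $g$ because edge stabilizers are abelian, and applying a retraction $\psi\colon\FRS\to F$ to the relation $[{}^g\gamma,\beta]=1$ replaces $g$ by $f=\psi(g)\in F$. Your extra bookkeeping --- passing to non-proper-power roots $c,d$ and deducing ${}^{\mu^\mo}d=c^{\pm 1}$ from cyclicity of centralizers in a free group --- is correct but not needed: the paper finishes from $[{}^f\gamma,\beta]=1$ alone, arguing that maximal abelianness forces ${}^f\stab(e)=\stab(ge)$ and hence $f^\mo ge=e$. (Also, the tree fact you lean on, $\Fix(c)=\Fix(c^n)$ for elliptic $c$, is false for general actions even of torsion-free groups --- an element can permute edges around a fixed vertex --- and in the present setting it has to be derived from the same CSA/maximal-abelian considerations as everything else.)

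The genuine difference is the endgame, and your instinct that it is ``the main obstacle'' is correct --- indeed more correct than you indicate. You close with 1-acylindricity, imported from the applications; the paper closes with equality of stabilizers, tacitly using that two edges in one $\FRS$-orbit with the same stabilizer must coincide. Neither closing step follows from the lemma's stated hypotheses, and your parenthetical claim that maximal-abelian-in-vertex-groups plus CSA malnormality ``underlies'' acylindricity is false in general: write the rank 1 centralizer extension $\bk{F,m\mid [a,m]=1}$, $F=F(a,b)$, as an HNN extension with vertex group $F$ and edge group $\bk{a}$. Edge stabilizers are then maximal abelian in their vertex groups and $F$ is elliptic, yet $a$ fixes the whole axis of $m$, the edges $e=\bk{a}$ and $me$ both lie in $T_F$, and no $f\in F$ satisfies $f^\mo me=e$ (that would force $m\in f\bk{a}\subset F$). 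So the lemma as literally stated fails; what saves it, both for your proof and for the paper's, is that it is only ever applied to splittings in which edge groups are maximal abelian --- hence malnormal --- in $\FRS$ itself, equivalently where Convention \ref{conv:abelian-elliptic} keeps non-cyclic abelian subgroups elliptic and the action is 1-acylindrical (as proved for the maximal abelian collapse). Granting that standing assumption, your argument is complete, and it has the merit of making explicit a dependence that the paper's own proof leaves implicit.
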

\begin{proof}
  Let $\bk{\gamma} = stab(e) \cap F$ and let $\bk{\beta} =
  \stab(ge)\cap F = {}^g stab(e) \cap F$. Since edge stabilizers are
  maximal abelian to prove the claim it is enough to find some $f \in
  F$ such that $[{}^f\gamma,\beta]=1$ as this would imply ${}^fstab(e)
  = stab(ge)$. By hypothesis $[{}^g \gamma,\beta]=1$. Let $\psi:\FRS
  \rightarrow F$ be a retraction, then $[{}^{\psi(g)}\gamma,\beta]=1$,
  so $f=\psi(g)$ is the desired element.
\end{proof}

If the action of $\FRS$ on its Bass-Serre tree $T$ is as in Lemma
\ref{lem:tf-transitive} then the action is 1-acylindrical. Suppose $F$
is elliptic and let $\fix(F)=v_0$, suppose also that $\rho$ is
hyperbolic. Of particular interest to us is the situation where $T_F
\cap \rho T_F \neq \emptyset$, as in item 2. of Theorem
\ref{thm:weidmann-nielsen}.

We will focus on the situation where $d(v_0,\rho v_0) = 2$ and where
the path $[v_0,\rho v_0]$ intersects two $\FRS$-vertex orbits. Let $w_0$
be a vertex such that $d(w_0,v_0)=1$ and suppose that $H = stab(w_0)
\cap F \neq \{1\}$. Let $w_1$ be the vertex in $[v_0,\rho v_0]$ that is an
$\FRS$-translate of $w_0$. Consider the two relative sub-presentations
(i.e. they are ``subgraphs of groups of'' $\FRS$.)

\begin{equation}\label{eqn:make-elliptic-splittings}
  \relpres{\bk{\Fh,H,t \mid A^t = B}}{A\leq \Fh, B\leq H, C = \Fh \cap H}
  \tr{~or~} \Fh*_CH
\end{equation}

with $F\leq \Fh = \stab(v_0)$. 

\begin{lem}\label{lem:make-elliptic} Let the action of $\FRS$ on $T$
  be as in the statement of Lemma \ref{lem:tf-transitive}. Let $v_0 =
  \fix(F)$ and suppose that there is some $\rho$ such that $T_F \cap
  \rho T_F \neq \emptyset$, $d(v_0,\rho v_0) =2$, and the path
  $[v_0,\rho v_0]$ intersects two $\FRS$-vertex orbits. Let $H =
  stab(w_0)$ be as described above. Using the notation introduced in
  (\ref{eqn:make-elliptic-splittings}) then there exist $f_1, f_2 \in
  F$ such that either\begin{enumerate}
  \item the path $[v_0,\rho v_0]$ intersects one $\FRS$-edge orbit and $f_2
    \rho f_1 = h \in H$; or
  \item the path $[v_0,\rho v_0]$ contains two $\FRS$-edge orbits and $f_2 \rho
    f_1 = th$ for some $h \in H$; which could be assumed to be the
    stable letter $t$ if we change the presentation by conjugating a
    boundary monomorphism.
\end{enumerate}\end{lem}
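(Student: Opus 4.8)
The plan is to extract a normal form for the hyperbolic element $\rho$ directly from the geometry of the length-two segment $[v_0,\rho v_0]$ and then to absorb its two ``ends'' into elements of $F$. Write the segment as $v_0 - w_1 - \rho v_0$, with edges $e_1=[v_0,w_1]$ and $e_2=[w_1,\rho v_0]$, and recall that $w_1$ lies in the $\FRS$-orbit of $w_0$ while $\rho v_0$ lies in the orbit of $v_0=\fix(F)$. Thus $\rho$ is carried by a segment that leaves an $\Fh$-vertex, crosses a $w_0$-type vertex, and returns to an $\Fh$-vertex, so in the induced sub-graph-of-groups it has the shape $\rho=a_1\,b\,a_2$ with $a_1,a_2\in\Fh=\stab(v_0)$ and $b$ in the $w_0$-vertex group $H=\stab(w_0)$ (whose intersection with $F$ is nontrivial by hypothesis). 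The whole content of the lemma is to show that $a_1,a_2$ can be pushed, modulo the edge group $C=\Fh\cap H$, into elements of $F$, and then to read off which of the two presentations occurs from whether $e_1,e_2$ share an $\FRS$-orbit.

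First I would use the hypothesis $T_F\cap\rho T_F\neq\emptyset$ to locate the two half-edges. Since $T_F$ and $\rho T_F$ are subtrees containing $v_0$ and $\rho v_0$, and $d(v_0,\rho v_0)=2$, a median argument places the meeting point on $[v_0,\rho v_0]$; the generic outcome is $w_1\in T_F\cap\rho T_F$, so that $e_1\subset T_F$ and $e_2\subset\rho T_F$. (The degenerate cases, where the median is an endpoint, must be checked by hand, and this is precisely where the intersection hypothesis is consumed.) In particular $\stab(e_1)\cap F\neq 1$; since the reference edge $e_0=[v_0,w_0]$ already satisfies $\stab(e_0)\cap F\neq 1$, both $e_0,e_1$ lie in $T_F$.

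Next I would normalize the two ends. Because $w_0,w_1$ share an $\FRS$-orbit and $e_0,e_1\subset T_F$, Lemma \ref{lem:tf-transitive} furnishes $f_2\in F$ with $f_2 w_1=w_0$; then $f_2 a_1$ fixes $w_0$ and lies in $\Fh$, hence $f_2 a_1\in\Fh\cap\stab(w_0)=C$. Applying the same argument to $\rho^{-1}$ (note $\rho^{-1}T_F\cap T_F\neq\emptyset$ as well, its midpoint being $a_2^{-1}w_0$) produces $f_1\in F$ with $a_2 f_1\in C$. Hence $f_2\,\rho\,f_1=(f_2 a_1)\,b\,(a_2 f_1)$ stabilizes $w_0$. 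Now I split on edge orbits: if $e_1,e_2$ lie in a single $\FRS$-orbit the induced splitting is the amalgam $\Fh*_C H$, the segment crosses one edge orbit, and $f_2\,\rho\,f_1=h\in H$, giving (1); if $e_1,e_2$ lie in distinct orbits the splitting is the two-edge decomposition $\bk{\Fh,H,t\mid A^t=B}$, the segment crosses the stable-letter edge exactly once, and the same two-sided absorption leaves $f_2\,\rho\,f_1=t\,h$ with $h\in H$, where conjugating a boundary monomorphism (Definition \ref{defn:moves}) lets us take the crossing letter to be $t$ itself, giving (2).

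I expect the main obstacle to be the case-(2) bookkeeping, together with the degenerate possibilities in the locating step. Concretely, one must be sure that after the two-sided $F$-adjustment the crossing word in case (2) is genuinely a single stable letter times an element of $H$---rather than a longer alternating word---which rests on the translation length being exactly $2$ and on $e_1,e_2$ being the only edges crossed; and one must verify that the boundary-monomorphism conjugation really normalizes the crossing letter to $t$ without disturbing the identification $C=\Fh\cap H$. Everything else reduces to $1$-acylindricity of the action and repeated use of the $F$-transitivity supplied by Lemma \ref{lem:tf-transitive}.
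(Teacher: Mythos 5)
Your proposal is correct and takes essentially the same approach as the paper's proof: both decompose $\rho$ along the length-two segment $[v_0,\rho v_0]$, use the hypothesis $T_F\cap\rho T_F\neq\emptyset$ to place the first edge of the segment in $T_F$ and the second in $\rho T_F$, and then apply Lemma \ref{lem:tf-transitive} at each end to turn the two $\Fh$-syllables of $\rho$ into edge-group elements after multiplying by suitable $f_1,f_2\in F$. The only differences are organizational --- the paper absorbs one end first and then chases stabilizers (${}^{a_1^{-1}}\stab(e)\cap F\neq\{1\}$) to locate the second edge, rather than running your symmetric median argument with $\rho$ and $\rho^{-1}$ --- and the paper is no more detailed than you are on the degenerate median positions or on case (2), which it dispatches with ``proved similarly.''
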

\begin{proof}
  We consider case 1. By hypothesis $T_F$ cannot be a point, $w_0 \in
  T_F$, and $H \cap F \neq \{1\}$. Consider Figure
  \ref{diagram1}. Then we must have \[\rho = a_2b_1a_1; a_i \in \Fh,
  b_1 \in H\] with $w_1 = a_2 w_0$. Now $a_2 \te \in T_F$ so by Lemma
  \ref{lem:tf-transitive} there exists an $f_2 \in F$ such that
  $f_2a_2 \te = \te$ so that $f_2a_2 \in stab(\te) = H \cap \Fh$. So up
  to replacing $b_1$ by $f_2a_2b_1 \in H$ we may now assume that $\rho
  = b_1a_1$ and that $w_0 \in [v_0,\rho v_0]$.
\begin{figure}
\centering

\begin{tikzpicture}
  [inner sep=0.2,
  vorbit/.style={circle, draw=black,thick,minimum size = 0.2cm},
  worbit/.style={circle, draw=black, fill=black,thick,
    minimum size = 0.2cm},scale=2]
  \clip (-0.75,0.6) rectangle (3.30,-2.25);
  \dottedcircle{0,0}{1};
  \dottedcircle{2,0}{1};
  \dottedcircle{0,-2}{1};
  \node (v0) at (0,0) [vorbit,label=above:$v_0$] {};
  \node (a2b1v0) at (0,-2)
  [vorbit,label=below:${a_2b_1v_0=a_2b_1a_1v_0}$]{};
  \node (b1v0) at (2,0) [vorbit,label=above:${b_1v_0=b_1a_1v_0}$] {};
  \writeabove{v0}{$T_F$}{0.45};
  \writeabove{b1v0}{$b_1a_1T_F$}{-0.5};
  \writeright{a2b1v0}{$\rho T_F$}{0.75};
  \node (w0) at (1,0) [worbit,label=above:$w_0$] {};
  \node (b1a1w0) at (3,0) [worbit, label=above:$b_1a_1w_0$]{};
  \node (a2w0) at (0,-1) [worbit, label=left:$a_2w_0$]{};
  \node (a1w0) at (0.707106781,-0.707106781) [worbit,
  label=right:$a_1w_0$]{};
  \draw[very thick] (v0) -- node[below] {$e$} (w0);
  \draw[very thick] (w0) -- (b1v0);
  \draw[very thick] (b1v0) -- node[below] {$b_1e$}  (b1a1w0);
  \draw[very thick] (v0) -- node[left] {$a_1e$} (a1w0);
  \draw[very thick] (v0) -- node[left] {$a_2e$}(a2w0);
  \draw[very thick] (a2w0) -- node[left] {$a_2b_1e$} (a2b1v0);  
\end{tikzpicture}

\caption{The path from $v_0$ to $\rho v_0$ only intersects one
$\FRS$-edge orbit}
\label{diagram1}
\end{figure}
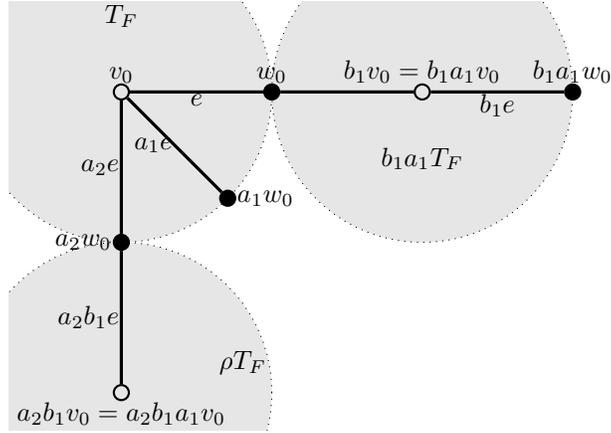

Looking again at Figure \ref{diagram1} we see that $T_F \cap b_1a_1T_F
\neq \emptyset$ but more specifically that $b_1 \te \subset b_1a_1
T_F$. This gives :
\begin{eqnarray*}
&& {}^{b_1}\stab(e)\cap {}^{b_1a_1}F \neq \{1\}\\
&& \Rightarrow \stab(e) \cap {}^{a_1}F \neq \{1\} \\
&& \Rightarrow {}^{a_1^{-1}}\stab(e) \cap F \neq \{1\}
\end{eqnarray*}
Which implies $a_1^{-1}e \subset T_F$ by Lemma \ref{lem:tf-transitive}
there is some $f_1^\mo \in F$ such that $f_1^\mo a_1^\mo \te = \te
\Rightarrow f_1^\mo a_1^\mo = b' \in H \cap F$. It follows that $\rho
f_1 = b_1a_1f_1 = b_1b'^\mo \in H$. Case 2 is proved similarly.
\end{proof}

\subsubsection{Avoiding transmissions}\label{sec:flow}
Folding sequences are difficult to analyse. The goal of this section
is to prove some lemmas that give us some control over folding
sequences, making them more tractable. 

The only moves applied to a $\G(A)$-graph $\B$ that may increase the
number of non-trivial $\B$-vertex groups are T1 transmissions and F4
folds, however an F4 fold decreases the number of cycles in the
underlying graph $B$ by 1. Whenever there is a proper transmission to
some $\B$-vertex group $\B_v$, it gets enlarged to $\bk{\B_v,\alpha}$
for some $\alpha \in A_{[v]}$, this in a sense increases the
complexity of $\B$. We will give  conditions that enable
us to perform a maximal number of F1 and F4 foldings without having to
resort to transmissions.  This enables us to keep most $\B$-vertex
groups of our $\G(A)$-graphs trivial or cyclic.

\begin{conv}\label{conv:nice-edge-groups}
  Throughout this section we will assume that in the graph of groups
  $\G(A)$, the images of edge groups are maximal abelian and malnormal
  in the vertex groups. We will also require our splittings to be
  1-acylindrical. These conditions imply that if there are distinct
  edges $e,f$ in $A$ such that $i(e)=u=i(f)$ then the images of
  $i_e:A_e \rightarrow A_u$ and $i_f:A_f \rightarrow A_u$ cannot be
  conjugated into one another inside $A_u$.
\end{conv}

\begin{defn}\label{defn:cancellable} 
  Let $u$ be a vertex in a $\G(A)$-graph $\B$ such that $\B_u $ is
  either abelian or trivial. Suppose we have a subgraph \[l=
  \xymatrix{v\bullet \ar[r]^{(a,e,b)} & u\bullet
    \ar[r]^{(a',e^\mo,b')} & \bullet w}\] such that we are able to
  perform a sequence of transmissions $t_1,t_2$, where $t_1$ and $t_2$
  are through the different edges of $l$, and suppose
  afterwards $\B_u$ is still abelian, then $l$ is called a
  \emph{cancellable path centered at $u$}.
\end{defn}

\begin{lem}\label{lem:abelian-free-prod}
  Let $A,B \leq G$ be two abelian subgroups of a fully residually free
  group $G$ such that for some $a \in A, b \in B$ we have $[a,b]\neq
  1$. Then we have \[\bra A, B \kett = A*B\]
\end{lem}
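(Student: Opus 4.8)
The plan is to show that the canonical epimorphism $A*B \to \bk{A,B}$ is injective, equivalently that no nontrivial reduced word over the free product dies in $G$. So suppose for contradiction that $w = c_1 c_2 \cdots c_n$, with $n\ge 1$ and the syllables $c_i$ lying alternately in $A\setminus\{1\}$ and $B\setminus\{1\}$, represents the identity of $G$. Each syllable is a genuine nontrivial element of $G$, and the idea is to push the entire relation into an honest free group, where the abelian subgroups $A$ and $B$ are forced to become cyclic and where $w$ becomes a visibly reduced word in a free product of two cyclic groups.

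First I would invoke full residual freeness: applying discrimination to the finite subset $\{1, c_1,\ldots,c_n, [a,b]\} \subset G$, I obtain a homomorphism $\phi\colon G \to F'$ into a free group $F'$ that is injective on this set. Consequently $\phi(c_i)\ne 1$ for every $i$, and $[\phi(a),\phi(b)] = \phi([a,b]) \ne 1$. Now $\phi(A)$ and $\phi(B)$ are abelian subgroups of the free group $F'$, hence cyclic; let $\bk{\bar a}$ and $\bk{\bar b}$ be the maximal cyclic subgroups of $F'$ containing them. Because $\phi(a)\in\bk{\bar a}$ and $\phi(b)\in\bk{\bar b}$ do not commute, these two maximal cyclic subgroups are distinct, so $\bar a$ and $\bar b$ do not commute.

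The key step is then that two non-commuting cyclic subgroups of a free group generate their free product: $\bk{\bar a,\bar b}$ is a two-generated non-abelian subgroup of $F'$, hence free of rank $2$, and the surjection $\bk{\bar a}*\bk{\bar b}\to \bk{\bar a,\bar b}$ is a surjection between free groups of rank $2$, so it is an isomorphism by the Hopf property. With this in hand, $\phi(w)$ is an alternating product of nonzero powers of $\bar a$ and of $\bar b$, i.e. a nontrivial reduced word in $\bk{\bar a}*\bk{\bar b}$, so $\phi(w)\ne 1$; this contradicts $w=1$ in $G$. I expect the only real subtlety to be this last structural fact about free groups, together with the bookkeeping of including the commutator $[a,b]$ in the discrimination set so that non-commutation (and not merely injectivity on the syllables) survives in $F'$ --- this is exactly where the hypothesis $[a,b]\ne 1$ is used. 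Note that one never has to verify $A\cap B=\{1\}$ separately, since it is subsumed by the conclusion.
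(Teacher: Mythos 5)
Your proposal is correct and follows essentially the same route as the paper: discriminate on the syllables of a reduced word together with the commutator $[a,b]$, observe that the images of $A$ and $B$ land in cyclic subgroups of the free group whose generators do not commute and hence generate their free product, and conclude that the image of the word is a nontrivial reduced word. The only cosmetic difference is that you justify the free-product structure via Nielsen--Schreier plus the Hopf property, where the paper directly invokes the standard fact that two non-commuting elements of a free group freely generate a rank~2 subgroup.
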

\begin{proof}
  Let $w=a_1b_2a_3\ldots b_n$ be product of non-trivial factors $a_i
  \in A$ and $b_j \in B$ with perhaps the exception that $a_1$ or
  $b_n$ are trivial. Since $G$ is fully residually free there exists a
  map of $G$ into $F$ such that all the non-trivial $a_i, b_j$ as well
  as some commutator $[a,b], a\in A, b\in B$ do not vanish. We have
  that the $a_i$ are sent to powers of some element $u \in F$
  and the $b_j$ are sent to powers of some $v \in F$. It follows that the
  homomorphic image of $w$ is sent to a freely reduced word in $u$ and
  $v$, and since $u,v \in F$ do not commute they freely generate a
  free subgroup of $F$. It follows that $w$ is not sent to a trivial
  element.
\end{proof}

\begin{lem}\label{lem:yellow-fold} Let $\B$ be a $\G(A)$-graph and
  suppose it has a cancellable path centered at $u$. Then it is
  possible to perform a folding (F1 or F4) at $u$ in $\B$ using only a
  Bass-Serre A1 move, and maybe a conjugation A0 move.
\end{lem}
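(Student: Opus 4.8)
The plan is to simulate the two permitted transmissions by a single Bass--Serre move, the point being that cancellability forces the two edge-labels at $u$ to agree modulo the edge group $A_e$. Orient both edges of the path away from $u$: writing the first edge of $l$ as $\varepsilon_1$ (from $v$ to $u$, with label $(a,e,b)$) and the second as $\varepsilon_2$ (from $u$ to $w$, with label $(a',e^\mo,b')$), the two edges issuing from $u$ are $\varepsilon_1^\mo$, with label $(b^\mo,e^\mo,a^\mo)$, and $\varepsilon_2$. Both map to $e^\mo$ in $A$, so $[v]=[w]=i(e)$, and the two initial components at $u$ are $b^\mo$ and $a'$; making these agree is exactly what will let us fold.

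First I would record the effect of the two transmissions. The T1 transmission through $\varepsilon_1$ enlarges $\B_u$ by $\eta_1 = b^\mo\, t_e(g_1)\, b$, and the transmission through $\varepsilon_2$ enlarges it by $\eta_2 = a'\, t_e(g_2)\,(a')^\mo$, for some $g_1,g_2 \in A_e$. Since the transmissions are proper, $\eta_1,\eta_2 \neq 1$; since the hypothesis says $\B_u$ stays abelian after both are performed, $\eta_1$ and $\eta_2$ commute inside $A_u$. Observe that $\eta_1$ lies in the conjugate subgroup $b^\mo\, t_e(A_e)\, b$ and $\eta_2$ in $a'\, t_e(A_e)\,(a')^\mo$, and that by Convention \ref{conv:nice-edge-groups} the subgroup $t_e(A_e)$ is maximal abelian and malnormal in $A_u$, so each of these conjugates is again maximal abelian in $A_u$.

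The crux is a CSA argument carried out inside $A_u$. As a subgroup of $\FRS$, $A_u$ is fully residually free, hence CSA, so two commuting nontrivial elements lie in a common maximal abelian subgroup. Applied to $\eta_1,\eta_2$ this forces $b^\mo\, t_e(A_e)\, b = a'\, t_e(A_e)\,(a')^\mo$; rearranging, $ba'$ normalizes $t_e(A_e)$, and malnormality (which makes $t_e(A_e)$ self-normalizing in $A_u$) gives $ba' \in t_e(A_e)$. Equivalently $a' = b^\mo t_e(c)$ for some $c \in A_e$: the two initial labels at $u$ differ by an element of the edge group. I expect this identification to be the only genuine obstacle, since everything downstream is bookkeeping.

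Finally I would turn this into a fold. A single A1 move on $\varepsilon_1^\mo$ with parameter $c$ replaces its initial component $b^\mo$ by $b^\mo\, i_{e^\mo}(c) = b^\mo t_e(c) = a'$, so that $\varepsilon_1^\mo$ and $\varepsilon_2$ now share the initial label $a'$ and the common image $e^\mo$. If $v=w$ this is exactly the configuration for an F4 double edge fold, the discrepancy in the terminal components being absorbed into $\B_v$, and no further move is needed. If $v \neq w$, then since $b' \in A_{[w]}=A_{[v]}$ a single A0 conjugation at $v$ can set the terminal component of $\varepsilon_1^\mo$ equal to $b'$, after which $\varepsilon_1^\mo$ and $\varepsilon_2$ carry identical labels and an F1 simple fold applies. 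In either case the fold is achieved using only an A1 move together with at most one A0 move, as claimed.
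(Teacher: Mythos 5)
Your proposal is correct and takes essentially the same route as the paper's proof: both arguments use the two proper transmissions to place commuting nontrivial elements of two conjugates of the (maximal abelian, malnormal) edge-group image into the abelian group $\B_u$, conclude via CSA/malnormality that the conjugates coincide so the two initial labels at $u$ differ by an element of the edge group, and then line up the labels with one A1 move followed by F4, or by an A0 conjugation and F1. The only cosmetic difference is that the paper allows the two edges a priori to map to distinct edges of $A$ and uses Lemma \ref{lem:abelian-free-prod} together with the conjugacy separation of Convention \ref{conv:nice-edge-groups} to force $[e]=[e']$, whereas you read Definition \ref{defn:cancellable} literally (both edges mapping to $e^{\pm 1}$) and replace that step with the uniqueness of the maximal abelian subgroup containing a nontrivial element.
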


\begin{proof} 
  W.l.o.g. the edges in the cancellable path are edges $e,e'$ with
  $i(e) = u = i(e')$ and with labels $(a,[e],b), (a',[e'],b')$
  respectively. We have $\B_u \leq A_{[u]}$ and we have injections
  $i_{[e]}:A_{[e]} \hookrightarrow A_{[u]}$ and $i_{[e']}:A_{[e']}
  \hookrightarrow A_{[u]}$. Denote the images $i_{[e]}(A_{[e]}) = A$
  and $i_{[e']}(A_{[e']}) = A'$.

  Let $\B_2$ be the $\G(A)$-graph obtained from $\B$ after applying
  the two transmissions that witness the fact that $e,u,e'$ is a
  cancellable path, then \[(\B_2)_u \cap a A a^\mo \neq \{ 1 \} \neq
  (\B_2)_u \cap a' A' a'^\mo\]

  By Lemma \ref{lem:abelian-free-prod} $\bk{a A a^\mo, a' A a'^\mo}$
  is either $(aAa^\mo)*(a' A a'^\mo)$, or free abelian. Since
  $(\B_2)_u$ is abelian, $\bk{a A a^\mo, a' A a'^\mo}$ is free
  abelian. The subgroups $a A a^\mo$ and $a' A' a'^\mo$ therefore lie
  inside a maximal abelian group $C$. It therefore follows that $A$ and
  $A'$ are not conjugacy separated in $A_{[u]}$. Convention
  \ref{conv:nice-edge-groups} forces $A = A'$ and hence $[e]=[e']$. This also means
  that by malnormality \[a'^\mo a A a^\mo a' = A \Rightarrow a'^\mo a
  = i_{[e]}(\alpha) \in A\] for some $\alpha \in
  A_{[e]}$.

  Therefore if we consider $\B$ before any transmissions were
  performed, using a Bass-Serre A1 move we can change the
  label of $e'$ as follows:\[(a',[e'],b') \rightarrow (a'
  i_{[e]}(\alpha),[e],t_{[e]}(\alpha^\mo) b') = (a,[e],b'')\] and then
  either perform an F4 fold if $t(e) = t(e')$ or an A0 conjugation at
  $t(e)$ and then an F1 fold at $u$.
\end{proof}

\begin{lem}\label{lem:long-range-crit} Let $\B$ be a
  $\G(A)$-graph. Suppose that $\B_{v_0},\ldots,\B_{v_m}$ are the
  non-trivial $\B$-vertex groups. Suppose for some $u
  \not\in\{v_0,\ldots,v_m\}$, after a sequence $t_1,\ldots,t_n$ of
  transmissions yielding a $\G(A)$-graph $\B_n$, the $\B$-vertex group
  $(\B_n)_u$ is abelian or trivial and after maybe making some A0-A2 adjustments
  it is possible to perform either move F1 or F4 at $u$. Then it is
  also possible to perform a folding move F1 or F4 at $u$ after only
  applying a sequence of A0-A2 and L1 adjustments to $\B$.
\end{lem}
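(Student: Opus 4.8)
The plan is to argue by induction on the number $n$ of transmissions in the sequence $t_1,\ldots,t_n$, exploiting the observation that the applicability of a fold is a condition on \emph{edge labels} alone. Indeed, neither F1 nor F4 imposes any precondition on the $\B$-vertex group $\B_u$: F1 requires two edges at $u$ with identical labels, and F4 requires two edges at $u$ with the same terminal vertex, the same first entry, and the same image $[e]$ in $A$. Since a T1 transmission changes only $\B$-vertex groups and leaves every edge label and the underlying graph untouched, the \emph{sole} function of the transmissions $t_1,\ldots,t_n$ is to enlarge certain vertex groups so that a subsequent A2 simple adjustment may use an element that was not originally available. By contrast, A0 and A1 moves alter edge labels using elements of the ambient groups $A_{[v]}$ and $A_{[e]}$, which are always available and never require transmission. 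Thus it suffices to show that every A2 adjustment in the enabling sequence which uses a transmitted element can be replaced by an L1 long range adjustment, since by definition an L1 move is precisely a packet of transmissions, followed by such an A2 adjustment on a single edge, followed by restoration of the enlarged vertex groups.

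For the base case $n=0$ we have $\B_n=\B$, so the fold is already enabled by a sequence of A0--A2 adjustments; as $u\notin\{v_0,\ldots,v_m\}$ the group $\B_u$ is trivial and every A2 move uses an element of an original (untransmitted) $\B$-vertex group, so all moves involved are among the permitted A0--A2 adjustments and there is nothing to do. For the inductive step I would examine the first transmission $t_1$, say from $x$ to $y$ through an edge $e_1$, which replaces $\B_y$ by $\langle \B_y,\xi\rangle$. If the adjoined generator $\xi$ is never invoked by a later A2 adjustment and is not needed to license a later transmission, then $t_1$ may simply be deleted; the remaining (shorter) sequence still makes $(\B_{n-1})_u$ abelian or trivial and still enables the fold through the same A0--A2 adjustments, so the induction hypothesis applies. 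Otherwise $\xi$ is used to adjust the label of some edge $f$, and I would splice together $t_1$, the transmissions cascading from it, and this A2 adjustment into a single L1 move that transmits along $e_1$ (and the cascading edges), performs the A2 adjustment on $f$, and then reverts the vertex groups. After this L1 move the graph $\B$ is unchanged except for the label of $f$, the remaining transmission sequence has been shortened, and the induction hypothesis again applies.

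The main obstacle is the structural requirement in the definition of an L1 move that the transmissions be carried out through edges \emph{distinct} from the edge $f$ whose label is adjusted, together with the bookkeeping of cascading transmissions (a transmission that is valid only because an earlier transmission enlarged its source group). To handle the cascade one orders the transmissions so that each is licensed by vertex groups enlarged strictly earlier, and packages a maximal licensed block together with its terminal A2 adjustment into one L1 move. To handle the forbidden-edge condition in the critical case, where $f$ is itself one of the two edges being folded at $u$, I would argue exactly as in the proof of Lemma~\ref{lem:yellow-fold}: because $(\B_n)_u$ is abelian, Lemma~\ref{lem:abelian-free-prod} together with Convention~\ref{conv:nice-edge-groups} (maximal abelian, malnormal edge images in a $1$-acylindrical splitting) forces the two folded edges to have the same image $[e]$ in $A$, with their edge-group images lying in a common maximal abelian subgroup. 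Consequently the element needed to match the two labels can be supplied by transmitting through the \emph{other} fold edge (and other incident edges) while adjusting the target edge, so the transmission path never passes through $f$ and the L1 move is legitimate.

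Finally, once every transmission-dependent A2 adjustment has been converted into an L1 move and the transmission sequence has been exhausted, the edges incident to $u$ carry exactly the labels required by F1 or F4, while all $\B$-vertex groups (apart from those that the fold itself will modify) have been restored to their original state. Hence F1 or F4 is applicable to $\B$ after a sequence of A0--A2 and L1 adjustments alone, as claimed.
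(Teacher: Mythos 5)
Your overall strategy is the paper's own, wrapped in an induction: the key dichotomy --- package the transmissions together with the terminal A2 adjustment into an L1 move when the adjusted edge carries no transmissions, and fall back on the cancellable-path analysis of Lemma \ref{lem:yellow-fold} when both fold edges carry transmissions --- is exactly how the paper argues. However, your resolution of the critical case contains a genuine error. You claim that, since the two fold edges $e,e'$ at $u$ satisfy $[e]=[e']$ with edge-group images in a common maximal abelian subgroup, ``the element needed to match the two labels can be supplied by transmitting through the \emph{other} fold edge.'' That step is unjustified and can fail. A T1 transmission into $u$ through $e'$, whose label is $(a',[e'],b')$, can only adjoin elements of the form $a' i_{[e']}(g) a'^\mo$ for those $g \in A_{[e']}$ whose counterpart $b'^\mo t_{[e']}(g) b'$ \emph{already lies in the source $\B$-vertex group} at the far end of $e'$. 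The malnormality argument of Lemma \ref{lem:yellow-fold} produces $\alpha \in A_{[e]}$ with $a'^\mo a = i_{[e]}(\alpha)$, so the element you need for the A2 adjustment is $a'a^\mo = a' i_{[e]}(\alpha^\mo)a'^\mo$; but nothing guarantees that this particular $\alpha$ is transmittable --- the source group may contain, say, only proper powers of the relevant edge-group generator, in which case no sequence of transmissions through $e'$ ever produces $a'a^\mo$, and your L1 move cannot be performed.

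The repair is precisely the conclusion of the lemma you cite, which is both stronger and simpler than what you wrote: since $a'^\mo a$ lies in the image of the edge group, an A1 Bass-Serre move --- which may insert \emph{any} element of $A_{[e]}$ and has no precondition on vertex groups --- already changes $(a',[e'],b')$ into $(a,[e],b'')$, after which F4 (or an A0 conjugation followed by F1) applies. So in the critical case no transmissions, and hence no L1 move, are needed at all; this is how the paper concludes. A secondary point you would need to address if you keep the induction: when you splice one A2 into an L1 move, that adjustment now occurs \emph{before} the remaining transmissions of the original sequence, and since it changes the label of $f$, any later transmission through $f$ (whose licensing condition $a\, i_{[f]}(g)\, a^\mo \in \B_{i(f)}$ refers to that label) may cease to be valid. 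The paper sidesteps this reordering problem entirely by forgoing induction: it examines only the A2 move at $u$ that enables the fold and splits directly into the two cases above.
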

\begin{proof}
  By Lemma \ref{lem:yellow-fold} we may assume that there are no
  cancellable paths in $\B$. We observe that the A0 conjugation and
  the A1 Bass-Serre moves which can be made at (an edge incident to)
  $u$ do not depend on $\B_u$.

  Suppose first that $(\B_n)_u$ is trivial, then there
  are no new possible A2 simple adjustments at $u$ in $\B_n$, so the
  result holds.

  Suppose now that $(\B_n)_u$ is non-trivial abelian and that in
  $(\B_n)$ we can perform an A2 move changing the label of $e$, where
  $i(e)=u$ and then after performing moves A1,A0 at $t(e)$ we can do
  either move F1 or F4 identifying $e$ and $e'$. In particular $e$ and
  $e'$ had labels $(a,[e],b)$ and $(a',[e],b')$ respectively and after
  doing move A2 at $u$ the labels became either $(a'c,[e],b)$ and
  $(a',[e],b')$ or $(a,[e],b)$ and $(ac,[e],b')$, for some $c$ in the
  edge group, respectively.

  If there were no transmission through either $e$ or $e'$, then
  w.l.o.g. there were no transmissions through $e'$ and we can use a L1
  long range adjustment in $\B$ to change $(a',[e'],b')$ to
  $(ac,[e'],b')$. In both cases after applying an A1 Bass-Serre move,
  we can then apply an F4 or (after the appropriate A0 conjugation)
  an F1 folding move.

  Otherwise there were transmissions through $e$ and $e'$, so$e,u,e'$
  is a cancellable path in $\B_{n}$. Lemma \ref{lem:yellow-fold}
  implies that we only needed A0 and A1 moves to change the labels of
  $e,e'$ to enable an F1 or F4 fold. In particular, we could have made
  these moves before in $\B$ before any transmissions were used. The
  result now follows.
\end{proof}

\subsubsection{The strategy}
For each possibility given in (\ref{eqn:reg-norm-split}) in Section
\ref{sec:collapses} we will do the following:\begin{enumerate}
\item Get the maximal abelian collapse of the JSJ of $\FRS$, and use
  this splitting as the underlying graph of groups $\G(X)$. This must
  be one of the graphs given in (\ref{eqn:reg-norm-split}).
\item We prove using Theorem \ref{thm:weidmann-nielsen} and Lemma
  \ref{lem:make-elliptic}, that we can always arrange so that $\x$ is
  somehow simple i.e. $\x$ lies in $\Fh \cup H$ or is a stable letter,
  up to conjugating boundary monomorphisms.
\item We then take the wedge $\B=\W(F,\x,\z)$, with the loop $\L(\x)$
  of length at most 2. Now $\B$ must fold down to $\G(X)$.
\item We will then apply folding moves to simplify the graph as much
  as possible \emph{while avoiding transmissions}. It will turn out
  using the results of Section \ref{sec:flow} that the resulting
  $\G(X)$-graph $\B$ will have the underlying graph as $X$.
\item All that will then remain to get a folded graph is to make some
  transmission moves, keeping track of these will tell us how the
  vertex groups are generated.
\item Finally, by arguing algebraically we will recover the original
  cyclic JSJ decomposition of $\FRS$ modulo $F$.
\end{enumerate}
\subsection{The one edge case}\label{sec:MAC-1e}

We consider the case where the maximal abelian collapse of $\FRS$ is a free product with
amalgamation: \begin{equation}
  \label{eqn:2-verts-1-edge} \FRS = \Fh*_A H
\end{equation} with $F \leq \Fh$, $A$ maximal abelian in both
factors and $H$ non-abelian.  Throughout this section $\Fh,A,H$
will denote these groups.

\subsubsection{Arranging so that $\x$ lies in $H$}

\begin{lem}\label{lem:2v-1e-simplification} Let $\FRS$ be freely
  indecomposable modulo $F$ with a maximal abelian collapse
  (\ref{eqn:2-verts-1-edge}). After Weidmann-Nielsen normalization on
  $(F,\x,\z)$ we can arrange; conjugating boundary monomorphisms if
  necessary; so that $\x$ lies in either $\Fh$ or $H$.
\end{lem}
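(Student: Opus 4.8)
The plan is to realize $(F,\x,\z)$ as a marked generating set and run Weidmann--Nielsen normalization against the Bass--Serre tree $T$ of the amalgam (\ref{eqn:2-verts-1-edge}), exploiting that this splitting is $1$-acylindrical (since it is a maximal abelian collapse) with $F$ elliptic. First I would set $S_1=F$, $h_1=\x$, $h_2=\z$, record $v_0=\fix(F)$, and apply Theorem \ref{thm:weidmann-nielsen} while never applying WN1 to $S_1$ (per the convention following Definition \ref{defn:weidmann-nielsen}), so that $F$ stays elliptic at $v_0$ and $\langle S_1\rangle=F$, $T_{\langle S_1\rangle}=T_F$. Two of the theorem's outputs are discarded at once: the free-product alternative $\langle M\rangle=F*F(\x,\z)$ would make $\FRS$ freely decomposable modulo $F$, contrary to hypothesis; and alternative (1) is vacuous because there is a single $S_i$. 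Hence, after normalization, one of the two normalized free generators, call it $\rho$, witnesses alternative (2) (that is, $\rho\, T_F\cap T_F\neq\emptyset$) or alternative (3) (that is, $\rho$ is elliptic); at the end I will rename this $\rho$ to be $\x$.

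Next I treat the touching case (2) with $\rho$ hyperbolic, which I expect to be the substantive step. Since $T$ is bipartite with parts the two vertex orbits and $\rho$ preserves orbit type, $d(v_0,\rho v_0)$ is even. Moreover every point of $T_F$ is fixed by some nontrivial element of $F$, which also fixes $v_0$, so $1$-acylindricity (the action is as in Lemma \ref{lem:tf-transitive}) gives $T_F\subseteq B(v_0,1)$; thus $\rho\, T_F\cap T_F\neq\emptyset$ forces $d(v_0,\rho v_0)\le 2$, and as $\rho$ is hyperbolic this distance is exactly $2$. The geodesic $[v_0,\rho v_0]$ then meets both vertex orbits and the single edge orbit, which is precisely the hypothesis of Lemma \ref{lem:make-elliptic}, case (1); it yields $f_1,f_2\in F$ with $f_2\rho f_1=h\in H$. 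A WN2 move by the elements $f_2,f_1\in F\subseteq M$ replaces the generator $\rho$ by $h$, so that $\x=h\in H$, as required.

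It remains to handle the elliptic case, which I expect to be the main obstacle. If $\rho$ fixes $v_0$ then $\rho\in\stab(v_0)=\Fh$ and we are done. If the vertex of $\Fix(\rho)$ closest to $v_0$ is a neighbour of $v_0$, then it is an $\Fh$-translate of the standard $H$-vertex, and conjugating the boundary monomorphism $i_e\colon A\to\Fh$ (the conjugation of boundary monomorphisms permitted in the statement) redefines the standard $H$-vertex to be that neighbour, placing $\rho\in H$. Whenever alternative (2) holds this distance-at-most-one conclusion is automatic, since an elliptic $\rho$ with $\rho\, T_F\cap T_F\neq\emptyset$ satisfies $2\,d(v_0,\Fix(\rho))=d(v_0,\rho v_0)\le 2$. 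The genuine difficulty is a generator falling only under alternative (3) whose closest fixed vertex lies at distance $\ge 2$, equivalently $d(v_0,\rho v_0)\ge 4$ so that $\rho\, T_F\cap T_F=\emptyset$: here $\rho$ is merely \emph{conjugate} into a vertex group, and conjugating it in would move $F$ off $v_0$. I would dispose of this by a minimality argument on the normalized marked set, producing a WN2 move by a suitable element of $\langle F,\z\rangle$ that shortens $d(v_0,\rho v_0)$ and so contradicts normalization, thereby forcing the fixed vertex to equal or neighbour $v_0$ and reducing to the two cases already settled. The crux I would still need to pin down is that such a shortening move always exists and can be performed without disturbing the pointwise action of $F$ at $v_0$.
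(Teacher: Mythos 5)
Your overall architecture (one application of Theorem \ref{thm:weidmann-nielsen} to $(F;\{\x,\z\})$, discarding the free-product outcome, then a case split on the witnessing generator $\rho$) is sound, and your two settled cases are essentially the paper's: the hyperbolic touching case, where $1$-acylindricity gives $T_F\subseteq B(v_0,1)$, hence $d(v_0,\rho v_0)=2$, and Lemma \ref{lem:make-elliptic} applies (the paper's Case~II), and the elliptic case with $d(v_0,\Fix(\rho))\leq 1$, handled by conjugating boundary monomorphisms (the paper's Cases~I.II and II.II). But the case you explicitly leave open --- $\rho$ elliptic with $T_{\bk{\rho}}\cap T_F=\emptyset$ --- is a genuine gap, and it is precisely where the lemma's content lies. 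Moreover, the repair you sketch cannot work as stated. ``Normalized'' in Theorem \ref{thm:weidmann-nielsen} is not defined by any minimality of displacements, so exhibiting a further shortening move would not ``contradict normalization'': the problematic configuration honestly satisfies alternative (3) of the theorem, and there is nothing to contradict. Worse, what is actually needed in this case is to \emph{conjugate} the elliptic generator (and, in the hardest sub-case, both generators simultaneously by a common element) toward $v_0$; conjugation of a free letter by an arbitrary element of $\bk{F,\z}$ is not a Weidmann--Nielsen move that can be justified by a length argument, and a bare WN2 multiplication $g_1\rho g_2$ shortening $d(v_0,\rho v_0)$ need not exist.

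The paper's missing idea is a re-marking trick: when $\x$ is elliptic with $T_\bk{\x}\cap T_F=\emptyset$, promote it to a marked \emph{subgroup} and apply Theorem \ref{thm:weidmann-nielsen} again to $(F,\bk{\x};\{\z\})$. Now WN1 moves may conjugate $\bk{\x}$ while fixing $F$ pointwise, so the theorem's alternative (1), $T_F\cap T_\bk{\x}\neq\emptyset$, becomes available and reduces to your easy elliptic case; alternative (2) with $S_1=F$ is your hyperbolic case applied to $\z$; alternative (2) with $S_2=\bk{\x}$ forces (by $1$-acylindricity and preservation of vertex orbits, since $T_\bk{\x}$ is a vertex or an edge with endpoints in distinct orbits) that $\z$ fixes an endpoint of $T_\bk{\x}$, hence is elliptic; and alternative (3) makes $\z$ elliptic outright. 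In the residual elliptic--elliptic situation the theorem is applied a third time: with marking $(F,\bk{\x},\bk{\z};\emptyset)$, free indecomposability modulo $F$ forces two of the trees $T_F$, $T_\bk{\x}$, $T_\bk{\z}$ to meet; if $T_\bk{\x}\cap T_\bk{\z}\neq\emptyset$ then $\x,\z$ fix a common vertex, and with the marking $(F,\bk{\x,\z};\emptyset)$ the only non-vacuous alternative is $T_F\cap T_{\bk{\x,\z}}\neq\emptyset$, after which acylindricity pins the common fixed vertex next to $v_0$ and conjugating boundary monomorphisms puts both generators into $H$ (or $\Fh$) at once. In short: the work your ``crux'' calls for is not done by a minimality argument but by iterated applications of Theorem \ref{thm:weidmann-nielsen} with coarser markings, which is exactly what the marking formalism is for.
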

\begin{proof}
  Since we are assuming free indecomposability of $\FRS$ modulo $F$,
  we can apply Theorem \ref{thm:weidmann-nielsen}.  Let $T$ be the
  Bass-Serre tree induced from the splitting
  (\ref{eqn:2-verts-1-edge}). Let $v_0 = \fix(F)$. We start by looking
  at the marked generating set $(F;\{\x,\z\})$. We consider different
  cases.

  {\bf Case I $T_F$ is a point:} Since $\FRS$ isn't freely
  decomposable by Theorem \ref{thm:weidmann-nielsen} w.l.o.g. after
  Weidmann-Nielsen normalization $\x$ must be elliptic. $T_\bk{\x}$ is
  either a vertex or an edge

  {\bf Case I.I $T_F \cap T_\bk{\x} = \emptyset$:} Consider the marked
  generating set $(F,\bra \x \kett;\{\z\})$ and apply Theorem
  \ref{thm:weidmann-nielsen} again. We now find that after
  Weidmann-Nielsen normalization either, w.l.o.g. $\z T_\bk{\x} \cap T_\bk{\x}
  \neq \emptyset$ or $\z$ is also elliptic.
  
  We always have the latter possibility. Indeed, by 1-acylindricity
  $T_\bk{\x}$ is either an edge or a point so for $\z T_\bk{\x} \cap T_\bk{\x} \neq
  \emptyset$ we must have that $\y$ fixes one of the endpoints of
  $T_\bk{\x}$ which implies that $\z$ is elliptic.
  
  If $\z$ is also elliptic then the trees $T_F,T_\bk{\x},T_\bk{\y}$ cannot all
  be disjoint, otherwise $\FRS$ would be freely decomposable modulo
  $F$.  If $T_\bk{\y} \cap\ T_F \neq \emptyset$ then we can switch $\x$ and
  $\z$ and pass to Case I.II Otherwise $T_\bk{\y} \cap T_\bk{\x} \neq \emptyset$
  then the tree $T_{\bra \z,\x \kett}$ is either a point, an edge, or has radius
  1. Passing to the marking $(F,\bra \x,\z \kett;\emptyset)$ and
  applying Theorem \ref{thm:weidmann-nielsen} implies that $T_{\bra
    \z,\x \kett}$ can be taken so that $T_F \cap T_{\bra \z,\x
    \kett}\neq \emptyset$. Which means that, conjugating boundary
  monomorphisms in $\Fh$ if necessary, both $\x$ and $\z$ can be
  brought into $H$.

  {\bf Case I.II $T_F \cap T_\bk{\x} \neq \emptyset$:} We are assuming that
  $T_F = v_0$. Since $\x$ fixes $v_0$ we have $\x \in \Fh$.

  {\bf Case II $T_F$ is not a point:} Conjugating boundary
  monomorphisms, we can arrange for some generator $\alpha$ of $A$ to
  lie in $F$. We apply Theorem \ref{thm:weidmann-nielsen} and find
  that either $\x T_F \cap T_F \neq \emptyset$ or $\x$ is elliptic. In
  the former case we have $d(v_0,\x v_0)=2$ and we can apply Lemma
  \ref{lem:make-elliptic} to make $\x \in H$ and we are done.
  Otherwise $\x$ is elliptic and we consider the next case.

  {\bf Case II.I $T_F \cap T_\bk{\x} = \emptyset$:} We consider the marked
  set $(F,\x;\{\z\})$ and we see that applying Theorem
  \ref{thm:weidmann-nielsen} we can either arrange for $\z$ to be
  elliptic or get that either $T_F \cap \z T_F \neq \emptyset$ or
  $T_\bk{\x} \cap \z T_\bk{\x}\neq \emptyset$. In Case I.I the latter
  possibility was seen to be impossible unless $\z$ was elliptic.  If
  $\z T_F \cap T_F \neq \emptyset$, then we can apply Lemma
  \ref{lem:make-elliptic} as for the previous case and obtain that $\z
  \in H$ and we are done.

  It therefore remains to verify the case where $\z$ is elliptic and
  $T_\bk{\y} \cap T_F = \emptyset$. For our group not to be freely
  decomposable modulo $F$ we must have that $T_\bk{\y} \cap T_\bk{\x} \neq
  \emptyset$.  Moreover since both $\x,\z$ are elliptic the tree
  $T_{\bra \x,\z \kett}$ must have radius 1. This is dealt with
  exactly as in the end of Case I.I.
  
  {\bf Case II.II $T_F \cap T_\bk{\x} \neq \emptyset$:} This means that
  $\x$ either fixes $v_0$ or some vertex in $T_F$. In all cases,
  applying Lemma \ref{lem:tf-transitive} and conjugating boundary
  monomorphisms if necessary, this implies that $\x \in H$ or $\x \in
  \Fh$.
\end{proof}

\begin{lem}\label{lem:no-conjugator}
  Let $A \leq \FRS$ be abelian and let $c$ be such that $c^\mo A c
  \cap A = \{1\}$. Then $c \not\in K = \bk{A,c^\mo A c}$ and the
  centralizer of $A$ in $K$, $Z_K(A)=A$.
\end{lem}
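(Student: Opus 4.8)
The plan is to show that $K$ is in fact the \emph{free product} $A * c^\mo A c$ and then read both conclusions off from the structure of a free product. Write $A' = c^\mo A c$, so $A'$ is abelian, $A \cap A' = \{1\}$ by hypothesis, and conjugating by $c$ gives $cA'c^\mo = A$. I will assume $A \neq \{1\}$ throughout (if $A = \{1\}$ then $K = \{1\}$ and there is nothing of substance to prove).

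First I would produce a pair of elements of $A$ and $A'$ that do \emph{not} commute. Suppose, to the contrary, that every element of $A$ commutes with every element of $A'$. Fix $1 \neq a \in A$ and set $M = Z_\FRS(a)$; since $\FRS$ is CSA, $M$ is the unique maximal abelian subgroup containing $a$, it is malnormal, and by commutation transitivity it contains all of $A$. The assumed commuting forces $c^\mo a c \in M$; but $c^\mo a c$ also lies in $A' \subseteq c^\mo M c$, so $1 \neq c^\mo a c \in M \cap c^\mo M c$, and malnormality of $M$ yields $c \in M$. Then $c$ commutes with all of $A \subseteq M$, whence $c^\mo A c = A$ and $A \cap A' = A \neq \{1\}$, contradicting the hypothesis. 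Therefore some $a \in A$ and $a' \in A'$ satisfy $[a,a'] \neq 1$, and Lemma \ref{lem:abelian-free-prod} gives $K = \langle A, A'\rangle = A * A'$.

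With the free product decomposition in hand, both claims reduce to standard facts. For $c \notin K$: in a free product $G_1 * G_2$ no nontrivial element of one factor is conjugate to an element of the other, so $gG_2g^\mo \cap G_1 = \{1\}$ for every $g \in G_1 * G_2$. If $c$ were in $K$, the relation $cA'c^\mo = A \neq \{1\}$ would exhibit a nontrivial intersection of a $K$-conjugate of the factor $A'$ with the factor $A$, which is impossible; hence $c \notin K$. For the centralizer: for a nontrivial element $a$ of a free factor $G_1$ of $G_1 * G_2$ one has $Z_{G_1 * G_2}(a) \subseteq G_1$. Applying this with a fixed $1 \neq a \in A$ gives $Z_K(A) \subseteq Z_K(a) \subseteq A$, and since $A$ is abelian the reverse inclusion $A \subseteq Z_K(A)$ is immediate, so $Z_K(A) = A$.

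The main obstacle is the first step, namely ruling out that $A$ and $A'$ commute elementwise; everything afterwards is formal free-product bookkeeping. The crux is converting the hypothesis $A \cap c^\mo A c = \{1\}$ into the existence of a non-commuting pair, and it is precisely the malnormality (CSA) of maximal abelian subgroups of $\FRS$ that makes this conversion go through.
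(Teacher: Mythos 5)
Your proof is correct and takes essentially the same route as the paper: both arguments establish $K = A * c^\mo A c$ via Lemma \ref{lem:abelian-free-prod} and then deduce $c \notin K$ and $Z_K(A)=A$ from standard free product structure facts (a conjugate of one factor cannot meet the other nontrivially, and centralizers of nontrivial factor elements lie in that factor). If anything you are more careful than the paper, whose proof invokes Lemma \ref{lem:abelian-free-prod} without verifying its hypothesis that some $a \in A$ and $a' \in c^\mo A c$ fail to commute; your CSA/malnormality argument supplies exactly that missing verification.
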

\begin{proof}
  By Lemma \ref{lem:abelian-free-prod} $K = A*B$ where $B =  c^\mo A
  c$.  If there is some word $U(A,B)$ such that we have the relation
  \[U(A,B)^\mo a^\mo U(A,B) a = 1\] for some $a\in A$, then the free
  product structure implies that $U(A,B) = U(A)$, so the second claim
  holds. It also follows that $Z_K(B)=B$. Suppose now that $c \in K$
  then for each $a \in A$ we have that $c^\mo a c \in B$, which is
  impossible from the free product structure.
\end{proof}

\begin{lem}\label{lem:2-v-1-e-Fx} Suppose  $\FRS$ has a maximal
  abelian collapse (\ref{eqn:2-verts-1-edge}). If $\FRS$ is generated
  as $\bk{F,\x,\y}$ with $\x \in \Fh$, then $\FRS$ is freely
  decomposable modulo $F$.
\end{lem}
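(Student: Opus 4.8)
The plan is to apply Weidmann--Nielsen normalization (Theorem \ref{thm:weidmann-nielsen}) to a marked generating set in which the whole ``$\Fh$-part'' of the generators is bundled into one subgroup, and then to read off a free decomposition modulo $F$ from each outcome. Write $P=\bk{F,\x}$. Since $\x\in\Fh\supseteq F$ we have $P\le\Fh$, so $P$ is elliptic and fixes $v_0=\fix(\Fh)$, and $\FRS=\bk{P,\y}$. Note first that $\y\notin\Fh$: otherwise the entire generating set would lie in $\Fh$, forcing $\FRS\le\Fh\subsetneq\FRS$, the inclusion being proper because $H$ is non-abelian and hence $A\subsetneq H$. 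I would then run Theorem \ref{thm:weidmann-nielsen} on the marked set $(S_1;\{\y\})$, where $S_1$ generates $P$, following the convention that $S_1$ is never conjugated so $F$ stays pointwise fixed. If the first alternative holds then $\FRS=\bk{S_1}*F(\{\y\})=P*\bk{\y}$; as $\y\ne 1$ and $F\le P$, this is a free decomposition modulo $F$ and we are done.

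Otherwise we obtain a normalized set. Because there is only one subgroup $S_1$, alternative (1) of Theorem \ref{thm:weidmann-nielsen} is vacuous, and the normalized element has the form $\tilde\y=p_1\y p_2$ with $p_1,p_2\in P$ (each WN2 move multiplies $\y$ on either side by an element of $S_1\subseteq P$); in particular $\tilde\y\notin\Fh$ as well. The remaining possibilities are (2) $\tilde\y\,T_P\cap T_P\ne\emptyset$ with $\tilde\y$ hyperbolic, or (3) $\tilde\y$ elliptic. For alternative (2) I would use $1$-acylindricity of the maximal abelian collapse together with $F\le P$ (whence $T_F\subseteq T_P$) to check that $d(v_0,\tilde\y v_0)=2$ and that the segment $[v_0,\tilde\y v_0]$ crosses two $\FRS$-vertex orbits. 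Lemma \ref{lem:make-elliptic} then yields $f_1,f_2\in F$ with $f_2\tilde\y f_1\in H$ (the stable-letter outcome cannot occur, the quotient graph being a single edge), which lands us in alternative (3) with the fixed vertex adjacent to $v_0$.

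So it suffices to treat alternative (3). Say $\tilde\y$ fixes $x_0\ne v_0$ (it is $\ne v_0$ since $\tilde\y\notin\Fh$). If $d(v_0,x_0)\ge 2$, then by $1$-acylindricity the pointwise stabilizer of $[v_0,x_0]$ is trivial, so the elliptic subgroups $P$ and $\bk{\tilde\y}$ generate their free product $\FRS=P*\bk{\tilde\y}$, again a free decomposition modulo $F$. The remaining, and main, case is $d(v_0,x_0)=1$; conjugating by an element of $P$ we may take $x_0=w_0$, so $\tilde\y\in H$, and $\tilde\y\notin A$ (otherwise it would fix the edge $[v_0,w_0]$, hence $v_0$). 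Here the folding machinery does the work: by Theorem \ref{thm:folded} a folding sequence starting from the wedge $\W(F,\x,\tilde\y)$ — whose only non-trivial vertex group is $P\le\Fh$ and whose $\tilde\y$-loop sits over the single edge — must terminate at the induced splitting of $\FRS=\bk{P,\tilde\y}$, namely the full amalgam $\Fh*_A H$. Since transmissions across the edge can only enlarge vertex groups by elements of the edge group $A$, the vertex group over $w_0$ can be built up to at most $\bk{A,\tilde\y}$; as the process does reach $H$, this forces $H=\bk{A,\tilde\y}$. As $\tilde\y\notin A$ and $A$ is maximal abelian, commutation transitivity gives $[\tilde\y,a]\ne 1$ for all $a\in A\setminus\{1\}$, so Lemma \ref{lem:abelian-free-prod} (with the malnormality input $\tilde\y^\mo A\tilde\y\cap A=\{1\}$ recorded by Lemma \ref{lem:no-conjugator}) yields $H=A*\bk{\tilde\y}$. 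Amalgamating over $A$ collapses the identified copy of $A$, so $\FRS=\Fh*_A(A*\bk{\tilde\y})=\Fh*\bk{\tilde\y}$, a free decomposition modulo $F$.

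I expect the genuine difficulty to sit in this last case, in two places: the translation-length bookkeeping needed in alternative (2) to legitimately apply Lemma \ref{lem:make-elliptic} and bring $\tilde\y$ into $H$, and the step $H=\bk{A,\tilde\y}$, i.e.\ verifying rigorously that a single element over the edge group must generate all of $H$. The cleanest justification of the latter is exactly the folding argument above, where one uses that transmissions move only edge-group (hence $A$-) elements, so no folding sequence can enlarge the $w_0$-vertex group past $\bk{A,\tilde\y}$.
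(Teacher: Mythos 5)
Your reduction of alternative (2) of Theorem \ref{thm:weidmann-nielsen} is where the proof breaks. Running the normalization on the marked set $(S_1;\{\y\})$ with $\bk{S_1}=P=\bk{F,\x}$, what alternative (2) hands you is $\tilde\y T_P\cap T_P\neq\emptyset$ --- a statement about $T_P$, not about $T_F$. Lemma \ref{lem:make-elliptic} cannot be invoked: its hypothesis is $T_F\cap\rho T_F\neq\emptyset$, and the inclusion $T_F\subseteq T_P$ that you cite goes the wrong way (a point of $T_P$ moved into $T_P$ by $\tilde\y$ need not lie in $T_F$; indeed $T_F$ may be the single point $v_0$, in which case the lemma says nothing at all). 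More seriously, the mechanism behind Lemma \ref{lem:make-elliptic} is Lemma \ref{lem:tf-transitive}, whose proof takes a retraction $\psi:\FRS\rightarrow F$ and replaces a conjugator $g$ by $\psi(g)\in F$ --- i.e.\ it is exactly the CC property of $F$ (Lemma \ref{lem:CC}). The subgroup $P=\bk{F,\x}$ is not known to be a retract of $\FRS$, so there is no $P$-analogue of Lemma \ref{lem:tf-transitive}. Concretely, alternative (2) gives you two $H$-type vertices $aw_0,\,a'w_0\in T_P$ with ${}^{a}A\cap P\neq\{1\}\neq{}^{a'}A\cap P$ and $\tilde\y\in a'Ha^{-1}$; to land in alternative (3) you must produce $p,p'\in P$ with $p\tilde\y p'$ elliptic, which amounts to $a,a'\in PA$. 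When $P=F$ that is precisely what the retraction trick delivers; for $P=\bk{F,\x}$ nothing delivers it, and this is not ``bookkeeping'' --- it is the heart of the matter. This configuration (hyperbolic $\y$ with $d(v_0,\y v_0)=2$, detected only through $P$-stabilizers) is exactly the paper's Case III, the $\G(X)$-graph forming a cycle of length two with $\B_v=\bk{F,\x}$ and $\B_u$ trivial, and the paper spends the bulk of its proof there, using Lemma \ref{lem:no-conjugator} to show that every transmission/collapse pattern that could complete the folding is impossible, contradicting Theorem \ref{thm:folded}. Your proposal has no substitute for that analysis.

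The rest of your argument is essentially sound, and your main case is a clean rendering of the paper's Case I: the claim $H=\bk{A,\tilde\y}$ is correct and can even be justified without folding, since $\FRS=\bk{P,\tilde\y}\leq\bk{\Fh,\bk{A,\tilde\y}}=\Fh*_A\bk{A,\tilde\y}$ forces $H=\bk{A,\tilde\y}$, whence $H=A*\bk{\tilde\y}$ by Lemma \ref{lem:abelian-free-prod} and $\FRS=\Fh*\bk{\tilde\y}$. Two smaller points: in the elliptic case with $d(v_0,x_0)\geq 2$ the conclusion $\bk{P,\tilde\y}=P*\bk{\tilde\y}$ is true, but the reason is the $1$-acylindrical ping-pong (fixed sets of nontrivial elements have diameter $\leq 1$, so alternating products move the midpoint arbitrarily far), not merely triviality of the pointwise stabilizer of $[v_0,x_0]$, which by itself does not imply a free product; and in that case you should take $x_0$ to be the fixed point of $\tilde\y$ closest to $v_0$, noting also that the conjugation bringing $x_0$ to $w_0$ uses an element of $\Fh$ rather than of $P$ --- harmless here, since the resulting decomposition $\Fh*\bk{h}$ still contains $F$ in the first factor, but worth saying.
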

\begin{proof}
  Let $\G(X)$ be the graph of groups representing the splitting
  (\ref{eqn:2-verts-1-edge}). Then $\FRS$
  can be represented by a $\G(X)$-graph $\B$ which consists
  of a vertex $v$ with $\B_v = \bfxk$ and a $\z$-loop $\L(\z,v)$.

  Since $\pi_1(\B,v)=\FRS$, by Theorem \ref{thm:folded} we should be
  able to bring $\B$ to $\G(X)$ using the moves of Section
  \ref{sec:graph-folds}. Now only the $\B$-vertex group $\B_v$ 
  is non-trivial. We do our folding process using only  moves 
  A0-A3,F1,F4,L1 and S1. If an F4 collapse occurs then $\B$ doesn't
  have any cycles, but it may have an extra non-trivial cyclic
  $\B$-vertex group. By doing S1 shaving moves we can assume that
  either $\B$ is a line with endpoints the vertices $v,u$ with $\B_v =
  \bfxk$, $\B_u$ cyclic and all other $\B$-vertex groups trivial, or
  $\B$ is a loop with only $\B_v=\bfxk$ non-trivial.

  By Lemma \ref{lem:long-range-crit} and Lemma \ref{lem:yellow-fold}
  we see that we can always avoid using transmissions unless one of
  the three following possibilities occurs: 

  {\bf Case I:} $\B$ has two vertices $v,u$ and one edge $e$. At this
  point we have $\B_u = H'$ which is cyclic. After a transmission
  $\B_u = \bk{A,H'}$, where $A$ is conjugable into the image of an
  edge group. The graph is then folded, but we see by Lemma
  \ref{lem:abelian-free-prod} that $H = A * H'$ which implies free
  decomposability of $\FRS$ modulo $F$.

  {\bf Case II:} $\B$ has no cycles, three vertices and two edges. We
  assume that all S1 shaving moves were performed. Then the only
  possibility for $\B$ is that it has endpoints $v$ and $u$, with
  $\B_u$ cyclic, and the other $\B$-vertex group $\B_w$ is trivial. If
  it is possible to transmit from $\B_u$ then $u$ can be shaved
  off. If it is possible to transmit from $v$ to $w$ and then from $w$
  to $u$ then there is a cancellable path centered at $w$ and Lemma
  \ref{lem:yellow-fold} applies. So in both cases we can continue folding
  $\B$ without using transmissions. 

  {\bf Case III:} $\B$ consists of a cycle of length 2, with vertices
  $v$ and $u$. This means that the $\B$-vertex group $\B_u$ is
  trivial. For $\B$ to be folded, after a transmission we must be able
  to perform an F4 collapse move.

  If the collapse is towards $u$ we distinguish two
  possibilities. Either no transmissions are needed and $\B_u$ will be
  generated by some element and the edge group, which implies free
  decomposability modulo $F$; or there is a transmission from $v$ to
  $u$ through one edge followed by a transmission from $u$ to $v$
  through the other edge, but this gives a cancellable path so Lemma
  \ref{lem:yellow-fold} applies, and we get a collapse from $u$
  towards $v$ which we immediately deal with below.

  The remaining possibility is that the collapse is towards $v$. If no
  transmissions were needed then $\B_u$ is generated by the edge group
  and we have $\FRS = \Fh$ -- contradiction.  Otherwise by Lemma
  \ref{lem:long-range-crit}, before the collapse, we must do
  \emph{two} transmissions from $v$ to $u$ so that, after an A0
  conjugation move, we have w.l.o.g. $\B_u =\brakett{b_1 A'
    b_1^\mo,A''}$ where $A',A'' \subset A$ and $b_1 \not\in A$.  We
  now either do a simple adjustment and collapse towards $v$ or some
  non-trivial transmission from $u$ to $v$. For a collapse we need
  $b_1 \in \brakett{b_1 A' b_1^\mo,A''}$, for a non-trivial
  transmission we need either $A''$ or $b_1 A' b_1^\mo$ to have a
  proper centralizer in $\B_u$. Both of these are forbidden by Lemma
  \ref{lem:no-conjugator}.
      
\end{proof}

\subsubsection{Arranging so that $\y$ lies in $H$ as well}

\begin{lem}\label{lem:F-and-abelian}
  Let $A\leq \FRS$ be a maximal non-cyclic abelian subgroup, then
  either:\begin{itemize}
  \item $\bk{F,A} = F * A$.
  \item There is some $p\in F$ such that $\bk{F,A} =  F*_{\bk{p}}A$.
  \item $\bk{F,A} = \bk{F,r,A \mid [r,p] = 1}$ where $p \in F$ and
    $\bk{a} = \bk{F,r} \cap A$. Moreover $\bk{F,r} = \bk{F,a}$. In
    particular no conjugate of an element of $F$ is centralized by $A$.
\end{itemize}
\end{lem}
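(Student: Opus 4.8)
The plan is to realize $\bk{F,A}$ as the fundamental group of a one-edge graph of groups with $A$ as one vertex group, and then to identify the other vertex group using the Chiswell--Remeslennikov classification (Theorem \ref{thm:onevariable}). Write $H=\bk{F,A}$. Since $A$ is non-cyclic abelian it is not contained in the free group $F$, so $F<H$ and Corollary \ref{cor:get-split} furnishes a $(\leq\Z)$-splitting of $H$ modulo $F$. By Convention \ref{conv:abelian-elliptic} (equivalently, because a non-cyclic abelian subgroup of a fully residually free group is elliptic in any cyclic splitting) the subgroup $A$ is elliptic, and $A$, being maximal abelian in $\FRS$ and hence in $H$, may be taken to be a maximal abelian vertex group; $F$ is elliptic as well. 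As $H$ is generated by the two elliptic subgroups $F$ and $A$, it is generated by two vertex stabilizers, so the action on the Bass--Serre tree has no hyperbolic generators and, after collapsing, $H$ is an amalgam $H=F_0*_E A$ over a single edge, with $F\leq F_0$ and $E=F_0\cap A$ either trivial or infinite cyclic. A normal form argument shows $F_0=\bk{F,E}$: any element of the vertex group $F_0$ written as an alternating word in $F$ and $A$ must have all of its $A$-syllables inside $F_0\cap A=E$.

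First I would dispose of the two easy cases. If $E=\{1\}$ then $H=F_0*A$ and, since $F_0=\bk{F,E}=F$, we get $H=F*A$, the first alternative. If $E=\bk{p}$ with $p\in F$, then $F_0=\bk{F,p}=F$ and $H=F*_{\bk p}A$ with $\bk p=F\cap A$, the second alternative. The substantive case is $E=\bk{a}$ with $a\notin F$. Then $F_0=\bk{F,a}$ is generated by $F$ together with a single element and, as a subgroup of $\FRS$ containing $F$, is a fully residually $F$ quotient of $F*\bk{x}$ with $F_0\neq F$. Theorem \ref{thm:onevariable} forces $F_0$ to be either $F*\bk{a}$ or a rank $1$ centralizer extension $F*_{\bk\alpha}Ab(\alpha,r)$. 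In the first subcase $H=(F*\bk a)*_{\bk a}A=F*A$ (amalgamating the free factor $\bk a$ into $A$), which again gives the first alternative. In the remaining subcase $F_0=F*_{\bk\alpha}Ab(\alpha,r)=\bk{F,r}$ with $[r,\alpha]=1$ and $\alpha\in F$, so, setting $p=\alpha$, we obtain $H=\bk{F,r}*_{\bk a}A$ with $\bk a=\bk{F,r}\cap A$ and $\bk{F,a}=F_0=\bk{F,r}$: this is precisely the third alternative, and assembling the two amalgams yields the relative presentation $\bk{F,r,A\mid [r,p]=1}$.

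It remains to verify the final assertion of the third case, and this is where I expect the only real difficulty. The key input is the maximality of $A$: since $A$ is maximal abelian, an element is centralized by $A$ only if it already lies in $A$, so \emph{centralized by $A$} may be replaced throughout by \emph{lies in $A$}. First, $a\notin Ab(\alpha,r)$: otherwise $a$ would commute with $\alpha$, whence, by commutation transitivity, $A$ would centralize $\alpha\in F$, forcing $\alpha\in A$ and contradicting both the maximality of $A$ and $\alpha\notin A$. Consequently $a$ is hyperbolic in the centralizer-extension splitting of $F_0$, and by Corollary \ref{cor:get-split} $a$ is not conjugate in $\FRS$ to any element of $F$. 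The plan is then to show that any nontrivial conjugate of an $F$-element lying in $A$ would have to be a power of $a$: using the amalgam $H=F_0*_{\bk a}A$ and the conjugacy theorem for amalgams (together with Corollary \ref{cor:get-split} to keep conjugates of $F$-elements elliptic), such an element is conjugate, within the abelian group $A$, into the edge group $\bk a$, hence equal to a power of $a$, contradicting the previous sentence. The delicate point — and the main obstacle — is controlling conjugacy in $\FRS$ rather than merely in $H$; I would handle it by working inside the ambient iterated centralizer extension $G$ provided by Theorem \ref{thm:ICE}, where the maximal abelian subgroup containing $A$ is an extended centralizer $Z_G(w)$ and the elements of it conjugate into $F$ are confined to the cyclic base $\bk w$, so that no such element can be the hyperbolic generator $a$.
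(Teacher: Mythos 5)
Your derivation of the trichotomy itself is sound and is essentially the paper's own argument: both proofs rest on ellipticity of $F$ and of the non-cyclic abelian subgroup $A$, the observation that a group generated by elliptic subgroups has simply connected quotient graph (hence collapses to a one-edge amalgam with $A$ a vertex group and the other vertex group generated by $F$ and the edge group), and then Theorem \ref{thm:onevariable} applied to $\bk{F,a}$, with the degenerate cases absorbed into $F*A$. Up to that point the two arguments differ only in bookkeeping.

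The genuine gap is in your proof of the final assertion of the third case, and you flag it yourself (``the plan is then to show\ldots'', ``the delicate point --- and the main obstacle''). Concretely: (1) knowing $a\notin F$ and $a\notin Ab(\alpha,r)$ does not make $a$ \emph{hyperbolic} in $F*_{\bk{\alpha}}Ab(\alpha,r)$; hyperbolicity requires that $a$ not be \emph{conjugate} into either vertex group, and excluding $a=gf'g^{-1}$ with $f'\in F$ not conjugate to a power of $\alpha$ is essentially equivalent to the statement being proved (such an $a$ makes $g^{-1}Ag$ a non-cyclic maximal abelian subgroup centralizing an element of $F$, which is not absurd in itself --- it is exactly the configuration that must be shown to land in alternatives 1 or 2). (2) Corollary \ref{cor:get-split} guarantees ellipticity of elements conjugate into $F$ only in the particular splitting $D$ constructed in its proof (induced from an iterated centralizer extension), not in an arbitrary cyclic splitting such as the centralizer-extension splitting of $F_0$; ellipticity does not transfer between splittings for free, so the inference ``$a$ hyperbolic in the centralizer splitting, hence not conjugate in $\FRS$ into $F$'' is unjustified. (3) The closing ICE argument is only a sketch whose last step is missing: granting that the elements of the extended centralizer $Z_G(w)$ conjugate into $F$ lie in $\bk{w}$, you still owe an argument that $a$ cannot lie in a conjugate of $\bk{w}$. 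There is also a small circularity earlier: you rule out $a\in Ab(\alpha,r)$ by ``contradicting $\alpha\notin A$'', but $\alpha\notin A$ was never established; the correct contradiction is that $\alpha\in A$ forces $\alpha\in\bk{F,r}\cap A=\bk{a}$, so $\alpha=a^k$, and combined with $a\in Ab(\alpha,r)$ this puts $a\in\bk{\alpha}\leq F$, contradicting $a\notin F$. By contrast, the paper disposes of the final claim with a two-line algebraic argument requiring none of this machinery: if $[a,f]=1$ for some nontrivial $f\in F$, commutation transitivity (applied through $a\in A$) would give $[f,A]=1$, while normal forms in the amalgam $\bk{F,a}*_{\bk{a}}A$ show $[f,A]\neq 1$. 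You should either complete steps (1)--(3) or replace them with that direct commutation-transitivity argument.
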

    
\begin{proof}
  Since $\bk{F,A} \leq \FRS$ it is fully residually $F$ and since $F
  \neq \bk{F,A}$ it has an essential cyclic or free splitting modulo
  $F$. Since $A$ is non-cyclic abelian it is forced to be
  elliptic. Suppose that $\bk{F,A} \neq F * A$, then it has an
  essential cyclic splitting and since it is generated by elliptic
  elements the underlying graph of groups has no cycles. The only
  possibilities are that there is some $a \in A$ such that $\bk{F,A} =
  \bk{F,a}*_{\bk{a}} A$ or that there is some $p \in F$ such that
  $\bk{F,A} = F*_{\bk{p}} H$ with $H = \bk{p,A}$. 

  We first consider the former case, since $\bk{F,A}$ is assumed to be
  freely indecomposable, Theorem \ref{thm:onevariable} implies that
  $\bk{F,a} = \bk{F,r | [r,p]}$ for some $p \in F$, but if we had 
  $[a,f]=1$ for some $f \in F$ by normal forms we see $[f,A]\neq 1$
  contradicting commutation transitivity. $A$ is therefore the
  centralizer of an element that is hyperbolic in the cyclic JSJ of
  $\bk{F,r | [r,p]}$ modulo $F$.

  The remaining case is that $\bk{F,A} = F*_{\bk{p}} H$ with $H =
  \bk{p,A}$.  By Lemma \ref{lem:abelian-free-prod} $H\neq \bk{p}*A$
  only if $p \in A$. The result therefore holds.
\end{proof}
    
\begin{cor}\label{cor:F-and-abelian}
  Let $A\leq \FRS$ be a non-cyclic maximal abelian subgroup that
  centralizes some $\alpha \in F$, and let $a \in \FRS$ be such that
  there is no $f \in F$ such that $fa \in A$. Then $\bk{F,aAa^\mo} = F
  * aAa^\mo$ and $a \not\in \bk{F,aAa^\mo}$.
\end{cor}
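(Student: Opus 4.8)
The plan is to apply the trichotomy of Lemma \ref{lem:F-and-abelian} to the conjugate subgroup $aAa^\mo$, which is again non-cyclic and maximal abelian, and to show that only the free-product alternative can occur. First I would record that since $A$ centralizes the (nontrivial) element $\alpha \in F$ and $A$ is maximal abelian, commutation transitivity forces $\alpha \in A$, so $\alpha \in A \cap F$ and the centralizer $Z(\alpha)$ equals $A$. Conjugating, $aAa^\mo$ centralizes $a\alpha a^\mo$, which is a conjugate of the element $\alpha \in F$. Hence the third alternative of Lemma \ref{lem:F-and-abelian}, whose defining feature is that \emph{no} conjugate of an $F$-element is centralized by the abelian group, is immediately excluded for $aAa^\mo$. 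So $\bk{F,aAa^\mo}$ is either $F*aAa^\mo$ or $F*_{\bk{q}}aAa^\mo$ with $\bk{q}=F\cap aAa^\mo$, and everything reduces to ruling out the amalgam, i.e. to proving $F\cap aAa^\mo=\{1\}$.

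The heart of the argument, and the step I expect to be the main obstacle, is this: assume towards a contradiction that there is $q\in F\cap aAa^\mo$ with $q\neq 1$, and produce $g\in F$ with $ga\in A$, contradicting the hypothesis. Set $b=a^\mo q a\in A$, so that $q=aba^\mo$. Since $q$ and $a\alpha a^\mo$ both lie in the abelian group $aAa^\mo$, they commute; applying a retraction $\psi:\FRS\rightarrow F$ and writing $f_0=\psi(a)$ gives $[q,f_0\alpha f_0^\mo]=1$ in the free group $F$. Two nontrivial commuting elements of a free group are powers of a common element, so $q=c^m$ and $f_0\alpha f_0^\mo=c^n$ for some $c\in F$; hence $\alpha=d^n$ and $q=f_0 d^m f_0^\mo$, where $d=f_0^\mo c f_0\in F$.

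Now I would observe that $d$ commutes with $\alpha=d^n$, so $d\in Z(\alpha)=A$, using that $A$ is the maximal abelian subgroup containing $\alpha$; thus $d\in A\cap F$ and in particular $d^m\in A$. Combining $q=aba^\mo$ with $q=f_0 d^m f_0^\mo$ yields $(f_0^\mo a)\,b\,(f_0^\mo a)^\mo=d^m$; writing $a_1=f_0^\mo a$, both $b$ and $d^m$ are nontrivial elements of $A$ with $a_1 b a_1^\mo=d^m\in A$, so $A\cap a_1 A a_1^\mo\neq\{1\}$ and malnormality of the maximal abelian subgroup $A$ forces $a_1\in A$. Then $g=f_0^\mo\in F$ satisfies $ga=a_1\in A$, the desired contradiction. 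This establishes $F\cap aAa^\mo=\{1\}$, which rules out the amalgam and leaves only $\bk{F,aAa^\mo}=F*aAa^\mo$.

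Finally, for $a\not\in\bk{F,aAa^\mo}$ I would argue purely from the free-product structure just obtained: $\alpha$ is a nontrivial element of the factor $F$ and $a\alpha a^\mo$ is a nontrivial element of the factor $aAa^\mo$, and these two are conjugate via $a$. Were $a$ to lie in $K=F*aAa^\mo$, this would exhibit a conjugacy inside $K$ between nontrivial elements of distinct free factors, which is impossible. Hence $a\not\in\bk{F,aAa^\mo}$, completing the proof.
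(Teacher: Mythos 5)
Your proof is correct, and it shares the paper's skeleton --- both apply the trichotomy of Lemma \ref{lem:F-and-abelian} to the maximal abelian subgroup $aAa^\mo$, discard the centralizer-extension alternative because $aAa^\mo$ centralizes $a\alpha a^\mo$, a conjugate of an element of $F$, and reduce everything to showing $F \cap aAa^\mo = \{1\}$ --- but you exclude the amalgam by a genuinely different, more elementary mechanism. The paper first pins down the amalgamated element: writing $p = a\alpha'' a^\mo$ with $\alpha'' \in A$, it uses discrimination of $A$ by retractions (elements of $aAa^\mo \setminus F$ can be sent to arbitrarily high powers of $\alpha$ while $p$ stays fixed) to force $\alpha'' = \alpha^n$, and then invokes property CC (Lemma \ref{lem:CC}) to produce $f \in F$ with $f(a\alpha^n a^\mo)f^\mo = \alpha^n$, whence $fa \in Z(\alpha^n) = A$. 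You bypass both ingredients: a single retraction converts $[q, a\alpha a^\mo]=1$ into a relation in the free group $F$, root extraction there gives $d \in F$ with $\alpha = d^n$ and $q = f_0 d^m f_0^\mo$, commutation transitivity puts $d$ in $A = Z(\alpha)$, and malnormality of $A$ (the CSA property) applied to $a_1 b a_1^\mo = d^m$ yields $f_0^\mo a \in A$, the desired contradiction. In effect you inline the retraction trick that proves Lemma \ref{lem:CC} and replace the paper's least detailed step --- the ``arbitrarily high powers'' discrimination claim --- by the sharper combination of root extraction and malnormality, which makes your argument more self-contained. For the final claim, your route (nontrivial elements of distinct free factors are never conjugate in a free product) also differs mildly from the paper's (the centralizer of $\alpha$ in $F * aAa^\mo$ lies in $F$, so if $a$ belonged to this group then the non-cyclic abelian group $A$ would be forced into the free group $F$); both are sound.
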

\begin{proof}
  By Lemma \ref{lem:F-and-abelian} $\bk{F,aAa^\mo} = F * aAa^\mo$ or
  $F*_{\bk{p}}aAa^\mo$ for some $p \in F$. Suppose towards a
  contradiction that the latter possibility held. Then $p = a \alpha''
  a^\mo; \alpha'' \in A$. $A$ is discriminated by $F$-retractions
  $\FRS \rightarrow F$. This means that every element of $aAa^\mo
  \setminus F$ can be sent to arbitrarily high powers of $\alpha$ via
  such retractions. It follows that $\alpha'' = \alpha^n$. Since $F$
  has property CC there is some $f \in F$ such that $f (a \alpha^n
  a^\mo) f^\mo = \alpha^n$ which implies that $[fa,\alpha] = 1
  \Rightarrow fa \in A$, since $A$ is maximal abelian. This
  contradicts the hypothesis.

  We therefore have $\bk{F,aAa^\mo} = F * aAa^\mo$, and $a \not\in
  \bk{F,aAa^\mo}$ follows from the fact that the free product
  structure implies that the centralizer of $\alpha$ lies in $F$.
\end{proof}
    
\begin{lem}\label{lem:2-v-1-e-Hx}
  Suppose $\FRS$ is freely indecomposable modulo $F$ and has a maximal
  abelian collapse (\ref{eqn:2-verts-1-edge}) and is generated as
  $\bk{F,\x,\y}$ with $\x \in H$. Then $\FRS$ is generated by
  $\brakett{F,\x,\z'}$ where $\z'$ also lies in $H$.
\end{lem}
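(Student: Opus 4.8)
The plan is to run the folding argument of Lemma~\ref{lem:2-v-1-e-Fx} in the present situation, but now using that $\x$ already lies in $H$, and to close the argument by feeding its output back into Lemma~\ref{lem:2-v-1-e-Fx} to obtain a contradiction. Let $\G(X)$ be the graph of groups for the maximal abelian collapse~(\ref{eqn:2-verts-1-edge}), with a vertex $v$ labelled $\Fh$ and a vertex $w$ labelled $H$ joined by a single edge carrying $A$. Form the wedge $\B=\W(F,\x,\z)$ based at $v$, so that $\B_v=F$ and both the $\x$-loop and the $\z$-loop are attached at $v$; since $\x\in H$ the loop $\L(\x)$ has length $2$. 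The two edges of $\L(\x)$ map to the same edge of $X$ and, after reorienting, have the same initial label, so a single F4 double-edge fold collapses $\L(\x)$ and deposits $\x$ into the vertex group over $w$. After this fold $\B$ is a single edge joining $v$ and $w$ with $\B_v=F$ and $\B_w=\bk{\x}$, and the $\z$-loop is still based at $v$.

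Next I would use the structural constraint that drove Proposition~\ref{prop:free-decomp-FRS}: the underlying graph of $\B$ carries two independent cycles while $X$ is a tree, so by Theorem~\ref{thm:folded} any folding sequence reducing $\B$ to $\G(X)$ contains exactly two F4 collapses. One is the collapse of $\L(\x)$ above; the second must kill the cycle coming from the $\z$-loop. Following the strategy of Section~\ref{sec:flow}, I would perform the reduction using only the moves A0--A2, F1, F4, L1 and S1, postponing transmissions until they are genuinely forced; Lemma~\ref{lem:long-range-crit} and Lemma~\ref{lem:yellow-fold} guarantee that this is possible. When a transmission is forced it travels through the single edge carrying $A$, and Lemma~\ref{lem:no-conjugator}, Corollary~\ref{cor:F-and-abelian} (and Lemma~\ref{lem:F-and-abelian} when $A$ is non-cyclic) control the resulting vertex groups and rule out the offending configurations, exactly as in the Case~III analysis of Lemma~\ref{lem:2-v-1-e-Fx}. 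The outcome is that the second F4 collapse deposits a single element $\z'$ into one vertex group while leaving $F$ and $\x$ untouched, and since folding preserves $\pi_1(\B)=\FRS$ this realizes $\FRS=\bk{F,\x,\z'}$.

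It then remains only to decide into which vertex group $\z'$ is deposited. If the second collapse were directed towards $v$, then $\z'\in\Fh$, so $\FRS=\bk{F,\z',\x}$ would be generated modulo $F$ by the two elements $\z',\x$ with $\z'\in\Fh$; Lemma~\ref{lem:2-v-1-e-Fx}, applied with $\z'$ playing the role of its hypothesis element lying in $\Fh$ and $\x$ the other generator, would then force $\FRS$ to be freely decomposable modulo $F$, contradicting the hypothesis. Hence the collapse is directed towards $w$ and $\z'\in H$, which is the desired conclusion.

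The main obstacle is the second paragraph: proving that the reduction of the $\z$-loop can genuinely be organized so that its entire contribution is a single element of one vertex group, rather than being split across the edge or forcing an uncontrolled enlargement of $\B_v$. This is where the $1$-acylindricity of the maximal abelian collapse and the transmission-avoidance lemmas of Section~\ref{sec:flow} do the work. The one feature absent from Lemma~\ref{lem:2-v-1-e-Fx} is that the $H$-side vertex group already carries $\x$ before the $\z$-loop is processed, so one must check that the forced transmissions through $A$ interact with $\bk{\x}$ and with conjugates of $A$ in the way prescribed by Lemma~\ref{lem:no-conjugator} and Corollary~\ref{cor:F-and-abelian}; this is the only genuinely new verification needed.
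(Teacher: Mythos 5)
Your setup and endgame are sound and match the paper's own strategy: forming the wedge, collapsing the length-two $\x$-loop by an F4 fold to reach the configuration with $\B_v=F$, $\B_u=\bk{\x}$ plus the $\z$-loop (this is exactly where the paper's proof starts), and closing with the dichotomy that a collapse toward the $\Fh$-side yields $\FRS=\bk{F,\z',\x}$ with $\z'\in\Fh$, whence Lemma \ref{lem:2-v-1-e-Fx} gives free decomposability and a contradiction. The counting of two F4 collapses via Theorem \ref{thm:folded} is also correct.

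The gap is the entire middle of the argument. You assert as the ``outcome'' that the second F4 collapse deposits a single element $\z'$ into one vertex group \emph{while leaving $F$ and $\x$ untouched}, but that is precisely the statement requiring proof, and it is not what happens in several branches of the folding process. The paper's proof consists of a case analysis on where the folding stalls: an F4 collapse may occur prematurely, producing a line with a new endpoint carrying $\bk{\z'}$ (the paper's Case II); or no collapse occurs and one is left with a cycle through a trivial-group vertex (Case III.I) or a length-two cycle on the vertices $u,v$ (Case III.II). In Case III.II forced transmissions genuinely do enlarge the vertex groups --- one reaches configurations such as $\B_u=\bk{\x,\alpha}$ with $\alpha\in F\cap A$ and $\B_v=\bk{F,a\alpha'a^\mo}$ --- and the argument is not that these are avoided but that each such branch terminates in an impossibility, via the assumption $(\dagger)$ that no $f\in F$ satisfies $fa\in A$, property CC of $F$, Corollary \ref{cor:F-and-abelian}, and Lemmas \ref{lem:no-conjugator} and \ref{lem:abelian-free-prod}. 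Your appeal to doing this ``exactly as in the Case III analysis of Lemma \ref{lem:2-v-1-e-Fx}'' cannot stand in for that work: the analysis there has no non-trivial group on the $H$-side and never invokes Corollary \ref{cor:F-and-abelian}, which is the key tool here. The interaction you defer to your final paragraph as ``the only genuinely new verification needed'' is not a peripheral check; it is the bulk of the proof, and it is absent from your proposal.
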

\begin{proof}
  We start with the $\G(X)$-graph $\B$ with one edge $e$ and two
  vertices $v,u$ with $\B_v = F$ and $\B_u = \brakett{\x}$, then at
  $v$ attach the $\z$-loop $\L(\z,v)$. Start our adjustment-folding
  process applying only moves A0-A3,F1,F4,L1,S1, as much as possible,
  but avoiding transmissions.

  {\bf Case I:} We brought $\B$ to a graph with two vertices $u,v$ and
  one edge without having to use transmissions. We then either have
  $\B_u=F$ and $\B_v = \bk{\x,\z'}$, in which case the result follows;
  or $\B_u=\bk{F,\y'}$ and $\B_v = \bk{\x}$, which by Lemma
  \ref{lem:2-v-1-e-Fx} implies free decomposability of $\FRS$ modulo
  $F$ -- contradiction.

  {\bf Case II:} An F4 collapse occured. In this case $\B$ is a line
  with one endpoint either $u$ or $v$ and the other endpoint is some
  vertex $w$ with $\B_w = \bk{\y'}$. We see that if any transmissions
  from $w$ were possible then could use an S1 shaving move and remove
  $w$. On one hand this graph should fold down to $\G(X)$, on the
  other hand by Lemma \ref{lem:long-range-crit}, if $w$ is at distance
  at least 2 from either $u$ or $v$ we can apply an F1 move without using
  transmissions.

  We can therefore assume that $\B$ has two edges, and three vertices
  with $w$ as an endpoint. The last fold will be an F1 fold.  We note
  that it is impossible for there to be a transmission \emph{to} $w$
  followed by a transmission \emph{from} $w$. Indeed, suppose this was
  the case, then after the first transmission we have by Lemma
  \ref{lem:abelian-free-prod} that $\B_w = \bk{\y',A'} =
  \bk{\y'} * \bk{A'}$ where $A'$ is conjugable into $A$. Now for there
  to be a transmission back from $w$ we need $A'$ to have a proper
  centralizer in $\bk{\y'} * \bk{A'}$, which is impossible.

  So any transmissions preceding a simple adjustment that enables the
  F1 fold will be through the edge connecting $u$ and $v$. It follows
  that instead we can make a long range adjustment L1 on the edge
  adjacent to $w$, and then apply the F1 fold. Since no transmissions
  were used we have reduced this to the Case I.

  {\bf Case III:} No collapses occured. In this case $\B$ contains a
  cycle and the only non-trivial $\B$-vertex groups are $\B_u$ and
  $\B_v$. We note that the cycle must have an even length so by Lemma
  \ref{lem:long-range-crit} we can assume the cycle has
  length 2 and contains $u$ or $v$. We distinguish two subcases

  {\bf Case III.I:} $\B$ has three vertices $u,v,w$ with $B_w = \{1\}$
  and the cycle in $\B$ consists of two edges $e,f$ going from $w$ to
  $u$ or $w$ to $v$. First note that it is impossible for there to be
  a transmission to $w$ through $e$ followed by a transmission from
  $w$ through $f$ as then we would have a cancellable path and could
  make a collapse at $w$ contrary to our assumptions.

  Suppose now that it was possible to transmit to $w$ through the
  edges $e$ and $f$, then by Lemma \ref{lem:no-conjugator} it is
  impossible to perform a simple adjustment at $w$ preceding an F4
  collapse at $w$ and it is impossible to make a new transmission
  from $w$ back through $e$ or $f$. It therefore follows that the next
  fold was preceded only by transmission through the edge between $u$
  and $v$, so again we can make an L1 long range adjustment to change
  the label of either $e$ or $f$ (but not both) and then perform
  either an F4 collapse, which brings us to case I, or an F1 fold
  which brings us to case III.II.

  {\bf Case III.II:} $\B$ has two vertices, two edges and one
  cycle. Then we can represent $\B$ as the $\G(X)$-graph
  \begin{equation}\label{eqn:1-e-2-v-2-C} 
    \xymatrix{v\bullet
      \ar@/^/[r]^{(a,e,b)}
      \ar[r]_{(a',e,b')} & u \bullet}
  \end{equation} with $a,a' \in \Fh$ and
  $b,b' \in H$ and $\B_u = F, \B_v = \bk{\x}$. Now if a
  transmission from $u$ were possible then w.l.o.g. we could
  express $\FRS$ as the same $\G(X)$-graph but with $\B_v = \bk{F,
    ab\x b^\mo a^\mo}$ and $\B_u = \{1\}$, which by Lemma
  \ref{lem:2-v-1-e-Fx} implies free decomposability modulo $\FRS$.

  It follows that if no transmissions from $v$ are possible then no
  transmissions at all are possible and we can therefore make an F1
  collapse without transmission and reduce to the Case I.

  We may therefore assume that a transmission from $\B_v$ is possible
  which implies that $F \cap A = \bk{\alpha} \neq \{1\}$ and that
  there is some $f \in F$ such that $fa' \in A$. This means that
  using an A2 simple adjustment, an A1 Bass-Serre move and an A0
  conjugation we may assume that $a' = b' = 1$ in
  (\ref{eqn:1-e-2-v-2-C}). We can therefore put $\B_v = F$ and $\B_u =
  \bk{\x,\alpha}$.  Now note that if it were also possible to transmit
  from $v$ through the other edge then before any transmissions we
  could use an A2 simple adjustment an A1 Bass-Serre move to change the
  label $(a,e,b)$ to $(1,e,b'')$ in (\ref{eqn:1-e-2-v-2-C}), which
  means that we can reduce to the Case I. Hence,
  \begin{itemize}
  \item[$(\dagger)$]We may assume that there is no $f \in F$ such that
    $fa \in A$.
  \end{itemize}
  If no further transmissions are possible then we must be able to
  perform a collapse from $u$ to $v$. This means that $b \in
  \bk{\x,\alpha}$ so that after collapsing we get a $\G(X)$-graph $\B$
  with one edge labeled $(1,e,1)$ and $\B$-vertex groups $\B_v =
  \bk{F,a}$ and $\B_u = \bk{\x,\alpha}$. After transmissions we have
  $\B_u = \bk{A,\x}$ which by Lemma \ref{lem:abelian-free-prod}
  implies that $\FRS$ is freely decomposable modulo $F$.

  We can therefore assume that $b \not \in \bk{\x,\alpha}$ but that
  there is a transmission from $u$ to $v$ through the edge labeled
  $(a,e,b)$. This means that there is some $\alpha' \in A$ such that
  $b^\mo \alpha' b \in \bk{\x,\alpha}$. So after the transmission we
  have that $\B_v = \bk{F,a \alpha' a^\mo}$. By Corollary
  \ref{cor:F-and-abelian} and ($\dagger$) $\B_v \leq F* a A a^\mo$ and
  since $\bk{\alpha'}\leq A$, it follows that $\B_v = F* a
  \bk{\alpha'} a^\mo$. So no further transmissions from $v$ to $u$ are
  possible since by the free product structure $Z_{\B_v}(\alpha)$, the
  centralizer of $\alpha$ in $\B_v$ is $\bk{\alpha}$.

  Since $\B$ must fold down to a graph with one edge we must be able
  to perform a simple adjustment to change the label $(1,e,1)$ to
  $(a,e,1)$ and fold $\B$ down to: \[ \xymatrix{ v \bullet
    \ar[r]^{(a,e,1)} & \bullet u}\] But to do this we would need $a\in
  \B_v$, but we saw in the previous paragraph that after the only
  possible transmission $\B_v \leq F* a A a^\mo$, so by Corollary
  \ref{cor:F-and-abelian} this is impossible. Having exhausted all the
  possibilities the result follows.
\end{proof}

From the previous lemmas we get.

\begin{prop}\label{prop:2-v-1-e-classification} If $\FRS$ is
  freely indecomposable modulo $F$ and has a maximal abelian collapse
  (\ref{eqn:2-verts-1-edge}), then, conjugating boundary monomorphisms
  if necessary, $\FRS$ can be generated by $F$ and two elements $\x,\z
  \in H$.
\end{prop}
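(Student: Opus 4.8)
The plan is to assemble the three preceding lemmas, since each handles one piece of the argument and together they exhaust all cases. First I would apply Lemma \ref{lem:2v-1e-simplification} to the marked generating set $(F,\x,\z)$: Weidmann-Nielsen normalization, together with a possible conjugation of the boundary monomorphisms of the splitting (\ref{eqn:2-verts-1-edge}), lets me assume that $\x$ is elliptic and in fact lies in one of the two vertex groups, so that either $\x \in \Fh$ or $\x \in H$.

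Next I would eliminate the possibility $\x \in \Fh$. Here the free indecomposability hypothesis does the work: Lemma \ref{lem:2-v-1-e-Fx} asserts that if $\FRS = \bk{F,\x,\z}$ with $\x \in \Fh$, then $\FRS$ is freely decomposable modulo $F$. This contradicts the standing assumption, so we must have $\x \in H$. With $\x \in H$ secured, I would then invoke Lemma \ref{lem:2-v-1-e-Hx}, which replaces the second generator $\z$ by an element $\z'$ that also lies in $H$, while preserving $\FRS = \bk{F,\x,\z'}$. Renaming $\z'$ as $\z$ gives exactly the desired conclusion: $\FRS$ is generated by $F$ together with two elements $\x,\z \in H$, after the conjugations of boundary monomorphisms performed along the way.

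The synthesis itself is routine; the genuine difficulty is buried in the lemmas being cited, and I expect the main obstacle to be Lemma \ref{lem:2-v-1-e-Hx}. Its proof is a delicate folding-sequence analysis on the wedge $\W(F,\x,\z)$ over the graph of groups for (\ref{eqn:2-verts-1-edge}), and the hard step is to rule out, in each possible folding configuration, that absorbing $\z$ into $H$ forces a free splitting modulo $F$. This rests essentially on the machinery of Section \ref{sec:flow} — the no-transmission criteria of Lemmas \ref{lem:yellow-fold} and \ref{lem:long-range-crit}, which let us realize the needed folds using only adjustment moves — together with the malnormality of maximal abelian subgroups and the conjugacy-control results (Corollary \ref{cor:F-and-abelian} and Lemma \ref{lem:no-conjugator}) that forbid the unwanted transmissions that would otherwise expose a free factor. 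Once those inputs are granted, the proposition follows immediately by the three-step chain above.
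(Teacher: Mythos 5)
Your proof is correct and is exactly the paper's argument: the paper derives Proposition \ref{prop:2-v-1-e-classification} directly from Lemmas \ref{lem:2v-1e-simplification}, \ref{lem:2-v-1-e-Fx} and \ref{lem:2-v-1-e-Hx} ("From the previous lemmas we get"), in precisely the three-step chain you describe. Your assessment that the real content lives in Lemma \ref{lem:2-v-1-e-Hx} and the no-transmission machinery of Section \ref{sec:flow} is also accurate.
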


\subsubsection{Recovering the original cyclic splitting from the
  maximal abelian collapse}

This next proposition enables us to revert to a cyclic splitting.
\begin{prop}\label{prop:1e-h-3gen}
  Suppose that $\FRS$ is freely indecomposable modulo $F$ and has a
  maximal abelian collapse (\ref{eqn:2-verts-1-edge}) then $\FRS$
  admits a cyclic splitting \[\F'*_\brakett{\alpha}H'\] where either:
  \begin{enumerate}
  \item $\F'=F$ and $H'$ is generated by $\brakett{\alpha,\x,\z},
    \alpha \in F$. Hence $H'$ is a three generated fully residually
    free group (see Theorem \ref{thm:FGMRS}).
  \item $\F'=\brakett{F,\alpha}$ and $H'=\brakett{\x,\z}$ with $\alpha
    \in H'$ i.e. $H'$ is free of rank 2.
  \end{enumerate}
\end{prop}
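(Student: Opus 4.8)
The plan is to run the folding process of Theorem \ref{thm:folded} on the wedge $\W(F,\x,\z)$ and to read the desired cyclic splitting off the folded $\G(X)$-graph, exactly in the spirit of the strategy laid out in Section \ref{sec:2-nonab-classification} and carried out in Lemma \ref{lem:2-v-1-e-Hx}. First I would invoke Proposition \ref{prop:2-v-1-e-classification} to arrange, after conjugating boundary monomorphisms, that both $\x$ and $\z$ lie in $H$. I then build $\B=\W(F,\x,\z)$ with central vertex $v$ carrying $\B_v=F$ over the $\Fh$-vertex; since $\x,\z\in H$ the loops $\L(\x;v)$ and $\L(\z;v)$ each have length $2$, passing through an intermediate vertex lying over the $H$-vertex. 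The underlying graph of $\B$ has two cycles while the target amalgam is a tree, so two F4 collapses must occur in any folding sequence bringing $\B$ to its induced splitting.

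Next I would push all foldings through while deferring transmissions to the very end, using Lemma \ref{lem:yellow-fold} and Lemma \ref{lem:long-range-crit} precisely as in the case analysis of Lemma \ref{lem:2-v-1-e-Hx}. The two intermediate $H$-vertices merge under an F1 fold and the two cycles die under F4 collapses, leaving a one-edge graph with $\B_v=F$ on the $\Fh$-side and $\B_u=\bk{\x,\z}$ on the $H$-side, joined by a single edge whose edge group is cyclic; after an A0/A1 normalization call its generator $\alpha$, so that $\bk{\alpha}$ is a cyclic subgroup of the maximal abelian $A$.

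The two conclusions then come from a dichotomy on the final transmissions across this single edge. Because the edge group is cyclic, at most the one element $\alpha$ is transmitted, and in exactly one direction. If $\alpha\in F$, the transmission runs from the $\Fh$-side into $\B_u$, producing $\F'=F$ and $H'=\bk{\alpha,\x,\z}$; here $H'$ is a three-generated subgroup of $\FRS$, hence three-generated fully residually free, and Theorem \ref{thm:FGMRS} applies, giving case (1). If $\alpha\notin F$, then $\alpha$ must already lie in $\bk{\x,\z}$, so the transmission runs from the $H$-side into $\B_v$, giving $\F'=\bk{F,\alpha}$ and $H'=\bk{\x,\z}$ free of rank $2$, which is case (2). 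In each case one finally checks the output is a genuine amalgam $\F'*_{\bk{\alpha}}H'$ with $\bk{\alpha}=\F'\cap H'$ maximal cyclic, using CSA/malnormality together with Lemma \ref{lem:abelian-free-prod} and Lemma \ref{lem:no-conjugator}.

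I expect the main obstacle to be twofold. First, I must rule out the \emph{straddling} possibility, in which $\alpha$ lies in neither $F$ nor $\bk{\x,\z}$: this is where one forces $\alpha$ onto a single side by using that $\FRS$ is $(N+2)$-generated and freely indecomposable modulo $F$, so that no genuinely new generator can be absorbed into the edge group without either increasing the rank or producing a free factor. Second, I must descend from the maximal abelian edge group $A$ of the collapse (which may be non-cyclic) to the cyclic edge group $\bk{\alpha}$ of the recovered splitting, i.e. recognize the correct cyclic subgroup of $A$; for this I would lean on Lemma \ref{lem:F-and-abelian} and Corollary \ref{cor:F-and-abelian} to control the interaction of $F$ with a non-cyclic $A$ and to locate the relevant cyclic direction. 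Once these two points are settled, the rest is the routine bookkeeping of vertex-group generation already used throughout this section.
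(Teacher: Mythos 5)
Your opening move --- invoking Proposition \ref{prop:2-v-1-e-classification} to arrange $\x,\z \in H$ --- is exactly how the paper's proof starts, and your final dichotomy ($\alpha\in F$ giving case (1); $\alpha\notin F$ forcing $\alpha\in\bk{\x,\z}$ and giving case (2)) is precisely the paper's argument for the case where the amalgamating subgroup $A$ of (\ref{eqn:2-verts-1-edge}) is \emph{cyclic}. (The paper gets the generation statements $\Fh=\bk{F,\alpha}$ and $H=\bk{\alpha,\x,\z}$ directly from normal forms rather than by re-running the wedge-folding process, but that difference is cosmetic.)

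The genuine gap is the case where $A$ is non-cyclic, which is the real content of this proposition: passing from the maximal abelian collapse back to a \emph{cyclic} splitting. Your folding argument asserts that the folded one-edge graph has cyclic edge group $\bk{\alpha}$, but the folded $\G(X)$-graph for the whole group simply recovers the splitting $\Fh*_AH$ itself, whose edge group is the possibly non-cyclic $A$; so the assertion ``because the edge group is cyclic, at most the one element $\alpha$ is transmitted'' begs the question. You flag this as your second obstacle and point at Lemma \ref{lem:F-and-abelian} and Corollary \ref{cor:F-and-abelian}, but you never carry out the argument, and in particular you never eliminate the third alternative of Lemma \ref{lem:F-and-abelian}: that no conjugate of $A$ centralizes any element of $F$, in which case $\Fh=\bk{F,A}=\Fh'*_{\bk a}A$ with $\Fh'$ a rank 1 centralizer extension of $F$. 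Were that case to occur, unfolding would yield a splitting over a cyclic group with $\F'$ a rank 1 centralizer extension of $F$ amalgamated along a proper power --- which is neither case (1) nor case (2) of the statement. The paper devotes a full paragraph to killing this case: it unfolds to $\Fh'*_{\bk a}(A*_AH)$, shows $\bk{\x,\z}\cap A=\bk{a^n}$, invokes Theorem \ref{thm:onevariable} and the absence of back-transmissions to conclude $\FRS=\Fh'*_{\bk{a^n}}\bk{\x,\z}$, and derives a contradiction with $a$ lying in a non-cyclic abelian subgroup. Without this step (and the companion observation that in the surviving subcase $\Fh=F*_{\bk\alpha}A$, the unfolding $\FRS=F*_{\bk\alpha}(A*_AH)$ lands in case (1), with $H'=\bk{\alpha,\x,\z}$ containing the non-cyclic $A$ and hence three-generated but not free of rank 2), your proof does not establish the stated dichotomy.
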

\begin{proof}
  We first consider when the amalgamating maximal abelian subgroup $A$
  in (\ref{eqn:2-verts-1-edge}) is cyclic. We write
  $A=\brakett{\alpha}$ and $\F'=\F, H'=H$, then this splitting is in
  fact already a maximal abelian collapse so we can apply Proposition
  \ref{prop:2-v-1-e-classification} and we get that $\x,\z \in H$.
  Looking at normal forms we have that $\Fh=\brakett{F,\alpha}$ and
  $H=\brakett{\alpha,\x,\z}$ if $\alpha\in F$ then $\Fh=F$ and $H$ is
  three generated fully residually free. If $\alpha \not\in F$ then we
  must have $\alpha \in \brakett{\x,\z}$ and it follows that
  $H=\brakett{\x,\z}$.

  We now consider the case where $A$ in (\ref{eqn:2-verts-1-edge}) is
  not cyclic. By Proposition \ref{prop:2-v-1-e-classification} we can
  assume that $\x,\z \in H$ and it follows that
  $\Fh=\brakett{F,A}$. First suppose that some conjugate of $A$
  centralizes some element of $F$. Then by Lemma
  \ref{lem:F-and-abelian} and by free indecomposability of $\FRS$
  modulo $F$ we have: \[\Fh=F*_\bk{\alpha}A\] which means that $\FRS$
  admits the splitting: \[\FRS=F*_\bk{\alpha}(A*_AH)\] and as before
  we get that $H'=\brakett{\x,\z,\alpha}$ is a three generated fully
  residually free group.

  The remaining possibility is that no conjugate of $A$ centralizes an
  element of $F$. So by Lemma \ref{lem:F-and-abelian} and by free
  indecomposability of $\FRS$ we have $\bk{F,A} = \Fh'*_\bk{a} A$
  where $\Fh'$ is a rank 1 centralizer extension of $F$. This means
  that we can unfold the splitting (\ref{eqn:2-verts-1-edge}) to get
  the cyclic splitting\[ \Fh'*_\bk{a} (A*_AH)\] and by proposition
  \ref{prop:2-v-1-e-classification}, $F \leq \Fh'$ and $\x,\y \in
  H$. Since no conjugate of $F$ intersects $A$ we need
  w.l.o.g. $\bk{x,y} \cap A = \bk{a^n}$ for some $n \in \Z$. Now by
  free indecomposability and by Theorem \ref{thm:onevariable},
  $\bk{F,a^n}$ must be a centralizer extension of $F$. Note however
  that the centralizer of $a^n$ in $\bk{F,a^n}$ must be $\bk{a^n}$ so
  there are no transmissions from $\Fh'$ back to $(A*_AH)$. Therefore
  \[\FRS = \Fh'*_{\bk{a^n}}(\bk{x,y})\] which contradicts
  the assumption that $\bk{a}$ was contained in a non-cyclic abelian
  subgroup.
\end{proof}
    
\define{An element in a free group is called primitive if it belongs
  to a basis}. For the next result, we need:
\begin{thm}[Main Theorem of \cite{Baumslag-1965}]
  \label{thm:Baumslag1965}
  Let $w=w(x_1,x_2,\ldots,x_n)$ be an element of a free group $F$
  freely generated by $x_1,x_2,\ldots,x_n$ which is neither a proper
  power nor a primitive. If $g_1,g_2,\ldots,g_n, g$ are elements of a
  free group connected by the relation\[ w(g_1,g_2,\ldots,g_n)=g^n
  ~~(m>1)\] then the rank of the group generated by
  $g_1,g_2,\ldots,g_n, g$ is at most $n-1$.
\end{thm}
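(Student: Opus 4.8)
The plan is to reduce the statement, by elementary properties of subgroups of free groups, to a single assertion about when a power of an element of a subgroup is primitive in that subgroup; that assertion generalises Schützenberger's theorem and is the genuinely hard point. Write $E$ for the ambient free group containing $g_1,\dots,g_n,g$, and set $H_0=\langle g_1,\dots,g_n\rangle$ and $H=\langle g_1,\dots,g_n,g\rangle$. Subgroups of $E$ are free, so $\rk$ means the size of a free basis, and the hypothesis is the relation $w(g_1,\dots,g_n)=g^m$ with $m>1$ (we assume $w\neq 1$, which is implicit in the hypotheses, since otherwise the relation reads $1=g^m$ and puts no constraint on the $g_i$).

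First I would eliminate $g$. Choose a free basis $b_1,\dots,b_s$ of $H_0$, where $s=\rk(H_0)$, and write $w(g_1,\dots,g_n)=w_0(b_1,\dots,b_s)$. Since $g^m=w_0(b_1,\dots,b_s)$ already lies in $H_0$, the generating set $\{b_1,\dots,b_s,g\}$ of $H$ satisfies a nontrivial relation: in the abstract free group on symbols $\beta_1,\dots,\beta_s,\gamma$ the word $\gamma^m w_0(\beta)^{-1}$ is reduced and nonempty, yet maps to $1$ in $H$. Hence $\{b_1,\dots,b_s,g\}$ is not a free basis, so $\rk(H)\le s=\rk(H_0)$. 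It therefore suffices to prove $\rk(H_0)\le n-1$, i.e. that $g_1,\dots,g_n$ do \emph{not} freely generate $H_0$.

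Suppose they do, so that $\theta\colon x_i\mapsto g_i$ is an isomorphism of the free group $F=\langle x_1,\dots,x_n\rangle$ onto $H_0$, and put $u:=\theta(w)=g^m\neq 1$. If $g\in H_0$ then $w=(\theta^{-1}g)^m$ is a proper $m$-th power in $F$, contradicting that $w$ is not a proper power; the same contradiction arises if $u$ is a proper power in $H_0$. Thus $g\notin H_0$ and $u$ is not a proper power in $H_0$; and as $w$ is not primitive, $u$ is not primitive in $H_0$. Now let $\langle c\rangle$ be the maximal cyclic subgroup of $E$ containing $g$, write $g=c^k$, so $u=c^{km}$, and set $\langle c\rangle\cap H_0=\langle c^d\rangle$ (nontrivial as $u\neq 1$, with $d\ge 2$ since $c\in H_0$ would force $g\in H_0$). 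From $u=(c^d)^{km/d}$ with $c^d\in H_0$ and $u$ not a proper power in $H_0$ we are forced to $km=d$, i.e. $u=c^d$ is exactly the generator of $\langle c\rangle\cap H_0$. Everything thus reduces to the following, in which we have arranged that $c^d$ is \emph{not} primitive in $H_0$:

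\medskip\noindent\emph{Crux. If $c\in E$ is not a proper power, $c\notin H_0$, and $\langle c\rangle\cap H_0=\langle c^d\rangle$ with $d\ge 2$, then $c^d$ is primitive in $H_0$.}
\medskip

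\noindent To prove the Crux I would realise the subgroup $K=\langle H_0,c\rangle\le E$ as the honest amalgamated free product $K\cong H_0\ast_{\langle c^d\rangle}\langle c\rangle$ along $\langle c^d\rangle=\langle c\rangle\cap H_0$. Being a subgroup of the free group $E$, $K$ is itself free; but by Shenitzer's criterion an amalgam of two free groups over an infinite cyclic subgroup $C$ is free only when $C$ is a free factor of one of the factors. Since $d\ge 2$, the subgroup $\langle c^d\rangle$ is not a free factor of $\langle c\rangle\cong\mathbb{Z}$, so it must be a free factor of $H_0$; equivalently $c^d$ is primitive in $H_0$, contradicting the standing assumption. \textbf{The main obstacle is the first step}: verifying that $\langle H_0,c\rangle$ is the \emph{honest} amalgam $H_0\ast_{\langle c^d\rangle}\langle c\rangle$ with no further folding or collapse. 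This is the equation-theoretic heart of the matter — ruling out extra folding is exactly where the combinatorics of solutions of $w=g^m$ over free groups (cancellation diagrams, or the placement of the $c$-power loop in the Stallings core graph of $H_0$) must be controlled, the prototypical case $n=2$, $u=[g_1,g_2]$ being precisely Schützenberger's theorem that a commutator which is a proper power forces its two entries to commute. Finally, both hypotheses on $w$ are essential and homology alone will not do: abelianising the relation gives only $\rk(H)\le n$, since the first Betti number of $\langle x_1,\dots,x_n,t\mid w=t^m\rangle$ equals $n$ whatever $w$ is; the extra unit drop to $n-1$ is genuinely non-abelian, drawn from ``$w$ not a proper power'' (to exclude $g\in H_0$ and powers of $u$ in $H_0$) and from ``$w$ not primitive'' (in the Crux).
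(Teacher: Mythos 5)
First, a point of order: this paper does not prove the statement at all --- it is imported verbatim as the Main Theorem of \cite{Baumslag-1965} and used as a black box (in the proof of Lemma \ref{lem:abelianbetween}) --- so your attempt has to stand on its own merits rather than be compared with an internal argument. Your elementary reductions are correct: eliminating $g$ to get $\rk(H)\le\rk(H_0)$ for $H_0=\langle g_1,\dots,g_n\rangle$ and $H=\langle H_0,g\rangle$; and, assuming the $g_i$ freely generate, the facts that $g\notin H_0$, that $u=w(g_1,\dots,g_n)$ is neither primitive nor a proper power in $H_0$, and that $u=c^d$ generates $\langle c\rangle\cap H_0$ with $d=km\ge 2$.

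The genuine gap is the step you yourself flag, and it is worse than an unverified step: it is false. Under exactly the hypotheses of your Crux, $\langle H_0,c\rangle$ need \emph{not} be the amalgam $H_0\ast_{\langle c^d\rangle}\langle c\rangle$. Take $E=F(a,b)$, let $H_0=\ker\big(F(a,b)\to\mathbb{Z}/2\big)$ where $a\mapsto 1$, $b\mapsto 0$, so $H_0$ is free of rank $3$ with basis $\{\,b,\;aba^{-1},\;a^2\,\}$, and take $c=a$. Then $c$ is not a proper power, $c\notin H_0$, and $\langle c\rangle\cap H_0=\langle a^2\rangle$, so all your hypotheses hold (with $d=2$). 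But $aba=(aba^{-1})a^2\in H_0$, so the element $a\cdot b\cdot a\cdot(aba)^{-1}$ is an alternating product of four syllables whose $H_0$-syllables $b$ and $(aba)^{-1}$ lie outside $\langle a^2\rangle$; by the normal form theorem it is nontrivial in $H_0\ast_{\langle a^2\rangle}\langle a\rangle$, yet it clearly equals $1$ in $E$. Hence the canonical map $H_0\ast_{\langle a^2\rangle}\langle a\rangle\to\langle H_0,a\rangle=F(a,b)$ has nontrivial kernel, the ``honest amalgam'' claim fails, and Shenitzer's criterion gives nothing. (In this example $a^2$ happens to be primitive in $H_0$, so the Crux itself survives; it is your route to it that collapses.)

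Moreover the gap cannot be patched locally, because the repair is circular: you only need honesty of the amalgam in the case where $c^d$ is \emph{not} primitive in $H_0$, but if the Crux is true that case is empty, so any proof of ``$c^d$ not primitive $\Rightarrow$ the amalgam is honest'' already carries the entire weight of the theorem --- nothing has been reduced. Note also that your Crux is formally stronger than the quoted theorem and cannot be recovered from it: Baumslag's conclusion bounds $\rk\langle g_1,\dots,g_n,g\rangle$, which by itself does not forbid $g_1,\dots,g_n$ from freely generating a rank-$n$ subgroup of a smaller-rank group (rank is not monotone under inclusion in free groups --- $F_2$ contains $F_3$). So the proposal proves the easy periphery, correctly isolates the hard core, and then offers for that core an argument whose key claim is refuted by an index-two subgroup of $F(a,b)$.
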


\begin{lem}\label{lem:abelianbetween} Suppose $\FRS$ splits as
  \[ F*_\bk{\alpha}A*_\bk{\gamma} \bra \x,\z \kett\] where $A$ is
  non-cyclic abelian and $\bk{\gamma} \cap \bk{\alpha} = \{1\}$, then
  $\FRS$ is freely decomposable modulo $F$. In particular, $\bra \gamma \kett$
  must be a free factor of $\bk{\x,\z}$.
\end{lem}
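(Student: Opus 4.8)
The plan is to reduce everything to a single statement: the element $\gamma$, viewed inside the vertex group $\bk{\x,\y}$, is \emph{primitive}. Note first that $\bk{\x,\y}$ is free of rank $2$: it is a $2$-generated subgroup of the fully residually $F$ group $\FRS$, and it is non-abelian, since otherwise $A$ and $\bk{\x,\y}$ would be two distinct maximal abelian vertex groups sharing the nontrivial element $\gamma$, contradicting the CSA property. Once $\gamma$ is known to be primitive we may write $\bk{\x,\y} = \bk{\gamma}*\bk{\delta}$, and since amalgamation over a free factor simplifies,
\[
A *_{\bk{\gamma}}\big(\bk{\gamma}*\bk{\delta}\big) = A*\bk{\delta},
\qquad
\FRS = \big(F*_{\bk{\alpha}}A\big)*\bk{\delta}.
\]
As $\delta\neq 1$, this exhibits $\FRS$ as freely decomposable modulo $F$, with $\bk{\gamma}$ a free factor of $\bk{\x,\y}$, which is exactly the desired conclusion.

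To prove primitivity I would work with retractions. Since $\FRS$ is fully residually $F$, every $f\in\Hom(\FRS,F)$ restricts to the identity on $F$, and the collection of such retractions discriminates $\FRS$. Any retraction $f$ fixes $\alpha$, and because $[\gamma,\alpha]=1$ in the abelian group $A$, the image $f(\gamma)$ lies in $Z_F(\alpha)$. In the free group $F$ this centralizer is the maximal cyclic subgroup $\bk{\mu}$, where $\mu$ is the primitive root of $\alpha$; writing $\alpha=\mu^i$ we therefore have $f(\gamma)=\mu^{j(f)}$ for some integer $j(f)$ depending on $f$.

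The crucial step is to produce one retraction for which $j(f)$ is a proper power exponent, and this is precisely where the hypothesis $\bk{\gamma}\cap\bk{\alpha}=\{1\}$ enters: in the abelian group $A$ it gives $\bk{\alpha,\gamma}\cong\Z^2$. Fix a large $L$ and let $P=\{\alpha^p\gamma^q : |p|,|q|\leq L\}$. By discrimination choose $f$ injective on the finite set $P\cup\{[\x,\y]\}$. Since $f(\alpha^p\gamma^q)=\mu^{ip+j(f)q}$, injectivity on $P$ forces the linear map $(p,q)\mapsto ip+j(f)q$ to be injective on the box; as $i$ is fixed, if $|j(f)|\leq 1$ this map has a collision inside a box whose size is bounded in terms of $i$ alone, so for $L$ large enough we must have $|j(f)|\geq 2$. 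At the same time $f([\x,\y])\neq 1$ guarantees that $f(\x),f(\y)$ do not commute, hence $\bk{f(\x),f(\y)}$ is free of rank $2$.

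Finally, writing $\gamma=w(\x,\y)$ as a word in the free group $\bk{\x,\y}$, we obtain $w\big(f(\x),f(\y)\big)=f(\gamma)=\mu^{j(f)}$ with $|j(f)|\geq 2$, an equation $w(g_1,g_2)=g^m$ ($m\geq 2$) in the free group $F$. By Theorem \ref{thm:Baumslag1965}, if $w$ were neither primitive nor a proper power then $\bk{g_1,g_2,\mu}$ would have rank at most $1$, contradicting that $\bk{f(\x),f(\y)}$ already has rank $2$. Hence $w$ is primitive or a proper power in $\bk{\x,\y}$; since the edge group $\bk{\gamma}$ is maximal cyclic in the vertex group $\bk{\x,\y}$, the element $\gamma$ is not a proper power, so it is primitive, and the decomposition above applies. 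I expect the main obstacle to be the counting argument forcing $|j(f)|\geq 2$ while simultaneously keeping $f(\x),f(\y)$ independent — i.e. verifying that a single retraction can be chosen to do both jobs, which is exactly what the hypothesis $\bk{\alpha,\gamma}\cong\Z^2$ makes possible.
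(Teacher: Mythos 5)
Your proposal follows essentially the same route as the paper's own proof: both reduce the lemma to showing that $\gamma$ is primitive in $\bk{\x,\y}$, both produce a retraction to $F$ that keeps $\bk{f(\x),f(\y)}$ of rank two while sending $\gamma$ to a proper power of the root of $\alpha$, and both then invoke Theorem \ref{thm:Baumslag1965} to force primitivity and hence the free decomposition $\FRS=\big(F*_{\bk{\alpha}}A\big)*\bk{\delta}$. Your counting argument on the box $P=\{\alpha^p\gamma^q : |p|,|q|\leq L\}$ is in fact a welcome elaboration of a step the paper merely asserts (``there are retractions that are monomorphic on $\bk{\x,\y}$ that send $\gamma$ to arbitrarily high powers of $\alpha$''), and it correctly isolates where the hypothesis $\bk{\gamma}\cap\bk{\alpha}=\{1\}$ enters.

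There is, however, one genuine gap: your disposal of the proper-power case. You write that $\gamma$ is not a proper power ``since the edge group $\bk{\gamma}$ is maximal cyclic in the vertex group $\bk{\x,\y}$,'' but that maximality is not a hypothesis of the lemma --- it is exactly the statement needing proof. Splittings in this paper are not assumed balanced (Definition \ref{defn:balanced}); the whole apparatus of balancing folds exists precisely because edge groups need not be maximal cyclic in their vertex groups, so you cannot read this property off the splitting. The fact is true here, and this is how the paper opens its proof, via commutation transitivity: if $\gamma=\delta^k$ with $k\geq 2$ and $\delta\in\bk{\x,\y}$, then $\delta$ commutes with $\gamma$; since $A$ is abelian, non-cyclic, and contains $\gamma$, commutation transitivity forces $\delta$ to centralize all of $A$; picking $a\in A\setminus\bk{\gamma}$, the commutator $[\delta,a]$ is a reduced word of length four in the amalgam $A*_{\bk{\gamma}}\bk{\x,\y}$ (note $\delta\notin\bk{\gamma}$ as $k\geq 2$), hence non-trivial --- a contradiction. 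With this short argument inserted in place of your assertion, your proof is complete and agrees with the paper's.
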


\begin{proof}
  First note that $\alpha\in F$ and $\gamma \in \bk{\x,\y}$ aren't
  proper powers (otherwise, looking at normal forms, we would find a
  contradiction to commutation transitivity.) Suppose towards a
  contradiction that $\gamma \in \bk{\x,\y}$ was not primitive.

  Since $\bk{\x,\z}$ is a two generated non-abelian subgroup of a
  fully residually free group, there are retractions
  $f:\FRS\rightarrow F$ such that the restriction of $f$ to
  $\bk{\x,\z}$ is a monomorphism. Since we also have that $\alpha \in
  F$, there are retractions that are monomorphic on $\bk{\x,\y}$ that
  send $\gamma$ to arbitrarily high powers of $\alpha$. We fix $f$ so
  that it is injective on $\bk{\x,\y}$ and such that $f(\gamma)$ is a proper
  power. Denote by $K\leq F$ the image of $\bk{\x,\z}$. $K$ is free of
  rank 2 but we see that in the ambient group $f(\gamma)$ is a proper
  power. Theorem \ref{thm:Baumslag1965} applies and the assumption
  that $\gamma \in \bk{\x,\z}$ is not a proper power and not primitive
  force the image of $\bk{\x,\z} \rightarrow F$ to by cyclic
  contradicting injectivity of $f$ on $\bk{\x,\z}$.
\end{proof}

Lemma \ref{lem:f2-hnn} is a restatement of Lemma 2.10 of
\cite{Touikan-2007}. Unfortunately upon rereading I noticed that the
Lemma as stated is false, we give here a corrected version of the Lemma. (In
the original version the line (\ref{eqn:corrected}) is
$H=\brakett{G,t|t^\mo p t = q}; p,q \in G-\{1\}$.) Thankfully this
mistake doesn't impact the results of \cite{Touikan-2007} in a
significant way.

\begin{lem}\label{lem:f2-hnn}Let $H$ be
  a free group of rank 2 and let $w\in H$ be non primitive, and not a
  proper power. Then the only possible almost reduced (see Definition
  \ref{defn:almost-reduced}), hairless non-trivial cyclic splitting of
  $H$ modulo $w$ is \begin{equation}\label{eqn:corrected}
    H=\brakett{G,t|t^\mo p^n t = q}; p,q \in H-\{1\} \end{equation}
  where $w \in G, G$ is a free group of rank 2 and $n \in
  \Z$. Moreover $G=\brakett{p}*\brakett{q}$ so that $H=\brakett{p,t}$.
\end{lem}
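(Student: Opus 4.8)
The plan is to control the underlying graph of such a splitting by an Euler characteristic count, reduce to a single HNN edge, and then read off the relation from the fact that both $H$ and the surviving vertex group are free of rank $2$.

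First I would record the numerics. Let $\G(X)$ be an almost reduced, hairless, non-trivial cyclic splitting of $H$ modulo $w$. Every vertex group $H_v$ is a subgroup of the free group $H$, hence free of some rank $r_v$, and every edge group is infinite cyclic. Since the (Wall) Euler characteristic satisfies $\chi(\Z)=0$ and $\chi(F_r)=1-r$, its additivity over $\G(X)$ gives $\chi(H)=\sum_v(1-r_v)=-1$, that is $\sum_v(r_v-1)=1$: the total rank excess of the vertex groups is exactly $1$. Comparing with $b_1(H)=2$ and the bound (\ref{eqn:b1-bound}) also forces the edge relations to be independent, so the underlying graph satisfies $b_1(X)\le 2$.

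Next I would eliminate every configuration except a single loop. Hairlessness means a valence-$1$ vertex cannot have cyclic vertex group, so $r_v\ge 2$ there; almost reducedness means a valence-$2$ vertex group properly contains a cyclic edge image, so $r_v\ge 1$ there. If $X$ had a separating edge, collapsing the two sides would exhibit $H=A*_CB$ with $C$ cyclic and $\rk(A)+\rk(B)=3$ (again by Euler characteristic), so one factor would be infinite cyclic of valence one, i.e. a hair, contradicting hairlessness. Hence $X$ has no separating edge; together with $\sum_v(r_v-1)=1$ and $b_1(X)\le 2$, collapsing a spanning tree (all of whose edges are non-loops) amalgamates the vertex groups into a single free group $G$ with $\chi(G)=-1$, i.e. $G$ free of rank $2$, and leaves exactly one loop. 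Thus $H=\bk{G,t\mid t^{-1}Pt=Q}$ with $P,Q\in G$ the two edge-group images, and, since $w$ is elliptic, $w$ is conjugate into $G$; adjusting by the modular move of conjugating a boundary monomorphism we may take $w\in G$.

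It then remains to identify the relation. Choosing a basis $\{g_1,g_2\}$ of $G$, the group $H$ is the one-relator group $\bk{g_1,g_2,t\mid Q^{-1}t^{-1}Pt}$, and since $H$ is free of rank $2$ this relator must be primitive; this forces (after a basis change of $G$) one of the two edge images, say $\bk{Q}$, to be a free factor of $G$, so $G=\bk{q}*\bk{c}$ with $\bk{q}$ in the class of $Q$. Feeding this back into $t^{-1}Pt=q$ and using that $H$ must stay free of rank $2$ then pins $P$ down to a power of the complementary generator, $P=p^{n}$ with $\{p,q\}$ a basis, so the relation becomes $t^{-1}p^{n}t=q$; hence $q=t^{-1}p^{n}t$ is redundant, $G=\bk{p}*\bk{q}$, and $H=\bk{p,t}$, as claimed.

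The delicate point — and where the hypotheses on $w$ together with Theorem \ref{thm:Baumslag1965} are meant to enter — is the passage from ``the relator is primitive'' to the explicit power normal form: showing that $\bk{P}$ has a well-defined maximal root $p$ in $G$ with $\{p,q\}$ a basis, rather than $P$ and $Q$ sitting in some incompatible position. Baumslag's theorem governs exactly a relation $w(g_1,g_2)=g^{n}$ in a free group, and the assumption that $w$ is neither primitive nor a proper power is what both forbids the splitting from collapsing further and rules out the configuration in which $\bk{P}$ and $\bk{Q}$ lie in a common cyclic subgroup (which would make $H$ a non-free Baumslag--Solitar-type group). I expect this bookkeeping, rather than the Euler characteristic reduction, to be the main obstacle.
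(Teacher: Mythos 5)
You are attempting something the paper itself does not do: the paper gives no proof of Lemma \ref{lem:f2-hnn}, but imports it (with a correction) from Lemma 2.10 of \cite{Touikan-2007}. So your argument has to stand entirely on its own, and as written it does not. The Euler-characteristic bookkeeping is fine: with infinite cyclic edge groups one gets $\sum_v(r_v-1)=1$, and killing vertex groups gives $b_1(X)\le 2$. But the step ``collapsing a spanning tree \ldots leaves exactly one loop'' is unsupported. Nothing you have said excludes $b_1(X)=2$, i.e.\ a double HNN $H=\bk{G,t,s\mid a^t=a',\ b^s=b'}$ over a rank-$2$ vertex group: this passes the Euler-characteristic test, has no separating edge, and passes the $b_1$ test because the classes $[a]-[a']$ and $[b]-[b']$ can span $G^{ab}\otimes\mathbb{Q}$. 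The only way I see to rule it out is to apply the one-loop normal form (the very statement being proved) to each loop in turn and derive an exponent-sum contradiction in the stable letters; so this case is downstream of the hard part of the lemma, not of the soft graph count. Separately, note that the literal statement you are trying to prove (for arbitrary graphs) is actually contradicted by two-vertex, two-edge configurations: e.g.\ $H=F(z,t)$ with vertex groups $\bk{z^2,tz^2t^{-1}}$ and $\bk{z}$ and both edge groups mapping to $\bk{z^2}$ in $\bk{z}$ is hairless and almost reduced in the sense of Definition \ref{defn:almost-reduced}, yet is not of the form (\ref{eqn:corrected}); it merely collapses to it. So your reduction can at best show the splitting collapses to a one-loop splitting, and the genuinely two-loop case remains open.

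The decisive gap, however, is the one you yourself defer as ``bookkeeping'': it is the entire content of the lemma. Knowing that the relator is primitive, and even knowing that one edge image, say $\bk{Q}$, is a free factor of $G$ (which is Swarup's theorem on $\Z$-splittings of free groups, not a formal consequence of primitivity), does \emph{not} pin down the normal form. Concretely, take $G=\bk{p}*\bk{q}$, $Q=q$ and $P=pqp^{-1}q^{-1}p$: here $\bk{Q}$ is a free factor of $G$ and the relator $R=t^{-1}Ptq^{-1}$ has exponent-sum vector $(1,-1,0)$, yet $\bk{G,t\mid t^{-1}Pt=q}$ is not free --- the Whitehead graph of $R$ is connected with no cut vertex, so $R$ is not primitive. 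Thus the passage from ``free factor'' to ``$P$ is conjugate to a power $p^{n}$ of an element complementary to $q$'' requires a genuine Whitehead/Nielsen/folding argument, and Theorem \ref{thm:Baumslag1965} (which concerns equations $w(g_1,\ldots,g_n)=g^m$) will not produce it. This is exactly the delicate point at which the original version of the lemma in \cite{Touikan-2007} was misstated (the paper's correction consists precisely in inserting the exponent $n$), so it cannot be waved through as routine.
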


Consider a splitting of a free group $H$ of rank 2 such as the one
given in (\ref{eqn:corrected}) we can apply a balancing fold (see
Definition \ref{defn:balancing-fold}) to replace the vertex group $G$
by \begin{equation}\label{eqn:balanced}\hat{G} = \bk{p}*\big(
  \bk{q}*_\bk{q} (t^\mo \bk{p} t)\big).\end{equation} Letting $\hat{q}
= t^\mo p t$ we have $\hat{q}^n = q, \hat{G} = \bk{p}*\bk{\hat{q}}$
and $H = \bk{\hat{G} ,t \mid t^\mo p t = \hat{q}} = \bk{p,t}$.

\begin{prop}\label{prop:1e-2v-3genH} Suppose that $\FRS$ is freely
  indecomposable modulo $F$ and that it splits as
  \begin{equation}\label{eqn:1e-2v-3genh} 
    F*_\brakett{\alpha} H \end{equation} and $H=G*_\bk{\beta}Ab(\beta,r)$ is a
  rank 1 free extension of a centralizer of a free group $G$ of rank 2.
  Then (\ref{eqn:1e-2v-3genh}) refines to 
  \begin{equation}\label{eqn:1e-2v-3genh2}
    F*_\brakett{\alpha} G*_\bk{\beta}Ab(\beta,r) 
  \end{equation}
\end{prop}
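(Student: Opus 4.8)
The plan is to show that the edge group $\bk{\alpha}$ of the amalgam (\ref{eqn:1e-2v-3genh}) is conjugate into the free vertex group $G$ of the given splitting of $H$. Once this is in hand the refinement is automatic: collapsing the edge carrying $\bk{\beta}$ in the linear graph of groups $F-_{\bk{\alpha}}G-_{\bk{\beta}}Ab(\beta,r)$ merges $G$ and $Ab(\beta,r)$ back into $G*_{\bk{\beta}}Ab(\beta,r)=H$ and returns $F*_{\bk{\alpha}}H$, so (\ref{eqn:1e-2v-3genh2}) is a genuine refinement of (\ref{eqn:1e-2v-3genh}) exactly when $\bk{\alpha}$, the image of $F\cap H$ inside $H$, is elliptic and conjugate into $G$ in the Bass--Serre tree $T_H$ of $H=G*_{\bk{\beta}}Ab(\beta,r)$. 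Thus everything reduces to locating $\alpha$ in this splitting of $H$.

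First I would rule out that $\alpha$ is conjugate into the abelian vertex $Ab(\beta,r)$. Because (\ref{eqn:1e-2v-3genh}) arises from a maximal abelian collapse, the edge group $\bk{\alpha}=F\cap H$ is maximal abelian in $H$, hence a malnormal maximal cyclic subgroup. If some conjugate $g^{\mo}\alpha g$ lay in $Ab(\beta,r)$, then the maximal abelian subgroup $g^{\mo}\bk{\alpha}g$ would have to contain the non-cyclic abelian group $Ab(\beta,r)$, which is impossible for a cyclic group. Since an abelian subgroup of the amalgam $H$ is either conjugate into a factor or infinite cyclic generated by a hyperbolic element, the only surviving possibilities are that $\alpha$ is conjugate into $G$ (the desired conclusion) or that $\alpha$ is hyperbolic in $T_H$.

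The hard part, and where I expect the main obstacle to lie, is excluding the hyperbolic case. I would first observe that $\beta$ is neither a proper power in $G$ (this is the centralizer condition built into the rank $1$ centralizer extension) nor primitive in $G$: were $\beta$ primitive, $H=G*_{\bk{\beta}}Ab(\beta,r)$ would split off a free factor and $\FRS$ would be freely decomposable modulo $F$, contrary to hypothesis. To exploit this I would use full residual freeness: choose a retraction $\psi:\FRS\rightarrow F$ restricting to a monomorphism on the free rank $2$ group $G$. Then $\psi(\beta)$ and $\psi(r)$ commute in $F$, so $\psi(r)$ is a power of the maximal root of $\psi(\beta)$; if $\psi(\beta)$ were a proper power, Baumslag's Theorem \ref{thm:Baumslag1965} (applicable precisely because $\beta$ is neither primitive nor a proper power) would force $\psi(G)$ to be cyclic, contradicting injectivity on the non-abelian $G$. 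Hence $\psi(\beta)$ is not a proper power and $\psi(r)\in\psi(\bk{\beta})$, so every such retraction factors through the retraction $H\to G$ collapsing $Ab(\beta,r)$ onto $\bk{\beta}$. A hyperbolic $\alpha$, however, has a reduced form in $G*_{\bk{\beta}}Ab(\beta,r)$ that genuinely involves $r$, while $\alpha\in F$ is fixed by $\psi$; tracking this identity against the discrimination of a finite subset of $Ab(\beta,r)\cup\{\alpha\}$ is meant to produce the ``abelian in the middle'' configuration of Lemma \ref{lem:abelianbetween}, whose conclusion (free decomposability modulo $F$) contradicts our standing hypothesis. This forces $\alpha$ to be elliptic, hence conjugate into $G$ by the previous paragraph, and (\ref{eqn:1e-2v-3genh2}) follows by uncollapsing the $\bk{\beta}$-edge. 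I anticipate the delicate point to be converting ``$\alpha$ hyperbolic'' into an honest splitting to which Lemma \ref{lem:abelianbetween} genuinely applies; an alternative I would keep in reserve is a purely structural argument, using that $Ab(\beta,r)$ is elliptic in every cyclic splitting of $\FRS$ by Convention \ref{conv:abelian-elliptic} and that the one-edge collapse is obtained from the JSJ of $\FRS$, so that the decomposition $H=G*_{\bk{\beta}}Ab(\beta,r)$ is the one induced on $H$ and $\bk{\alpha}$ survives as an edge group with endpoint a conjugate of $G$ or of $Ab(\beta,r)$.
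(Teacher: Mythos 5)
Your overall plan---reduce everything to showing that $\bk{\alpha}$ is conjugate into $G$---is the same as the paper's, but both halves of your execution have genuine problems. The first is your dismissal of the case where $\alpha$ is conjugate into $Ab(\beta,r)$. You justify it by asserting that $\bk{\alpha}=F\cap H$ is maximal abelian in $H$ ``because the splitting arises from a maximal abelian collapse.'' That premise is not among the hypotheses and is false in exactly the situation that matters: the splitting (\ref{eqn:1e-2v-3genh}) is produced by Proposition \ref{prop:1e-h-3gen} \emph{from} the maximal abelian collapse, and in the case of that proposition where the collapsed edge group $A$ is non-cyclic one gets $\FRS=F*_\bk{\alpha}(A*_AH)$, so $\alpha$ lies inside a non-cyclic abelian subgroup of the new vertex group; there $\bk{\alpha}$ is \emph{not} maximal abelian in $H$ and $\alpha$ \emph{is} conjugate into $Ab(\beta,r)$. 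Concretely, $\alpha=\beta$ is consistent with every hypothesis of the proposition (and then the conclusion holds because $\beta\in G$), so the statement you derive---that no conjugate of $\alpha$ lies in $Ab(\beta,r)$---is false, not merely unproved. This case is where the paper does real work: if $\alpha\in Ab(\beta,r)$ then, being no proper power, $\bk{\alpha}$ is a direct summand of $Ab(\beta,r)$; either $\bk{\alpha}\cap\bk{\beta}\neq\{1\}$, forcing $\alpha=\beta$ and we are done, or Lemma \ref{lem:abelianbetween} (which rests on Baumslag's Theorem \ref{thm:Baumslag1965}) shows $\FRS$ is freely decomposable modulo $F$, a contradiction.

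The hyperbolic case is also not closed. You flag the missing step yourself (``is meant to produce\dots'', ``I anticipate the delicate point\dots''): knowing that every retraction injective on $G$ factors through the retraction $H\to G$ does not, as written, yield any splitting of $\FRS$ to which Lemma \ref{lem:abelianbetween} applies. Moreover your dichotomy (elliptic in $T_H$ versus hyperbolic in $T_H$) omits the intermediate possibility that $\alpha$ is hyperbolic in $T_H$ but elliptic in some \emph{other} cyclic splitting of $H$; by Lemma \ref{lem:f2-hnn} such splittings exist (HNN splittings of $G$ modulo $\beta$), and the paper handles them by noting that their non-abelian vertex groups are conjugate into $G$, so this case folds back into the desired conclusion. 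What remains after that is the case where $H$ has \emph{no} cyclic splitting modulo $\alpha$ at all, and there the paper's argument is quite different from your sketch and is complete: take a strict resolution of $\FRS$; since $\alpha\in F$ stays elliptic at every level, $H$ is never split along the resolution, hence is carried injectively into the terminal group $F*F(Y)$, contradicting $Ab(\beta,r)\cong\Z\oplus\Z\leq H$. Finally, your side claim that $\beta$ primitive in $G$ would make $\FRS$ freely decomposable modulo $F$ is unjustified---free decomposability of $H$ does not pass to $F*_\bk{\alpha}H$ unless $\alpha$ is conjugate into a proper free factor---and the paper's proof explicitly allows the possibility $G=\bk{\beta}*\bk{\beta'}$.
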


\begin{proof} The hypothesis arises as item 1 of Proposition
  \ref{prop:1e-h-3gen}. We need to show that $\alpha$ is conjugable into
  $G$. Suppose towards a contradiction that this was impossible. 

  We first consider the possible cyclic JSJ splittings of $H =
  G*_\bk{\beta}Ab(\beta,r)$. These correspond to cyclic splittings of
  $G$ modulo $\beta$. By Lemma \ref{lem:f2-hnn}, the two non-trivial
  possibilities, after possibly applying a balancing fold, are \[G =
  \bk{\beta}*\bk{\beta'} \tr{~or~} G =\brakett{G',s \mid s^\mo \gamma
    s = \gamma'} \tr{~with~} \beta \in G'.\] Since $\FRS$ is not freely decomposable, and
  replacing $G$ by $G'$ if necessary, we have by hypothesis that
  either $\alpha$ is conjugable into $Ab(\beta,r)$ or that
  $G*_\bk{\beta}Ab(\beta,r)$ has no cyclic splittings modulo
  $\alpha$. We note that in all cases, by commutation transitivity
  $\beta$ cannot be a proper power in $G$.

  Consider the case where $G*_\bk{\beta}Ab(\beta,r)$ has no
  cyclic splittings modulo $\alpha$. Let $\R$ be a strict
  resolution of $\FRS$. Since $\alpha \in F$ is always forced to be elliptic, the
  image of the subgroup $G*_\bk{\beta}Ab(\beta,r)$ in all the quotients of
  $\FRS$ in $\R$ is always forced to be elliptic, which implies that it
  is isomorphic to a subgroup of a free group, which
  impossible.

  Suppose now that $\alpha \in Ab(\beta,r)$. Since $\alpha\in
  Ab(\beta,r)$ is not a proper power $\bk{\alpha}$ is a direct summand
  of $Ab(\beta,r)$ if $\bk{\beta} \cap \bk{\alpha} \neq \{1\}$ then
  $\alpha=\beta$ and we are done. Otherwise Lemma
  \ref{lem:abelianbetween} applies and $\FRS$ is freely
  decomposable -- contradiction.
\end{proof}

\begin{prop}\label{prop:2-2-A-noconj}Suppose that $\FRS$ is freely
  indecomposable modulo $F$ and that it splits
  as \[\F'*_\brakett{\alpha} H' \] where $F\leq \F'$ and $H'$ is free
  of rank 2 and suppose moreover that $H'$ splits further as an HNN
  extension \[H'=\brakett{G,t \mid t^\mo \gamma^n t = \gamma'}\]
  modulo $\alpha$. Then $\alpha$ cannot be almost conjugate (as in
  Definition \ref{defn:almost-conjugate}) to either $\gamma$ or
  $\gamma'$ in $G$
\end{prop}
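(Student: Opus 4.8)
The plan is to assume, towards a contradiction, that $\alpha$ is almost conjugate to $\gamma$ or to $\gamma'$ in $G$, and to deduce that $\FRS$ is freely decomposable modulo $F$, contradicting the hypothesis. Two structural facts drive the argument. First, the splitting $\F'*_{\bk{\alpha}}H'$ arises as a maximal abelian collapse — it is the splitting produced by Proposition \ref{prop:1e-h-3gen} from (\ref{eqn:2-verts-1-edge}) — so the edge group $\bk{\alpha}=\F'\cap H'$ is maximal cyclic in $H'$. Second, by Lemma \ref{lem:f2-hnn} applied to the rank $2$ free group $H'$, the vertex group $G$ is free with basis $\{\gamma,\gamma'\}$ and $H'$ is free with basis $\{\gamma,t\}$; in particular $\gamma$ and $t^\mo\gamma t$ are primitive in $H'$, while $\gamma'=t^\mo\gamma^n t=(t^\mo\gamma t)^n$ is an $n$-th power of the primitive element $t^\mo\gamma t$.

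First I would translate the almost conjugacy (Definition \ref{defn:almost-conjugate}) into a statement about genuine conjugacy. Since $\gamma$ and $\gamma'$ are basis elements of $G$ they are not proper powers, so if some $\bk{c}\leq G$ contains a conjugate of $\gamma$ (resp.\ $\gamma'$) then $\bk{c}$ is conjugate to $\bk{\gamma}$ (resp.\ $\bk{\gamma'}$); hence being almost conjugate to $\gamma$ (resp.\ $\gamma'$) means precisely that $\alpha$ is conjugate in $G$, and so in $H'$, to $\gamma^k$ (resp.\ $(\gamma')^k$) for some $k\neq 0$. In the first case I write $\alpha=(g\gamma g^\mo)^k$ and observe that, as $\bk{\alpha}$ is maximal cyclic in $H'$ and $g\gamma g^\mo\neq 1$, we must have $k=\pm1$, so $\alpha$ is conjugate in $H'$ to the basis element $\gamma^{\pm1}$. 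In the second case $\alpha=\big(g(t^\mo\gamma t)g^\mo\big)^{nk}$; maximal cyclicity now forces $nk=\pm1$, whence $n=1$, $k=\pm1$, and $\gamma'=t^\mo\gamma t$ is itself primitive, so again $\alpha$ is conjugate in $H'$ to a basis element.

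In either case $\bk{\alpha}$ is conjugate to a free factor of $H'$ and is therefore itself a free factor, say $H'=\bk{\alpha}*H''$ with $H''\neq 1$. Then
\[ \FRS=\F'*_{\bk{\alpha}}H'=\F'*_{\bk{\alpha}}\big(\bk{\alpha}*H''\big)=\F'*H'', \]
a nontrivial free decomposition modulo $F$ (as $F\leq\F'$ and $H''\neq1$), contradicting free indecomposability. The step I expect to require the most care is the $\gamma'$ case: although $\gamma'$ is primitive in $G$, it is only an $n$-th power $(t^\mo\gamma t)^n$ in the ambient group $H'$, so the maximal-cyclicity constraint applies to the exponent $nk$ rather than to $k$; it is this observation, together with correctly invoking that the edge group is maximal cyclic in $H'$ by virtue of the maximal abelian collapse, that makes the two cases collapse into the same free-factor conclusion.
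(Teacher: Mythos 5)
Your strategy (contradict free indecomposability) and your use of Lemma \ref{lem:f2-hnn} to get $G=\bk{\gamma}*\bk{\gamma'}$ and $H'=\bk{\gamma}*\bk{t}$, together with the translation of almost conjugacy into ``$\alpha$ is conjugate in $G$ to $\gamma^k$ (resp.\ $(\gamma')^k$)'', all match the paper and are sound. The genuine gap is the step forcing $k=\pm 1$ (resp.\ $nk=\pm1$): it rests entirely on the claim that $\bk{\alpha}$ is maximal cyclic in $H'$, which is \emph{not} a hypothesis of the proposition. You justify it by asserting that the splitting ``arises as a maximal abelian collapse, via Proposition \ref{prop:1e-h-3gen}''; but that provenance appears nowhere in the statement being proved, so as a proof of the proposition as stated this is an imported assumption, and nothing in the stated hypotheses (free indecomposability, the CSA property) rules out $\alpha$ being a proper power in $H'$ when its centralizer in $\F'$ is just $\bk{\alpha}$. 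The paper's own proof never needs such an assumption.

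The good news is that the detour through primitivity of $\alpha$ is unnecessary, and deleting it turns your argument into the paper's. From your own intermediate conclusion --- $\alpha^g=\gamma^m$ in $H'$ for some $g$ and some $m\neq 0$ (in the $\gamma'$ case $m=nk$, since $(\gamma')^k=(t^\mo\gamma t)^{nk}$) --- conjugate boundary monomorphisms so that $\alpha=\gamma^m\in\bk{\gamma}$. Since $H'=\bk{\gamma}*\bk{t}$, amalgamating $\F'$ over a subgroup of the free factor $\bk{\gamma}$ leaves $\bk{t}$ as a free factor:
\[\FRS=\F'*_{\bk{\alpha}}\big(\bk{\gamma}*\bk{t}\big)=\big(\F'*_{\alpha=\gamma^m}\bk{\gamma}\big)*\bk{t},\]
which is freely decomposable modulo $F$ for \emph{every} $m$ --- the desired contradiction. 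This is precisely what the paper does (stated there via a balancing fold and a Tietze transformation): it only needs $\bk{\alpha}$ to sit inside the free factor $\bk{\gamma}$, not that $\alpha$ be primitive.
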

\begin{proof} Obviously by Lemma \ref{lem:f2-hnn} $\gamma$ and
  $\gamma'$ are not almost conjugate in $G$. If $\alpha$ is almost
  conjugate to either $\gamma$ or $\gamma'$ in $G$ then this is still
  true after performing a balancing fold to get
  (\ref{eqn:balanced}) and writing $\gamma'$ again instead of
  $\hat{\gamma}'$ and writing $G$ again instead of $\hat{G}$.

  So we may now assume that $H' = \bk{G,t \mid \gamma^t = \gamma'}$
  with $G = \bk{\gamma}*\bk{\gamma'}$ and that w.l.o.g.  $\alpha$ and
  $\gamma$ are almost conjugate in $G$. This means that there are some
  $\eta, g \in G$ such that $\bk{\alpha^g}, \bk{\gamma} \leq
  \bk{\eta}$.  On the other hand $\gamma$ is not a proper power in $G$
  so $\bk{\gamma} = \bk{\eta}$ which implies that $\bk{\alpha^g} \leq
  \bk{\gamma}$ so, conjugating boundary monomorphisms, we get a
  relative presentation\[ \FRS = \relpres{\bk{\F', G, t \mid t \gamma t
      = \gamma'}}{\bk{\alpha} = \F' \cap G, \gamma, \gamma' \in G}\]
  with $G = \bk{\gamma'}*\bk{\gamma}$ and $\bk{\alpha} \leq
  \bk{\gamma}$ which means that we can rewrite this relative
  presentation using a Tietze transformation as \[\FRS
  = \bk{\F',\gamma,t\mid}\] with $\gamma^m = \alpha \in F$ for some $m \in
  \Z$. This mean that $\FRS$ is freely decomposable modulo $F$-- contradiction.
\end{proof}

The same argument yields:

\begin{prop}\label{prop:2-3-D-noconj} Suppose that $\FRSP$ splits
  as \[F*_\brakett{\alpha} H*_\bk{\beta}Ab(\beta,t) \] with $H$ free of
  rank 2 and suppose moreover that $H$ splits further as an HNN
  extension \[H=\brakett{G,t \mid t^\mo \gamma^n t = \gamma'}\] modulo
  $\alpha$ and $\beta$, then we cannot have that $\alpha$, $\beta$ and
  $\gamma$ are almost conjugate in $H$.
\end{prop}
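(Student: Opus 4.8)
The plan is to rerun the proof of Proposition~\ref{prop:2-2-A-noconj} almost verbatim, with two modifications: there is now an extra amalgamated factor $Ab(\beta,t)$, and the almost conjugacy is asserted in $H$ rather than in the free factor $G$. To avoid clashing with the generator $t$ of $Ab(\beta,t)$ I will write the stable letter of the HNN decomposition of $H$ as $s$. First I would normalise $H$: by Lemma~\ref{lem:f2-hnn}, and after a balancing fold to reach~(\ref{eqn:balanced}), I may assume $H=\bk{G,s\mid \gamma^s=\gamma'}$ with $G=\bk{\gamma}*\bk{\gamma'}$, so that $H=\bk{\gamma,s}$ is free of rank $2$ and $\gamma$ is a basis element; in particular $\gamma$ is not a proper power in $H$ and $\bk{\gamma}$ is maximal cyclic in $H$.

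Next I would translate the almost conjugacy hypothesis. Suppose towards a contradiction that $\alpha,\beta,\gamma$ are almost conjugate in $H$. By Definition~\ref{defn:almost-conjugate} there is a cyclic subgroup $\bk{\eta}\leq H$ and elements $g_1,g_2,g_3\in H$ with $\bk{\alpha^{g_1}},\bk{\beta^{g_2}},\bk{\gamma^{g_3}}\leq\bk{\eta}$. Since $\bk{\gamma}$ is maximal cyclic in $H$, so is its conjugate $\bk{\gamma^{g_3}}$, whence $\bk{\eta}=\bk{\gamma^{g_3}}$; consequently both $\alpha$ and $\beta$ are conjugate in $H$ into $\bk{\gamma}$. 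Because the boundary monomorphisms of the two edges of $F*_\bk{\alpha}H*_\bk{\beta}Ab(\beta,t)$ may be conjugated independently (Definition~\ref{defn:moves}), I may then arrange $\bk{\alpha}\leq\bk{\gamma}$ and $\bk{\beta}\leq\bk{\gamma}$, say $\alpha=\gamma^m$ and $\beta=\gamma^k$.

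Finally I would extract a free factor. Writing $H=\bk{\gamma}*\bk{s}$ and using that both edge groups now lie in the free factor $\bk{\gamma}$, the stable letter $s$ splits off, yielding \[\FRSP=\big(F*_\bk{\alpha}\bk{\gamma}*_\bk{\beta}Ab(\beta,t)\big)*\bk{s}.\] Since $F$ lies in the first factor and $\bk{s}\cong\Z$ is a non-trivial free factor, $\FRSP$ is freely decomposable modulo $F$, contradicting its free indecomposability. I expect the one genuine subtlety — the main obstacle — to be the passage from almost conjugacy in $H$ (rather than in $G$, as in Proposition~\ref{prop:2-2-A-noconj}) to the conclusion that $\alpha$ and $\beta$ lie in $\bk{\gamma}$ up to conjugacy; this is exactly what primitivity of $\gamma$ in $H$, supplied by the balancing fold, guarantees, and it is what allows the two edge groups to be pushed simultaneously into a single free factor so that $\bk{s}$ can be peeled off.
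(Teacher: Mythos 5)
Your proof is correct and is essentially the paper's own: the paper proves Proposition~\ref{prop:2-3-D-noconj} by simply remarking that ``the same argument'' as in Proposition~\ref{prop:2-2-A-noconj} applies, and your write-up is exactly that argument adapted to the extra abelian vertex group --- normalize via Lemma~\ref{lem:f2-hnn} and a balancing fold so that $H=\bk{\gamma}*\bk{s}$ with $\gamma$ primitive, use maximality of $\bk{\gamma}$ to push $\bk{\alpha}$ and $\bk{\beta}$ into $\bk{\gamma}$ by conjugating boundary monomorphisms, split off $\bk{s}$ as a free factor, and contradict free indecomposability modulo $F$. Note that, like the paper, you invoke free indecomposability of $\FRSP$ modulo $F$, which is not stated explicitly in the proposition but is implicit from the surrounding context (the displayed splitting arises as a JSJ-type decomposition of a freely indecomposable group), so this is not a gap in your argument.
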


\begin{lem}\label{lem:QH-possibility}
  Suppose that $\FRS$ has a $QH$ subgroup $Q$ then its JSJ must be of
  the form: \begin{equation}\label{eqn:QH-possibility}
    \FRS=\F*_\bk{q}H_1*_\bk{a_2}\ldots *_\bk{a_k}H_k*_\bk{p}Q
\end{equation}
\end{lem}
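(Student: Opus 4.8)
The plan is to realise $Q$ as a surface group via the JSJ structure theorem and then exploit that $\FRS=\bk{F,\x,\y}$ is generated by $F$ together with only two elements; this last fact bounds both the genus of the surface and the number of cycles of the underlying graph so severely that $Q$ must hang off the rest of the decomposition along a single edge. First I would normalise $Q$: by Theorem~\ref{thm:jsj} every QH subgroup conjugates into an MQH subgroup, so I may take $Q$ to be a maximal QH vertex group, i.e. the fundamental group of a compact surface $\Sigma$ with non-empty boundary whose boundary subgroups are exactly the images of the edge groups incident to $Q$ in the hairless graph $X$. Writing $g$ for the genus and $b$ for the number of boundary curves, $Q$ is free of rank $2g+b-1$; in $H_1(Q)$ the $b$ boundary classes satisfy only the relation that their sum vanishes, hence span a rank $b-1$ subgroup complementary to a rank $2g$ ``genus'' subspace $V_Q$.

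Next I would extract the numerical constraints. Since $\FRS=\bk{F,\x,\y}$ we have $b_1(\FRS)\le N+2$, while Proposition~\ref{prop:b1-complexity} gives $b_1(\FRS)\ge N+1$ because $\FRS\neq F$. The edge relations of the graph of groups only identify boundary classes, so they never touch $V_Q$; combining this with the lower bound~(\ref{eqn:b1-bound}) and with the injection $H_1(F)\hookrightarrow H_1(\FRS)$ coming from a retraction $\FRS\to F$, the $2g$ genus classes of $Q$ together with the classes carried by the independent cycles of $X$ descend to classes of $H_1(\FRS)$ independent of the $N$ classes of $F$. This forces $2g+c\le 2$, where $c$ is the number of independent cycles of $X$; in particular $g\le 1$ and $c\le 2$. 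Because a QH surface supports a hyperbolic--hyperbolic pair it is non-elementary and is not a three-holed sphere, so also $2g+b-1\ge 2$.

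Finally I would pin down the shape. If $b\ge 2$ then $Q$ has valence at least two; a non-elementary surface of genus $g\le 1$ with $b\ge 2$ boundary components that carries an essential non-peripheral curve has rank $2g+b-1\ge 3$, and for $g=0$ all three independent directions are forced through boundary curves. Accommodating such a $Q$ alongside $F$ raises the rank — and, once the incident edges are counted, the first Betti number — above $N+2$, contradicting two-generation modulo $F$. Hence $b=1$, whence $g=1$ and $Q$ is a one-holed surface occupying a valence-one vertex of $X$, attached by the single edge whose edge group $\bk{p}$ is generated by the boundary curve. Now $2g+c\le 2$ with $g=1$ gives $c=0$, so $X$ is a tree; two-generation modulo $F$ bounds the number of vertices, and $1$-acylindricity together with malnormality of the (maximal abelian) edge groups — the mechanism behind Corollary~\ref{cor:no-ab} — prevents interior branching, so $X$ is the path $\F-H_1-\cdots-H_k-Q$ asserted in~(\ref{eqn:QH-possibility}).

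The hard part will be the bookkeeping in the last two paragraphs. One must certify rigorously that the $2g$ genus classes, the $c$ cycle classes, and the $N$ classes of $F$ are jointly independent in $H_1(\FRS)$ (the retraction handles the $F$-part, but ensuring the genus classes are not swallowed by the edge identifications needs care), and one must rule out every multi-boundary surface — above all the four-holed sphere, whose genus subspace is trivial and which therefore evades the crude genus bound — as a vertex group of a group two-generated modulo $F$. Excluding branch vertices outright, rather than merely bounding their number, is precisely where the $1$-acylindricity of the maximal abelian collapse and the malnormality of its edge groups do the real work.
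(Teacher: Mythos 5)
Your route is genuinely different from the paper's, but as written it has gaps at exactly the two points that carry the content of the lemma, and you flag both without closing them. For comparison, the paper's proof is a short quotient argument: kill the vertex group $\F$ together with all edge groups. The quotient $K$ is generated by the images of $\x,\y$ (since $F\leq\F$ dies), and it is the free product of the quotients of the remaining vertex groups by the normal closures of their incident edge groups, together with a free group whose rank is the number of cycles of $X$; in particular the capped-off surface group $\ol{Q}$ is a free factor. Grushko then forces $\ol{Q}=\Z\oplus\Z$, no cycles, no further QH vertex groups, and triviality of every other factor; and since a non-cyclic fully residually free group $L$ satisfies $L/\tr{ncl}(\gamma)\neq 1$ for every single element $\gamma$ (it is free abelian or maps onto a free group of rank $2$), no vertex group other than $\F$ and $Q$ can be a leaf of the tree. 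As every finite tree has at least two leaves, $X$ is a path from $\F$ to $Q$, which is (\ref{eqn:QH-possibility}). Working with the group quotient rather than with $H_1$ is what makes the planar surfaces and the branching disappear simultaneously.

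Now the two gaps. First, genus-zero surfaces: your inequality $2g+c\le 2$ is vacuous when $g=0$, and your claim that a multi-boundary $Q$ ``raises the rank --- and, once the incident edges are counted, the first Betti number --- above $N+2$'' is an assertion, not an argument. Rank is not additive over amalgams, and in abelianization a planar $Q$ is visible only through its boundary classes, which get identified with classes of the neighbouring vertex groups, so the four-holed sphere genuinely evades the count you set up; it can be excluded homologically, but only by combining the planar relation $\sum_i [q_i]=0$ with the retraction $\FRS\to F$ and the cycle count in a case analysis over how the four edges attach, bookkeeping you acknowledge but do not perform (non-orientable surfaces are also never addressed). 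Second, branching: the mechanism you cite is wrong. Corollary \ref{cor:no-ab} is proved by exponent-sum counting (Lemmas \ref{lem:double-hnn-exposum} and \ref{lem:exp-sum-zero-abelian}), not by $1$-acylindricity, and neither acylindricity nor malnormality of edge groups prevents a tree of groups from having branch vertices. What actually excludes branching is either the paper's one-relator observation above (a leaf other than $\F$ and $Q$ would have to die in $K$ after killing one normal generator), or, within your homological framework, the fact that such a leaf $H$ contributes at least $b_1(H)-1\ge 1$ to $b_1(\FRS)$ on top of the $F$-classes and genus classes; you invoke neither, so the final step of your argument does not stand as written.
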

\begin{proof}
  Consider the quotient $\FRS \rightarrow \FRS/R = K$ obtained by
  killing the vertex group $\F$ as well as all the edge groups. On one
  hand $K$ is generated by the image of $\bk{x,y}$ on the other hand
  it contains a closed surface group $\ol{Q}$ as a free factor. The
  only possibility is $\ol{Q} = \Z\oplus \Z$, and we immediately see
  that the underlying graph of the JSJ must be simply connected. It
  moreover follows that $\ol{Q} = K$, hence there are no other QH
  subgroups.

  Now a non-cyclic fully residually free group $L$ is either free
  abelian or maps onto a free group of rank 2. This means that for all
  $\gamma \in L$ we have that the quotient $L/\tr{ncl}(\gamma)$ is
  non-trivial. So if $K = \ol{Q}$ then JSJ of $\FRS$ must be as in
  (\ref{eqn:QH-possibility}).
\end{proof}

\begin{proof}[Proof of Corollary \ref{cor:QH}] Let $\FRS$ have a MQH
  subgroup $Q$.  Propositions \ref{prop:1e-h-3gen},
  \ref{prop:1e-2v-3genH} and Lemma \ref{lem:QH-possibility} together
  imply that the JSJ of $\FRS$ cannot have an abelian vertex
  group. And must either be $\F*_\bk{\alpha}Q$, with $\F$ a rank 1
  centralizer extension of $F$, or $F*_\bk{\alpha}Q$. The latter case
  implies the result. Suppose the towards a contradiction former case
  i.e. 2. of Proposition \ref{prop:1e-2v-3genH} held. Then $\x,\y$
  freely generate $Q$ and so $\alpha$ must be in the commutator
  subgroup of $Q$. Now $\alpha$ can be written as a word in
  $\x,\y$ with exponent sum 0 in $\x,\y$. It follows that $\F$ is
  generated by elements of exponent sum 0 in $\x,\y$ and therefore
  cannot contain a non-cyclic abelian subgroup by Lemma
  \ref{lem:exp-sum-zero-abelian} -- contradiction.
\end{proof}

\begin{cor}\label{cor:2v-1e}If $\FRS$ is freely indecomposable and the maximal abelian
  collapse of its cyclic JSJ decomposition modulo $F$ is a free
  product with amalgamation. Then all the possibilities for the JSJ of
  $\FRS$ are to be found in the descriptions given Sections
  \ref{sec:allfree}, \ref{sec:abvertgr}, and \ref{sec:ICEICE}.
\end{cor}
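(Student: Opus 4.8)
The plan is to read Corollary \ref{cor:2v-1e} as an assembly of the results already established in this section. Proposition \ref{prop:1e-h-3gen} has reduced the one-edge maximal abelian collapse (\ref{eqn:2-verts-1-edge}) to a genuine cyclic amalgam $\F'*_{\langle\alpha\rangle}H'$ of exactly one of two types, so the task is to determine, for each type, the canonical cyclic JSJ obtained by refining this amalgam, and to check that the outcome occurs verbatim in one of Sections \ref{sec:allfree}, \ref{sec:abvertgr}, \ref{sec:ICEICE}. The refinement is produced by replacing each vertex group by its relative cyclic JSJ modulo the incident edge group(s).

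First I would treat Case~1 of Proposition \ref{prop:1e-h-3gen}, where $\F'=F$ and $H'=\langle\alpha,\x,\z\rangle$ is a three-generated fully residually free group. Applying Theorem \ref{thm:FGMRS}, I discard the options incompatible with the hypotheses: $H'$ cannot be free abelian, being non-abelian, and it cannot be free of rank $3$, for then $\alpha$ would be primitive in $H'$ and $\FRS$ would split as $F*\langle\x,\z\rangle$, contradicting free indecomposability. Hence $H'$ is either free of rank $2$ or a rank $1$ centralizer extension $G*_{\langle\beta\rangle}Ab(\beta,r)$ of a free group $G$ of rank $2$. When $H'$ is free of rank $2$ I analyse its relative cyclic JSJ modulo $\alpha$ using Lemma \ref{lem:f2-hnn}: if $H'$ is rigid the amalgam is already the JSJ and we land in Section \ref{sec:allfree} type~A.1; if $\alpha$ is a commutator boundary so that $H'$ is quadratically hanging, Corollary \ref{cor:QH} gives type~A.2; otherwise $H'$ refines as an HNN extension $\langle G,t\mid\beta^t=\beta'\rangle$ and we obtain the two-edge graph of Section \ref{sec:allfree} type~B.1, the non-almost-conjugacy hypothesis being exactly Proposition \ref{prop:2-2-A-noconj}. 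When $H'$ is the centralizer extension, Proposition \ref{prop:1e-2v-3genH} refines the amalgam to $F*_{\langle\alpha\rangle}G*_{\langle\beta\rangle}Ab(\beta,r)$, which is Section \ref{sec:abvertgr} type~A, and a further relative splitting of $G$ (again governed by Lemma \ref{lem:f2-hnn}) yields the three-edge graph of type~B, with the non-conjugacy condition supplied by Proposition \ref{prop:2-3-D-noconj}.

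Next I would treat Case~2, where $\F'=\langle F,\alpha\rangle$ with $\alpha\in H'=\langle\x,\z\rangle$ free of rank $2$ and $\alpha\notin F$. Since $\F'$ is a fully residually $F$ quotient of $F*\langle x\rangle$, Theorem \ref{thm:onevariable} leaves three options; the option $\F'=F$ is excluded by $\alpha\notin F$, and $\F'=F*\langle\alpha\rangle$ is excluded because collapsing the amalgamation would write $\FRS=F*H'$, again contradicting free indecomposability. Therefore $\F'$ is a rank $1$ centralizer extension of $F$, which places us in Section \ref{sec:ICEICE}; its three listed possibilities arise precisely as in the free-rank-$2$ subcase of Case~1, according to whether $H'$ is rigid (type~1), refines as an HNN extension with Proposition \ref{prop:2-2-A-noconj} furnishing the non-almost-conjugacy (type~2), or is refined so that a stable letter connects $\F'$ to $H'$ (type~3).

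I expect the principal difficulty to be bookkeeping rather than any single deep step. One must verify that each refinement yields the genuinely canonical, unfolded, almost reduced JSJ of Definition \ref{defn:almost-reduced} rather than some non-maximal cyclic splitting, and that a balancing fold (as in (\ref{eqn:balanced})) has been applied wherever the edge group fails to be maximal cyclic, so that the presentations match the listed normal forms exactly. The most delicate point is confirming that no cyclic splitting of a rank $2$ free vertex group modulo its incident edge groups has been missed---this is exactly what Lemma \ref{lem:f2-hnn} controls---and that the free-decomposability degenerations ruled out by Lemma \ref{lem:abelianbetween} and Corollary \ref{cor:QH} are correctly excluded, so that the enumeration against Sections \ref{sec:allfree}, \ref{sec:abvertgr}, \ref{sec:ICEICE} is exhaustive.
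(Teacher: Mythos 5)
Your proposal is correct and is essentially the paper's own (implicit) proof: the paper states this corollary with no separate argument, as the assembly of Propositions \ref{prop:1e-h-3gen}, \ref{prop:1e-2v-3genH}, \ref{prop:2-2-A-noconj}, \ref{prop:2-3-D-noconj}, Corollary \ref{cor:QH}, Theorems \ref{thm:FGMRS} and \ref{thm:onevariable}, and Lemma \ref{lem:f2-hnn}, which is exactly the assembly you carry out. The one inaccuracy --- listing type 3 of Section \ref{sec:ICEICE} as a possible outcome of refining the amalgam, when Lemma \ref{lem:f2-hnn} only permits HNN refinements of $H'$ (so that type in fact arises only from the two-edge collapse) --- is harmless over-inclusion and does not affect the claimed containment.
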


\subsection{The two edge case}\label{sec:MAC-2e}

We now consider the case where the maximal abelian collapse of $\FRS$ has
underlying graph: \begin{equation}\label{eqn:2e-graph}X=\xymatrix{v \bullet
  \ar@{-}@/_/[r]_{f} \ar@{-}@/^/[r]^e & \bullet u} \end{equation} to which we
give the relative presentation:
\begin{equation}\label{eqn:2vertices-2edges}
  \relpres{\bk{\Fh,H,t \mid B^t = C }}{B \leq \Fh, C\leq H, A = \Fh \cap H}
\end{equation} 
Where $F\leq \Fh=X_u, H = X_v$ and $A,B,C$ are maximal abelian and
conjugacy separated in their vertex groups. Throughout this section
the groups $\Fh, H, A, B, C$ are as above.

\subsubsection{Arranging so that either $\x \in H$ or $\x = t$}

\begin{lem}\label{lem:x-in-F-or-H} Let $\FRS$ be freely indecomposable
  modulo $F$ and have a maximal abelian collapse
  (\ref{eqn:2vertices-2edges}). After Weidmann-Nielsen normalization
  on $(F,\x,\z)$ modulo $F$ we can arrange; conjugating boundary
  monomorphisms if necessary; so that $\x$ either lies in $\Fh \cup H$
  or $\x=t$.
\end{lem}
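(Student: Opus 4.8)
The plan is to mimic the proof of Lemma \ref{lem:2v-1e-simplification}, acting on the Bass--Serre tree $T$ of the splitting (\ref{eqn:2vertices-2edges}) and tracking the marked generating set $(F,\x,\z)$ under Weidmann--Nielsen normalization, always using moves that fix $F$ pointwise. Since the maximal abelian collapse is $1$-acylindrical with maximal abelian, conjugacy-separated boundary subgroups, Lemmas \ref{lem:tf-transitive} and \ref{lem:make-elliptic} are available. Write $v_0=\fix(F)$; as $T(F)=\{v_0\}$ and every element of $F$ fixing a point $w$ also fixes the geodesic $[v_0,w]$, $1$-acylindricity shows $T_F$ is a subtree of radius at most $1$ about $v_0$. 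The one essential difference from the one-edge case is that $T$ now has \emph{two} edge orbits and is bipartite between the $\Fh$-orbit and the $H$-orbit of vertices; this bipartite structure is exactly what produces the genuinely new alternative $\x=t$.

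First I would start from the marking $(F;\{\x,\z\})$. Free indecomposability modulo $F$ rules out the free-splitting conclusion of Theorem \ref{thm:weidmann-nielsen}, so after normalization either some generator among $\x,\z$ is elliptic, or (swapping $\x$ and $\z$ if necessary) $\x$ is hyperbolic with $\x T_F\cap T_F\neq\emptyset$. In the hyperbolic subcase, $v_0$ and $\x v_0$ lie in the same ($\Fh$-)vertex orbit, hence are at even distance; meanwhile $T_F\cap \x T_F\neq\emptyset$ together with $\mathrm{radius}(T_F)\le 1$ forces $d(v_0,\x v_0)\le 2$, and hyperbolicity gives $d(v_0,\x v_0)=2$. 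The midpoint then lies in the $H$-orbit, so the geodesic $[v_0,\x v_0]$ meets both vertex orbits and Lemma \ref{lem:make-elliptic} applies: if $[v_0,\x v_0]$ crosses a single edge orbit we obtain $f_2\x f_1\in H$, and if it crosses both edge orbits we obtain $f_2\x f_1=t$ after conjugating a boundary monomorphism. Since $f_1,f_2\in F$, replacing $\x$ by $f_2\x f_1$ keeps $\{F,\x,\z\}$ a generating set and is realized by moves fixing $F$; relabelling then gives $\x\in H$ or $\x=t$.

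It remains to treat the elliptic alternative, which I would organize exactly as Cases I and II of Lemma \ref{lem:2v-1e-simplification}. If $\x$ is elliptic and $T_{\bk{\x}}\cap T_F\neq\emptyset$, then $\x$ fixes a vertex of $T_F$; if it fixes $v_0$ then $\x\in\Fh$, while if it fixes the other endpoint of an edge $\te\subset T_F$ then, normalizing the stabilizer by an element of $F$ via Lemma \ref{lem:tf-transitive} and conjugating a boundary monomorphism, $\x\in H$. If instead $T_{\bk{\x}}\cap T_F=\emptyset$, I would pass to the marking $(F,\bk{\x};\{\z\})$ and reapply Theorem \ref{thm:weidmann-nielsen}: either $\z$ becomes elliptic, or $\z T_F\cap T_F\neq\emptyset$ (returning us to the hyperbolic analysis after swapping roles), the remaining alternative $\z T_{\bk{\x}}\cap T_{\bk{\x}}\neq\emptyset$ forcing $\z$ to fix an endpoint of the edge $T_{\bk{\x}}$ and hence to be elliptic. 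When both $\x,\z$ are elliptic, free indecomposability modulo $F$ prevents $T_F,T_{\bk{\x}},T_{\bk{\z}}$ from being pairwise disjoint; passing to the combined marking $(F,\bk{\x,\z};\emptyset)$ and applying Theorem \ref{thm:weidmann-nielsen} once more arranges $T_F\cap T_{\bk{\x,\z}}\neq\emptyset$, after which conjugating boundary monomorphisms drives $\x$ into $\Fh\cup H$.

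The main obstacle is the purely tree-combinatorial bookkeeping in the all-elliptic case: one must check that each Weidmann--Nielsen step can be chosen to fix $F$ (so that $v_0$ is preserved), and that the radius-$\le 1$ description of $T_F$ combined with $1$-acylindricity genuinely forecloses every configuration except $\x\in\Fh\cup H$ or $\x=t$. The step that I expect to require the most care is distinguishing the one-edge-orbit outcome ($\x\in H$) from the two-edge-orbit outcome ($\x=t$) in Lemma \ref{lem:make-elliptic}, since it is precisely the bipartite, two-edge-orbit structure of $T$ that isolates $\x=t$ as the new possibility absent from the one-edge case.
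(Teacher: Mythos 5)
Your route is the paper's: Weidmann--Nielsen normalization, the radius-one description of $T_F$, and Lemma \ref{lem:make-elliptic} to turn a generator with $\x T_F\cap T_F\neq\emptyset$ into an element of $H$ or into the stable letter. Your bipartiteness argument in the subcase $\z T_{\bk{\x}}\cap T_{\bk{\x}}\neq\emptyset$ (the two endpoints of the edge $T_{\bk{\x}}$ lie in different vertex orbits, so a hyperbolic $\z$ cannot return that edge to itself) is correct and is a legitimate substitute for the paper's exponent-sum argument at that step. However, there is a genuine gap: you never make the observation with which the paper's proof \emph{opens}, namely that $\FRS$ cannot be generated by elliptic elements with respect to the splitting (\ref{eqn:2vertices-2edges}). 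The underlying graph here has a cycle, so exponent sum in the stable letter $t$ gives an epimorphism $\FRS\rightarrow\Z$ killing every elliptic element, while $F$ is elliptic; since Weidmann--Nielsen moves preserve the generating property, at no stage can both $\x$ and $\z$ be elliptic. This is exactly what makes your terminal ``both elliptic'' case vacuous, and some such argument is indispensable, because the treatment you propose for that case does not work.

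That treatment is imported from Case I.I of Lemma \ref{lem:2v-1e-simplification}, but it does not survive the transplant from the one-edge (amalgam) setting: an amalgam \emph{is} generated by elliptic elements, so there the desired outcome of pushing all generators into vertex groups is attainable, whereas in the two-edge splitting it is impossible, so no sequence of allowed moves can terminate with $F$, $\x$, $\z$ all elliptic. Concretely, your last step (``conjugating boundary monomorphisms drives $\x$ into $\Fh\cup H$'') is unjustified: from $T_F\cap T_{\bk{\x,\z}}\neq\emptyset$ one only gets that the fixed point of the elliptic subgroup $\bk{\x,\z}$ lies at distance at most $2$ from $v_0$, and the distance-two configuration, where $\x,\z$ lie in a conjugate ${}^{g}\Fh$, is handled by none of the lemmas you cite. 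Replacing this whole branch by the exponent-sum observation (as the paper does) closes the proof. A smaller inaccuracy of the same flavour: when an elliptic $\x$ fixes a vertex adjacent to $v_0$, that vertex may lie across the HNN edge, so normalization via Lemma \ref{lem:tf-transitive} yields $\x\in tHt^\mo$ rather than $\x\in H$; since $tHt^\mo$ and $H$ are genuinely different subgroups of $\FRS$, conjugating boundary monomorphisms cannot convert one into the other --- one must change the relative presentation (the choice of spanning tree), which is why the paper phrases this case as ``after possibly changing the relative presentation.''
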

\begin{proof} We first observe that $\FRS$ cannot be generated by
  elliptic elements w.r.t.the splitting (\ref{eqn:2vertices-2edges}).
  We apply Theorem \ref{thm:weidmann-nielsen} to the marked generating
  set $(F;\{\x,\z\})$. Let $T$ be the Bass-Serre tree of this
  splitting.  Let $v_0\in T$ be the vertex fixed by $F\leq\FRS$.

  Suppose that $T_F$ is a point. Then after Weidmann-Nielsen
  normalization, $\x$ must be brought to an elliptic element. We can
  then arrange $T_{\bk{\x}} \cap T_F \neq \emptyset$ or $\z T_F \cap
  T_F \neq \emptyset$, since $\y$ cannot also be elliptic we must have
  $T_{\bk{\x}} \cap T_F \neq \emptyset \Rightarrow \x \in \Fh$. Suppose
  now that $T_F$ is not a point, then after Weidmann-Nielsen
  normalization we can get either:
  
  {\bf Case I:} $T_F\cap \x T_F\neq \emptyset$. Then we can apply Lemma
  \ref{lem:make-elliptic} and get that $\x$ is either in $H$ or
  $\x=th, h \in H$.

  {\bf Case II:} $\x$ is elliptic. Then we use Theorem
  \ref{thm:weidmann-nielsen} on the marked set $(F,\x;\{\z\})$. $\z$
  cannot also be elliptic.

  If $T_F\cap T_{\bk{\x}} \neq \emptyset$ then if $v_0 \in
  T_{\bk{\x}}$ then we can assume that $\x \in \Fh$. If $\x$ fixes a
  vertex $w'$ adjacent to $v_0$, then we can assume that $\x \in
  \stab(w')$. By Lemma \ref{lem:make-elliptic} we either
  have that $\x$ can be brought into $H$ or $tHt^\mo$, after possibly
  changing the relative presentation, the result follows.

  If $T_F \cap \z T_F\neq \emptyset$ then as before we can arrange to
  that $\z = th$ and interchanging $\x$ and $\z$ the result will
  follow. The remaining case is $T_{\bk{\x}} \cap \z
  T_{\bk{\x}}\neq\emptyset$. We note, however, that $T_{\bk{\x}}$
  intersects at most one $\FRS$-edge orbit, but since $\z$ have
  exponent sum 1 in the stable letter, we find that the path $\rho$
  connecting $T_{\bk{\x}}$ and $\z T_{\bk{\x}}$ must intersect both $\FRS$-edge
  orbits. It follows that $T_{\bk{\x}} \cap \z T_{\bk{\x}} =
  \emptyset$.  So we must have $T_F \cap \z T_F \neq \emptyset$, and
  the result follows from Lemma \ref{lem:make-elliptic}.
\end{proof}

\begin{lem}\label{lem:free-dec-H}
  If the vertex group $H$ in the splitting
  (\ref{eqn:2vertices-2edges}) is generated by conjugates of its
  boundary subgroups, i.e. $H = \bra A^{h_1},C^{h_2} \kett, ~h_i \in H$,
  then $\FRS$ is freely decomposable modulo $F$.
\end{lem}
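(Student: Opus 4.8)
The plan is to show directly that the hypothesis forces the vertex group $H$ to be absorbed into $\Fh$ together with the stable letter $t$, exhibiting $\FRS$ as the free product $\Fh * \bk{t}$. First I would normalize the boundary data. The subgroups $A^{h_1}$ and $C^{h_2}$ are obtained from the two boundary subgroups $A$ and $C$ of $H$ by conjugating inside $H$, so the move ``conjugate boundary monomorphisms'' of Definition \ref{defn:moves} (applied independently to the $H$-side monomorphism of each of the two edges, each conjugation being by an element of $A_v = H$) lets me assume, after relabelling $A^{h_1} \to A$ and $C^{h_2} \to C$, that $H = \bk{A,C}$; that is, the two boundary subgroups themselves generate $H$.

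Next I would identify the free-product structure of $H$. By construction of the maximal abelian collapse, $A$ and $C$ are distinct maximal abelian subgroups of $H$ that are conjugacy separated. Commutation transitivity together with malnormality of maximal abelian subgroups (the CSA property) forces $A$ and $C$ to contain a non-commuting pair: otherwise a nontrivial element of $C$ would centralize a nontrivial element of $A$ and hence lie in the unique maximal abelian subgroup containing that element, namely $A$, contradicting conjugacy separation. Lemma \ref{lem:abelian-free-prod} then yields $H = A * C$.

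With $H = A * C$ in hand the splitting collapses algebraically. Since $A = \Fh \cap H$ is identified with $A \leq \Fh$ along the spanning-tree edge, the amalgam satisfies $\Fh *_A H = \Fh *_A (A * C) = \Fh * C$, the free factor $C$ now hanging freely off $\Fh$. The remaining edge contributes the stable letter $t$ with $B^t = C$, where $B \leq \Fh$, so $\FRS = (\Fh * C) *_{t}$ is the HNN extension identifying $B \leq \Fh$ with the free factor $C$ via $t^{-1} B t = C$. I would then eliminate $C$ by the Tietze substitution $\phi(b) \mapsto t^{-1} b t$, where $\phi : B \to C$ is the stable-letter isomorphism: because $\Fh * C$ is a free product there are no relations tying $C$ to $\Fh$, so every relation of $C$ pulls back to a relation of $B$ holding inside $\Fh$ and disappears. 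Concretely, the homomorphisms $\Fh \hookrightarrow \Fh$, $t \mapsto t$, $\phi(b) \mapsto t^{-1} b t$ and the obvious map $\Fh * \bk{t} \to \FRS$ fixing $\Fh$ and $t$ are mutually inverse, giving $\FRS \cong \Fh * \bk{t}$. As $F \leq \Fh$ and $\bk{t} \cong \Z$ is a nontrivial free factor, $\FRS$ is freely decomposable modulo $F$.

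I expect the only real subtleties to be the normalization step, where one must check that conjugating the two boundary monomorphisms independently legitimately reduces to $H = \bk{A,C}$ without disturbing the rest of the relative presentation, and the verification that $A$ and $C$ genuinely fail to commute, which is exactly where conjugacy separation and the CSA property are used. Everything after $H = A * C$ is a routine free-product and HNN computation.
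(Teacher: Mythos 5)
Your proof is correct and follows essentially the same route as the paper's: normalize the conjugators so that $H = \bk{A,C}$, apply Lemma \ref{lem:abelian-free-prod} to get $H = A*C$, and then Tietze-eliminate $C$ through the stable letter to exhibit $\FRS = \Fh * \bk{t}$. The details you make explicit (the non-commuting pair coming from CSA plus conjugacy separation, which is what licenses Lemma \ref{lem:abelian-free-prod}, and the step-by-step elimination of $C$) are precisely what the paper's terser proof leaves implicit.
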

\begin{proof}
  W.l.o.g. by conjugating boundary subgroups if necessary we may
  assume, $h_1,h_2 =1$. By Lemma \ref{lem:abelian-free-prod}, $H =
  A*C$ and we have the relative presentation $\bk{\Fh, H, t | t^\mo B t
    = C}$ with $A, B\leq \Fh$. Using Tietze transformations we can
  rewrite this as $ \Fh * \bra t \kett$.
\end{proof}

\begin{lem}\label{lem:2e-xinF}
  Suppose that $\FRS$ has a maximal abelian collapse
  (\ref{eqn:2vertices-2edges}). If $\FRS$ is generated as
  $\bk{F,\x,\y}$ with $\x \in \Fh$, then $\FRS$ is freely decomposable
  modulo $F$.
\end{lem}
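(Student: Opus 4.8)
The plan is to run the same folding argument used for the one-edge case in Lemma \ref{lem:2-v-1-e-Fx}, adapted to the bigon (\ref{eqn:2e-graph}). First I would take $\G(X)$ to be the graph of groups of the splitting (\ref{eqn:2vertices-2edges}), with $X_u=\Fh$, $X_v=H$, the spanning-tree edge $e$ realizing $A=\Fh\cap H$ and the remaining edge $f$ the stable letter $t$. Since $\x\in\Fh=X_u$, I may form the $\G(X)$-graph $\B$ consisting of a single base vertex $\tilde u$ with label $(\bfxk,u)$ together with the attached $\z$-loop $\L(\z;\tilde u)$; then $\pi_1(\B,\tilde u)=\bk{F,\x,\z}=\FRS$, so by Theorem \ref{thm:folded} $\B$ folds down to $\G(X)$. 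Because the maximal abelian collapse is $1$-acylindrical with maximal abelian, conjugacy separated boundary subgroups, Convention \ref{conv:nice-edge-groups} is in force and the transmission-avoidance results Lemma \ref{lem:long-range-crit} and Lemma \ref{lem:yellow-fold} apply throughout. I would run the adjustment--folding process with moves A0--A2, F1, F4, L1, S1, inserting a T1 transmission only when forced.

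The key structural observation, and the main point of departure from Lemma \ref{lem:2-v-1-e-Fx}, is a first Betti number count: the underlying graph of $\B$ (the base vertex together with the subdivided $\z$-loop) has $b_1=1$, and the target $X$ also has $b_1=1$. Among the available moves only F4 changes $b_1$, and it strictly decreases it, while nothing increases it; hence no F4 collapse can occur in the entire folding sequence. Thus, unlike the one-edge case, the single cycle of $\B$ is never collapsed but must be carried all the way to the cycle of the bigon $X$ by F1 folds, adjustments, shavings and transmissions alone.

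Next I would fold and shave as far as possible without transmissions, invoking Lemma \ref{lem:long-range-crit} and Lemma \ref{lem:yellow-fold} (to dispose of cancellable paths) exactly as in the proof of Lemma \ref{lem:2-v-1-e-Fx}. Since no F4 is permitted, the process stabilizes at a $\G(X)$-graph whose cycle is intact; shaving every valence-one vertex whose group is a conjugate of an edge group reduces the list of stuck shapes to a handful: essentially a bigon on $\tilde u$ (carrying $\bfxk$) and a second vertex $\tilde v$ over $v$, possibly with one residual pendant segment still to be folded in. In each such shape the only way to complete the fold to $\G(X)$ is to build up the vertex group over $v$ by a forced transmission, and I would enumerate these configurations as Cases I--III in parallel with the one-edge argument, now keeping track of how the $\z$-loop wraps the two edges of the bigon.

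Finally I would check that each case produces free decomposability. A forced transmission enlarges the group over $v$ by a conjugate of a boundary subgroup, so by Lemma \ref{lem:abelian-free-prod} the result splits as a free product of that boundary conjugate with the remainder, or else $H$ itself becomes generated by conjugates $\bk{A^{h_1},C^{h_2}}$ of its boundary subgroups, in which case Lemma \ref{lem:free-dec-H} immediately gives that $\FRS$ is freely decomposable modulo $F$. The configurations that would instead require a transmission back across an edge, or an additional adjustment allowing a collapse, are excluded by Lemma \ref{lem:no-conjugator} together with the prohibition on F4 folds. I expect the main obstacle to be precisely this bookkeeping in the last two paragraphs: enumerating the finitely many stuck shapes compatible with $b_1=1$ and with no F4 fold, and verifying that every forced transmission either exposes a free product via Lemma \ref{lem:abelian-free-prod} or triggers Lemma \ref{lem:free-dec-H}, the novelty over the one-edge case being that the persisting cycle forces the analysis to follow how $\z$ winds around the bigon.
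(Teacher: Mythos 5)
Your proposal follows the paper's own proof essentially step for step: the same $\G(X)$-graph $\B$ with $\bk{F,\x}$ at the base vertex and an attached $\y$-loop, the same key observation that F4 collapses are impossible because the cycle must survive to the bigon (your $b_1$ count is just a rephrasing of the paper's cycle-counting remark), the same appeal to Lemma \ref{lem:long-range-crit} to avoid transmissions until the graph is the two-vertex, two-edge bigon, and the same endgame in which conjugacy separation of the boundary subgroups plus Lemma \ref{lem:abelian-free-prod} force the $H$-side vertex group to be a free product of conjugates of boundary subgroups, whence Lemma \ref{lem:free-dec-H} gives free decomposability modulo $F$. The approach, the intermediate configurations, and all the key lemmas coincide with those in the paper's argument.
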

\begin{proof} Consider the $\G(X)$-graph $\B$ obtained by attaching a
  $\y$-loop to the vertex $v$ where $\B_v = \bk{F,\x}$. All other
  vertices in $\B$ have trivial $\B$-vertex group. In the folding process
  F4 collapses are impossible since the final graph should
  have a cycle. By Lemma \ref{lem:long-range-crit} we can perform our
  folding sequence without using transmissions as long as the
  underlying graph is not the graph with two vertices $u,v$, two edges
  $e,f$, and one cycle. When we do reach this point all that remains to be
  done to get a folded graph is to do transmissions. By hypothesis the
  boundary subgroups $C, A \leq H$ are conjugacy separated and so the
  first transmission from $v$ to $u$ through $e$ cannot be followed by
  a transmission form $u$ to $v$ through $f$. It therefore follows
  that after the first two transmissions $\B_v =\bk{C',A'}$, with
  $C',A' \leq C,A$ respectively. By Lemma \ref{lem:abelian-free-prod}, $\B_v
  = C'*A'$ and so no further transmissions are possible from $u$ to
  $v$.  $\B$ is therefore folded and free decomposability now follows
  from Lemma \ref{lem:free-dec-H}.\end{proof}

\subsubsection{Arranging so that w.l.o.g. $\x =t$ and $\y \in H$.}

\begin{lem} Suppose that $\FRS$ has a maximal abelian collapse
  (\ref{eqn:2vertices-2edges}) and is freely indecomposable modulo
  $F$. If $\FRS$ is generated as $\bk{F,\x,\y}$ with $\x \in H$, then,
  conjugating boundary monomorphisms if necessary, we can arrange so
  that $\FRS$ is generated as $\bk{F,\x,\y'}$ with $\z'=ath, h \in H,
  a \in \Fh$.
\end{lem}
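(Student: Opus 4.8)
The plan is to first determine the $t$-exponent sum of $\z$ and then to run a folding argument of the same type as in the proofs of Lemmas~\ref{lem:2e-xinF} and~\ref{lem:2-v-1-e-Hx}. Define a homomorphism $\phi\colon\FRS\to\Z$ by sending every element of $\Fh$ and of $H$ to $0$ and the stable letter $t$ to $1$. This is well defined on the relative presentation~(\ref{eqn:2vertices-2edges}): the defining relation $B^t=C$ maps to $0=0$ since $B\leq\Fh$ and $C\leq H$ both lie in vertex groups, and the amalgamation $A=\Fh\cap H$ is consistent. Now $\phi$ is surjective because $\phi(t)=1$, while $\FRS=\bk{F,\x,\z}$ with $\phi(F)=0$ and $\phi(\x)=0$ (as $\x\in H$). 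Hence the image of $\phi$ is $\phi(\z)\Z$, which forces $\sigma_t(\z)=\phi(\z)=\pm1$. Replacing $\z$ by $\z^{-1}$ (equivalently, interchanging the two orientations of the $t$-edge) I may assume $\sigma_t(\z)=1$.

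Next I would realize $\FRS$ as a $\G(X)$-graph and fold. Take $\B$ to consist of a vertex $u$ with $\B_u=F$ (a lift of the $\Fh$-vertex) and a vertex $v$ with $\B_v=\bk{\x}$ (a lift of the $H$-vertex), joined by the amalgam edge carrying $A$, together with a $\z$-loop $\L(\z)$ based at $u$; then $\pi_1(\B,u)=\bk{F,\x,\z}=\FRS$, so by Theorem~\ref{thm:folded} $\B$ folds down to $\G(X)$. I would run the folding process while avoiding transmissions, which is legitimate here by Lemmas~\ref{lem:yellow-fold} and~\ref{lem:long-range-crit}: the maximal abelian collapse is $1$-acylindrical and its boundary subgroups are maximal abelian and conjugacy separated in their vertex groups. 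Throughout, the adjustment and folding moves, read as operations on the generating set, correspond to Weidmann--Nielsen transformations of $\z$ (pre- and post-multiplication by elements of $\bk{F,\x}$) and to conjugations of the boundary monomorphisms, all of which keep $F$ fixed and keep $\x$ inside $H$.

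Finally I would read off the form of $\z$. Folds preserve $\pi_1(\B)$ and hence the class of $\z$ under $\phi$, so the (possibly modified) generator always crosses the $t$-edge with net multiplicity $\sigma_t(\z)=1$; once $\B$ has been folded so that its underlying graph is that of $\G(X)$, the contribution of $\z$ is supported on the single $t$-edge and is therefore a length-one $\G(X)$-path $a\,t\,h$ with $a\in\Fh$ its initial label and $h\in H$ its terminal label. It remains only to exclude the degenerate outcomes: a $\z$-path of $t$-length $0$ is impossible since $\sigma_t(\z)=1\neq0$, and any fold that exhibited $H$ as generated by conjugates of its boundary subgroups, or otherwise produced a free splitting modulo $F$, would contradict free indecomposability via Lemma~\ref{lem:free-dec-H}.

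The step I expect to be the main obstacle is the bookkeeping in the folding argument: one must check that every fold needed to collapse $\B$ onto $\G(X)$ can be realized by adjustments and foldings that avoid transmissions into the $\z$-part (so that no spurious generators are absorbed) while keeping $F$ and $\x\in H$ in place, so that the transformations genuinely translate into admissible Weidmann--Nielsen moves on $(F,\x,\z)$ together with conjugations of boundary monomorphisms. This is precisely the kind of case analysis carried out in the preceding lemmas, and $1$-acylindricity together with Lemmas~\ref{lem:no-conjugator}, \ref{lem:yellow-fold}, and~\ref{lem:long-range-crit} should again be what makes the various cases tractable.
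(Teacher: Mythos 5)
Your strategy coincides with the paper's: the same initial $\G(X)$-graph (the amalgam edge joining $\B$-vertex groups $F$ and $\bk{\x}$, with the $\z$-loop attached), folding down to the underlying graph of $\G(X)$ while avoiding transmissions, then reading the third generator off the surviving edges. The exponent-sum homomorphism is correct but does essentially no work: what is needed is not that $\z$ has $t$-exponent sum $\pm 1$, but that the folding terminates in a graph of the right shape \emph{with the $\B$-vertex groups still equal to $F$ and $\bk{\x}$}, which is exactly the transmission-avoidance claim. That claim is where your proposal has a genuine gap. Lemma \ref{lem:long-range-crit} does not authorize transmission-free folding wholesale: its hypothesis requires the fold to occur at a vertex lying \emph{outside} the set of vertices with non-trivial $\B$-vertex groups (and whose group is abelian or trivial after the transmissions), so it covers the process only while the graph has more than four vertices. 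Once $\B$ is down to four or three vertices, every remaining fold is adjacent to the vertices carrying $F$ and $\bk{\x}$, and the paper must argue configuration by configuration: for four vertices, that each arrangement of $e$- and $f$-labelled edges admits either a cancellable path at $u_1$ or $v_1$ (Lemma \ref{lem:yellow-fold}) or has $\B_{u_1},\B_{v_1}$ contained in conjugates of edge groups; for three vertices, that Lemma \ref{lem:no-conjugator} prevents transmissions through the doubled edges from altering $\B_u$ or $\B_v$, so that L1 adjustments suffice. You explicitly defer all of this as ``bookkeeping'' expected to be ``tractable as in the preceding lemmas,'' but this case analysis \emph{is} the proof, and it is not automatic: in the neighbouring Lemma \ref{lem:2-v-1-e-Hx} (Case III.II) transmissions genuinely cannot be avoided, and the argument instead has to derive a contradiction through Corollary \ref{cor:F-and-abelian}. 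Asserting that the analogous analysis should go through does not establish the lemma.

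Two further omissions. The paper's proof begins by ruling out the possibility that $\x$ is conjugate into an edge group (otherwise $\x$ could be moved into $\Fh$, and Lemma \ref{lem:2e-xinF} contradicts free indecomposability); your configurations are not as constrained as claimed without this reduction. Also, to read off $\z'=ath$ exactly, the amalgam edge must retain the label $(1,e,1)$ throughout the folding: if it ends up labelled $(a_1,e,b_1)$, the loop generator is $a_2t\,b_2b_1^\mo a_1^\mo$, which carries a trailing factor $a_1^\mo \in \Fh$ and cannot in general be corrected to the form $ath$ while keeping $F$ pointwise fixed and keeping the same element $\x$. The paper makes this explicit (``it is possible to leave the label $(1,e,1)$ \ldots unchanged throughout the folding process''); your reading-off step silently assumes it.
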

\begin{proof}We first note that if $\x$ is conjugate into an edge
  group, then we can assume $\x \in \Fh$, which by Lemma
  \ref{lem:2e-xinF} leads to a contradiction.  The hypotheses imply
  that $\FRS$ is the fundamental group of a $\G(X)$-graph $\B$
  obtained by taking an edge labelled, say $(1,e,1)$, with endpoints
  $v$ and $u$, attaching the $\z$-loop $\L(\z,v)$, and setting $\B_v =
  F,\B_u = \brakett{\x}$.
  
  We start our folding process using only moves A0-A3, F1, F4, L1,
  S1. Note that if a F4 collapse occurs, then the underlying graph
  will be simply connected, which is impossible. Any cycle must have
  even length and by Lemma \ref{lem:long-range-crit}, as long as there
  are more than four vertices we can continue our folding process
  while avoiding transmissions.

  Suppose that $\B$ has only four vertices, noting that this must fold
  to a graph like (\ref{eqn:2e-graph}) using F1 moves we see
  (exchanging the labels $e$ and $f$, if necessary) that the only
  possibilities after doing S1 moves are:\[ \xymatrix{u_1 \bullet
    \ar@{-}[d]^e\ar@{-}[r]^e & \bullet v_1 \ar@{-}[d]^f \\ v \bullet
    \ar@{-}[r]_e & \bullet v}~~ \xymatrix{u_1 \bullet
    \ar@{-}[d]^e\ar@{-}[r]^f & \bullet v_1 \ar@{-}[d]^e \\ v \bullet
    \ar@{-}[r]_e & \bullet v} ~~\xymatrix{u_1 \bullet
    \ar@{-}[d]^f\ar@{-}[r]^e & \bullet v_1 \ar@{-}[d]^e \\ v \bullet
    \ar@{-}[r]_e & \bullet v}\] where the edges are labelled by their
  image in $X$ via the map $[~]$ (recall Definition
  \ref{defn:G(A)-graph}.) If we consider all possible sequences of
  transmissions we see that either there is a cancellable path at
  $u_1$ or $v_1$ or that $\B_{u_1}$ and $\B_{v_1}$ are contained in
  conjugates of their edge groups. In either case $\B_u,\B_v$ remain
  unchanged and we can make an F1 fold without transmissions.

  Suppose now that $\B$ has only three vertices, then the only
  possibilities after doing S1 moves are \[\xymatrix{u_1\bullet
    \ar@{-}@/_/[d]_f \ar@{-}@/^/[d]^e & \\ v \bullet \ar@{-}[r]^e &
    \bullet u}~\xymatrix{ &v_1\bullet \ar@{-}@/_/[d]_e
    \ar@{-}@/^/[d]^f \\ v \bullet \ar@{-}[r]^e & \bullet u} \] with
  $\B_{u_1}$ or $\B_{v_1}$ trivial. The only folding that can occur is
  an F1 fold at $v$ or $u$. By Lemma \ref{lem:no-conjugator}
  $\B_u$ or $\B_v$ can only be changed by transmissions through the
  edge between $u$ and $v$ it follows that we only need to make
  A0-A4,L1 moves before our F1 fold.

  $\B$ can therefore be brought to a graph of the form:\[\xymatrix{v
    \bullet \ar@/_/[r]_{(a_1,e,b_1)} \ar@/^/[r]^{(a_2,f,b_2)}& \bullet
    u}\] with $\B_v=F$ and $\B_u=\brakett{\x}$. Moreover we see that
  it is possible to leave the label $(1,e,1)$ of the edge between $u$
  and $v$ unchanged throughout the folding process, so we may assume
  that $a_1=b_1=1$.  $\FRS$ is therefore generated by $F, \x \in H$
  and $\z' = a_2 f, b_2, e^\mo$ with $a_2 \in \Fh$ and $b_2 \in H$,
  i.e. in the relative presentation we can write $\y' = a_2tb_2$.
\end{proof}

\begin{lem} Suppose that $\FRS$ has a maximal abelian collapse
  (\ref{eqn:2vertices-2edges}) and is freely indecomposable modulo
  $F$. If $\FRS$ is generated as $\bk{F,\x,\y}$ with $\x = t$, then we
  can arrange so that $\FRS$ is generated as $\bk{F,\x,\y'}$ with $\z'
  \in H$.
\end{lem}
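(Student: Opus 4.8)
The plan is to run the folding argument of the preceding two lemmas, now with the stable letter $t$ already in place. First I would record two preliminary reductions. Since $\x=t$ has exponent sum $1$ in the stable letter and $\{F,\x,\y\}$ generates $\FRS$, the Nielsen substitution $\y\mapsto\y\x^{-\sigma_t(\y)}$ lets me assume $\sigma_t(\y)=0$ while keeping $\{F,t,\y\}$ a generating set. I would also note that $\x=t$ is hyperbolic in the splitting (\ref{eqn:2vertices-2edges}), hence is never conjugate into an edge group, so the degenerate possibility treated at the start of the previous lemma does not occur here.

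Next I would set up the $\G(X)$-graph $\B$ realising $\x=t$ by the non-tree edge: take a vertex $p$ with $\B_p=F$ mapping to the $\Fh$-vertex and a vertex $q$ with $\B_q=\{1\}$ mapping to the $H$-vertex, the tree edge $e$ with label $(1,e,1)$, and the non-tree edge $f$ whose label spells the stable letter $t$, so that this part has $\pi_1=\bk{F,t}=\bk{F,\x}$; then attach the $\y$-loop $\L(\y;p)$. Since $\pi_1(\B,p)=\FRS$, Theorem \ref{thm:folded} guarantees that $\B$ folds down to $\G(X)$. I would carry out the folding using only the moves A0--A3, F1, F4, L1, S1, postponing transmissions. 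The one genuinely new point is the bookkeeping: the underlying graph of $\B$ has $b_1=2$ (the $e$--$f$ bigon together with the $\y$-loop), whereas the graph (\ref{eqn:2e-graph}) of the maximal abelian collapse has $b_1=1$, so exactly one F4 collapse must occur, and by Lemmas \ref{lem:long-range-crit} and \ref{lem:yellow-fold} I may assume every other reduction is an F1 fold, or an S1 shaving of a cyclic valence-one vertex, performed without transmissions.

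Because $\pi_1(\B,p)=\FRS=\pi_1(\G(X))$, the folded graph is $\G(X)$ itself, with both edges $e,f$ surviving; the unique F4 collapse therefore cannot be the $e$--$f$ bigon and must absorb the cycle of the $\y$-loop into one of the two vertex groups. I would then do a case analysis on the configuration just before this collapse, using Convention \ref{conv:nice-edge-groups} (the boundary subgroups are conjugacy separated), Lemma \ref{lem:no-conjugator} and Lemma \ref{lem:abelian-free-prod} to rule out the transmissions that would enlarge the two surviving vertex groups. A collapse toward the $\Fh$-vertex would put $\y$ into $\Fh$, making $\FRS$ freely decomposable modulo $F$ by Lemma \ref{lem:2e-xinF} (with the two generators interchanged), a contradiction; a collapse expressing $H$ by conjugates of its boundary subgroups contradicts free indecomposability through Lemma \ref{lem:free-dec-H} (via Lemma \ref{lem:abelian-free-prod}). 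Eliminating these, the only surviving configuration keeps the tree edge labelled $(1,e,1)$ and collapses the $\y$-loop toward the $H$-vertex, so $\y$ contributes an element $\z'\in H$ and $\FRS=\bk{F,t,\z'}$, as desired.

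The main obstacle will be the bookkeeping in this case analysis, specifically checking that the $\y$-loop never needs to traverse the non-tree edge $f$ a second time; this is where the reduction $\sigma_t(\y)=0$ and the conjugacy separation of the edge groups are essential, since together they confine the $\y$-part to the tree piece $\Fh*_A H$, where it behaves exactly as in the one-edge analysis of Section \ref{sec:MAC-1e}. Once that confinement is established, the transmission-avoidance lemmas force the single F4 collapse onto the $H$-vertex after the two freely decomposable outcomes have been excluded.
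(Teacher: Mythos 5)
Your setup and overall strategy coincide with the paper's (same initial $\G(X)$-graph, same folding machinery, same two exclusions via Lemma \ref{lem:2e-xinF} and Lemma \ref{lem:free-dec-H}), but the proposal has two concrete gaps, and they sit exactly where the real work is. First, the ``confinement'' claim is false: arranging $\sigma_t(\y)=0$ does \emph{not} put $\y$ into the amalgam $\Fh*_AH$. Viewing $\FRS$ as an HNN extension of $\Fh*_AH$ with stable letter $t$, Britton's lemma shows that an element such as $tht^\mo h'$, with $h\in H$ not in the associated subgroup, has exponent sum zero in $t$ but does not lie in the base group. So the $\y$-loop may traverse the non-tree edge $f$ (in both directions) arbitrarily often, and the problem does not reduce to the one-edge analysis of Section \ref{sec:MAC-1e}.

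Second, the case analysis ``just before the unique F4 collapse'' cannot be closed with the tools you list. The terminal configuration the paper must confront (its Case III.II) is: two vertices, three edges with labels $(1,e,1)$, $(1,f,1)$ and a middle edge $(a,e,b)$, with $\B_v=F$, $\B_u=\{1\}$, and no fold available without transmissions. Whether a transmission from $v$ to $u$ is possible through a given edge is governed by intersections such as $F\cap A$, $F\cap B$ and $F\cap aAa^\mo$ being non-trivial; conjugacy separation of the boundary subgroups inside $H$, Lemma \ref{lem:no-conjugator} and Lemma \ref{lem:abelian-free-prod} say nothing that rules these out. The paper instead handles the transmissions head-on: if both $e$-type edges transmit, property CC of $F\leq\FRS$ (Lemma \ref{lem:CC}) produces $f\in F$ with $fa\in i_e(X_e)$ and an F4 collapse towards $u$ follows (the good outcome); otherwise it isolates the hypotheses $(\dagger)$ and $(\ddagger)$, lets a transmission back from $u$ enlarge $\B_v$ to $\bk{F,aAa^\mo}$, and then applies Corollary \ref{cor:F-and-abelian} to get the free product structure $F*aAa^\mo$ with $a\notin\B_v$, so no further move is possible and the graph is folded, contradicting $\FRS=\pi_1(\G(X))$. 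This use of CC and of Corollary \ref{cor:F-and-abelian} is the essential content of the lemma and is absent from your proposal; without it, the assertion that ``the only surviving configuration'' collapses the $\y$-loop toward the $H$-vertex is unproved.
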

\begin{proof} The hypotheses imply that $\FRS$ is the fundamental
  group of a $\G(X)$-graph $\B$ obtained by taking two edges with
  labels $(1,e,1)$ and $(1,f,1)$ with common endpoints $v$ and $u$,
  setting $\B_v=F, \B_u = \{1\}$, and attaching the $\z$-loop
  $\L(\z,v)$.  We start our folding process using only moves
  A0-A3,F1,F4,L1,S1, but avoiding transmissions.

  {\bf Case I:} Suppose we were able to bring $\B$ to a graph with two
  vertices and two edges with $\B_v = \brakett{F,\z'},\B_u = \{1\}$ or $\B_v
  = F, \B_u = \{\z'\}$. To get a folded graph, all that remains are
  transmissions, but note that in the former case, Lemma
  \ref{lem:free-dec-H} will imply free decomposability modulo $F$, in
  the latter case the result follows.

  {\bf Case II:} Suppose first that a F4 collapse occurred then $\B$ has
  one cycle and two non-trivial $\B$-vertex groups. By Lemma
  \ref{lem:long-range-crit} unless we have one of the two following
  possibilities: \begin{equation}\label{eqn:2e-collapsed}
    \xymatrix{u_1\bullet \ar@{-}[d]^g & \\ v \bullet \ar@{-}@/^/[r]^e
      \ar@{-}@/_/[r]_f & \bullet u}~\xymatrix{&\bullet v_1
      \ar@{-}[d]^g \\ v \bullet \ar@{-}@/^/[r]^e \ar@{-}@/_/[r]_f &
      \bullet u} \end{equation} we can continue folding without using
  transmissions. Here $\B_{u_1}$ or $\B_{v_1}$ is cyclic. The next
  fold is an F1 fold at $u$ or $v$, or shaving off $u_1$ or $v_1$. By
  Lemma \ref{lem:abelian-free-prod} unless $u_1$ or $v_1$ can be
  shaved off, there can be no transmissions to $u_1$ or $v_1$ and then
  back again through $g$ it therefore follows that prior to the F1
  fold no transmissions through the edge $g$ are needed so we can
  change its label with an A2 simple adjustment or an L1 long range
  transmission. This brings us to Case I. 

  {\bf Case III:} Suppose that no collapses occurred. By Lemma
  \ref{lem:long-range-crit} $\B$ has at most 4 vertices.  The only
  possibility with 4 vertices is something of the form:\[
  \xymatrix{u_1\bullet \ar@{-}[d] \ar@{-}[r]& \bullet v_1\ar@{-}[d]\\
    v \bullet \ar@{-}@/^/[r] \ar@{-}@/_/[r] & \bullet u}\] with $\B_u,
  \B_{u_1}, \B_{v_1}$ trivial. If there are no cancellable paths, then
  the next fold is of type F1 at $u$ or $v$, and no transmissions are
  needed. If the fold is at $v_1$ or $u_1$ Lemma
  \ref{lem:long-range-crit} ensures that no transmissions are needed
  to make the fold.

  {\bf Case III.I} Suppose now that after shaving $\B$ has three vertices then the
  possibilities are:\[\xymatrix{u_1\bullet \ar@{-}@/_/[d]_g
    \ar@{-}@/^/[d]^h & \\ v \bullet \ar@{-}@/_/[r] \ar@{-}@/^/[r] &
    \bullet u}~\xymatrix{ &v_1\bullet \ar@{-}@/_/[d]_g \ar@{-}@/^/[d]^h \\
    v \bullet \ar@{-}@/_/[r] \ar@{-}@/^/[r] & \bullet u}\] with $\B_u,
  \B_{u_1}, \B_{v_1}$ trivial.

  {\bf Case III.I.I:} If $[g] \neq [h]$ then since the edge groups are
  conjugacy separated and by Lemma \ref{lem:abelian-free-prod} after
  any sequence of transmissions there can be no transmissions from
  $u_1$ or $v_1$ back to $u$ or $v$ respectively. The next fold is an
  F1 fold and can therefore be done using an A2 simple adjustment or
  an L1 long range adjustment to change the label of $g$ or $h$. This
  brings us to Case III.II.

  {\bf Case III.I.II:} If $[g] = [h]$ then either we can F4 collapse at $u_1$ or $v_1$
  towards $v$ or $u$ respectively which after shaving off $u_1$ or $v_1$
  reduces to the Case I. Otherwise by Lemma
  \ref{lem:no-conjugator} after any sequence of transmissions there
  can be no transmissions from $u_1$ or $v_1$ back to $u$ or $v$ respectively
  and no F4 collapse at $u_1$ or $v_1$. The subsequent fold can
  therefore be made without any transmissions as in the previous
  paragraph. A collapse at this point reduces to Case II, otherwise we
  are in Case III.II.

  {\bf Case III.II:} Suppose now that $\B$ has two vertices and three
  edges, the possibilities are\[\xymatrix{v\bullet
    \ar@/^/[r]^{(1,f,1)} \ar@{-}[r]|{f} \ar@/_/[r]_{(1,e,1)} & \bullet
    u} \xymatrix{v\bullet \ar@/^/[r]^{(1,f,1)} \ar@{-}[r]|e
    \ar@/_/[r]_{(1,e,1)} & \bullet u}\] where $\B_u = \{1\}$ and the
  remaining edge is marked only by its image in $X$ via map
  $[~]$. Note that these cases are symmetric. Suppose the middle edge
  has label $(a,e,b)$ with $a \in \Fh$ and $b \in H$. First note that
  if it is possible to transmit through both $e$-type edges from $v$
  to $u$, because $F$ has property $CC$ this means that there is some
  $f \in F$ such that $fa \in i_e(X_e)$ which means that after an A2
  simple adjustment and an A1 Bass-Serre move we can make a F4
  collapse towards $u$ and the result follows. \begin{itemize}
  \item[$(\dagger)$] We may therefore assume that there is no $f \in
    F$ such that $fa \in i_e(X_e)$.
\end{itemize}
  If only one transmission from $v$ to $u$ is possible before the F4
  collapse, then we could use an L1 long range adjustment instead, and
  reduce to the Case I. The final remaining possibility is that
  there are transmissions from $v$ to $u$ through the edges labeled
  $(1,e,1)$ and $(1,f,1)$. In particular we can assume\begin{itemize}
  \item[$(\ddagger)$] Some conjugate of $X_e$ centralizes $\alpha \in
    F$.
  \end{itemize}
  Let $\B_u = \bk{\alpha,\beta}$ be the $\B$-vertex group after these
  transmissions. If $b \in \B_u$ and we can make a simple adjustment
  on the label of the edge labeled $(a,e,b)$ and then do an F4
  collapse, then we could have used an L1 long range adjustment
  instead, and reduce to Case I.

  We may therefore assume that there is a transmission from $u$
  through the edge labeled $(a,e,b)$ so that $\B_v$ is now $\bk{F,a A
    a^\mo}$ where $A \leq i_e(X_e)$. Since we $A$ centralizes some
  conjugate of an element of $F$ and by $(\dagger),(\ddagger)$, we can
  apply Corollary \ref{cor:F-and-abelian} to obtain $\bk{F,a A a^\mo}
  = F * a A a^\mo$, $a \not\in \B_v$ so there can be no collapse at
  $v$ and by the free product structure there can be no more
  transmissions. The graph is therefore already folded, contradicting the fact
  that $\FRS = \pi_1(\G(X))$ as in (\ref{eqn:2e-graph}).
\end{proof}

All these lemmas combine to give:

\begin{prop}\label{prop:two-edges}
  If $\FRS$ is freely indecomposable modulo $F$ and has a maximal
  abelian collapse (\ref{eqn:2vertices-2edges}), then $\FRS$ is
  generated by $F$, the stable letter $t$, and some element $\z \in
  H$.
\end{prop}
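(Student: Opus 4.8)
The plan is to combine Lemma~\ref{lem:x-in-F-or-H}, Lemma~\ref{lem:2e-xinF}, and the two lemmas immediately preceding this proposition (which treat the cases $\x \in H$ and $\x = t$) by a case analysis dictated by the position of $\x$ in the splitting~(\ref{eqn:2vertices-2edges}). First I would invoke Lemma~\ref{lem:x-in-F-or-H}: after Weidmann--Nielsen normalization on $(F,\x,\z)$ and, if necessary, conjugation of boundary monomorphisms, the element $\x$ lies in $\Fh$, lies in $H$, or equals the stable letter $t$. The case $\x \in \Fh$ is discarded at once, since Lemma~\ref{lem:2e-xinF} shows it forces $\FRS$ to be freely decomposable modulo $F$, against the standing hypothesis of free indecomposability. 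Thus only the cases $\x \in H$ and $\x = t$ survive.

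When $\x = t$, the lemma handling that case produces a generating set $\bk{F, t, \z'}$ with $\z' \in H$, which is already the asserted form. When $\x \in H$, the lemma handling that case produces a generating set $\bk{F, \x, \z'}$ with $\z' = ath$, where $a \in \Fh$ and $h \in H$. Here the one point requiring care is to promote $\z'$ to the role of stable letter: setting $t' = ath$ and substituting $t = a^{-1} t' h^{-1}$ into the defining relation $B^t = C$ converts it into $(aBa^{-1})^{t'} = h^{-1}Ch$, since $aBa^{-1} \leq \Fh$ and $h^{-1}Ch \leq H$. This is precisely the relation of a maximal abelian collapse with the same underlying graph~(\ref{eqn:2e-graph}) and merely conjugated boundary monomorphisms (the amalgamating subgroup $A = \Fh \cap H$, coming from the tree edge, is untouched). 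Hence $t'$ is a legitimate stable letter for an equivalent presentation, and $\FRS = \bk{F, t', \x}$ with $\x \in H$ is again of the desired form.

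The only genuine obstacle is this last bookkeeping step: recognizing that the hyperbolic generator $\z' = ath$ furnished in the $\x \in H$ case can be absorbed into a change of stable letter, so that both surviving cases are simultaneously normalized to ``$F$, a stable letter, and an element of $H$''. Everything else is a direct citation of the lemmas established in this subsection, so the proof itself should be very short.
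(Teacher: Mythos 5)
Your proposal is correct and takes essentially the same route as the paper, whose own ``proof'' is simply the remark that all the lemmas of Section \ref{sec:MAC-2e} combine: the trichotomy from Lemma \ref{lem:x-in-F-or-H}, elimination of the case $\x \in \Fh$ via Lemma \ref{lem:2e-xinF}, and the two unlabelled lemmas covering $\x \in H$ and $\x = t$. Your explicit bookkeeping step, replacing $t$ by $t' = ath$ and checking that the relation $B^t = C$ becomes $(aBa^\mo)^{t'} = h^\mo C h$, is exactly what the paper compresses into the phrase ``conjugating boundary monomorphisms if necessary,'' so you have filled in the one detail the paper leaves implicit rather than deviated from its argument.
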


\subsubsection{Recovering the original cyclic splitting from the
  maximal abelian collapse}

The next proposition enables us to revert to a cyclic splitting.

\begin{prop}\label{prop:2e-h-3gen} Suppose that $\FRS$ is freely
  indecomposable modulo $F$ and has a maximal abelian collapse
  (\ref{eqn:2vertices-2edges}), then $\FRS$ admits one of the two
  possible cyclic splittings:
  \begin{enumerate}
  \item \[ \FRS = \relpres{\bk{F,H',\x \mid \beta^\x = \beta'}}{\beta
      \in F, \beta' \in H', \bk{\alpha} = F \cap H'}\]where
    $H'=\brakett{\alpha,\beta',\z}$ is a three generated fully
    residually free group.
  \item \[\FRS = \relpres{\bk{\F',H',\x \mid \beta^\x = \beta'}}{\beta
      \in \F', \beta' \in H', \bk{\alpha} = \F \cap H'}\] where $\F'$ is
    a rank 1 free extension of a centralizer of $F$ and $H'$ is
    generated by $\alpha,\z$. Moreover $H'$ may not split further as
    an HNN extension.
  \end{enumerate}
\end{prop}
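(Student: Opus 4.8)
The plan is to run the two-edge analogue of the argument for Proposition \ref{prop:1e-h-3gen}. First I would invoke Proposition \ref{prop:two-edges} to assume $\FRS = \bk{F,t,\z}$ with $\z \in H$ and $t$ the stable letter of (\ref{eqn:2vertices-2edges}). Reading off normal forms in this splitting, each vertex group is generated by the given elements it contains together with the boundary subgroups it meets, so $\Fh = \bk{F,A,B}$ and $H = \bk{A,C,\z}$, where $A=\Fh\cap H$, $B\leq\Fh$, $C\leq H$ are the maximal abelian boundary subgroups and $C = t^\mo B t$.

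Next I would pin down the two vertex groups by a first Betti number count. Since $\FRS$ is generated by $F$ and two elements, $b_1(\FRS)\leq N+2$, and since $\FRS\neq F$ Proposition \ref{prop:b1-complexity} gives $b_1(\FRS)\geq N+1$. Taking the amalgamating edge as spanning tree, (\ref{eqn:b1-bound}) yields $b_1(\FRS)\geq b_1(\Fh)+b_1(H)-1$; as $H$ is non-abelian it maps onto a free group of rank $2$, so $b_1(H)\geq 2$, forcing $b_1(\Fh)\leq N+1$. Thus either $\Fh=F$ (Proposition \ref{prop:b1-complexity}) or $b_1(\Fh)=N+1$, in which case, once one checks that $\Fh$ is generated by $F$ and a single element, Theorem \ref{thm:onevariable} and free indecomposability modulo $F$ identify $\Fh$ as a rank $1$ centralizer extension $\F'=F*_\bk{u}Ab(u,r)$. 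The same inequality bounds $H$: if $\Fh=F$ then $b_1(H)\leq 3$ and $H=\bk{A,C,\z}$ is three-generated fully residually free (Theorem \ref{thm:FGMRS}), while if $b_1(\Fh)=N+1$ then $b_1(H)\leq 2$, so the non-abelian group $H$ is free of rank $2$. This produces the dichotomy between conclusions (1) and (2).

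The heart of the argument is then to un-enlarge the maximal abelian boundary subgroups $A,B,C$ to cyclic groups. Exactly as in Proposition \ref{prop:1e-h-3gen}, I would apply Lemma \ref{lem:F-and-abelian} and Corollary \ref{cor:F-and-abelian} to the abelian subgroups $A,B\leq\Fh$: when some conjugate of a boundary subgroup centralizes an element of $F$ the extra abelian rank is absorbed into $\Fh$, yielding the centralizer extension of case (2) with cyclic edge groups $\bk{\alpha},\bk{\beta}$, whereas the configurations in which an abelian group sits between $F$ and $\bk{\x,\z}$ with trivial cyclic overlap are excluded by Lemma \ref{lem:abelianbetween}, since they would make $\FRS$ freely decomposable modulo $F$. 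Conjugacy separation of the boundary subgroups (the $1$-acylindricity of the maximal abelian collapse) together with Lemma \ref{lem:abelian-free-prod} is what prevents $A$ and $C$ from fusing inside $H$, so that $H$ genuinely remains free of rank $2$ in case (2) and three-generated in case (1). Once the edges are cyclic one reads off $\bk{\alpha}=\Fh\cap H'$ and $\beta^t=\beta'$ with $\beta\in\Fh$, $\beta'\in H'$, giving the stated relative presentations.

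The main obstacle I anticipate is precisely the interaction of the HNN edge $B^t=C$ with the abelian collapse: because the boundary monomorphisms identify $B$ with $C$, a non-cyclic $B$ forces a non-cyclic $C\leq H$, which is incompatible with $H$ being free of rank $2$ in case (2). Resolving this requires showing that the non-cyclic abelian subgroup responsible for case (2) is \emph{internal} to $\Fh$ --- coming from an element not commensurable with any boundary element --- rather than being the boundary subgroup $B$ itself, and the Betti-number inequality above is what rules out both edge groups being non-cyclic simultaneously. Finally, the ``moreover'' clause of case (2), that $H'$ admits no further HNN splitting, follows as in Proposition \ref{prop:2-2-A-noconj}: such a splitting, combined with Lemma \ref{lem:f2-hnn}, would force $\alpha$ to be almost conjugate to one of the new edge elements in $H'$, and a Tietze rewriting would then exhibit $\FRS$ as freely decomposable modulo $F$, contradicting our hypothesis.
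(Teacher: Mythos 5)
Your opening step (Proposition \ref{prop:two-edges}, giving $\x=t$ and $\z\in H$) matches the paper, and the identification $\Fh=\bk{F,A,B}$, $H=\bk{A,C,\z}$ is essentially correct (it follows from the folded-graph description rather than from ``normal forms'', but that is cosmetic). The first genuine gap is your Betti-number dichotomy. Formula (\ref{eqn:b1-bound}) is stated, and is only valid, for graphs of groups with \emph{cyclic} edge groups: for an amalgam $G_1*_AG_2$ one only gets $b_1\geq b_1(G_1)+b_1(G_2)-\rk(A)$, so for the collapse (\ref{eqn:2vertices-2edges}) the correct lower bound is $b_1(\FRS)\geq b_1(\Fh)+b_1(H)+1-\rk(A)-\rk(B)$. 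But in the maximal abelian collapse the boundary subgroups $A,B,C$ are \emph{maximal abelian}, and the whole difficulty of this proposition lies precisely in the case where they are non-cyclic; if, say, $\rk(A)=\rk(B)=2$, your inequality $b_1(\FRS)\geq b_1(\Fh)+b_1(H)-1$ fails by $2$, and with it the conclusions $b_1(\Fh)\leq N+1$, ``$H$ is three-generated'', ``$H$ is free of rank $2$''. So the dichotomy between conclusions (1) and (2) is not established by this count. (You also leave unproved the claim that $\Fh$ is generated by $F$ and a single element before invoking Theorem \ref{thm:onevariable}.) The paper instead distinguishes cases by whether $F\cap B$ is trivial: free indecomposability produces some $\gamma\in\bk{\alpha,\z}$ with $\x\gamma\x^\mo=\beta\in B$, Theorem \ref{thm:onevariable} is applied to $\bk{F,\beta}$, Lemma \ref{lem:abelian-free-prod} (via $\bk{A,\z}=A*\bk{\z}$) rules out a non-cyclic $C$, and when $F\cap B\neq\{1\}$ an injectivity argument shows $\bk{F,A,B}=A*_\bk{\alpha}F*_\bk{\beta}B$, after which a second folding argument yields the three-generation of $H'$.

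The second gap is the ``moreover'' clause of case (2). You claim that a further HNN splitting of $H'$ would, via Lemma \ref{lem:f2-hnn}, force $\alpha$ to be almost conjugate to one of the new edge elements, whence free decomposability. Lemma \ref{lem:f2-hnn} forces no such thing: it only says that any such splitting has the form $\bk{G,t\mid t^\mo p^nt=q}$ with $\alpha\in G=\bk{p}*\bk{q}$, and Proposition \ref{prop:2-2-A-noconj} asserts that the almost-conjugacy \emph{cannot} occur there, so the contradiction you want never materializes. The paper's argument is different and does work: if $H'$ split further as an HNN extension, the refined splitting of $\FRS$ would collapse to a cyclic splitting with two cycles, so Corollary \ref{cor:no-ab} would forbid every vertex group from containing a non-cyclic abelian subgroup; but in case (2) the vertex group $\F'$ is a rank-$1$ centralizer extension of $F$ and so contains $\Z\oplus\Z$ --- contradiction.
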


\begin{proof} Let $\Fh,A,B,C,H,t$ be as in
  (\ref{eqn:2vertices-2edges}). By Proposition \ref{prop:two-edges} we
  can assume that $\x=t$ and $\z \in H$. We can always assume that
  $F\cap A \neq \{1\}$ (otherwise we could derive free decomposability
  modulo $F$.)

  \emph{Suppose first that $F\cap A=\brakett{\alpha}$ and $F\cap B =
    \{1\}$.} To ensure free indecomposability modulo $F$ we need there
  to be some $\gamma \in \brakett{\alpha,\z}$ such that $\x \gamma
  \x^\mo = \beta \in B \leq\Fh$. Now by Theorem \ref{thm:onevariable}
  if $\brakett{F,\beta}\neq F*\brakett{\beta}$ then we must have
  $\brakett{F,\beta} = \brakett{F,t | [p,t]=1}$ for some $p \in F$. If
  $p$ is not conjugate to $\alpha$ in $F$ then $\FRS$ has a cyclic
  splitting as in item 2. If $p$ and $\alpha^\pmo$ are conjugate in $F$,
  then we can assume that $\alpha = p$, so then the group $A$ in
  (\ref{eqn:2vertices-2edges}) is non-cyclic abelian of rank 2. We
  study the maximal abelian subgroup $C\leq H$ we already had that
  $\gamma(\z,\alpha) \in C$ if $C\leq H$ is not cyclic then there must
  be some $\gamma_1 \in \brakett{\z,A}$ such that $\gamma$ and
  $\gamma_1$ do not lie in a common cyclic subgroup and which
  satisfies the relation
  \begin{equation}\label{eqn:backwash} [\gamma,\gamma_1]=1
  \end{equation}
  however by Lemma \ref{lem:abelian-free-prod} we have
  \begin{equation}\label{eqn:backwashmore}\brakett{A,\z} =
    A*\brakett{\z}\end{equation}
  which means that (\ref{eqn:backwash}) is impossible. It follows
  that $\Fh=\brakett{F,\beta}$. This gives the cyclic
  splitting:
  \[\FRS = \relpres{\bk{\Fh,H,\x \mid \beta^\x = \gamma}}{\beta \in
    \Fh, \gamma \in H, \bk{\alpha} = \Fh \cap H}
  \] with $H=\bk{\alpha,\y}$.
  
  Suppose now towards a contradiction that $H$ split further as an
  HNN extension: \[ H =\bk{K,t \mid \delta^t=\delta'}; \delta,\delta'
  \in K\] modulo $\alpha, \gamma$, then we have \[\FRS =
  \relpres{\bk{\Fh,K, \x,t \mid \beta^\x = \gamma, \delta^t =
      \delta'}}{\beta \in \Fh, \gamma,\delta,\delta' \in K,
    \bk{\alpha} = \Fh \cap K}\] Then we can collapse this splitting to
  a double HNN and by Corollary \ref{cor:no-ab} $\Fh$ cannot contain any
  non-cyclic abelian subgroups -- contradiction.
  
  \emph{Suppose now that $F\cap A=\brakett{\alpha}$ and $F\cap B =
    \brakett{\beta}$.} If both $A$ and $B$ are cyclic we are done: by
  Proposition \ref{prop:two-edges} $H$ is generated by
  three elements. We therefore assume w.l.o.g. that $A$ isn't cyclic.
  First note that by Lemma \ref{lem:F-and-abelian}, $\bk{F,A}$ has the
  JSJ $F*_\bk{\alpha}A$. Suppose first that the $B$ isn't cyclic. We
  have a surjection \[A*_\bk{\alpha}F*_\bk{\beta}B \rightarrow
  \bk{F,A,B}\] which is injective on $F*_\bk{\beta}B$ as well. Suppose
  this map wasn't injective, then some element $w$ lies in the kernel,
  moreover $A*_\bk{\alpha}F*_\bk{\beta}B$ should not have any
  essential cyclic splittings modulo $w,F$. On the other hand
  $\bk{F,A,B}$ should have a non-trivial JSJ modulo $F$ but the
  triviality of the image $w$ implies that $\bk{F,A,B}$ has only one
  vertex group -- contradiction.

  Hence, whether or not $B$ is cyclic, we always have that $\bk{F,A,B}
  = A*_\bk{\alpha} F *_\bk{\beta} B$. This means we have the cyclic
  splitting\[\FRS = \relpres{\bk{F,H',t \mid \beta^t = \gamma}}{ \beta
    \in F, \gamma \in C\leq H', \bk{\alpha} = F \cap H'}\] where $H' =
  \bk{H,A,C}$. Now by Proposition \ref{prop:two-edges}, even with the
  new splitting, we still have that $\x=t$ and that $\y \in H\leq H'$
  so considering a folding sequence starting at this point we have
  that $F$ is a full vertex group and all that remains are transmissions
  from $F$ to $H'$ to get $\FRS$. It follows that $H' =
  \bk{\y',\alpha,\gamma}$, so it's 3-generated.
\end{proof}

\begin{prop}\label{prop:2-3-A-noconj} Suppose that $\FRS$ splits
  as \[\relpres{\bk{F,H,t \mid \beta^t = \gamma}}{\beta \in F, \gamma
    \in H, \bk{\alpha} = F \cap H}\] Then $\alpha$ and $\gamma$ cannot
  be almost conjugate $H$. Moreover $\alpha$ and $\beta$ cannot be
  almost conjugate in $F$.
\end{prop}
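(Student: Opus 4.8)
The plan is to mirror the proof of Proposition \ref{prop:2-2-A-noconj}: I assume one of the two almost-conjugacies and then force the two edge groups to lie in a common maximal cyclic subgroup of a vertex group, after which I exhibit either a free factor or a non-cyclic abelian subgroup of $\FRS$. Since $F$ and $H$ are free (by the standing convention of this section $H$ is free of rank $2$), both outcomes contradict the standing hypothesis that $\FRS$ is freely indecomposable modulo $F$; in the abelian case I use that, by Convention \ref{conv:abelian-elliptic}, a non-cyclic abelian subgroup must be elliptic and hence conjugate into a vertex group.

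For the first claim, suppose towards a contradiction that $\alpha$ and $\gamma$ are almost conjugate in $H$. First I would perform a balancing fold (Definition \ref{defn:balancing-fold}), which by Lemma \ref{lem:adjoin-a-root} preserves the hypotheses, so that $\gamma$ is not a proper power in $H$. Writing $\bk{\alpha^{g_1}},\bk{\gamma^{g_2}}\leq\bk{\eta}$ with $\eta$ generating the maximal cyclic subgroup containing $\gamma^{g_2}$, the non-proper-power condition gives $\bk{\eta}=\bk{\gamma^{g_2}}$, hence $\bk{\alpha^{g_1}}\leq\bk{\gamma^{g_2}}$ and so $\alpha^h=\gamma^k$ for some $h\in H$ and $k\in\Z$. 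Conjugating the boundary monomorphism of the amalgamating edge on the $H$-side by $h$ (Definition \ref{defn:moves}), I may assume $\alpha=\gamma^k\in\bk{\gamma}$. By Proposition \ref{prop:two-edges} I may take $\x=t$ and $\z\in H$, and then Proposition \ref{prop:2e-h-3gen} gives $H=\bk{\alpha,\gamma,\z}$; since $\alpha\in\bk{\gamma}$ this collapses to $H=\bk{\gamma,\z}$. As $H$ is free of rank $2$ and two-generated, $\{\gamma,\z\}$ is a basis, so $H=\bk{\gamma}*\bk{\z}$. The defining relations $\beta^t=\gamma$ and $\alpha=\gamma^k$ involve only $\gamma$ from $H$, so $\z$ occurs in no relation and splits off:
\[
\FRS=\bk{F,\gamma,t\mid \beta^t=\gamma,\ \gamma^k=\alpha}*\bk{\z},
\]
contradicting free indecomposability modulo $F$.

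For the second claim, suppose $\alpha$ and $\beta$ are almost conjugate in $F$. By the same reduction (balancing so that $\beta$ is not a proper power, then using malnormality of maximal cyclic subgroups of the free group $F$) I obtain $\alpha^f=\beta^k$ in $F$, and after conjugating the $F$-side boundary monomorphism of the amalgamating edge I may assume $\alpha=\beta^k\in\bk{\beta}$, so the two $F$-side edge groups are nested. I then slide the amalgamating edge over the $t$-edge at the vertex $F$ (Definition \ref{defn:moves}): it becomes a loop at $H$ whose boundary subgroups are $\bk{\alpha}$ and, transported across $t$, $\bk{\gamma^k}$, while the remaining edge presents $K=F*_{\bk{\beta}=\bk{\gamma}}H$. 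In $K$ one has $\beta=\gamma$, hence $\alpha=\beta^k=\gamma^k$, so the loop relation $s^{-1}\alpha s=\gamma^k$ reads $[s,\alpha]=1$ for the new stable letter $s$. Thus $\bk{\alpha,s}$ is a non-cyclic abelian subgroup of $\FRS$; being elliptic it lies in a conjugate of $F$ or $H$, both free, which is impossible.

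The hard part will be the bookkeeping in the two reductions—tracking exactly how the boundary monomorphisms transform under the conjugation and under the slide, so that the edge groups genuinely come to lie in $\bk{\gamma}$ (respectively $\bk{\beta}$)—together with the justification that $\gamma$ (respectively $\beta$) may be taken not to be a proper power. Once the edge groups are nested, the splitting off of the free factor in the first case and the appearance of the abelian subgroup in the second are immediate.
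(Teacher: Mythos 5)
The paper's own proof handles both claims uniformly and cheaply: the almost conjugacy lets one conjugate boundary monomorphisms, fold and slide so that the two edges of the splitting can be combined, yielding a splitting whose maximal abelian collapse has a \emph{single} edge, and this contradicts the standing hypothesis of Section \ref{sec:MAC-2e} that the maximal abelian collapse of $\FRS$ has two edges. Your first claim aims at a stronger conclusion (an explicit free factor $\bk{\z}$), and it has a genuine gap precisely where you combine the normalization with Proposition \ref{prop:2e-h-3gen}. The subgroup $\bk{\alpha}=F\cap H$ is intrinsic---it is the stabilizer of the unique Bass--Serre edge joining the vertices fixed by $F$ and by $H$---so no conjugation of boundary monomorphisms moves it; what the almost conjugacy really gives is $\alpha={}^{h}\gamma^{\pm k}$ for a genuine conjugator $h\in H$, which can only be eliminated by replacing $\gamma$ with ${}^{h}\gamma$ and $t$ with $th^{\mo}$. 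Now you are stuck either way: keeping the original presentation, the generation statement reads $H=\bk{{}^{h}\gamma^{\pm k},\gamma,\z}$, which does not collapse to $\bk{\gamma,\z}$; renormalizing, you do get $\alpha\in\bk{\gamma}$, but Proposition \ref{prop:2e-h-3gen} was proved for the \emph{original} triple and is not known to hold for the new one. Either way $\bk{\z}$ does not visibly split off; closing this requires a real Nielsen-type argument inside $H$, not bookkeeping. (Your balancing-fold justification is also off: if $\gamma=\delta^m$ in $H$, a retraction $\FRS\to F$ together with uniqueness of roots in fully residually free groups shows ${}^{t}\delta$ already lies in $F$, so no root may be adjoined---a genuinely new root would give two distinct $m$-th roots of $\beta$ and destroy commutation transitivity, hence full residual freeness.)

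The second claim is where the proposal actually breaks. After sliding the amalgamating edge over the $t$-edge, the loop at $H$ has boundary subgroups $\bk{\gamma^{k}}$ (the slid end, which in $K=F*_{\bk{\beta}=\bk{\gamma}}H$ does equal $\bk{\alpha}$) and $\bk{\alpha_{H}}$, where $\alpha_{H}$ denotes the $H$-side copy of the old amalgamating edge group. These are \emph{distinct} subgroups of $K$: their equality in $\FRS$ was the content of the edge you just slid and is now carried by the loop itself (its stable letter is $s=t^{\mo}$). So the loop relation reads $s^{\mo}\alpha s=\alpha_{H}$, a conjugation between two different elements of $K$, and substituting ``$\alpha_{H}=\alpha$'' back in is circular; no commutation relation, and no abelian subgroup, arises. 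Concretely, take $F=F(a,b)$, $\FRS=F*\bk{t}$, $H=\bk{a^{t},a^{2}}$, $\beta=a$, $\gamma=a^{t}$, $\alpha=a^{2}$: all hypotheses of the displayed splitting hold, $\alpha=\beta^{2}$, yet $\FRS$ is free, the centralizer of $\alpha$ is cyclic, and $\FRS$ has no non-cyclic abelian subgroup whatsoever---what degenerates under the almost conjugacy is free indecomposability, not commutativity. So this half of the argument must be redone, for instance as in the paper: after the slide the loop can be collapsed into the $H$-vertex, producing a one-edge maximal abelian collapse and contradicting the two-edge case hypothesis.
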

\begin{proof} 
  Suppose towards a contradiction that $\alpha, \gamma$ were almost
  conjugate in $H$, then after conjugating boundary monomorphisms,
  making folding moves (as in Definition \ref{defn:moves}) which keep
  the splitting cyclic and making a sliding move we would get a
  splitting:\[\relpres{\bk{F,H,t \mid \beta^t = \alpha}}{\alpha,
    \beta\in F, \bk{\alpha} = F \cap H}\] whose maximal abelian
  collapse is as considered in Section \ref{sec:MAC-1e} --
  contradiction. Similarly, if $\alpha$ and $\beta$ were almost
  conjugate in $F$, we can similarly derive a contradiction to the
  choice of maximal abelian collapse.
\end{proof}
  
\begin{prop}
  Suppose we have the splitting \[ \FRS = \relpres{\bk{F,H', \y \mid
      \beta^\y = \gamma}}{\beta \in F, \gamma \in H',
    \bk{\alpha}=F\cap H'}\] where $H' = H*_\bk{\delta}Ab(\delta,r)$ is
  a rank 1 free extension of a centralizer of the free group $H$ of rank
  2. Then this splitting can be refined to \[\FRS =
  \relpres{\bk{F,H,Ab(\delta,r), \y \mid \beta^\y = \gamma}}{\beta \in
    F, \gamma \in H, \bk{\alpha}=F\cap H, \bk{\delta} = H \cap
    Ab(\delta,r)}\] such that $\delta, \alpha,\gamma$ are not almost
  conjugate \emph{in} $H$. This cyclic splitting is the JSJ of $\FRS$.
\end{prop}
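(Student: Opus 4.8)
The plan is to refine the given splitting by blowing up the vertex group $H'$ along its own decomposition $H' = H*_{\bk\delta}Ab(\delta,r)$; the entire content of the statement is that the two cyclic subgroups incident to $H'$, namely $\bk\alpha = F\cap H'$ and the target $\bk\gamma$ of the stable letter $\y$, both lie in the free rank-$2$ factor $H$ rather than in the abelian factor $Ab(\delta,r)$ or hyperbolically. For $\alpha$ I would simply invoke Proposition \ref{prop:1e-2v-3genH}. Deleting the $\y$-edge leaves the subgroup $\bk{F,H'} = F*_{\bk\alpha}H'$, which is generated by $F$ together with two elements (the generators of $H'$ other than $\alpha\in F$), hence is a fully residually $F$ quotient of $F*\bk{x,y}$, and is freely indecomposable modulo $F$ as a nontrivial amalgam over $\bk\alpha\neq\{1\}$. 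Since $H'=H*_{\bk\delta}Ab(\delta,r)$ is exactly a rank $1$ centralizer extension of the free rank-$2$ group $H$, Proposition \ref{prop:1e-2v-3genH} applies and refines $F*_{\bk\alpha}H'$ to $F*_{\bk\alpha}H*_{\bk\delta}Ab(\delta,r)$, so $\alpha$ is conjugable into $H$.

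For $\gamma$ I would run the argument of Proposition \ref{prop:1e-2v-3genH} in parallel, the essential input being that $\gamma = \y^\mo\beta\y$ is conjugate in $\FRS$ to $\beta\in F$, so along a strict resolution $\R$ of $\FRS$ (Theorem \ref{thm:strict-discriminate}), whose maps fix $F$, the image of $\gamma$ is always conjugate to $\beta$ and hence stays elliptic at every level. Assume for contradiction that $\gamma$ is not conjugable into $H$, and split into the dichotomy used there. If some cyclic splitting of $H'$, which by Lemma \ref{lem:f2-hnn} is governed by the cyclic splittings of the free group $H$ modulo $\delta$, makes $\gamma$ elliptic, then $\gamma$ lands either in a free vertex (hence in $H$, done) or in the abelian vertex; in the latter case cutting the $\alpha$-edge exhibits the sub-splitting $F*_{\bk\gamma}Ab(\delta,r)*_{\bk\delta}H$, and Lemma \ref{lem:abelianbetween} (when $\bk\gamma\cap\bk\delta=\{1\}$) forces $\bk\delta$ to be a free factor of $H$, which together with $\alpha\in H$ propagates to a free splitting of $\FRS$ modulo $F$, a contradiction (while $\bk\gamma\cap\bk\delta\neq\{1\}$ forces $\gamma\in\bk\delta\leq H$). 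Otherwise $H'$ admits no cyclic splitting modulo $\gamma$; then, since $\gamma$ and the non-cyclic abelian $Ab(\delta,r)$ (elliptic by Convention \ref{conv:abelian-elliptic}) are both always elliptic, the image of $H'$ is elliptic at every level of $\R$ and therefore eventually embeds in the free terminal group, contradicting $Ab(\delta,r)\leq H'$. This abelian-versus-rigid dichotomy for $\gamma$ is the main obstacle, since it is where the conjugacy of $\gamma$ into $F$ must be leveraged through the strict-resolution machinery rather than a direct normal-form computation, and where the free decomposition of the tree subgroup must be pushed across the cycle of $\FRS$.

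With $\alpha,\gamma$ both conjugable into $H$, re-attaching the $\alpha$- and $\y$-edges to the $H$-vertex of $H'=H*_{\bk\delta}Ab(\delta,r)$ yields precisely the displayed three-edge graph of groups, with $\bk\delta = H\cap Ab(\delta,r)$. To see that $\delta,\alpha,\gamma$ are not almost conjugate in $H$, I would argue as in Propositions \ref{prop:2-2-A-noconj}, \ref{prop:2-3-A-noconj} and \ref{prop:2-3-D-noconj}: were they almost conjugate, all three could be conjugated into a single cyclic subgroup of the free group $H$, after which sliding the three incident edges together and a Tietze reduction would split a free factor off $\FRS$ modulo $F$, contradicting free indecomposability (and Corollary \ref{cor:no-ab} disposes of any residual configuration in which a non-cyclic abelian vertex survives inside a two-cycle splitting).

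Finally, since $\delta,\alpha,\gamma$ are not almost conjugate, the free rank-$2$ vertex $H$ admits no essential cyclic splitting in which all three boundary subgroups are elliptic, so $H$ is rigid; $Ab(\delta,r)$ is the unique non-cyclic abelian subgroup and is elliptic by Convention \ref{conv:abelian-elliptic}, and every edge group is cyclic. Hence the splitting is almost reduced and unfolded and satisfies the defining properties of Theorem \ref{thm:jsj}, so by the uniqueness clause of that theorem it is the cyclic JSJ decomposition of $\FRS$ modulo $F$.
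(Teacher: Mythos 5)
Your overall architecture (force $\alpha$ and $\gamma$ to be elliptic with respect to $H'=H*_{\bk{\delta}}Ab(\delta,r)$, kill the bad cases, then check the JSJ axioms) is the right one, and two of your cases do coincide with the paper's proof: the strict-resolution ellipticity argument when $H'$ admits no suitable cyclic splitting, and the slide-plus-Tietze contradiction for almost conjugate edge elements. But there are two genuine gaps. First, your treatment of $\alpha$ rests on applying Proposition \ref{prop:1e-2v-3genH} to the subgroup $\bk{F,H'}=F*_{\bk{\alpha}}H'$, and that proposition requires this subgroup to be freely indecomposable modulo $F$. Your justification --- that a nontrivial amalgam over $\bk{\alpha}\neq\{1\}$ is automatically freely indecomposable --- is false as a general principle: $F*_{\bk{w}}\big(\bk{w}*\bk{x}\big)=F*\bk{x}$ is a nontrivial amalgam over a nontrivial cyclic subgroup. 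An amalgam over $\Z$ is freely decomposable whenever the edge group is a free factor of a vertex group, and in the present situation the dangerous configuration is exactly $\delta$ primitive in $H$, for then $H'=Ab(\delta,r)*\bk{h}$ is itself freely decomposable; a Kurosh argument shows that free indecomposability of $F*_{\bk{\alpha}}H'$ modulo $F$ essentially comes down to ruling this out, which is entangled with the very conjugacy statements you are proving. The paper never passes to this subgroup: it runs the dichotomy for $\alpha$ and $\gamma$ simultaneously using strict resolutions of $\FRS$ itself, which is how it dodges the circularity.

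Second, and more seriously, in the case where $\gamma$ is conjugable into $Ab(\delta,r)$ your contradiction comes from applying Lemma \ref{lem:abelianbetween} to the proper subgroup $\bk{F,{}^{\y}H'}=F*_{\bk{\beta}}{}^{\y}Ab(\delta,r)*_{{}^{\y}\bk{\delta}}{}^{\y}H$ (cutting the $\alpha$-edge) and then asserting that its conclusion, $\bk{\delta}$ a free factor of $H$, ``propagates to a free splitting of $\FRS$ modulo $F$.'' That propagation does not follow. What you obtain is $H=\bk{\delta}*\bk{h}$, hence $H'=Ab(\delta,r)*\bk{h}$; but the other edge group $\bk{\alpha}$ of the splitting of $\FRS$ is attached at an element $\alpha\in H$ which may be hyperbolic with respect to this free decomposition of $H'$ (for instance $\alpha=\delta h$), so no refinement of the splitting of $\FRS$ with a trivial edge group, and hence no free decomposition of $\FRS$ modulo $F$, is produced; free decomposability of a subgroup containing $F$ simply does not pass to the ambient group. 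The paper's proof needs no propagation: it forms the subgroup of $\FRS$ with vertex groups $F$, $\bk{\gamma,\delta}$ and $H$, observes that along a strict resolution of it the elements $\alpha,\beta,\gamma,\delta$ stay elliptic, so $H$ is always mapped monomorphically and $\gamma,\delta$ remain conjugable into non-abelian vertex groups, and concludes that $\bk{\gamma,\delta}$ never acquires an exposed direct summand, contradicting Lemma \ref{lem:abelian-exposed}. You should replace your propagation step by this exposed-summand argument; note also that your appeal to Corollary \ref{cor:no-ab} belongs earlier in the proof, where it is needed to exclude HNN-type splittings of $H$ modulo $\alpha,\gamma,\delta$ before ellipticity can be said to place $\gamma$ in a conjugate of $H$ or of $Ab(\delta,r)$.
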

\begin{proof}
  If $\alpha$ and $\gamma$ are almost conjugate, then we can derive a
  contradiction arguing as in the proof of  Proposition \ref{prop:2-3-A-noconj}. 

  If $H$ could split further as an HNN extension modulo
  $\alpha,\gamma,\delta$ then we could apply Corollary \ref{cor:no-ab}
  contradicting the existence of a non-cyclic abelian subgroup of
  $\FRS$. It follows that the cyclic JSJ splitting of $H'$ modulo
  $\alpha,\gamma$ either consists of one vertex group $H'$ or is an
  amalgam $H*_\bk{u}Ab(\delta,r)$. In the former case since
  $\alpha,\gamma$ are conjugable into $F$ their images will be
  elliptic in in every term of a strict resolution of $\FRS$ which
  implies that  $H*_\bk{u}Ab(\delta,r)$ is free -- contradiction.

  Suppose now that either $\alpha$ or $\gamma$, say w.l.o.g. $\gamma$,
  is conjugable in $H'$ into $Ab(\delta,r)$ but that $\gamma$ was not
  almost conjugate to $\delta$. We can conjugate boundary
  monomorphisms so that $\gamma \in Ab(\delta,r)$ and replace
  $Ab(\delta,r)$ with $\bk{\gamma,\delta}$. This gives an $F$-subgroup
  $G \leq \FRS$ that has a splitting with vertex groups $F,
  \bk{\gamma,\delta}$, and $H$. We that in every term of a strict resolution
  of $G$ the images of the elements $\alpha,\beta,\gamma,\delta$ are
  forced to be elliptic, this means that $H$ is always mapped
  monomorphically, so $\gamma$ and $\delta$ will always be
  conjugable into some non-abelian vertex group, this means that
  $\bk{\gamma,\delta}$ will never have an exposed direct summand (see
  Definition \ref{defn:exposed}) contradicting Lemma \ref{lem:abelian-exposed}.
\end{proof}

 \begin{cor}\label{cor:2v-2e} If $\FRS$ is freely indecomposable has a
   maximal abelian collapse (\ref{eqn:2vertices-2edges}). Then all the possibilities for the JSJ of
  $\FRS$ are to be found in the descriptions given Sections
  \ref{sec:allfree}, \ref{sec:abvertgr}, and \ref{sec:ICEICE}. 
\end{cor}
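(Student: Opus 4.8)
The plan is to read off the answer from Proposition~\ref{prop:2e-h-3gen} and then resolve the two splittings it produces into the listed JSJs. By Proposition~\ref{prop:two-edges} we may assume $\x=t$ and $\z\in H$, and Proposition~\ref{prop:2e-h-3gen} then gives exactly two possibilities. In the second, $\F'$ is a rank~$1$ centralizer extension of $F$ (Definition~\ref{defn:ICE}), $H'=\bk{\alpha,\z}$ is free of rank~$2$, and $H'$ admits no further HNN refinement; this is verbatim item~3 of Section~\ref{sec:ICEICE}, and since $H'$ does not split further the displayed two-edge splitting is already the cyclic JSJ. Hence everything reduces to analyzing the first splitting
\[
\FRS=\relpres{\bk{F,H',\x\mid \beta^\x=\beta'}}{\beta\in F,\ \beta'\in H',\ \bk{\alpha}=F\cap H'},
\]
in which $H'$ is three-generated and fully residually free.

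First I would note that $H'$ is non-abelian, since it contains the non-abelian vertex group $H$ of the maximal abelian collapse. Theorem~\ref{thm:FGMRS} then leaves only two structural types for $H'$: it is free, or it is a rank~$1$ centralizer extension of a free group of rank~$2$. If $H'$ is free and rigid relative to its two incident edge groups $\bk\alpha,\bk{\beta'}$, then it has rank~$2$ and the two-edge splitting is the JSJ; Proposition~\ref{prop:2-3-A-noconj} guarantees that $\alpha,\beta'$ are not almost conjugate in $H'$ and that $\alpha,\beta$ are not almost conjugate in $F$, so this is precisely case~B.2 of Section~\ref{sec:allfree}. If instead $H'$ is free but splits further relative to $\{\alpha,\beta'\}$, I would normalize that splitting with Lemma~\ref{lem:f2-hnn}—obtaining, after a balancing fold, the form $\bk{G,s\mid \delta^s=\delta'}$ with $G=\bk{\delta}*\bk{\delta'}$—and refine the two-edge splitting along it; free indecomposability modulo $F$ forbids a free factor of $H'$ missing every edge group, so the refinement is a three-edge graph whose non-$F$ vertex group is free of rank~$2$, landing in case C.1 or C.2 of Section~\ref{sec:allfree}. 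Finally, if $H'=G*_\bk{u}Ab(u,r)$ is a rank~$1$ centralizer extension of the free rank-$2$ group $G$, I would replace $H'$ by this decomposition, turning the two-edge graph into a three-edge graph with an abelian vertex group; the preceding Proposition shows this refinement is the JSJ and is exactly case~B.2 of Section~\ref{sec:abvertgr}.

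The main obstacle is certifying, in each case, that the refined graph really is the cyclic JSJ of the advertised type and that no degenerate configuration escapes the list, rather than producing the candidate graphs themselves. Concretely, after every refinement one must verify that the edge groups stay maximal cyclic and conjugacy separated in their vertex groups—so that the splitting remains $1$-acylindrical, almost reduced, and unfolded—and that the relevant non-almost-conjugacy conditions hold. Proposition~\ref{prop:2-3-A-noconj} supplies these whenever the conjugated boundary element lies in $F$, while free indecomposability modulo $F$, together with Lemma~\ref{lem:abelianbetween} and Corollary~\ref{cor:no-ab}, excludes the two dangerous degeneracies: an exposed free factor of $H'$ avoiding the edge groups, which would force free decomposability, and a non-cyclic abelian subgroup sitting inside a two-cycle configuration. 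Once these verifications are in place, every JSJ arising from a two-edge maximal abelian collapse is accounted for by one of the descriptions in Sections~\ref{sec:allfree}, \ref{sec:abvertgr}, and~\ref{sec:ICEICE}, which is the content of the corollary.
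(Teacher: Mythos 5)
Your proposal is correct and takes essentially the same approach as the paper: the paper offers no standalone proof of this corollary, because it is precisely the assembly of Proposition \ref{prop:two-edges}, Proposition \ref{prop:2e-h-3gen}, Proposition \ref{prop:2-3-A-noconj} and the unnamed proposition immediately preceding the corollary, which is exactly how you argue (your explicit treatment, via Theorem \ref{thm:FGMRS} and Lemma \ref{lem:f2-hnn}, of the sub-cases where $H'$ is free of rank $3$ or splits further as an HNN extension fills in details the paper leaves implicit). One small imprecision: the refinement of a further-splitting free $H'$ lands only in type C.1 of Section \ref{sec:allfree} (type C.2 arises from the three-edge collapse of Section \ref{sec:MAC-3e}), but since the corollary asserts only membership in the combined lists, this does not affect correctness.
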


\subsection{The three edge case}\label{sec:MAC-3e}
We now consider the case where the maximal abelian collapse of $\FRS$
has underlying graph: \begin{equation}\label{eqn:3e-G(X)}X=\xymatrix{v \bullet
  \ar@{-}@/^/[r]^g \ar@{-}[r]|e \ar@{-}@/_/[r]_f & \bullet u}\end{equation} to which we
give the relative presentation:
\begin{equation}\label{eqn:three-edge-groups}
 \relpres{\bk{\Fh,H,s,t \mid A^s=B, D^t = E }}{A,D \leq \Fh, B,E \leq H,
   C = \Fh\cap H}
\end{equation}
Where $F\leq \Fh = X_v, H = X_u$ and $A,B,C,D,E$ are maximal abelian
and conjugacy separated in their vertex groups. Note that
by Corollary \ref{cor:no-ab}, $\FRS$ cannot contain any non-cyclic
abelian subgroups, in particular the subgroups $A,B,C,D,E$ must all be
cyclic.

\begin{lem}\label{lem:3e-x}Let $\FRS$ be freely indecomposable modulo
  $F$ with a maximal abelian collapse
  (\ref{eqn:three-edge-groups}). After Weidmann-Nielsen normalization
  on $(F,\x,\z)$ modulo $F$ we can arrange; conjugating boundary
  monomorphisms if necessary so that $\x = t$.
\end{lem}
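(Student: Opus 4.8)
The plan is to exploit the extra rigidity that the third edge provides: because the maximal abelian collapse (\ref{eqn:three-edge-groups}) carries two independent stable letters $s,t$, \emph{both} generators $\x$ and $\z$ are forced to be hyperbolic, and this will pin $\x$ down to a stable letter almost immediately.

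First I would record the structural facts. By Corollary \ref{cor:no-ab} the edge groups $A,B,C,D,E$ are cyclic, and since they are maximal abelian in their vertex groups the action on the Bass--Serre tree $T$ is $1$-acylindrical and satisfies the hypotheses of Lemma \ref{lem:tf-transitive} (edge stabilizers maximal abelian, $F$ elliptic). Let $v_0=\fix(F)$. I would then introduce the exponent-sum homomorphism $\sigma=(\sigma_s,\sigma_t)\colon\FRS\to\Z^2$, which is well defined by Britton's lemma and surjective since $\sigma(s)=(1,0)$ and $\sigma(t)=(0,1)$. Every element conjugate into a vertex group of (\ref{eqn:three-edge-groups}) lies in $\ker\sigma$; in particular every elliptic element and all of $F$ do, which is the content of Lemma \ref{lem:double-hnn-exposum}. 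As $\FRS=\bk{F,\x,\z}$ and $\sigma(F)=0$, the pair $\sigma(\x),\sigma(\z)$ generates $\Z^2$ and is therefore a basis; both are nonzero, so both $\x$ and $\z$ are hyperbolic.

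Next I would run Weidmann--Nielsen normalization (Theorem \ref{thm:weidmann-nielsen}) on the marked set $(F;\{\x,\z\})$, keeping $F$ pointwise fixed as in the convention preceding Definition \ref{defn:W-N-normalization}. The free-product alternative $\bk{M}=F*F(\x,\z)$ is excluded by free indecomposability modulo $F$; alternative (1) is vacuous since there is a single subset $S_1=F$; and alternative (3) cannot occur, because the normalized generators $\tilde\x,\tilde\z$ still satisfy $\bk{F,\tilde\x,\tilde\z}=\FRS$, so the exponent-sum argument again makes them hyperbolic and hence fix no vertex. Thus alternative (2) holds, and after relabelling I may assume $\tilde\x T_F\cap T_F\neq\emptyset$.

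It remains to feed this into Lemma \ref{lem:make-elliptic}. Since $F$ fixes $v_0$, is non-abelian, and the action is $1$-acylindrical, $T_F$ is a star of radius at most $1$ about $v_0$; hyperbolicity gives $\tilde\x v_0\neq v_0$, and as $v_0,\tilde\x v_0$ lie in the same vertex orbit their distance is even. Two radius-$1$ stars $T_F$ and $\tilde\x T_F$ meeting then forces $d(v_0,\tilde\x v_0)=2$, with midpoint $w\in T_F$ (so $\stab(w)\cap F\neq\{1\}$) and $[v_0,\tilde\x v_0]$ crossing the two vertex orbits — exactly the hypotheses of Lemma \ref{lem:make-elliptic}, applied with $\rho=\tilde\x$ and $H=\stab(w)$. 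Of its two conclusions, the first, $f_2\tilde\x f_1=h\in H$, is impossible: $f_1,f_2\in F$ have trivial exponent sum, so $\sigma(f_2\tilde\x f_1)=\sigma(\tilde\x)\neq 0$, whereas $\sigma|_H=0$. Hence the second holds, giving $f_2\tilde\x f_1=t$ after conjugating a boundary monomorphism, and replacing $\x$ by $f_2\tilde\x f_1$ (a legitimate change of generators modulo $F$, as $f_1,f_2\in F$) yields $\x=t$. I expect the one delicate point to be the geometric step showing that $\tilde\x T_F\cap T_F\neq\emptyset$ together with $1$-acylindricity forces the distance-$2$, two-edge-orbit configuration; once that is secured, hyperbolicity of $\tilde\x$ finishes the argument by ruling out the elliptic alternative.
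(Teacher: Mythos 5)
Your proof is correct and follows essentially the same route as the paper's: Weidmann--Nielsen normalization on $(F;\{\x,\z\})$, exponent sums in the stable letters to force both generators to be hyperbolic (hence excluding the elliptic alternatives of Theorem \ref{thm:weidmann-nielsen}), 1-acylindricity to get $d(v_0,\x v_0)=2$, and case 2 of Lemma \ref{lem:make-elliptic} to convert $\x$ into a stable letter after conjugating boundary monomorphisms. The only difference is that you make explicit the steps the paper leaves implicit --- the justification of hyperbolicity, the distance-2 two-orbit configuration, and the exclusion of case 1 of Lemma \ref{lem:make-elliptic}.
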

\begin{proof}
  Since we are assuming free indecomposability of $\FRS$ modulo $F$,
  we can apply Theorem \ref{thm:weidmann-nielsen} to the marked
  generating set $(F;\{\x,\y\})$. Let $T$ be the Bass-Serre tree
  corresponding to the splitting (\ref{eqn:3e-G(X)}).  We note that
  neither $\x$ nor $\z$ can be brought to elliptic elements w.r.t. the
  splitting (\ref{eqn:3e-G(X)}). Let $v_0 \in T$ be the vertex fixed
  by $F$.  W.l.o.g. after  Weidmann-Nielsen normalization we have $T_F
  \cap \x T_F \neq \emptyset$. 1-acylindricity implies that $d(v_0,\x
  v_0) = 2$. It follows w.l.o.g. that $\x$ as a $\G(X)$-path is of the form
  $a_1,f,b_1,g^\mo,a_2$ where $b_1 \in H, a_1,a_2 \in \Fh$, by Lemma
  \ref {lem:make-elliptic} we can arrange so that $\x = f,b,g^\mo$,
  and conjugating boundary monomorphisms enables us to assume that $\x
  = f,g^\mo=t^\mo$ in terms of the relative presentation
  (\ref{eqn:three-edge-groups}).
\end{proof}
\begin{lem}\label{lem:3e-y} Let $\FRS$ be freely indecomposable modulo
  $F$ with a maximal abelian collapse (\ref{eqn:three-edge-groups})
  and with $\x = t$, then $\FRS$ is generated by $F$, $t$, and $s$.
\end{lem}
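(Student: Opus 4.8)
The plan is to run the folding machinery exactly as in the one- and two-edge lemmas, taking advantage of the fact that by Corollary~\ref{cor:no-ab} all of $A,B,C,D,E$ are cyclic, so that every auxiliary $\B$-vertex group that arises stays cyclic and the free-product lemmas of Section~\ref{sec:flow} apply without the complications of the one-edge case. Let $t$ be the stable letter with $\x=t$, guaranteed by Lemma~\ref{lem:3e-x}, and let $s$ be the other stable letter; as a $\G(X)$-loop $t$ is carried by a bigon consisting of the tree edge together with one of the two non-tree edges. First I would start the process with the $\G(X)$-graph $\B$ built from that bigon between $v$ and $u$, with trivial edge labels, together with the loop $\L(\z,v)$ and with $\B_v=F$, $\B_u=\{1\}$. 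This bigon realizes $t=\x$, so $\pi_1(\B,v)=\bk{F,t,\z}=\FRS$.

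The key simplifying observation is that no F4 collapse can occur. Since $\FRS=\pi_1(\G(X))$, Theorem~\ref{thm:folded} says the fully folded graph is $\G(X)$ itself, whose underlying graph has exactly two independent cycles. The initial $\B$ also has two independent cycles: the bigon accounts for one and $\L(\z,v)$ for the other. Among the folding moves only F4 changes the number of independent cycles, decreasing it by one, whereas F1, S1, T1 and the adjustments all preserve it; hence it must remain equal to two throughout, and no F4 is ever applied. In particular $\L(\z,v)$ cannot fold onto an existing edge, and its cycle survives as the third edge of $X$.

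Next I would run the adjustment--folding process with the moves A0--A2, F1, L1 and S1 while avoiding transmissions, and carry out the case analysis on the intermediate graphs exactly as in the two-edge lemmas of Section~\ref{sec:MAC-2e}. By Lemma~\ref{lem:long-range-crit} and Lemma~\ref{lem:yellow-fold} the number of vertices can be decreased without transmissions as long as auxiliary vertices coming from $\L(\z,v)$ remain; in each of the small configurations (two, three, or four vertices after shavings) I would check that a hypothetical unavoidable transmission either creates a cancellable path---so that Lemma~\ref{lem:yellow-fold} permits the fold using only A0 and A1 moves---or else, via Lemma~\ref{lem:abelian-free-prod} and Lemma~\ref{lem:no-conjugator} together with the cyclicity of the edge groups, forces $H$ to be generated by conjugates of its boundary subgroups, whence Lemma~\ref{lem:free-dec-H} makes $\FRS$ freely decomposable modulo $F$, contradicting our hypothesis. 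The process thus terminates at the two-vertex, three-edge graph $X$ with $\B_v=F$, $\B_u=\{1\}$ and with $\L(\z,v)$ folded onto the third edge.

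At that point the three edges of $\B$ realize, as $\B$-loops based at $v$, precisely the two stable letters $t$ and $s$, so $\pi_1(\B,v)=\bk{F,t,s}$. As every folding and adjustment move preserves $\pi_1(\B,v)$ and we began with $\pi_1(\B,v)=\FRS$, we conclude that $\FRS=\bk{F,t,s}$. The main obstacle is entirely in the third paragraph: verifying, configuration by configuration, that any genuinely unavoidable transmission occurs only in a graph that already witnesses free decomposability. What makes this tractable here---more so than in the one-edge case---is precisely that Corollary~\ref{cor:no-ab} forbids non-cyclic abelian subgroups, so no root-adjoining or proper-centralizer phenomena intervene and the free-product lemmas apply verbatim.
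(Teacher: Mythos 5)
Your first two paragraphs match the paper's proof: the same initial $\G(X)$-graph (the bigon with labels $(1,e,1)$, $(1,f,1)$ realizing $\x=t$, plus $\L(\z,v)$, with $\B_v=F$ and $\B_u=\{1\}$), and the same observation that F4 collapses are forbidden because they would drop the cycle count of the underlying graph below that of $X$. The genuine gap is in your third paragraph, which is where all the content of this lemma lies: you defer the configuration-by-configuration analysis (``I would check\ldots''), and, more importantly, the fallback you propose for an unavoidable transmission is wrong in this setting. Lemma~\ref{lem:free-dec-H} is stated and proved only for the two-edge collapse (\ref{eqn:2vertices-2edges}): its Tietze argument eliminates one of the \emph{two} boundary subgroups of $H$ against the single stable letter. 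In the three-edge collapse (\ref{eqn:three-edge-groups}) the vertex group $H$ carries \emph{three} boundary subgroups, and ``$H$ is generated by conjugates of its boundary subgroups'' is not in conflict with free indecomposability at all --- it is precisely the expected outcome: type C.2 of Section~\ref{sec:allfree} has $H$ generated by $\alpha,\delta,\epsilon$, and the proof of Corollary~\ref{cor:2v-3e} concludes that $\F=F$ and $H$ is a free group of rank 2 generated by its boundary subgroups. So the contradiction you rely on to dispose of bad transmissions can never be reached, and your case analysis has no valid way to close.

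The paper's proof of this lemma needs no contradiction of that kind. It uses the exponent-sum constraint $\sigma_s(\z)=1$ to cut the four-vertex intermediate graphs down to three explicit configurations, and then checks in each that any possible transmissions leave the auxiliary $\B$-vertex groups $\B_{u_1},\B_{v_1}$ cyclic; hence Lemma~\ref{lem:long-range-crit} (with Lemma~\ref{lem:yellow-fold}) always permits the next F1 fold after A0--A2 and L1 adjustments alone. The three-vertex configurations then fold with no transmissions at all, and the process stops at the two-vertex, three-edge graph with $\B_v=F$, $\B_u=\{1\}$, whose loops give $\pi_1(\B,v)=\bk{F,t,\z'}$ where $\z'$ is the $\G(X)$-path $a_1,g,b_1,e^\mo$; a final step --- conjugating boundary monomorphisms, which you also elide --- turns $\z'$ into $s$. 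If you do want a contradiction-shaped fallback for a genuinely stuck configuration, the one the paper uses in comparable situations (e.g.\ Case III.II of Lemma~\ref{lem:2-v-1-e-Hx}) is Lemma~\ref{lem:no-conjugator} together with Theorem~\ref{thm:folded}: a free-product $\B$-vertex group admits no further transmissions or enabling adjustments, so the graph would be folded yet not equal to $\G(X)$ --- not free decomposability via Lemma~\ref{lem:free-dec-H}.
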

\begin{proof} The hypotheses imply that $\FRS$ is the fundamental group
  of a $\G(X)$-graph $\B$ obtained by taking two edges with labels
  $(1,e,1)$ and $(1,f,1)$ with common endpoints $v$ and $u$, setting
  $\B_v=F, \B_u = \{1\}$, and attaching the $\z$-loop $\L(\z,v)$.

  Again we start our adjustment-folding process, using only moves
  A0-A2,L1,F1, S1. F4 collapses are forbidden since they reduce the
  number of cycles in the underlying graph. As long as after S1
  shavings there are strictly more than 4 vertices, we see by Lemma
  \ref{lem:long-range-crit} that we can always perform a folding move
  without using transmissions. It follows that we can bring $\B$ to a
  graph with 4 vertices such that $\B_v=F$ is the only non-trivial
  $\B$-vertex group.

  Interchanging $e$ and $f$ if necessary, and noting that the exponent sum
  $\sigma_s(\z)=1$, w.l.o.g. the only possibilities are:
  \[ \xymatrix{u_1 \bullet \ar@{-}[d]^e\ar@{-}[r]^e & \bullet v_1
    \ar@{-}[d]^g \\ v \bullet \ar@{-}@/^/[r]^e \ar@{-}@/_/[r]_f &
    \bullet v}~~ \xymatrix{u_1 \bullet \ar@{-}[d]^e\ar@{-}[r]^g &
    \bullet v_1 \ar@{-}[d]^e \\ v \bullet \ar@{-}@/^/[r]^e
    \ar@{-}@/_/[r]_f & \bullet v} ~~\xymatrix{u_1 \bullet
    \ar@{-}[d]^g\ar@{-}[r]^e & \bullet v_1 \ar@{-}[d]^e \\ v \bullet
    \ar@{-}@/^/[r]^e \ar@{-}@/_/[r]_f & \bullet v}\] where the edges
  are marked by their images in $X$ via the map $[~]$. In all three cases we see that
  after applying transmissions the groups $\B_{u_1},\B_{v_1}$ are
  cyclic. Again Lemma \ref{lem:long-range-crit} applies and we can
  continue our adjustment-folding process.

  If there are only three vertices then the only possibilities are:
  \[\xymatrix{u_1\bullet & \\ v
    \bullet \ar@/_/[r]_f \ar@/^/[r]^e \ar@/_/[u]_e \ar@/^/[u]^g &
    \bullet u} ~\xymatrix{ & v_1\bullet \ar@/^/[d]^g \ar@/_/[d]_e\\ v
    \bullet \ar@/_/[r]_f \ar@/^/[r]^e & \bullet u}\] and the last fold
  is of type F1 at either $u$ or $v$ and in particular no
  transmissions are needed. We get that $\B$ is given by \[\xymatrix{v
    \bullet \ar@/^/[r]^{(a_1,g,b_1)} \ar[r]|e \ar@/_/[r]_f & \bullet
    v}\] Where the edges labelled $e$ and $f$ have labels $(1,e,1)$
  and $(1,f,1)$ respectively. In the end we have that $\FRS$ is
  generated by $F,t$ and some element $\z'$ represented by the $\G(X)$
  path $a_1,g,b_1,e^\mo$ where $b_1 \in H$ and $a_1 \in \Fh$. After
  conjugating boundary monomorphisms, we may assume that $\z' = s$.
\end{proof}

\begin{cor}\label{cor:2v-3e}If $\FRS$ is freely indecomposable and the maximal abelian
  collapse of its cyclic JSJ decomposition modulo $F$ has three
  edges. Then all the JSJ of $\FRS$ is of type C.2. in Section \ref{sec:allfree}.
\end{cor}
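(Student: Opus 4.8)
The plan is to assemble Corollary~\ref{cor:no-ab}, Lemma~\ref{lem:3e-x}, and Lemma~\ref{lem:3e-y}, and then to run the folding process of Lemma~\ref{lem:3e-y} to completion so as to read off the two vertex groups. First I would record the shape of the collapse \eqref{eqn:three-edge-groups}. Since its underlying graph has two cycles, Corollary~\ref{cor:no-ab} forbids non-cyclic abelian subgroups of $\FRS$; hence all five edge groups are cyclic, no vertex group is abelian, and in particular $\Fh$ is not a rank~$1$ centralizer extension of $F$. I may therefore write the $\Fh$-side edge generators as $\beta,\gamma$ and the $H$-side edge generators as $\delta,\epsilon$, with relations $\beta^t=\delta$ and $\gamma^s=\epsilon$, and $\bk{\alpha}=\Fh\cap H$ for the tree (amalgamating) edge.

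By Lemmas~\ref{lem:3e-x} and~\ref{lem:3e-y} I can arrange $\x=t$ and $\z'=s$, so $\FRS=\bk{F,t,s}$ and the folding process reaches the $\G(X)$-graph $\B$ whose underlying graph is $X$, with $\B_v=F$ and all other $\B$-vertex groups trivial. The crucial step is to fold $\B$ to completion and thereby identify $\Fh=\B_v$. I would run the transmissions from $v$ to $u$ through the three edges: the tree edge deposits $\alpha$, while the two cycle edges deposit the conjugates $\delta=\beta^t$ and $\epsilon=\gamma^s$, so that after these transmissions $\B_u=\bk{\alpha,\delta,\epsilon}$.

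This last group is three-generated, fully residually free, and, by Corollary~\ref{cor:no-ab}, contains no non-cyclic abelian subgroup; Theorem~\ref{thm:FGMRS} then forces it to be free. Its incident edge groups $\bk{\alpha},\bk{\delta},\bk{\epsilon}$ are maximal cyclic and conjugacy separated in this free group, hence have no proper centralizer there and cannot be conjugated into one another, so \emph{no} transmission from $u$ back to $v$ is available. The graph is therefore folded, with $\B_v=F$ untouched; by Theorem~\ref{thm:folded} it realizes the splitting, giving $\Fh=F$ and $H=\bk{\alpha,\delta,\epsilon}$.

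It remains to match this with type~C.2 and to see that it is already the JSJ. With $\Fh=F$ we have $\beta,\gamma\in F$, $\delta,\epsilon\in H$, $\bk{\alpha}=F\cap H$, and $H=\bk{\alpha,\delta,\epsilon}$ free, which is precisely the presentation in Section~\ref{sec:allfree}~C.2. Since $\FRS$ has no non-cyclic abelian subgroup, no abelian vertex can be split off, and since $H$ is not QH (otherwise Corollary~\ref{cor:QH} would force a one-edge splitting) both vertex groups are rigid; hence the maximal abelian collapse involves no genuine folding and coincides with the JSJ. The main obstacle is the third paragraph: establishing $\Fh=F$ by using the free structure of $\bk{\alpha,\delta,\epsilon}$ and conjugacy separation of the edge groups to exclude any back-transmission into the $F$-vertex. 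Everything else is bookkeeping on top of the cited lemmas.
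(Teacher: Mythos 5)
Your outline reproduces the paper's setup faithfully (Corollary \ref{cor:no-ab} for cyclicity of the edge groups, then Lemmas \ref{lem:3e-x} and \ref{lem:3e-y} to reach the $\G(X)$-graph $\B$ with $\B_v=F$, $\B_u=\{1\}$ and only transmissions remaining), but the step you yourself flag as crucial contains a genuine gap: you \emph{assume} that the folding finishes with three forward transmissions from $v$ to $u$, i.e.\ that each of the $\Fh$-side edge groups $\bk{\beta}$, $\bk{\gamma}$ already meets $\B_v=F$. A transmission from $v$ through the edge carrying $\beta^t=\delta$ exists only if $F\cap\bk{\beta}\neq\{1\}$, and a priori $\beta$ is merely an element of $\Fh$, which may be strictly larger than $F$. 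Since ``powers of $\beta$ and $\gamma$ lie in $F$'' is essentially equivalent to the conclusion $\Fh=F$ you are trying to prove, running all three transmissions and then reading off $\Fh=\B_v=F$ is circular: it silently excludes exactly the scenario that needs work, namely the one where the folding can only be completed by a \emph{back}-transmission from $u$ to $v$, which enlarges $\B_v$ beyond $F$.

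That scenario is where the paper's proof does its real work, and it needs a tool your proposal never invokes. The paper argues by cases on which forward transmissions are available: if only the tree-edge transmission exists, conjugacy separation of the edge groups in $H$ stalls the process, which is absurd since $\B$ must fold onto $\G(X)$ with $H$ non-abelian; if the transmissions through $e$ and $f$ exist but not through $g$, then free indecomposability forces a transmission from $u$ back to $v$ through $g$, so that $\B_v$ becomes $\bk{F,\alpha}$ for some transmitted element $\alpha$, and \emph{then} Theorem \ref{thm:onevariable} is applied: $\bk{F,\alpha}$ is a one-generated fully residually $F$ quotient of $F*\bk{x}$, hence equals $F$, $F*\bk{\alpha}$, or a rank $1$ centralizer extension of $F$; the last is killed by Corollary \ref{cor:no-ab} and the middle by free indecomposability, so $\Fh=F$ after all. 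Without Theorem \ref{thm:onevariable} (or some substitute) you cannot rule out $\Fh=\bk{F,\alpha}\supsetneq F$. The parts of your argument covering the ``nice'' case — freeness of $\B_u$ via Theorem \ref{thm:FGMRS}, no proper back-transmission once $\alpha,\delta,\epsilon$ are deposited, and the matching with type C.2 of Section \ref{sec:allfree} — are fine, but they address only the case that was never in doubt.
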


\begin{proof} All we need to show is that the vertex groups are $F$
  and a free group of rank 2.

  By the two previous lemmas we have that $\FRS$ is the fundamental
  group of the $\G(X)$-graph \[ \B = \xymatrix{v \bullet \ar@/^/[r]^g
    \ar[r]|e \ar@/_/[r]_f & \bullet u}\] with $\B_v = F$ and $\B_u=
  \{1\}$. To get a folded graph, all that are needed are
  transmissions. We also saw that the edge groups are cyclic. Suppose
  first that the only possible transmission is from $v$ to $u$ through
  $e$, then by conjugacy separability of the edge groups it is
  impossible for there to be any further transmissions from $u$ back
  to $v$ through the other edges.

  Suppose that now there were transmissions possible only from $v$ to
  $u$ through edges $e$ and $f$. So as not to have free
  decomposability modulo $F$, we must have a transmission from $u$ to
  $v$ through $g$. We note that the boundary subgroups associated to
  the edges $e,f$ must be maximal cyclic because they lie in $F$, it
  then follows that there are no further possible transmissions and
  the graph is folded. In particular we find that $\B_u=\F =
  \brakett{F,\alpha}$ where $\alpha$ is the element transmitted from
  $H$ to $\F$. $\FRS$ is freely indecomposable modulo $F$ only if $\F \neq
  F*\brakett{\alpha}$, but by Theorem \ref{thm:onevariable} the only
  other possibility for $\brakett{F,\alpha}$ is $F*_uAb(u,t)$, which
  is impossible since $\FRS$ has no non-cyclic abelian subgroups.

  It therefore follows that $\F = F$ and $H$ is a free group of rank 2
  generated by its boundary subgroups.
\end{proof}
 
\subsection{The proof of the Proposition \ref{prop:2v-class}}\label{sec:2-nonab-proofs}
If the JSJ of $\FRS$ has more than one non-abelian vertex group then
it falls into the premises of Corollaries \ref{cor:2v-1e},
\ref{cor:2v-2e}, or \ref{cor:2v-3e} so our list of possible JSJs given
in Section \ref{sec:2-nonab} is complete.

\section{When the JSJ of $\FRS$ has one vertex group}\label{sec:1v-classification}
We now consider the situation where $\FRS$ has a cyclic JSJ
decompositions modulo $F$, with only one vertex group. We have the
relative presentations:
\begin{equation}\label{eqn:1v-1e} 
  \bk{\F ,t \mid \beta^t = \beta'}; \beta, \beta'
\in \F 
\end{equation} 

\begin{equation}\label{eqn:1v-2e}
\bk{\F,s ,t \mid \beta^t = \beta', \alpha^s=\alpha'}; \alpha,
\alpha', \beta, \beta' \in \F 
\end{equation}

\subsection{$\F$ is 2-generated modulo $F$.}\label{sec:1v-F-class}
We first need some further auxiliary results.

\begin{lem}\label{lem:1v-eoc}
Let the JSJ of $\FRS$ be either (\ref{eqn:1v-1e}) or
(\ref{eqn:1v-2e}). Then $\beta$ and $\beta'$ cannot be conjugate in $\F$.
\end{lem}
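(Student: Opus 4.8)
The plan is to argue by contradiction, converting an unwanted conjugacy $\beta'=\beta^{g}$ inside $\F$ into a non-cyclic abelian subgroup that is forced to be hyperbolic, which contradicts the way the JSJ is chosen. First I would suppose $\beta$ and $\beta'$ are conjugate in $\F$, say $\beta'=\beta^{g}$ for some $g\in\F$. Since the relation $\beta^{t}=\beta'$ holds in $\FRS$, I set $t'=tg^{\mo}$ and compute $\beta^{t'}=g\,t^{\mo}\beta t\,g^{\mo}=g\beta'g^{\mo}=\beta$, so $[\beta,t']=1$ and $A=\bk{\beta,t'}$ is abelian. Because the JSJ is an essential cyclic splitting of a group that is freely indecomposable modulo $F$, the edge group $\bk{\beta}$ is infinite cyclic, so $\beta\neq 1$.

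Next I would show $A$ is genuinely non-cyclic and not elliptic, treating both (\ref{eqn:1v-1e}) and (\ref{eqn:1v-2e}) at once. Consider the homomorphism $\phi\colon\FRS\to\Z$ that kills $\F$, sends the second stable letter $s$ (present only in (\ref{eqn:1v-2e})) to $0$, and sends $t$ to $1$; this is well defined since each defining relation $\beta^{t}=\beta'$ and $\alpha^{s}=\alpha'$ has exponent sum $0$ in every stable letter. As $g\in\F\subseteq\ker\phi$ we get $\phi(t')=\sigma_{t}(t')=1$, whereas $\phi$ vanishes on every conjugate of $\F$. Hence $t'$ is not conjugate into $\F$. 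In particular $A$ cannot be cyclic: if $A=\bk{c}$ with $\beta=c^{a}$ and $t'=c^{b}$, then $\phi(t')=b\,\phi(c)=1$ forces $\phi(c)\neq 0$, while $\phi(\beta)=a\,\phi(c)=0$ forces $a=0$ and hence $\beta=1$, a contradiction. By torsion-freeness of fully residually $F$ groups we conclude $A\cong\Z^{2}$, and the same computation shows $A$ is hyperbolic, i.e. not conjugate into the unique vertex group $\F$.

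Finally I would invoke the hypothesis that (\ref{eqn:1v-1e}) or (\ref{eqn:1v-2e}) is the JSJ: by Convention \ref{conv:abelian-elliptic} the JSJ is chosen so that every non-cyclic abelian subgroup of $\FRS$ is elliptic, and since $\F$ is the only vertex group this means $A$ is conjugate into $\F$, contradicting the previous paragraph. (Equivalently, $\beta$ fixes every $t'$-translate of the base edge and hence the entire axis of $t'$ in the Bass--Serre tree, violating the $2$-acylindricity guaranteed by that convention.) I expect the only delicate point to be the bookkeeping that keeps the argument uniform across the one-edge and two-edge cases; the single homomorphism $\phi$ handles both simultaneously, and the modified letter $t'=tg^{\mo}$ stays hyperbolic precisely because $g$ lies in $\F\subseteq\ker\phi$ — which is exactly where the hypothesis \emph{conjugate in $\F$}, rather than merely in $\FRS$, is used.
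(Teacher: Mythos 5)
Your proof is correct and takes essentially the same route as the paper: replacing $t$ by $t'=tg^\mo$ is exactly the ``conjugating boundary monomorphisms'' move by which the paper rewrites $\bk{\F,t}$ as the centralizer extension $\bk{\F,r \mid [r,\beta]=1}$, and both arguments then contradict the choice of JSJ via Convention \ref{conv:abelian-elliptic} (a non-cyclic abelian subgroup $\bk{\beta,t'}$ would have to be elliptic, i.e.\ the JSJ would need an abelian vertex group). Your exponent-sum homomorphism simply makes explicit the non-cyclicity and hyperbolicity of $\bk{\beta,t'}$ that the paper's two-line proof leaves implicit.
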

\begin{proof}
  Suppose the contrary. Then $\beta'$ and $\beta$ are conjugate in
  $\F$ so conjugating boundary monomorphisms gives us
  \[\bk{\F,t}= \bk{\F,r \mid [r,\beta] = 1}\] which implies
  that the JSJ of $\FRS$ has an abelian vertex group -- contradiction.
\end{proof}

This Corollary now follows immediately from the fact that $F \leq \FRS$ has
property CC.
\begin{cor}\label{lem:Ftilde-not-F}
  Let the $\FRS$ split as in (\ref{eqn:1v-1e}) or (\ref{eqn:1v-2e})
  then $\F \neq F$.
\end{cor}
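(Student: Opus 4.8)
The plan is to argue by contradiction, drawing the contradiction from Lemma \ref{lem:1v-eoc} and using property CC (Lemma \ref{lem:CC}) as the bridge between conjugacy in $\FRS$ and conjugacy in $F$. So I would begin by supposing toward a contradiction that $\F = F$. In both relative presentations (\ref{eqn:1v-1e}) and (\ref{eqn:1v-2e}) the defining relation $\beta^t = \beta'$ is present, with $\beta,\beta' \in \F$; under the assumption $\F = F$ these two elements therefore lie in $F$ itself.

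Next I would observe that the relation $\beta^t = \beta'$ exhibits $\beta$ and $\beta'$ as conjugate elements of $\FRS$, conjugate precisely by the stable letter $t$. Since $\beta,\beta' \in F$ and $F \leq \FRS$ has property CC by Lemma \ref{lem:CC}, there exists some $k \in F$ with $\beta^k = \beta'$. In particular $\beta$ and $\beta'$ are conjugate inside $F = \F$.

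This, however, directly contradicts Lemma \ref{lem:1v-eoc}, which asserts that $\beta$ and $\beta'$ cannot be conjugate in $\F$ whenever the JSJ of $\FRS$ is of the form (\ref{eqn:1v-1e}) or (\ref{eqn:1v-2e}). Hence the assumption $\F = F$ is untenable, and we conclude $\F \neq F$. I do not anticipate a genuine obstacle here: the statement is a short composition of two facts already in hand, and the only point that needs a moment's care is matching the conjugation convention $\beta^t = t^{-1}\beta t$ so that the relation really does witness conjugacy of $\beta$ with $\beta'$ in $\FRS$, after which Lemma \ref{lem:CC} applies verbatim. Note also that in the two-edge case (\ref{eqn:1v-2e}) it suffices to use the single relation $\beta^t = \beta'$, since Lemma \ref{lem:1v-eoc} covers the pair $\beta,\beta'$ in both cases.
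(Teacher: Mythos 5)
Your proof is correct and matches the paper's argument exactly: the paper states that the corollary ``follows immediately from the fact that $F \leq \FRS$ has property CC,'' which is precisely your route — assume $\F = F$, note $\beta^t = \beta'$ makes $\beta,\beta'$ conjugate in $\FRS$, apply Lemma \ref{lem:CC} to get conjugacy inside $F=\F$, and contradict Lemma \ref{lem:1v-eoc}. You have simply spelled out the steps the paper leaves implicit.
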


\begin{lem}\label{lem:2v-2e-abelian}
  Suppose $\FRS$ splits as a double HNN
  extension:\[\relpres{\bk{\hat{F},t,s \mid \alpha^s = \alpha', \beta^t =
      \beta'}}{\alpha,\alpha',\beta,\beta'\in \F}\] where $\hat{F}$ has no
  cyclic or free splittings modulo
  $F,\alpha,\alpha',\beta,\beta'$. Then w.l.o.g. either $\bk{\alpha}$
  or $\bk{\beta}$ is conjugable into $F$, but not both.
\end{lem}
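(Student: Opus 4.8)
The plan is to treat the two assertions separately, beginning with the easier ``not both''.

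For \emph{not both}, I would argue by contradiction: suppose both $\bk\alpha$ and $\bk\beta$ are conjugable into $F$. Since $\alpha'=s^\mo\alpha s$ and $\beta'=t^\mo\beta t$, the elements $\alpha'$ and $\beta'$ are conjugate in $\FRS$ to $\alpha$ and $\beta$ respectively, so all four of $\alpha,\alpha',\beta,\beta'$ would then be conjugate in $\FRS$ into $F$. By Corollary \ref{lem:Ftilde-not-F} we have $\hat F=\F\neq F$, so Corollary \ref{cor:get-split} furnishes a $(\leq\Z)$-splitting $D$ of $\hat F$ modulo $F$ in which \emph{every} element of $\hat F$ that is conjugate in $\FRS$ into $F$ is elliptic. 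Hence $\alpha,\alpha',\beta,\beta'$ are all elliptic in $D$; together with the ellipticity of $F$ this exhibits $D$ as a cyclic or free splitting of $\hat F$ modulo $F,\alpha,\alpha',\beta,\beta'$, contradicting the hypothesis.

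For \emph{at least one}, I would again argue by contradiction, assuming neither $\bk\alpha$ nor $\bk\beta$ is conjugable into $F$; by the conjugacies $\alpha\sim\alpha'$ and $\beta\sim\beta'$ in $\FRS$, none of the four is then conjugable into $F$. Since $\hat F\neq F$, take the $(\leq\Z)$-splitting $D$ of $\hat F$ modulo $F$ of Corollary \ref{cor:get-split}. As the hypothesis forbids any splitting of $\hat F$ modulo $F,\alpha,\alpha',\beta,\beta'$, at least one of the four edge elements --- say, after the evident $s\leftrightarrow t$ symmetry, $\alpha$ --- must be hyperbolic in $D$. The idea is to promote this to a hyperbolic--hyperbolic pair of elementary cyclic splittings of $\FRS$: the $s$-HNN splitting of $\FRS$ is an elementary cyclic splitting with edge group $\bk\alpha$, while refining the vertex $\hat F$ by $D$ (after extending $D$ across the $t$-HNN, using that $\beta,\beta'$ are elliptic in $D$) and then collapsing to a single edge crossed by $\alpha$ produces an elementary cyclic splitting of $\FRS$ in which $\alpha$ is hyperbolic. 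Thus the $s$-splitting is hyperbolic with respect to this second splitting, and Theorem \ref{thm:jsj}(2) forces $\bk\alpha$ into an MQH subgroup of $\FRS$. But the JSJ of $\FRS$ has the single vertex group $\hat F$; were it QH, Corollary \ref{cor:QH} would give $\FRS=\bk{F,s,t\mid[s,t]=\alpha}$, whose JSJ has two vertex groups, contrary to the one-vertex hypothesis. Hence $\hat F$ is rigid and $\FRS$ has no QH subgroup --- a contradiction.

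The main obstacle is the construction of the second elementary splitting of $\FRS$ in which $\alpha$ is hyperbolic, because the two HNN edges interact: extending the splitting $D$ of $\hat F$ over the $t$-HNN requires $\beta,\beta'$ to be elliptic in $D$, so the real work is a case analysis over which of $\alpha,\alpha',\beta,\beta'$ are hyperbolic in $D$, showing that one may always arrange (relabelling $s\leftrightarrow t$ if necessary) that exactly one HNN's edge group is hyperbolic while the other pair stays elliptic, so that the hyperbolic--hyperbolic pair can be built and Theorem \ref{thm:jsj}(2) applied. Throughout one also uses Corollary \ref{cor:no-ab}, which guarantees that $\hat F$ has no non-cyclic abelian subgroups, so that the only alternative to rigidity for the single vertex group is that it be QH.
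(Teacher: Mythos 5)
Your ``not both'' half is correct and is essentially the paper's own argument: if both $\bk{\alpha}$ and $\bk{\beta}$ were conjugable into $F$, then all of $\alpha,\alpha',\beta,\beta'$ are elliptic in the $(\leq\Z)$-splitting of $\hat{F}$ modulo $F$ supplied by Corollary \ref{cor:get-split}, contradicting the hypothesis (your explicit appeal to Corollary \ref{lem:Ftilde-not-F} to secure $\hat{F}\neq F$, which Corollary \ref{cor:get-split} needs, is a sensible addition). The ``at least one'' half, however, contains a fatal gap, and it is not the case analysis you defer as ``the main obstacle'': it is the construction of the second elementary splitting. To invoke Theorem \ref{thm:jsj}(2) you need two elementary cyclic splittings \emph{of $\FRS$} that are mutually hyperbolic, hence a cyclic splitting of $\FRS$ in which $\alpha$ is hyperbolic. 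Refinement cannot produce one: refining the double HNN at the vertex $\hat{F}$ by $D$ requires \emph{every} incident edge group, i.e.\ all of $\bk{\alpha},\bk{\alpha'},\bk{\beta},\bk{\beta'}$, to be elliptic in $D$, and $\alpha$ was chosen hyperbolic. Extending $D$ across the $t$-edge alone only yields a splitting of the subgroup $K=\bk{\hat{F},t\mid \beta^t=\beta'}$, and to form $\FRS=\bk{K,s\mid\alpha^s=\alpha'}$ compatibly with that splitting you again need $\alpha,\alpha'$ elliptic in it, which fails. In fact no such second splitting can exist: in the very situation you reduce to ($\hat{F}$ a rigid vertex group of the JSJ of $\FRS$), Theorem \ref{thm:jsj}(1) makes $\hat{F}$, hence $\alpha$, elliptic in every cyclic splitting of $\FRS$ modulo $F$. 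The underlying misconception is that hyperbolicity of $\alpha$ in a splitting of the \emph{vertex group} $\hat{F}$ is contradictory; it is not, since the hypothesis only excludes splittings of $\hat{F}$ in which $F,\alpha,\alpha',\beta,\beta'$ are \emph{simultaneously} elliptic, and JSJ vertex groups routinely split further with incident edge elements hyperbolic. (Note also that your argument assumes the double HNN is the JSJ of $\FRS$ with a single vertex group; the paper also applies this lemma to double HNN rewritings of JSJs having an abelian vertex group, e.g.\ in Proposition \ref{prop:abelian-Ftilde-2gen}, where that assumption is false.)

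The paper's proof of this direction runs along entirely different lines, inside the Bass-Serre tree $T$ of the double HNN itself. If neither pair is conjugable into $F$, then no nontrivial element of $F$ stabilizes an edge of $T$, so $T_F$ is a single point, and every hyperbolic $g\in\FRS$ satisfies $T_F\cap gT_F=\emptyset$. The homomorphism $\FRS\rightarrow\Z^2$ killing $\hat{F}$ and sending $s,t$ to a basis shows, by exponent sums, that any set generating $\FRS$ together with $F$ must contain two hyperbolic elements. Theorem \ref{thm:weidmann-nielsen}, applied to the marked generating set $(F;\{\x,\y\})$, then leaves only the free-product alternative, since alternatives (1)--(3) of that theorem each force some normalized generator to be elliptic or to return the point $T_F$ to itself. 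Thus $\FRS=F*\bk{\x'}*\bk{\y'}$, contradicting free indecomposability of $\FRS$ modulo $F$. If you want to repair your write-up, this tree-plus-exponent-sum argument is the route to take.
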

\begin{proof}
  We may consider $\FRS$ as a double HNN extension. Suppose towards a
  contradiction that neither $\alpha,\alpha'$ nor $\beta,\beta'$ were conjugable
  into $F$ in $\hat{F}$. Then $T_F$ is a point, which means for any
  hyperbolic $g \in \FRS, T_F \cap gT_F = \emptyset$. Since $\FRS$ is
  a double HNN, looking at exponent sums of stable letters, we see it
  must be generated by $F$ and \emph{at least} two hyperbolic
  elements. It now follows from Theorem \ref{thm:weidmann-nielsen} on the
  marked generating set $(F;\{\x,\y\})$ that $\FRS$ is freely
  decomposable modulo $F$ -- contradiction.

  Suppose now towards a contradiction that both $\beta$ and $\alpha$ were
  conjugable into $F$. Then by Corollary \ref{cor:get-split} there is
  a splitting of $\hat{F}$ modulo $\alpha,\beta,\alpha',\beta',F$, with
  either trivial or cyclic edge groups. -- contradiction.
\end{proof}

We can now say something about the vertex groups of the JSJs
(\ref{eqn:1v-1e}) and (\ref{eqn:1v-2e}).

\begin{lem}\label{lem:1v-2e-F} Suppose $\FRS$ has the
  JSJ (\ref{eqn:1v-2e}) then:\begin{enumerate}
    \item $\F$ has no non-cyclic abelian subgroups
    \item One of the edge groups, say $\brakett{\alpha}$, is
      conjugate into $F$.
    \item The elements $\alpha$ and $\beta$ are not conjugate in $\FRS$
    \item The centralizers of $\alpha, \beta$ are cyclic in $\F$. In
      particular after making balancing folds, the splitting is 1-acylindrical.
    \item After balancing folds, where $\F_f$ denotes the resulting
      vertex group, we have $\F_f = \bk{F,\alpha',\beta'}$ with $\alpha'$ conjugable into
      $F$ and $\beta'$ conjugable into $\bk{F,\alpha'}$. In particular
      $\F$ is also 2 generated modulo $F$.
    \end{enumerate}
\end{lem}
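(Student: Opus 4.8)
The plan is to clear the first three assertions from results already available and to reserve the real work for the generation statement. For (1), the underlying graph of (\ref{eqn:1v-2e}) is a single vertex carrying two loops, so (\ref{eqn:1v-2e}) is a cyclic splitting of $\FRS$ modulo $F$ with two cycles; Corollary \ref{cor:no-ab} then forbids any vertex group, in particular $\F$, from containing a non-cyclic abelian subgroup. For (2), since (\ref{eqn:1v-2e}) is the JSJ and $\F$ is its rigid vertex group, $\F$ is elliptic in every cyclic splitting of $\FRS$ modulo $F$ and hence has no essential cyclic or free splitting modulo $F$ and its incident edge groups $\alpha,\alpha',\beta,\beta'$; Lemma \ref{lem:2v-2e-abelian} therefore applies and, after relabelling, $\bk{\alpha}$ is conjugable into $F$ while $\bk{\beta}$ is not. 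Conjugating the boundary monomorphism of the $s$-edge I may assume $\alpha\in F$. For (3), if $\alpha$ and $\beta$ were conjugate in $\FRS$ then, $\alpha$ being conjugate into $F$, so would $\beta$ be, contradicting the ``not both'' clause of (2).

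For (4), in a fully residually free group commutation transitivity makes the centralizer of any non-trivial element abelian, so $C_\F(\alpha)$ and $C_\F(\beta)$ are abelian subgroups of $\F$ and hence cyclic by (1). After performing balancing folds (Definition \ref{defn:balancing-fold}) the edge-group images become maximal cyclic; the enlarged vertex group $\F_f$ is still the vertex group of a two-cycle cyclic splitting, so it too has no non-cyclic abelian subgroups by Corollary \ref{cor:no-ab}, whence maximal cyclic equals maximal abelian there. Malnormality of maximal abelian subgroups together with the remark following Definition \ref{defn:balanced} then yields $1$-acylindricity.

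Assertion (5) is the crux, and I would obtain it by running the folding strategy of Section \ref{sec:2-nonab-classification} in the single-vertex setting. First, applying Weidmann-Nielsen normalization to the marked set $(F;\{\x,\z\})$ and a make-elliptic argument as in Lemma \ref{lem:make-elliptic} (adapted to the single vertex orbit), I would arrange, conjugating boundary monomorphisms, that $\x$ is a stable letter; since the $s$-edge lies in $T_F$ (because $\alpha\in F$) whereas the $t$-edge does not, this can be done with $\x=s$. Recording the exponent sums of the stable letters (well defined by Britton's lemma) gives a homomorphism $\FRS\to\Z^2$ under which $\x,\z$ generate, so after Nielsen moves I may take $\sigma_s(\z)=0$ and $\sigma_t(\z)=1$. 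Starting from the wedge $\B=\W(F,\x,\z)$ with $\L(\x)$ of length $1$, I fold $\B$ down to the underlying graph of (\ref{eqn:1v-2e}), using Lemmas \ref{lem:yellow-fold} and \ref{lem:long-range-crit} to postpone transmissions so that $F$ remains the only non-trivial $\B$-vertex group until the final stage. The closing transmissions then build $\F_f$: transmitting through the $s$-edge enlarges $F$ to $\bk{F,\alpha'}$, where $\alpha'=s^{-1}\alpha s$ is conjugable into $F$ because $\alpha\in F$; transmitting through the $t$-edge enlarges this to $\bk{F,\alpha',\beta'}=\F_f$, with $\beta$ ending up conjugable into the already-built $\bk{F,\alpha'}$, so that $\beta'=t^{-1}\beta t$ is conjugable into $\bk{F,\alpha'}$. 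Thus $\F_f$ is generated by $F$ and two further elements, i.e. $\F$ is $2$-generated modulo $F$.

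The hard part is (5): keeping the folding sequence under enough control that exactly two transmissions suffice and no spurious generators enter $\F_f$. I expect to rule out, via Lemma \ref{lem:no-conjugator}, Lemma \ref{lem:abelian-free-prod} and the conjugacy-separation of the (now maximal cyclic) edge groups, the configurations in which a transmission through one edge is immediately undone by a return transmission through the other --- exactly the kind of case analysis carried out in Section \ref{sec:MAC-2e}. I also need the single-vertex analogue of Lemma \ref{lem:make-elliptic}, since that lemma is phrased for two vertex orbits. The most delicate point is the relation ``$\beta'$ conjugable into $\bk{F,\alpha'}$'': it rests on processing the $s$-edge (anchored to $F$ through $\alpha\in F$) before the $t$-edge, and verifying it rigorously is where most of the bookkeeping will go.
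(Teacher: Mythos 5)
Your items (1)--(4) are correct and follow the paper's own route: (1) is Corollary \ref{cor:no-ab} applied to the two-cycle splitting, (2) and (3) are Lemma \ref{lem:2v-2e-abelian}, and (4) comes from commutation transitivity plus (1). (For 1-acylindricity one also needs the four boundary subgroups to be pairwise non-conjugate in $\F_f$ --- this uses (3) together with Lemma \ref{lem:1v-eoc} --- but the paper is equally terse on this point, so I won't press it.)

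The genuine gap is in (5), and it is exactly where you say the ``bookkeeping'' will go: your proposal defers the entire content of the paper's proof. Two concrete problems. First, normalizing exponent sums so that $\sigma_s(\z)=0$ and $\sigma_t(\z)=1$ does not bound the syllable length of $\z$: a word like $s\,f\,t\,s^\mo f'$ has those exponent sums, so your wedge $\W(F,\x,\z)$ can have an arbitrarily long $\z$-loop, and the folding lemmas you invoke (\ref{lem:yellow-fold}, \ref{lem:long-range-crit}) were set up for the two-vertex graphs of Sections \ref{sec:MAC-1e}--\ref{sec:MAC-3e}; with a single vertex both loops attach to the same vertex and the loop folds (the F2/F3 moves the paper never makes explicit) enter, which your sketch does not address. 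The paper avoids folding here altogether: it applies Theorem \ref{thm:weidmann-nielsen} a second time to the marked set $(F,\bk{F,\x};\{\y\})$ and uses 1-acylindricity from item (4) --- the geodesic between $T(\bk{F,\x})$ and $\y T(\bk{F,\x})$ must cross a $t$-orbit edge and has length at most $2$ --- to force, after multiplying $\y$ on both sides by elements of $\bk{F,\x}$, either $\y=f_1tf_2$ or $\y=f_1sf_2tf_3$ with $f_i\in\F$. This finiteness statement is the engine of the proof and nothing in your plan replaces it. Second, your picture of ``exactly two closing transmissions'' producing $\F_f=\bk{F,\alpha',\beta'}$ is too optimistic. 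The claim that some conjugate of $\beta$ lies in $\bk{F,\alpha'}$ is obtained in the paper from free indecomposability: otherwise $\bk{F,\x,\y}=\bk{F,\x}*\bk{\y}$, since $\y$ has exponent sum $1$ in $t$. Moreover, even granted this, a transmission through the $t$-edge a priori yields only a proper power $(\beta')^m$, so the paper must run a descent (its Case I): either $\bk{F,\alpha',(\beta')^m}=\F_f$, or a further transmission produces $(\beta')^{m_1}$ with $|m_1|<|m|$, and the process terminates; its Case II ($\y=f_1sf_2tf_3$) is then reduced to Case I by one explicit fold. None of this appears in your proposal, so as it stands (5) is a plausible plan rather than a proof.
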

\begin{proof}
  By Corollary \ref{cor:no-ab} $\F$ has no non-cyclic abelian
  subgroups. Items 2. and 3. follow from Lemma
  \ref{lem:2v-2e-abelian}. Item 4. now follows from the previous
  items.

  To prove item 5. we first replace the vertex group $\F$ by $\F_f$,
  which is obtained by performing balancing folds. By Lemma
  \ref{lem:adjoin-a-root} if $\F_f$ is 2-generated modulo $F$ so is
  $\F$. We now have a 1-acylindrical splitting and the Bass-Serre tree
  $T$ has two edge orbits. We now use Theorem
  \ref{thm:weidmann-nielsen}. By items 3. and 4. $T_F$ has radius 1
  and contains edges from only one orbit. We also have that $\x$ and
  $\y$ cannot be elliptic. It therefore follows that after
  Weidmann-Nielsen normalization we have w.l.o.g. that $T_F \cap \x
  T_F \neq \emptyset$ and if $\fix(F)=v_0$ then $d(v_0,\x v_0) \leq
  2$. From this we may assume that there are no symbols $t$ in the
  normal form of $\x$ w.r.t. the relative presentation
  (\ref{eqn:1v-2e}) which means that $\x$ is forced to have exponent
  sum 1 in $s$, we therefore have $d(v_0,\x v_0)=1$ so that $\x =
  f_1sf_2, f_i \in \F$. Now conjugating boundary monomorphisms if
  necessary we have w.l.o.g. $\x^\mo \alpha \x =\alpha' \in
  \F$. W.l.o.g. some conjugate of $\beta$ lies in $\bk{F,\alpha'}$,
  otherwise $\bk{F,\x,\y} = \bk{F,\x} *\bk{\y}$, since $\y$ must have
  exponent sum 1 in $t$.

  After Weidmann-Nielsen normalization $T_{\bk{F,\x}} \cap \y
  T_{\bk{F,\x}} \neq \emptyset$. Let $E$ denote the edges in the same
  $\FRS$-orbit as the edges in $\axis(s) \subset T$. The minimal
  invariant trees $T(\bk{F,\x})$ and $\y T(\bk{F,\x})$ do not contain
  edges from $E$. The shortest path $\eta$ between the minimal
  invariant subtrees $T(\bk{F,\x})$ and $\y T(\bk{F,\x})$ therefore
  must contain some edge in $E$, since $\y v_0 \in \y
  T(\bk{F,\x})$. By 1-acylindricity $\eta$ has length at most 2. Let
  $\eta$ have endpoints $v_1 \in T(\bk{F,\x})$ and $v_2\in \y
  T(\bk{F,\x})$. Let $\rho_1,\rho_2 \bk{F,\x}$ be such that $\rho_1
  v_1 = v_0, (\y \rho_2 \y^\mo) \y v_0=v_2$. Then $d(v_0,\rho_1\y\rho_2
  v_0)\leq 2$. Replacing $\y$ by $\rho_1 \y \rho_2$ if necessary we
  have w.l.o.g. either $\y = f_1t f_2, f_i \in \F$ or $\y =
  f_1sf_2tf_3$.

  {\bf Case I:} If $\y = f_1t f_2, f_i \in \F_f$. Then for $u,w \in
  \bk{F,\x}$ the stable letters $t$ cancel in products\[ (f_1t f_2)u
  (f_1t f_2)^\mo \tr{~or~} (f_1t f_2)^\mo w (f_1t f_2) \] if and only
  if either ${}^{f_2}u \in \bk{\beta'}$ or $w^{f_1} \in \bk{\beta}$
  W.l.o.g. we may assume $w \in \bk{\beta}$ and conjugating boundary
  monomorphisms we may assume $\y^\mo w \y \in \bk{\beta'}$. Now either
  $\bk{F,\alpha',\y^\mo w y} = \F_f$ or $\y^\mo w y = (\beta')^m$ and
  $\bk{F,\alpha',(\beta')^m} \cap \bk{\beta} \geq \bk{F,\alpha'} \cap
  \bk{\beta}$. Then we can make another transmission to get $\F_f \geq
  \bk{F,\alpha',(\beta')^{m_1}}$ with $|m_1| < |m|$. Either we have
  equality or we have $\bk{F,\alpha',(\beta')^{m_1}} \cap \bk{\beta}
  \geq \bk{F,\alpha',(\beta')^m}\cap \bk{\beta}$ etc. This cannot go on indefinitely
  and we find finally that $\F_f$ is 2-generated modulo $F$.

  {\bf Case II:} Suppose $\y = f_1sf_2tf_3$. Then $\FRS$ is the
  fundamental group of the $\G(X)$-graph $\B:$\[ \xymatrix{ v_0\bullet
    \ar@(ul,dl)_{(1,e,1)} \ar@/^/[r]^{(f_1,e,1)} & \bullet v_1
    \ar@/^/[l]^{(f_2,f,f_3)}}\] with $\B_{v_0} = \bk{F,\alpha'}$ and
  $\B_{v_1} = \{1\}$. This folds down to a bouquet of two
  circles. Since it is impossible to increase $\B_{v_0}$ via
  transmissions, there must be some $f' \in \bk{F,\alpha'}$ such that
  $f'f_1 \in \bk{\beta}$ so that we can fold together the edges labeled
  $(1,e,1)$ and $(f_1,e,1)$. We may now assume that $\y = f_2tf_3$ so
  we have reduced to Case I.
\end{proof}

\begin{lem}\label{lem:1v-1e-2genmodF} If $\FRS$ has the JSJ 
  (\ref{eqn:1v-1e}) then w.l.o.g. $\beta, \beta' \not\in F\leq
  \F$. Moreover the centralizers of $\beta$ and $\beta'$ are cyclic in
  $\F$, so after balancing folds the splitting is
  1-acylindrical. After balancing folds, where $\F_f$ denotes the
  resulting vertex group, we have w.l.o.g. $\F_f =
  \bk{F,\x,\beta'}$. In particular $\F$ is also 2-generated modulo $F$.
\end{lem}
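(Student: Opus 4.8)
The plan is to mirror the proof of Lemma~\ref{lem:1v-2e-F}, specialised to the single stable letter of (\ref{eqn:1v-1e}); throughout write $\FRS=\bk{\F,t\mid\beta^t=\beta'}$, so that $\beta'$ is conjugate to $\beta$ in $\FRS$. The first step is to prove the sharper statement that \emph{neither $\beta$ nor $\beta'$ is conjugate in $\FRS$ into $F$} (which gives the asserted $\beta,\beta'\notin F$ a fortiori). Suppose some, hence both, of $\beta,\beta'$ were conjugate into $F$. By Corollary~\ref{lem:Ftilde-not-F} we have $\F\neq F$, so by Corollary~\ref{cor:get-split} the group $\F$ carries a nontrivial $(\leq\Z)$--splitting $D$ modulo $F$ in which every element conjugate into $F$ is elliptic; thus $\beta$ and $\beta'$ are both elliptic in $D$. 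Refining the HNN decomposition of $\FRS$ along $D$ (attaching the $t$--edge between the vertices carrying $\beta$ and $\beta'$) then produces a cyclic splitting of $\FRS$ modulo $F$ in which the rigid vertex group $\F$ is no longer elliptic, contradicting property~(1) of Theorem~\ref{thm:jsj}. Since no conjugate of $\bk\beta$ now meets $F$, the subtree $T_F$ fixed by $F$ is the single vertex $v_0=\fix(F)$.

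Next I would show $Z_\F(\beta)$ and $Z_\F(\beta')$ are cyclic. If, say, $Z_\F(\beta)$ were non--cyclic it would be a maximal non--cyclic abelian subgroup $A\ni\beta$, elliptic by Convention~\ref{conv:abelian-elliptic}; but then the edge group $\bk\beta$ would lie inside the non--cyclic abelian $A$, forcing $A$ to appear as a separate abelian vertex group of the JSJ and contradicting the hypothesis that the JSJ has the single vertex group $\F$. This is exactly where the one--edge case differs from the two--edge case: Corollary~\ref{cor:no-ab} is unavailable with only one cycle, so $\F$ may genuinely contain non--cyclic abelian subgroups (for instance when $\F$ is a centralizer extension), and one must use the previous step to ensure that the \emph{particular} edge groups $\bk\beta,\bk{\beta'}$ avoid them. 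With $\beta\not\sim\beta'$ in $\F$ (Lemma~\ref{lem:1v-eoc}), maximal cyclic edge groups after balancing, and cyclic centralizers, the edge stabilizers are malnormal and conjugacy--separated, so the balanced splitting $\bk{\F_f,t\mid\beta^t=\beta'}$ is $1$--acylindrical. By Lemma~\ref{lem:adjoin-a-root} it then suffices to generate the balanced vertex group $\F_f$ by $F$ and two elements.

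For the generation count I would act on the Bass--Serre tree $T$ of the balanced splitting (one edge orbit, $T_F=\{v_0\}$). Applying Theorem~\ref{thm:weidmann-nielsen} to $(F;\{\x,\z\})$: free indecomposability modulo $F$ rules out the free--product alternative, and since $\FRS$ is a nontrivial HNN extension it cannot be generated modulo $F$ by elliptic elements, so after Weidmann--Nielsen normalization I may assume $\x\in\F_f$ while $\z$ has exponent sum $\pm1$ in $t$. Applying Theorem~\ref{thm:weidmann-nielsen} to $(F,\x;\{\z\})$ and using $1$--acylindricity (so $T_{\bk{F,\x}}$ has radius $\leq1$ about $v_0$) gives $d(v_0,\z v_0)=1$, whence, conjugating boundary monomorphisms, $\z=f_1 t f_2$ with $f_i\in\F_f$. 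This is precisely Case~I of the proof of Lemma~\ref{lem:1v-2e-F}: representing $\FRS$ by the $\G(X)$--graph with $\B_{v_0}=\bk{F,\x}$ and the length--one $\z$--loop, the only transmissions closing the graph up enlarge $\B_{v_0}$ by a conjugate of $\beta'$, and the same bounded iteration on $\bk{F,\x,(\beta')^m}\cap\bk\beta$ forces $\F_f=\bk{F,\x,\beta'}$. Thus $\F_f$, and hence $\F$ by Lemma~\ref{lem:adjoin-a-root}, is $2$--generated modulo $F$.

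I expect the main obstacle to be the justification that $Z_\F(\beta)$ and $Z_\F(\beta')$ are cyclic: unlike the two--edge case one cannot invoke the absence of non--cyclic abelian subgroups, so the argument must combine the non--conjugacy of $\beta,\beta'$ into $F$ with the canonicity of the JSJ to exclude an abelian vertex group hidden behind the edge group. Once the length--one form $\z=f_1tf_2$ is secured, the transmission bookkeeping in the final step is routine and identical to Case~I of Lemma~\ref{lem:1v-2e-F}.
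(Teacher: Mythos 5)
Your first step (neither $\beta$ nor $\beta'$ is conjugate in $\FRS$ into $F$, via Corollary \ref{cor:get-split} and a refinement contradicting Theorem \ref{thm:jsj}) is sound, and is a legitimate reordering: the paper obtains non-conjugacy into $F$ only as a consequence of its centralizer argument. Your closing Weidmann--Nielsen/transmission argument and the appeal to Lemma \ref{lem:adjoin-a-root} also match the paper's. The genuine gap is in the middle step, exactly where you yourself flag the main obstacle: the claim that if $Z_{\F}(\beta)$ were non-cyclic, the maximal abelian subgroup $A\ni\beta$ would be ``forced to appear as a separate abelian vertex group of the JSJ.'' Nothing in Theorem \ref{thm:jsj} or Convention \ref{conv:abelian-elliptic} yields this: the convention only makes non-cyclic abelian subgroups \emph{elliptic}, i.e.\ conjugate \emph{into} the unique vertex group $\F$, which $A$ already is. The assertion you need is precisely Corollary \ref{cor:edge-group-centralizers}, whose proof in the paper cites this very lemma in the one-vertex case, so invoking it (explicitly or implicitly) is circular. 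Nor can the gap be closed by re-running your first step's refinement trick: one can indeed find a $(\leq\Z)$-splitting $D$ of $\F$ modulo $F$ in which $A$, hence $\beta$, is elliptic, but to re-attach the HNN edge and refine the JSJ you need $\beta'$ elliptic in the \emph{same} $D$. Since $\beta$ and $\beta'$ are not conjugate in $\F$ (Lemma \ref{lem:1v-eoc}), and $\beta'$ is not conjugate into $F$ (so the ``moreover'' clause of Corollary \ref{cor:get-split} says nothing about it), nothing forces $\beta'$ to be elliptic whenever $\beta$ is. Your argument therefore does not exclude the scenario in which $\beta$ lies in a non-cyclic abelian subgroup of $\F$ (hence is elliptic in every splitting of $\F$ with abelians elliptic) while $\beta'$ is hyperbolic in all of them: then no refinement of the JSJ exists, yet $Z_{\F}(\beta)$ is non-cyclic, the balanced splitting is only 2-acylindrical (the fixed set of $\beta$ is a star of diameter 2), and your later steps --- $T_{\bk{F,\x}}$ of radius $\leq 1$, $d(v_0,\z v_0)=1$ --- all of which use 1-acylindricity, collapse.

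This is where the paper's proof takes an essential turn that your proposal omits: it passes to a composition of strict epimorphisms $\pi:\FRS\rightarrow\FRSP$ that is injective on $\F$. In $\FRSP$ the element $\pi(t)$ exists and conjugates $\beta$ to $\beta'$, so in any one-edge splitting of $\FRSP$ modulo $F$ inducing a non-trivial splitting of $\F$, ellipticity of $\beta$ automatically entails ellipticity of $\beta'$; the refinement contradiction then shows that $\beta$ and $\beta'$ are hyperbolic in the generalized JSJ of $\F$, which is what gives cyclic centralizers (and non-conjugacy into $F$ as a by-product). To repair your proof you would need this device --- or some equivalent mechanism coupling the behaviour of $\beta$ and $\beta'$ across splittings of $\F$ --- since the JSJ axioms for $\FRS$ alone, together with Convention \ref{conv:abelian-elliptic}, do not rule out the offending configuration.
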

\begin{proof} Let $\pi: \FRS \rightarrow \FRSP$ be a composition of
  strict epimorphisms that is injective on $\F$. Suppose there is a
  one edge splitting of $\FRSP$ modulo $F$ with either trivial or
  cyclic edge group such that $D$, the induced splitting of $\F$, is
  non-trivial and $\beta$ is elliptic. $\beta'$ is then also elliptic
  so we can refine the splitting (\ref{eqn:1v-1e}), contradicting the
  fact that it is a JSJ. It follows that $\beta,\beta'$ must be
  hyperbolic elements in the generalized JSJ of $\F$ and therefore
  have cyclic centralizers in $\F$. By Corollary \ref{cor:get-split},
  $\beta$ is not conjugable in $\FRS$ into $F$.  1-acylindricity after
  balancing folds now follows.

  Suppose we performed our balancing folds and the resulting splitting
  of $\FRS$ has the unique vertex group $\F_f$. Let $T$ the Bass-Serre
  tree corresponding to this splitting and consider the marked
  generating set $(F;\{\x,\z\})$ we have that $T_F$ must be a point,
  so by Theorem \ref{thm:weidmann-nielsen} after Weidmann-Nielsen
  normalization $\x$ can be sent into $\F$. It follows that we must
  have $\beta \in \bfxk$. And since we must have $T_\bfxk \cap \z
  T_\bfxk \neq \emptyset$ by 1-acylindricity we easily conclude $\z =
  f_1t^\pmo f_2$ for $f_1,f_2 \in \F$, conjugating boundary
  monomorphisms, we may therefore assume that $\z = s$ and arguing
  exactly as in Case I of the proof of Lemma \ref{lem:1v-2e-F} we have
  $\F_f=\brakett{F,\x,\beta'}$. The 2-generation of $\F$ modulo $F$
  now follows from Lemma \ref{lem:adjoin-a-root}.
\end{proof}

\subsection{When all the vertex groups of the JSJ of $\FRS$ except $\F$ are
  abelian (revisited)}\label{sec:abelian-2-gen-mod-F}

We are now able to prove that when all the vertex groups of the JSJ of
$\FRS$ except $\F$ are abelian, then $\F$ is also 2-generated modulo
$F$.

\begin{prop}\label{prop:abelian-Ftilde-2gen}
  If $\FRS$ is as in Proposition 
  \ref{prop:simply-connected-abelian} then $\F$ is generated by $F$
  and at most two other elements.
\end{prop}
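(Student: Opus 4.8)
The plan is to use a single elementary observation: if $\rho\colon\FRS\to K$ is a homomorphism with $F\leq K\leq\FRS$ and $\rho|_K=\mathrm{id}$, then $K=\rho(\FRS)=\rho(\bk{F,\x,\z})=\bk{F,\rho(\x),\rho(\z)}$ is generated by $F$ together with two elements. So it suffices, in each case, to realise $\F$ (or a tightly controlled overgroup of it) as such an $F$-retract of $\FRS$.

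By Proposition \ref{prop:simply-connected-abelian} and Lemma \ref{lem:abelian-good-enough} the underlying graph of the JSJ is one of: the single edge $\F*_{\bk\alpha}A$, the path $A_2*_{\bk\beta}\F*_{\bk\gamma}A_3$, or the loop-plus-edge graph with $\F$ at the vertex carrying the loop. First I would dispose of the single-edge and path cases. Each abelian vertex group is free abelian and, the splitting being almost reduced, meets $\F$ in a \emph{maximal} cyclic group, hence splits as that cyclic group times a complement. Projecting every abelian vertex onto its edge group and keeping $\F$ fixed defines an $F$-retraction $\rho\colon\FRS\to\F$, so by the observation above $\F$ is generated by $F$ and two elements. (In these cases Proposition \ref{prop:abelian-classification} in fact gives $\F=F$, but the retraction yields the bound directly and uniformly.)

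The remaining, and genuinely harder, case is the loop-plus-edge graph, where no retraction onto $\F$ exists because the loop's stable letter $t$ conjugates $\beta$ to a non-conjugate $\beta'$. Here the abelian vertex $A$ must have rank exactly $2$: otherwise, rewriting $A$ as an HNN letter commuting with $\alpha$ would exhibit three cycles and Corollary \ref{cor:no-ab} would forbid the non-cyclic abelian $A$; so $A=Ab(\alpha,r)$. Projecting $A$ onto $\bk\alpha$ gives an $F$-retraction of $\FRS$ onto the loop vertex group $\F''=\bk{\F,t\mid\beta^t=\beta'}$, whence $\F''$ is generated by $F$ and two elements. Rewriting $A$ as an HNN letter now presents $\FRS$ as a double HNN over $\F$ with two cycles, so Corollary \ref{cor:no-ab} shows $\F$ contains no non-cyclic abelian subgroup; hence the centralizers of $\beta,\beta'$ in $\F$ are cyclic, and the argument of Lemma \ref{lem:1v-eoc} shows $\beta,\beta'$ are not conjugate in $\F$ (a conjugacy would force an abelian vertex into the JSJ, contradicting the loop-plus-edge form). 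Thus $\bk{\F,t\mid\beta^t=\beta'}$ is an essential one-edge splitting of $\F''$ which, after balancing folds, is $1$-acylindrical, and I would run the Weidmann-Nielsen argument of Lemma \ref{lem:1v-1e-2genmodF} on the two-element (modulo $F$) generating set $\{\rho(\x),\rho(\z)\}$ of $\F''$ to conclude that $\F$ itself is generated by $F$ and two elements.

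The main obstacle is precisely this loop case: since $\F$ is not a retract one cannot read off its generators directly, and one must instead pass to $\F''$ and re-use the one-vertex machinery (Weidmann-Nielsen normalization together with the root-adjunction control of Lemma \ref{lem:adjoin-a-root}). The delicate step is verifying, \emph{outside} the literal hypotheses of Lemma \ref{lem:1v-1e-2genmodF}, that $\beta,\beta'$ are non-conjugate with cyclic centralizers in $\F$; this is exactly what Corollary \ref{cor:no-ab} and Lemma \ref{lem:1v-eoc} supply once $A$ is rewritten to expose the second cycle.
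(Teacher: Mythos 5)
Your overall route---retract $\FRS$ onto a subgroup containing $\F$ and then analyse that retract---is the same as the paper's, but your treatment of the loop-plus-edge case has a genuine gap: you never decide whether the loop elements $\beta,\beta'$ are conjugable into $F$, and your final step silently assumes they are not. Rewriting the rank-two abelian vertex $A=Ab(\alpha,r)$ as an HNN letter presents $\FRS$ as a double HNN extension over $\F$, and Lemma \ref{lem:2v-2e-abelian} then forces \emph{exactly one} of the two edge classes to be conjugable into $F$: either $\bk{\alpha}$ or $\bk{\beta}$, but not both. In the case where it is $\bk{\beta}$ (the paper's Case II.II), after conjugating boundary monomorphisms the edge group of your splitting $\F''=\bk{\F,t\mid\beta^t=\beta'}$ lies inside $F$, so in the corresponding Bass--Serre tree $T_F$ is \emph{not} a point. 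The Weidmann--Nielsen argument of Lemma \ref{lem:1v-1e-2genmodF} opens precisely with ``$T_F$ must be a point, hence $\x$ can be normalized into $\F$,'' and the two facts you do verify (cyclic centralizers, and non-conjugacy of $\beta,\beta'$ in $\F$) do not substitute for this; nor can you invoke that lemma as a black box, since by Corollary \ref{cor:get-split} the one-edge splitting of $\F''$ is then not its JSJ ($\F$ splits further modulo $F,\beta,\beta'$). This missing case is where the bulk of the paper's proof lives: one must analyse the actual JSJ of the retract $\F''$, which may be freely decomposable, may have a single vertex group (so that the \emph{two-edge} Lemma \ref{lem:1v-2e-F} applies rather than the one-edge lemma), or may have several vertex groups (handled via first Betti number estimates and the results of Section \ref{sec:2-nonab-classification}); the two-generation of $\F$ comes out of each subcase by a different mechanism.

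Two further slips, both repairable. First, ``almost reduced'' gives only \emph{proper} containment of edge groups in vertex groups; it does not make $\F\cap A$ maximal cyclic (primitive) in $A$, so your projections of abelian vertex groups onto their edge groups are not defined in general. The paper's fix is a balancing fold (Definition \ref{defn:balancing-fold}), which replaces $\F$ by $\F_f$, followed by Lemma \ref{lem:adjoin-a-root} to transfer two-generation from $\F_f$ back to $\F$; you cite that lemma, but not at the point where it is actually needed. Second, your exclusion of rank three for $A$ is incorrect as argued: a rank-three free abelian vertex group cannot be rewritten as two cyclic HNN letters (the relation making the two new letters commute is not a cyclic edge relation), so Corollary \ref{cor:no-ab} does not apply the way you claim; the correct argument is the Betti number bound, which shows that rank three would force $b_1(\F)=N$, hence $\F=F$ by Proposition \ref{prop:b1-complexity}, in which case there is nothing to prove.
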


\begin{proof}
  By Proposition \ref{prop:abelian-classification} we may assume
  $\FRS$ has the vertex groups $\F$ and $A$, which is abelian. 

  {\bf Case I:} If the JSJ of $\FRS$ is $\F*_\bk{\alpha}A$ then,
  perhaps after making a balancing fold which replaces $\F$ by $\F_f$,
  we have a retraction $\F_f*_\bk{\alpha}A \rightarrow \F_f$ so by
  Lemma \ref{lem:adjoin-a-root} $\F$ is two-generated modulo $F$.

  {\bf Case II:} Suppose now that the JSJ of $\FRS$
  is \[\relpres{\bk{\F,A,t \mid \alpha^t = \alpha'}}{\alpha,\alpha'
    \in \F, \bk{\beta} = \F \cap A}\] by Proposition
  \ref{lem:2v-2e-abelian} w.l.o.g., conjugating boundary monomorphisms
  if necessary, either $\beta$ or $\alpha$, but not both, lies in $F$.

  {\bf Case II.I:} Suppose $\beta$ lies in $F$, then by Corollary
  \ref{cor:get-split} the JSJ of $\bk{\F,t}$ is $\bk{\F,t \mid \alpha^t =
    \alpha'}$. Again we have a retraction $\FRS \rightarrow \bk{\F,t \mid
    \alpha^t = \alpha'}$ (balancing folds aren't necessary since we
  can't add proper roots to elements of $F$), so that $\bk{\F,t \mid
    \alpha^t = \alpha'}$ is two-generated modulo $F$. The result now
  follows from Lemma \ref{lem:1v-1e-2genmodF}.

  {\bf Case II.II:} Suppose finally that $\alpha$ lies in $F$. Then by
  Corollary \ref{cor:get-split} $\bk{\F,t \mid \alpha^t = \alpha'}$
  admits a non-trivial $(\leq\Z)$-splitting modulo
  $\alpha,\alpha'$. We first assume that all possible balancing folds
  were applied and, abusing notation, we do not change the notation
  for the vertex groups, by Lemma \ref{lem:adjoin-a-root} this will not
  affect the result. Again we have a retraction $\FRS \rightarrow
  \bk{\F,t \mid \alpha^t = \alpha'}$ so that $\bk{\F,t \mid \alpha^t =
    \alpha'}$ is 2-generated modulo $F$. Note moreover that
  $b_1(\bk{\F,t \mid \alpha^t = \alpha'}) < b_1(\FRS)$ which implies
  (since $\F\neq F$) that $b_1(\bk{\F,t \mid \alpha^t = \alpha'})=N+1$.

  {\bf Case II.II.I:} Suppose first that $\bk{\F,t \mid \alpha^t =
    \alpha'}$ is freely decomposable modulo $F$, say as
  $\widehat{F}*H$ with $F\leq \widehat{F}$. Then $\F$ cannot be
  elliptic w.r.t. this splitting since otherwise $\F\leq \widehat{F}$
  which means we must have $t \in \widehat{F}$ as well --
  contradiction. $\F$ can therefore split as $\F'*K$ with $\alpha \in
  \F' \leq \widehat{F}$ and $\alpha' \in K$ and we have $\bk{\F,t \mid \alpha^t = \alpha'}
  = \big(\F'*_\bk{\alpha} {}^tK \big)* \bk{t}$. Since $b_1(\F) =
  N+1$. We must have $\F'*_\bk{\alpha} {}^tK = F$ so that $\F =
  F*t^\mo\bk{\alpha} t$ and the result holds.

  {\bf Case II.II.II:} Suppose now that the JSJ of $\bk{\F,t \mid \alpha^t
    = \alpha'}$ has only one vertex group. Then the only possibility
  is that the JSJ of $\bk{\F,t \mid \alpha^t = \alpha'}$ is
  $\bk{\F',t,s\mid\alpha^t = \alpha', \delta^s = \delta'}$ with
  $\alpha,\alpha',\delta,\delta' \in \F'$. In particular we have $\F =
  \bk{\F',s}$. We may again assume that this splitting is balanced. By
  Lemma \ref{lem:1v-2e-F} we have $\F' = \bk{F,\alpha',\delta'}$ which
  means that $\F = \bk{F,\alpha',s}$ and the result holds.

  {\bf Case II.II.III:} Suppose finally that the JSJ of $\bk{\F,t \mid
    \alpha^t = \alpha'}$ has more than 2 vertex groups. Then the JSJ
  can be obtained by refining the splitting $\bk{\F,t \mid \alpha^t =
    \alpha'}$. In particular the underlying graph of the JSJ is not
  simply connected. Since $b_1(\bk{\F,t \mid \alpha^t = \alpha'}) =
  N+1$, Corollaries \ref{cor:abelian-low-betti-uhd} and
  \ref{cor:nonab-uhd} imply that that $\bk{\F,t \mid \alpha^t =
    \alpha'}$ has no non-cyclic abelian subgroups. It therefore
  follows from Proposition \ref{prop:2e-h-3gen} item 1.  and Corollary
  \ref{cor:2v-3e} that $\F=F*_{\alpha}H$ or $\bk{F,H,s \mid \gamma^s =
    \epsilon}$ with $\bk{\alpha} = F \cap H$ and $\gamma \in F,
  \epsilon \in H$, both of which are generated by two elements modulo
  $F$. The result therefore holds.
\end{proof}

\begin{proof}[proof of Proposition \ref{prop:abelian} ]The result now
  follows from Propositions \ref{prop:simply-connected-abelian},
  \ref{prop:abelian-classification}, and
  \ref{prop:abelian-Ftilde-2gen}.
\end{proof}

\subsection{Uniform hierarchical depth does not increase for finitely
  generated subgroups}

The purpose of this section is to show that uniform hierarchical depth
is well behaved when passing to finitely generated subgroups. This is necessary to
bound the $\uhd$ of $\F$ when JSJ of $\FRS$ has one vertex and one
edge. This next lemma is essentially an application of Theorem
\ref{thm:folded} coupled with the observation that if edge groups are
cyclic or trivial, there are only finitely many adjustments and
transmissions that can be applied to a $\G(X)$-graph $\B$ that
actually change the $\B$-vertex groups.

\begin{lem}\label{lem:fg-vertex-groups}
  Let $G$ be finitely generated. If $G$ is the fundamental group of a graph of groups
  with edge groups either cyclic or trivial, then all the vertex
  groups are f.g.
\end{lem}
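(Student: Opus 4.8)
The plan is to run the folding machinery of Theorem~\ref{thm:folded}. Fix a finite generating set $g_1,\ldots,g_k$ of $G$ and let $\G(X)$ be the given graph of groups, so that $G=\pi_1(\G(X))$. Form the $\G(X)$-graph $\B_0$ consisting of a single base vertex $u$ with trivial vertex group together with the loops $\L(g_1;u),\ldots,\L(g_k;u)$; then $\pi_1(\B_0,u)=\bk{g_1,\ldots,g_k}=G=\pi_1(\G(X))$. Running the folding process on $\B_0$ produces a folded $\G(X)$-graph $\B$, and since the decomposition of $G$ induced by $G\leq\pi_1(\G(X))$ is $\G(X)$ itself, Theorem~\ref{thm:folded} identifies the vertex groups of $\B$ with those of $\G(X)$ (up to conjugacy). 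It therefore suffices to show that each vertex group of the folded graph $\B$ is finitely generated.

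First I would track how vertex groups can grow during the process. The moves A0--A2 are adjustments that either conjugate a vertex group or modify an edge label by an element of an adjacent vertex group or edge group, so they never enlarge a vertex group as an abstract subgroup. An F1 fold replaces two vertex groups by the subgroup they generate, and an F4 fold adjoins a single element $b_1^{-1}b_2$; since each folding strictly decreases the number of edges, only finitely many folds occur, and together they contribute only finitely many new generators. The only remaining source of growth is the T1 transmission, which adjoins to a vertex group $\B_v$ an element of the form $b^{-1}t_{[e]}(g)b$ with $g\in A_{[e]}$.

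The heart of the argument, and the step I expect to be the main obstacle, is to show that only finitely many transmissions actually change the vertex groups. Fix an edge $e$ with $i(e)=u$, $t(e)=v$ and label $(a,[e],b)$. The set of $g\in A_{[e]}$ for which a transmission is admissible, namely those with $a\,i_{[e]}(g)\,a^{-1}\in\B_u$, is $i_{[e]}^{-1}\big(a^{-1}\B_u a\cap i_{[e]}(A_{[e]})\big)$, a subgroup of the cyclic group $A_{[e]}$ and hence cyclic, say $\bk{g_0}$. A single transmission adjoining $b^{-1}t_{[e]}(g_0)b$ thus captures the whole cyclic subgroup $b^{-1}t_{[e]}(\bk{g_0})b$, so afterwards no further transmission through $e$ with this label enlarges $\B_v$. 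Moreover, because $A_{[e]}$ is abelian, the transmitted element is left unchanged by a Bass-Serre move A1 (which alters $b$ only by an edge-group element) and is merely conjugated, consistently with $\B_v$, by a conjugation A0; hence the cyclic subgroup contributed by $e$ is stable under all adjustments. Between two consecutive folds the underlying graph is fixed and has finitely many edges, so within each such block at most finitely many transmissions change the vertex groups, and since there are finitely many folds the total number of effective transmissions is finite.

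Combining these observations, each vertex group of the folded $\B$ is generated by the finitely many elements contributed by the folds together with the finitely many effective transmissions, and is therefore finitely generated. By Theorem~\ref{thm:folded} these are exactly the vertex groups of $\G(X)$, so every vertex group of $\G(X)$ is finitely generated. The delicate point to make fully rigorous is the interaction between transmissions and the label changes produced by adjustments across several blocks; I would settle it precisely by exploiting the commutativity of the cyclic edge groups, as above, together with the essential reversibility of adjustments noted in the description of the folding process.
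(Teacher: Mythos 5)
You are following the same route the paper does; in fact the paper gives no proof of this lemma beyond the remark immediately preceding it, namely that it is ``essentially an application of Theorem~\ref{thm:folded} coupled with the observation that if edge groups are cyclic or trivial, there are only finitely many adjustments and transmissions that can be applied to a $\G(X)$-graph $\B$ that actually change the $\B$-vertex groups.'' Your wedge-of-loops setup, the finiteness of folds (each fold destroys an edge), and the stability of transmissions under the moves A0--A2 (using commutativity of the edge groups) are exactly an elaboration of that remark, and that part is correct.

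However, one step of your argument is false as stated, and it is the step carrying the whole weight. You claim that after transmitting the admissible cyclic subgroup $\bk{g_0}$ through $e$, ``no further transmission through $e$ with this label enlarges $\B_v$.'' This fails because the admissible subgroup $i_{[e]}^{-1}\big(a^{-1}\B_u a\cap i_{[e]}(A_{[e]})\big)$ is not static: a later transmission \emph{into} $\B_u$ (for instance through a second edge joining $u$ and $v$) can strictly enlarge it, say from $\bk{g_0}$ to $\bk{g_1}$ with $g_1^2=g_0$, after which a transmission through $e$ once again properly enlarges $\B_v$. So edges can be re-activated, back-and-forth transmissions between two vertices are a priori conceivable, and your count of one effective transmission per edge per block is wrong. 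The repair is a chain argument: each proper transmission through a fixed edge $e$ adds an element of $A_{[e]}$ lying outside the subgroup generated by all elements previously transmitted through $e$ (otherwise its image would already lie in $\B_v$, since that image is preserved by A0--A2 adjustments and can only be absorbed by folds), so the successive transmitted subgroups form a strictly ascending chain of subgroups of the cyclic or trivial group $A_{[e]}$; such a chain is finite, because positive generators of nontrivial subgroups of $\Z$ strictly decrease along divisibility. Since no move ever creates edges, summing over the finitely many edges bounds the total number of proper transmissions, which is precisely the paper's observation. A second, minor, imprecision: the induced decomposition of $G=\pi_1(\G(X))$ equals $\G(X)$ only when the action on the Bass-Serre tree is minimal; if it is not, any vertex outside the minimal subtree has stabilizer fixing the first edge on the path to the minimal subtree, so the corresponding vertex groups are contained in edge stabilizers, hence cyclic or trivial, and the lemma still follows.
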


\begin{lem}\label{lem:maximal-vs-jsj}
  Let $\G(X)$ be $(\leq \Z)$-splitting (see Definition
  \ref{defn:splitting}) of $G$ (modulo $F$) and let $\G(Y)$ be
  the generalized JSJ of $G$ (modulo $F$). Every rigid vertex group of
  $\G(Y)$ is conjugable into a vertex group of $\G(X)$.
\end{lem}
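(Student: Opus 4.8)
The plan is to show that every rigid vertex group $R$ of $\G(Y)$ is elliptic in the Bass--Serre tree $T$ of the $(\leq\Z)$-splitting $\G(X)$ (see Definition \ref{defn:splitting}); since all our actions are without inversion, ellipticity of $R$ is the same as $R$ being conjugable into a vertex group of $\G(X)$, which is exactly the conclusion sought. First I would locate $R$. By Definition \ref{defn:gen-JSJ} the vertex groups of the generalized JSJ $\G(Y)$ are precisely the vertex groups of the JSJs of the Grushko factors $H_0,\ldots,H_m$ (with $F\leq H_0$), so, up to conjugacy, $R$ is a rigid (non-MQH, non-abelian) vertex group of the JSJ of a \emph{single} freely indecomposable factor $H_i$ --- taken modulo $F$ when $i=0$. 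By Lemma \ref{lem:fg-vertex-groups} every group in sight is finitely generated, so the JSJ machinery of Theorem \ref{thm:jsj} applies.

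The key step is to separate the trivial edges from the cyclic ones. Collapsing every edge of $T$ that carries a nontrivial (infinite cyclic) stabilizer produces a $G$-tree $\bar T$ with trivial edge stabilizers, i.e. a free splitting of $G$ modulo $F$ (the collapsed edge set is $G$-invariant, and $F$, being elliptic in $T$, stays elliptic in $\bar T$). Since $H_i$ is freely indecomposable modulo $F$, it is elliptic in $\bar T$; after conjugating I may assume $H_i$ fixes a vertex $\bar v\in\bar T$, so that $R\leq H_i\leq G_{\bar v}$. The preimage $T_{\bar v}\subseteq T$ of $\bar v$ under the collapse is then a subtree all of whose edges carry infinite cyclic stabilizers, and $G_{\bar v}$ --- hence $H_i$ --- acts on it.

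Finally I would analyze the induced action of $H_i$ on $T_{\bar v}$. If $H_i$ fixes a point of $T_{\bar v}$, then so does $R$, and we are done immediately. Otherwise $H_i$ has a minimal invariant subtree in $T_{\bar v}$, whose edge stabilizers have the form $H_i\cap\bk{c}$ with $\bk{c}$ infinite cyclic, hence are cyclic or trivial. A trivial edge stabilizer would give a free splitting of $H_i$, contradicting free indecomposability, so the induced splitting is a genuine cyclic splitting of $H_i$ (modulo $F$ when $i=0$, since $F\leq H_0$ is elliptic in $T$ and therefore in $T_{\bar v}$). By Theorem \ref{thm:jsj}(1) the rigid vertex group $R$ is elliptic in every cyclic splitting of $H_i$ modulo $F$, so $R$ fixes a vertex $w\in T_{\bar v}\subseteq T$. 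In every case $R$ turns out to be elliptic in $T$, i.e. conjugable into a vertex group of $\G(X)$.

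The main obstacle will be the bookkeeping in the last paragraph: one must use free indecomposability to rule out trivial edge stabilizers in the $H_i$-action on the fiber, so that the induced splitting really is cyclic and Theorem \ref{thm:jsj}(1) is literally applicable, and one must check that the ``modulo $F$'' hypothesis survives the restriction from $T$ to $T_{\bar v}$. The rest --- the reduction to a single Grushko factor and the collapse to a free splitting --- is standard Bass--Serre theory once the correct two-step ellipticity criterion ($R$ elliptic in $\bar T$ and in the fiber $T_{\bar v}$ implies $R$ elliptic in $T$) is isolated.
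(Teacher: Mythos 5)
Your proof is correct, but it takes a genuinely different route from the paper's. The paper argues by contradiction: if a rigid vertex group $Y_u$ of $\G(Y)$ were hyperbolic in $\G(X)$, one collapses $\G(X)$ to an \emph{elementary} $(\leq \Z)$-splitting $D$ in which $Y_u$ is still hyperbolic; free indecomposability of $G$ (or, in the freely decomposable case, of the Grushko factor containing $Y_u$, to whose induced splitting one passes) forces the relevant splitting to be cyclic, and then items (2) and (3) of Theorem \ref{thm:jsj} show that $D$ is obtained from the JSJ by the moves of Definition \ref{defn:moves}, possibly after first refining an MQH subgroup along a simple closed curve --- and all of these moves preserve ellipticity of $Y_u$, a contradiction. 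Your argument is direct, and its key input is item (1) of Theorem \ref{thm:jsj} (rigid vertex groups are elliptic in \emph{every} cyclic splitting modulo $F$) rather than items (2)--(3). Your collapse-and-fiber construction --- collapse the cyclic edges of $T$ to get a free splitting $\bar T$ in which the freely indecomposable factor $H_i$ is elliptic, then study the induced $H_i$-action on the all-cyclic fiber $T_{\bar v}$, where free indecomposability rules out trivial edge stabilizers --- plays the role that the paper's case division on free decomposability and passage to the Grushko factor's induced splitting play. What your route buys is that it avoids the moves machinery and the MQH-refinement discussion entirely, resting only on the universal ellipticity statement (1); what the paper's route buys is the more explicit information that the given splitting is actually reachable from the JSJ by ellipticity-preserving moves, with no need for the fiber tree. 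Both arguments share the reduction to a single Grushko factor and the use of free indecomposability to promote $(\leq \Z)$-edge groups to honestly cyclic ones, and the small facts you flag (ellipticity of $F$ in $T_{\bar v}$ via projection to an invariant subtree, and nontriviality of the free splitting extracted from a trivial edge stabilizer) are standard and go through as you indicate.
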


\begin{proof}
  Let $Y_u$ be a rigid (i.e. non-QH, non-abelian) vertex group of
  $\G(Y)$ and suppose towards a contradiction that $Y_u$ was not
  elliptic in $\G(X)$. In such a case we can collapse $\G(X)$ to some
  elementary $(\leq-\Z)$ splitting $D$ such that $Y_u$ is hyperbolic.

  We may first suppose that $G$ is freely indecomposable (modulo
  $F$). Let $\bk{c}$ be the edge group of $D$. Then by items (2) and
  (3) of Theorem \ref{thm:jsj}, we can obtain $D$ from $\G(Y)$ by
  perhaps first refining $\G(Y)$ by further splitting a MQH subgroup
  along a s.c.c. (if $c$ is conjugable into a MQH subgroup) and then
  performing a sequence of slides, foldings and collapses as described
  in Definition \ref{defn:moves}. All these moves preserve the
  ellipticity of $Y_u$ -- contradiction.

  Suppose now that $G$ is freely decomposable (modulo $F$). We have by
  definition of a generalized JSJ that $Y_u$ is a rigid vertex group
  of the cyclic JSJ decomposition of $G_i$ (modulo $F$), where $G_i$
  is a non-free free factor of the Grushko decomposition of $G$
  (modulo $F$). The elementary splitting $D$ induces a non-trivial
  \emph{cyclic} splitting of $G_i$ (modulo $F$) with $Y_u$
  hyperbolic. We can now derive a contradiction as in the previous
  paragraph arguing with $G_i$ in place of $G$.
\end{proof}

\begin{thm}\label{thm:uhd}
  Let $G$ be finitely generated fully residually free group and let $H\leq G$ be a
  finitely generated subgroup, then $\uhd(H)\leq\uhd(G)$.
\end{thm}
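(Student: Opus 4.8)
The plan is to induct on $\uhd(G)$, using the two preceding lemmas to pass between the JSJ of $H$ and the splitting of $H$ induced by the JSJ of $G$.

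For the base case $\uhd(G)=0$, the group $G$ is trivial, abelian, free, or a surface group. A finitely generated subgroup of a trivial, abelian, or free group is again trivial, abelian, or free, and a finitely generated subgroup of a surface group is either free or (of finite index, hence) again a surface group. In every case $\uhd(H)=0$, so the inequality holds. Here I am also using the fact that a subgroup of a fully residually free group is fully residually free (restrict the discriminating homomorphisms), so the statement makes sense all the way down the induction.

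For the inductive step, assume the theorem for all finitely generated fully residually free groups of uniform hierarchical depth strictly less than $n$, and suppose $\uhd(G)=n\geq 1$. If $H$ is itself trivial, abelian, free, or a surface group then $\uhd(H)=0\leq n$, so we may assume $H$ has a non-trivial generalized JSJ. Let $\G(X)$ be the generalized JSJ of $G$ (Definition \ref{defn:gen-JSJ}), with vertex groups $G_1,\dots,G_m$; by the definition of $\uhd$ each satisfies $\uhd(G_i)\leq n-1$, and all edge groups are cyclic or trivial. Since $H$ is finitely generated, its action on the Bass-Serre tree of $\G(X)$ yields an induced $(\leq \Z)$-splitting $\G(X_H)$ of $H$ whose vertex groups are the subgroups $V=H\cap gG_ig^{-1}$; by Lemma \ref{lem:fg-vertex-groups} these are finitely generated. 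Each such $V$ is a finitely generated subgroup of the conjugate $gG_ig^{-1}$, which is fully residually free and satisfies $\uhd(gG_ig^{-1})=\uhd(G_i)\leq n-1$, so the inductive hypothesis gives $\uhd(V)\leq n-1$.

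It remains to control the vertex groups of the generalized JSJ $\G(Y_H)$ of $H$ itself, since $\uhd(H)$ is computed from these and not from the induced splitting $\G(X_H)$. Let $H_1,\dots,H_k$ be the vertex groups of $\G(Y_H)$; each is finitely generated (again Lemma \ref{lem:fg-vertex-groups}) and fully residually free, and I claim $\uhd(H_l)\leq n-1$ for all $l$. If $H_l$ is abelian or a surface-type (QH) vertex group, then $\uhd(H_l)=0\leq n-1$. If $H_l$ is rigid, then by Lemma \ref{lem:maximal-vs-jsj} applied to the $(\leq \Z)$-splitting $\G(X_H)$ of $H$, some conjugate of $H_l$ lies in a vertex group $V$ of $\G(X_H)$; as $\uhd(V)\leq n-1$ and this conjugate of $H_l$ is a finitely generated subgroup of $V$, the inductive hypothesis yields $\uhd(H_l)\leq\uhd(V)\leq n-1$. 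Consequently $\uhd(H)=\max_l\uhd(H_l)+1\leq n=\uhd(G)$, completing the induction. The main obstacle is exactly this last step: the induced splitting $\G(X_H)$ need not be the JSJ of $H$, so its vertex groups cannot be fed directly into the recursive definition of $\uhd(H)$. Bridging this gap is the role of Lemma \ref{lem:maximal-vs-jsj}, which forces the rigid vertex groups of the genuine JSJ of $H$ to sit inside vertex groups of the induced splitting, where the inductive hypothesis applies; the abelian and surface-type vertex groups are harmless, having depth $0$.
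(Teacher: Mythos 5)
Your proof is correct and follows essentially the same route as the paper's: induction on $\uhd(G)$, passing to the induced $(\leq\Z)$-splitting of $H$, then using Lemma \ref{lem:maximal-vs-jsj} to conjugate the rigid vertex groups of the JSJ of $H$ into its vertex groups and Lemma \ref{lem:fg-vertex-groups} for finite generation, with QH and abelian vertex groups contributing depth $0$. The only cosmetic difference is that you apply the inductive hypothesis twice (first to the induced vertex groups $V$, then to the rigid JSJ vertex groups inside them), whereas the paper composes the two inclusions and applies it once; both are valid.
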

\begin{proof}
  We proceed by induction on uniform hierarchical depth.  If $G$ is a
  finitely generated fully residually free group such that $\uhd(G)=0$ then the same
  is true for any finitely generated subgroup of $G$.

  Suppose now that $\uhd(G)=n+1$ and that the theorem held for $m \leq
  n$. Let $H\leq G$ be a finitely generated subgroup.  Let $E$ denote the
  generalized JSJ of $G$. If $H$ is conjugable into a vertex group
  then by induction hypothesis, $\uhd(H) \leq n$ and the result holds.

  Suppose now that $H$ is hyperbolic w.r.t. $E$. Then $H$ has an
  induced $(\leq \Z)$-splitting $D$ as a finite graph of groups with
  vertex groups conjugable into the vertex groups of $E$. On one hand
  the vertex groups of $D$ are conjugable into the vertex groups of
  $E$. On the other hand by Lemma \ref{lem:maximal-vs-jsj} the rigid
  vertex groups of the generalized JSJ of $H$ are conjugable into the
  vertex groups of $D$ and by Lemma \ref{lem:fg-vertex-groups} the
  rigid vertex groups are finitely generated. It follows that we can apply the
  induction hypothesis so for each rigid vertex group $H_i$ of the
  generalized JSJ of $H$ we have $\uhd(H_i) \leq n$. Noting that QH
  and abelian vertex groups have $\uhd=0$, we can now conclude that
  $\uhd(H)\leq n+1$. So the result holds by induction.
\end{proof}

\begin{lem}\label{lem:uhd-leq-uhdf}
  If $G$ is fully residually $F$ then $\uhd(G) \leq \uhdf(G)$.
\end{lem}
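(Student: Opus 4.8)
The plan is to induct on $\uhdf(G)$. The two engines are Lemma \ref{lem:maximal-vs-jsj}, which compares the generalized JSJ of $G$ with an arbitrary $(\leq \Z)$-splitting, and Theorem \ref{thm:uhd}, the monotonicity of $\uhd$ under passage to finitely generated subgroups. The essential point the argument must negotiate is that $\uhd(G)$ and $\uhdf(G)$ are read off from two genuinely different decompositions: the \emph{ordinary} generalized JSJ of $G$ (regarded merely as a fully residually free group) versus the generalized JSJ of $G$ \emph{modulo $F$}. These are reconciled by observing that the JSJ modulo $F$, once we forget the distinguished copy of $F$, is nothing but a $(\leq \Z)$-splitting of $G$, so Lemma \ref{lem:maximal-vs-jsj} can be applied in its non-relative form.

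For the base case $\uhdf(G)=0$ we have $G=F$, which is free, so $\uhd(G)=0$. For the inductive step set $\uhdf(G)=n+1$ and let $F\leq\F,H_1,\ldots,H_n$ be the vertex groups of the generalized JSJ of $G$ modulo $F$; then $\uhdf(\F)\leq n$ and $\uhd(H_i)\leq n$ for all $i$. If $G$ is trivial, abelian, free, or a surface group, then $\uhd(G)=0\leq n+1$ and we are done. Otherwise $\uhd(G)=\max_j\uhd(G_j)+1$, where $G_1,\ldots,G_m$ are the vertex groups of the ordinary generalized JSJ of $G$, and it suffices to show $\uhd(G_j)\leq n$ for every $j$.

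Fix $G_j$. If it is abelian, free, or a surface group (in particular whenever it is a QH or abelian vertex group) then $\uhd(G_j)=0\leq n$. Otherwise $G_j$ is a non-abelian, non-free rigid vertex group, hence finitely generated by Lemma \ref{lem:fg-vertex-groups}; applying Lemma \ref{lem:maximal-vs-jsj} with $\G(Y)$ the ordinary generalized JSJ of $G$ and $\G(X)$ the generalized JSJ modulo $F$ (read as a $(\leq \Z)$-splitting) shows that $G_j$ is conjugable into one of $\F,H_1,\ldots,H_n$. If $G_j$ is conjugate into some $H_i$, then $\uhd(G_j)\leq\uhd(H_i)\leq n$ by Theorem \ref{thm:uhd}. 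If $G_j$ is conjugate into $\F$, observe first that $\F$ is again fully residually $F$: restricting the discriminating $F$-homomorphisms $G\to F$ to $\F$ yields $F$-homomorphisms $\F\to F$ that still discriminate $\F$. Since $\uhdf(\F)\leq n<\uhdf(G)$, the induction hypothesis gives $\uhd(\F)\leq\uhdf(\F)\leq n$, and Theorem \ref{thm:uhd} then yields $\uhd(G_j)\leq\uhd(\F)\leq n$. In every case $\uhd(G_j)\leq n$, so $\uhd(G)\leq n+1=\uhdf(G)$, completing the induction.

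The main obstacle is the case in which a rigid vertex group of the ordinary JSJ is absorbed into the distinguished vertex group $\F$: there Theorem \ref{thm:uhd} alone only bounds $\uhd(G_j)$ by $\uhd(\F)$, which is the wrong invariant, and one must re-enter the induction via $\uhd(\F)\leq\uhdf(\F)$. This is precisely why the induction is set up on $\uhdf$ rather than being a one-step argument, and it relies on the small but necessary observation that $\F$ inherits the fully residually $F$ property from $G$.
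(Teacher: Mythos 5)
Your proof is correct and follows essentially the same route as the paper's: induction on $\uhdf(G)$, using the fact that vertex groups of the ordinary generalized JSJ are conjugable into vertex groups of the JSJ modulo $F$, combined with Theorem \ref{thm:uhd} and the induction hypothesis applied to $\F$. Your write-up is in fact slightly more careful than the paper's, which tacitly treats all vertex groups of the ordinary JSJ as subgroups of vertex groups of the relative JSJ, whereas you correctly isolate the QH/abelian vertex groups (where $\uhd=0$) before invoking Lemma \ref{lem:maximal-vs-jsj} for the rigid ones, and you justify that $\F$ inherits the fully residually $F$ property.
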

\begin{proof}
  We proceed by induction on $\uhdf(G)$. If $\uhdf(G) = 0$ then the
  result holds. Let $\F_G$ be the vertex group of the generalized JSJ
  of $G$(modulo $F$ containing $F$ and let $E$ be the generalized
  cyclic JSJ splitting of $G$ (\emph{not} modulo $F$).

  Suppose for all $m \leq n, \uhdf(K) \leq m \Rightarrow \uhd(K)\leq
  \uhdf(K)$ where $K$ is fully residually $F$. Let $\uhdf(G)=n+1$ and
  let $F \leq H \leq G$ be a finitely generated subgroup.  By definition
  $\uhdf(\F_G) \leq n$ so by induction hypothesis $\uhd(\F_G) \leq
  \uhdf(\F_G)$. Now the vertex groups $G_i$ of $E$ or are either
  finitely generated subgroups of $\F_G$ or finitely generated subgroups of other vertex groups of the
  generalized JSJ of $G$ modulo $F$. In both cases by Theorem
  \ref{thm:uhd} and by induction hypothesis for each
  vertex group $G_i$, $\uhd(G_i) \leq n$, so $\uhd(G)\leq n+1$. The
  result now follows by induction.
\end{proof}

\begin{cor}\label{cor:uhdf-monotonic}
  Let $F \leq H \leq G$ where $G$ is fully residually $F$ and $H$ is
  finitely generated, then $\uhdf(H)\leq\uhdf(F)$.
\end{cor}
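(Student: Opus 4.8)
The statement should read $\uhdf(H)\le\uhdf(G)$ (the bound is by $\uhdf(G)$, since $\uhdf(F)=0$), and the plan is to prove it by induction on $\uhdf(G)$, mirroring the proof of Theorem~\ref{thm:uhd} but in the $F$-relative setting, feeding the two non-relative results---Theorem~\ref{thm:uhd} and Lemma~\ref{lem:uhd-leq-uhdf}---into the argument whenever a vertex group fails to contain $F$. The base case $\uhdf(G)=0$ is immediate: then $G=F$, so $F\le H\le F$ forces $H=F$ and $\uhdf(H)=0$. For the inductive step write $\uhdf(G)=n+1$, let $E$ be the generalized JSJ of $G$ modulo $F$ with Bass--Serre tree $T$, and set $v_0=\fix(F)$, a single vertex because the edge groups of $E$ are cyclic or trivial while $F$ is non-abelian. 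Write $\F_G=\stab_G(v_0)$ for the vertex group containing $F$, so $\uhdf(\F_G)\le n$, and let $H_1,\ldots,H_m$ be the remaining vertex groups, with $\uhd(H_j)\le n$.

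First I would dispose of the case where $H$ is elliptic in $E$. Then $H$ fixes some vertex of $T$; since $F\le H$ also fixes $v_0$ and $\fix(F)=\{v_0\}$, the subgroup $H$ must fix $v_0$, i.e. $H\le\F_G$. As $\F_G$ is fully residually $F$ with $\uhdf(\F_G)\le n$ and $H$ is finitely generated, the induction hypothesis applied to $\F_G$ in place of $G$ gives $\uhdf(H)\le\uhdf(\F_G)\le n\le n+1$.

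The substantive case is when $H$ is hyperbolic in $E$. Here $H$ inherits an induced $(\leq\Z)$-splitting $D$, which is a splitting modulo $F$ (the projection of $v_0$ onto the minimal $H$-invariant subtree is fixed by $F$), whose vertex groups are conjugable into those of $E$ and, by Lemma~\ref{lem:fg-vertex-groups}, finitely generated. Let $\G(Y)$ be the generalized JSJ of $H$ modulo $F$, with $F$-containing vertex group $\F_H$ and remaining vertex groups $K_1,\ldots,K_r$; I must bound every summand of $\uhdf(H)=\max\big(\uhdf(\F_H),\uhd(K_1),\ldots,\uhd(K_r)\big)+1$ by $n$. The group $\F_H$ is rigid (non-abelian, since it contains $F$, and non-QH), so Lemma~\ref{lem:maximal-vs-jsj} applied to $D$ shows it is elliptic in $D$, hence fixes a vertex of $T$; since $F\le\F_H$ forces that vertex to be $v_0$, we obtain the honest containment $\F_H\le\F_G$, and the induction hypothesis yields $\uhdf(\F_H)\le\uhdf(\F_G)\le n$. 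For each rigid $K_i$, Lemma~\ref{lem:maximal-vs-jsj} again shows $K_i$ is conjugable into a vertex group of $D$, hence of $E$, that is into $\F_G$ or some $H_j$; since $\uhd$ is a conjugacy invariant, Theorem~\ref{thm:uhd} together with Lemma~\ref{lem:uhd-leq-uhdf} gives $\uhd(K_i)\le\uhd(\F_G)\le\uhdf(\F_G)\le n$ in the first case and $\uhd(K_i)\le\uhd(H_j)\le n$ in the second, while the abelian and QH vertex groups have $\uhd=0$. Hence $\uhdf(H)\le n+1$, completing the induction.

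The main obstacle I anticipate is the passage from ``$\F_H$ conjugable into a vertex group of $E$'' to the genuine containment $\F_H\le\F_G$ needed to invoke the $\uhdf$ (rather than merely $\uhd$) induction hypothesis: this is exactly where I use that $\fix(F)$ is a single vertex and that $\F_H$ is a \emph{rigid} vertex group, so that Lemma~\ref{lem:maximal-vs-jsj} delivers ellipticity in $D$ rather than only conjugacy. For the remaining rigid pieces $K_i$ no such care is needed, since $\uhd$ is conjugation-invariant and Theorem~\ref{thm:uhd} bounds $\uhd$ of finitely generated subgroups regardless of the $F$-structure.
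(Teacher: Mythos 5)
Your proof is correct and follows essentially the same route as the paper's: induction on $\uhdf(G)$, establishing the genuine containment $\F_H\leq\F_G$ so the inductive hypothesis applies to the $F$-containing vertex group, and bounding the remaining rigid vertex groups via Theorem \ref{thm:uhd} and Lemma \ref{lem:uhd-leq-uhdf}. You also correctly identified the typo in the statement ($\uhdf(F)$ should be $\uhdf(G)$), and your explicit justification of $\F_H\leq\F_G$ (via uniqueness of $\fix(F)$ and rigidity of $\F_H$) fills in a step the paper leaves as ``it therefore follows.''
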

\begin{proof}
  We proceed by induction on $\uhdf(G)$. If $\uhdf(G)=0$ then the
  result holds.
  
  Suppose the Corollary was true for all $G$ such that $\uhdf(G) \leq
  n.$ Let $\uhdf(G) = n+1$.  Let $E$ be the generalized JSJ of $G$
  modulo $F$ and let $D$ be the generalized JSJ of $H$ modulo $F$. Let
  $\F_G$ and $\F_H$ be the vertex groups of $E$ and $D$
  respectively containing $F$.

  As argued in Theorem \ref{thm:uhd}, the rigid vertex groups of $D$
  are conjugable into the vertex groups of $E$. It therefore follows
  that $\F_H \leq \F_G$, and by induction hypothesis $\uhdf(\F_H) \leq
  \uhdf(\F_G) \leq n$. As for the other rigid vertex groups
  $H_1,\ldots, H_m$ in $D$, Lemma \ref{lem:uhd-leq-uhdf} implies that
  for each vertex group $G_i$ of $E$, $\uhd(G_i) \leq n$, so by
  Theorem \ref{thm:uhd} $\uhd(H_i)\leq n$. It follows that
  $\uhd(H)\leq n+1$. The result now follows by induction.
\end{proof}

\begin{cor}\label{cor:countereg}
  There is a one relator fully residually $F$ group which does not
  embed in a centralizer extension of $F$.
\end{cor}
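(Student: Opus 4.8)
The plan is to exhibit the group $G$ of Example \ref{eg:D-I-1} as the witness and then rule out the embedding by a uniform hierarchical depth count. Recall that there $F = F(a,b)$ and
\[
  G = \bk{F,x,y \mid [a^\mo b a[b,a][x,y]^2 x, a] = 1}
\]
is a \emph{one relator} group which is shown to be fully residually $F$, freely indecomposable modulo $F$, and to carry the JSJ $G = \F *_{\bk{\alpha}} H$, where $\F$ is a rank $1$ centralizer extension of $F$ and $H = \bk{x,y}$ is free of rank $2$. This is precisely a splitting of type $1$ in Section \ref{sec:ICEICE}, so it is covered by Proposition \ref{prop:2v-class}. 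I would take this $G$ as the group claimed by the corollary.

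Next I would compute the two relevant depths. Since the vertex group $F \leq \F$ of the JSJ of $G$ is a centralizer extension of $F$, Corollary \ref{cor:nonab-uhd} gives $\uhdf(G) = 2$; correspondingly, $G$ contains the non-cyclic abelian subgroup coming from $\F$. On the other hand, let $C = F *_{\bk{u}} A_u$ be \emph{any} centralizer extension of $F$ in the sense of Definition \ref{defn:ICE}, so $u \in F$ is malnormal and $A_u$ is free abelian. Its cyclic JSJ modulo $F$ is the amalgam itself, with rigid vertex group $F$ and free abelian vertex group $A_u$ joined along $\bk{u}$, so that $\uhdf(C) = \max(\uhdf(F), \uhd(A_u)) + 1 = \max(0,0)+1 = 1$. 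Moreover $C$ is finitely generated and fully residually $F$ (it embeds in itself as an iterated centralizer extension, cf. Theorem \ref{thm:ICE}), so the monotonicity result of Corollary \ref{cor:uhdf-monotonic} applies to it.

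Finally I would invoke monotonicity of relative uniform hierarchical depth. An embedding of $G$ into a centralizer extension of $F$ is an $F$-embedding, realizing $G$ as a finitely generated subgroup $F \leq G \leq C$ with $C = F *_{\bk{u}} A_u$. Since $G$ is finitely generated and $C$ is fully residually $F$, Corollary \ref{cor:uhdf-monotonic} gives $\uhdf(G) \leq \uhdf(C) = 1$, contradicting $\uhdf(G) = 2$. Hence no such embedding can exist, which proves the corollary. I expect the only delicate points to be bookkeeping rather than genuine obstacles: first, that the relevant notion of embedding is the $F$-embedding needed to apply Corollary \ref{cor:uhdf-monotonic} (which is the natural one for coordinate groups of systems over $F$); and second, that the single step amalgam $C$ has $\uhdf$ exactly $1$ and not larger, which is immediate once one notes that its JSJ modulo $F$ has only the depth-$0$ vertex groups $F$ and $A_u$. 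The genuine mathematical content, that $\uhdf$ does not increase on passing to a finitely generated $F$-subgroup, is exactly Corollary \ref{cor:uhdf-monotonic}; here it only remains to assemble the two depth computations around it.
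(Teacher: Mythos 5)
Your proposal is correct and takes essentially the same route as the paper: the same witness group from Example \ref{eg:D-I-1}, the depth computation $\uhdf(G)=2$ against $\uhdf(C)=1$ for any centralizer extension $C$ of $F$, and the contradiction via the monotonicity result of Corollary \ref{cor:uhdf-monotonic}. The only cosmetic difference is that you derive $\uhdf(G)=2$ by citing Corollary \ref{cor:nonab-uhd}, whereas the paper notes directly that the vertex group $\F_1=\bk{F,t\mid [t,u]=1}$ has $\uhdf(\F_1)=1$ and does not split modulo $F$ and the incident edge group.
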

\begin{proof}
  The group $\FRS$ given in Example \ref{eg:D-I-1} is freely
  indecomposable modulo $F$ and has a cyclic splitting modulo $F$ with
  a vertex group $F \leq \F_1 = \bk{F,t \mid [t,u]=1}$ which doesn't
  split modulo $F$ and the incident edge group. $\uhd_F(\F_1) = 1$
  which means that $\uhdf(\FRS) =2$. On the other hand any centralizer
  extension of $F$ has $\uhdf=1$, so by Corollary
  \ref{cor:uhdf-monotonic} $\FRS$ cannot embed into any rank 1
  centralizer extension.
\end{proof}

\subsection{The two edge case}
In this section we describe the structure of $\F$ when the JSJ of
$\FRS$ has one vertex and two edges. In particular we will explicitly
bound its uniform hierarchical depth. Suppose
$\FRS,\F,\alpha,\alpha',\beta,\beta',s,t$ are given in as
(\ref{eqn:1v-2e}). By Lemma \ref{lem:1v-2e-F} we can assume $\alpha
\in F$ and $\beta,\beta'$ hyperbolic in the JSJ of $\F$.

\begin{defn}
  For each edge $e$ in the JSJ of $\FRS$ we define the \define{edge
    class associated to $e$} to be the conjugacy class in $\FRS$
  corresponding to the edge group associated to $e$.
\end{defn}

It is important to note that distinct edges of the JSJ of $\FRS$ may
have the same edge class.

\begin{lem}\label{lem:1v-2e-not-one-vertex}
  Let $\FRS$ has the JSJ (\ref{eqn:1v-2e}). If $\F$ is freely
  indecomposable modulo $F$ then the JSJ of $\F_f$ (as given in Lemma
  \ref{lem:1v-2e-F}) has more than one vertex group.
\end{lem}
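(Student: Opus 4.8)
The plan is to rule out, by contradiction, that the JSJ of $\F_f$ modulo $F$ has a single vertex group, using the ``kill the vertex group and the edge groups'' trick from the proof of Lemma \ref{lem:QH-possibility}. If $\F_f$ were freely decomposable modulo $F$ the conclusion would be immediate, since its generalized JSJ would already have several vertex groups coming from the Grushko factors; so we may assume $\F_f$ is freely indecomposable modulo $F$. By Corollary \ref{lem:Ftilde-not-F} we have $\F \neq F$, hence $\F_f \neq F$, so its JSJ is non-trivial. Suppose for contradiction that this JSJ has a single vertex group $K$, with $F \leq K$. Then $\F_f$ is a multiple HNN extension of $K$ with $k \geq 1$ cyclic edge groups and stable letters $r_1,\ldots,r_k$.

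Consider the quotient map $\pi : \F_f \to Q := \F_f / \tr{ncl}(K, \text{edge groups})$. As in Lemma \ref{lem:QH-possibility}, collapsing the (single) vertex group and all edge groups of this one-vertex graph of groups leaves the free group on the stable letters, so $Q$ is free of rank $k \geq 1$. The plan is to show that $\pi$ is trivial, which is the desired contradiction. First, $F \leq K$ gives $\pi(F)=1$. Next, recall that $\alpha' \in \F_f$ is conjugate in $\FRS$ to $\alpha \in F$; by Corollary \ref{cor:get-split} it is therefore elliptic in the JSJ of $\F_f$ modulo $F$, hence conjugable into the unique vertex group $K$, so that $\pi(\alpha')=1$ and thus $\pi(\bk{F,\alpha'})=1$. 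Finally, by item 5 of Lemma \ref{lem:1v-2e-F} the element $\beta'$ is conjugable into $\bk{F,\alpha'}$, so $\pi(\beta')$ is a conjugate of an element of $\pi(\bk{F,\alpha'})=1$, whence $\pi(\beta')=1$.

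Since $\F_f = \bk{F,\alpha',\beta'}$, the three computations above give $\pi(\F_f)=1$, i.e. $Q=1$, contradicting that $Q$ is free of rank $k\geq 1$. Hence the JSJ of $\F_f$ cannot have a single vertex group, and so it has more than one. Note that this argument never uses ellipticity of $\beta$ (indeed $\beta,\beta'$ are hyperbolic in the JSJ of $\F$), only the conjugacy of $\beta'$ into $\bk{F,\alpha'}$. The step that must be checked most carefully is the ellipticity of $\alpha'$: one needs that an element of $\F_f$ which is conjugate into $F$ within the ambient group $\FRS$ is elliptic in the JSJ of $\F_f$ taken modulo $F$, so that it is conjugable into $K$ and is killed by $\pi$. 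This is precisely the content supplied by Corollary \ref{cor:get-split}, and, combined with the conjugacy of $\beta'$ into $\bk{F,\alpha'}$ from Lemma \ref{lem:1v-2e-F}, it is what drives the whole contradiction.
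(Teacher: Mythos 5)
Your reduction to a one--vertex JSJ with vertex group $K$ and $k\geq 1$ loops, the observation that $Q=\F_f/\tr{ncl}(K)$ is then free of rank $k\geq 1$, and the fact that $\pi(F)=1$ are all fine. But both steps that kill the remaining generators conflate conjugacy in the ambient group $\FRS$ with conjugacy \emph{inside} $\F_f$, and this is a genuine, fatal gap: the quotient $\pi$ only kills elements that are conjugate into $K$ by elements \emph{of $\F_f$}. In Lemma \ref{lem:1v-2e-F}, $\alpha'$ is conjugate to $\alpha\in F$ only via the stable letter $s$ of the splitting (\ref{eqn:1v-2e}), and $\beta'$ is conjugate to $\beta\in\bk{F,\alpha'}$ only via the other stable letter $t$; neither stable letter lies in $\F_f$, since elements of the vertex group have exponent sum $0$ in $s$ and $t$ (Lemma \ref{lem:double-hnn-exposum}). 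Inside $\F$ these pairs are genuinely \emph{non}-conjugate --- that is the content of Lemma \ref{lem:1v-eoc} (and, by symmetry, the same holds for $\alpha,\alpha'$), and it is exactly what makes (\ref{eqn:1v-2e}) an honest double HNN extension. So there is no reason that $\pi(\beta')=1$: if, say, the JSJ of $\F_f$ were $\bk{K,r\mid c^r=c'}$ with $\beta'$ of nonzero exponent sum in $r$, then $\pi(\beta')$ would be a nontrivial element of $Q$. The $\alpha'$ step fails for a second reason as well: Corollary \ref{cor:get-split} asserts ellipticity of $\alpha'$ only in the particular splitting $D$ it constructs (induced from an embedding into an iterated centralizer extension), not in the JSJ of $\F_f$. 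Ellipticity in one cyclic splitting does not imply ellipticity in the JSJ: by property (3) of Theorem \ref{thm:jsj} other elliptic splittings are obtained from the JSJ by the moves of Definition \ref{defn:moves}, and folding and collapsing moves can turn JSJ-hyperbolic elements into elliptic ones.

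The paper's proof is built precisely around this obstruction. Since the conjugations $\alpha'\sim\alpha$ and $\beta'\sim\beta$ hold only in an ambient group, it passes to the first strict quotient $\FRSP$ of $\FRS$ in which $\F$ splits and works with the splitting of $\F_f$ \emph{induced} from splittings of $\FRSP$, where those conjugations survive. The valid substitute for your quotient map is the exponent sum homomorphism in the stable letter $r$ of a one-edge splitting of $\FRSP$: conjugation in $\FRSP$ preserves exponent sums, so $\alpha'$ and $\beta'$ are forced to have exponent sum $0$ in $r$, and the contradiction comes from showing that the induced splitting of $\F_f$ would force some element of $\F_f$ to have nonzero exponent sum. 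Making this work requires the case analysis over the possible splittings of $\FRSP$ (cyclic HNN extension, amalgam over a cyclic group with abelian or with non-abelian second factor, freely decomposable), using 2-acylindricity, Corollary \ref{cor:edge-group-centralizers}, and Betti number estimates. If you want to salvage your approach you would have to prove that $\alpha'$ and $\beta'$ are conjugate into $K$ by elements of $\F_f$ itself, which is false in general; the fact that your argument would bypass the paper's entire case analysis in a few lines is itself a warning sign.
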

\begin{proof}
  Suppose towards a contradiction that JSJ of $\F_f$ has only one vertex
  group. By Lemma \ref{lem:1v-2e-F} $\F_f$ is 2 generated modulo $F$
  which means that the JSJ of $\F_f$ has at most two edges.  The unique
  vertex group of $\F_f$ cannot be $F$ itself by Corollary
  \ref{lem:Ftilde-not-F}. Let $\FRSP$ be the first term in a strict
  resolution of $\FRS$ where $\F$ splits. We first consider the case
  where $\FRSP$ is freely indecomposable modulo $F$.

  We can collapse the JSJ of $\FRSP$ to an elementary splitting $\FRSP
  = \pi_1(\G(Z))$ with edge group $C$ and $\F$ hyperbolic.  If $\F$ is
  hyperbolic in $\FRSP$ then so is $\F_f$, moreover by Lemma
  \ref{lem:1v-2e-F}, after replacing $\alpha', \beta'$ by their proper
  roots if necessary, $\F_f$ is generated as $\bk{F,\alpha',\beta'}$.

  The elementary splitting $\G(Z) $ is 2-acylindrical.  If the
  JSJ of $\F_f$ has two edges, then $\F_f$ has two edge classes, but
  they cannot be conjugate in $\FRSP$ because by Lemma
  \ref{lem:1v-2e-F} exactly one of the edge classes of the JSJ $\F_f$
  will conjugable into $F$. It therefore follows that only one
  edge class of $\F_f$ is conjugable into $C$ in $\FRSP$. Therefore
  only one of the edge classes of the JSJ of $\F_f$ corresponds to
  edge groups of induced splitting of $\F_f$ in $\FRSP$.

  Although the JSJ of $\F_f$ has only one vertex group, it is still
  possible for the induced splitting of $\F_f$ to have more than one
  vertex group, but by the previous paragraph the induced
  graph of groups decomposition of $\F_f$ given by the $\G(Z)$-graph
  $\B$ has at most one non-cyclic $\B$ vertex group $\B_v$ and at most
  one cycle. Moreover since this splitting is non-trivial and must
  collapse to a cyclic HNN extension. Since $\G(Z)$ is
  2-acylindrical this means that the cycle in $\B$ has length at most
  2, since when $\F_f$ acts on the Bass-Serre tree $T$ corresponding to
  $\G(Z)$ the edge group of the JSJ of $\F_f$ that is conjugable into
  $C$ should fix an arc between two translates of some $v_0 \in T$
  with $\fix(\B_v) = v_0$.

  Suppose first that $\FRSP$ split as a cyclic HNN extension $\bk{A,r
    \mid c^r = c'}$ with $c,c'$ not conjugate in $A$ and $F \leq A$,
  and $\F$ not elliptic. Up to conjugating boundary monomorphisms (or
  equivalently replacing $r$ by $g_1rg_2$ for some $g_1,g_2 \in A$) if
  necessary, the possible induced splittings of $\F_f$ are given by
  the folded $\G(Z)$-graphs:\begin{equation}\label{eqn:2e-induced-hnn}
    \B = \xymatrix{u\bullet \ar@/_/[r]_{(1,e,1)}
      \ar@/^/[r]^{(a_1,e,a)}& \bullet v} \tr{~or~} \xymatrix{u\bullet
      \ar@(ul,ur)^{(1,e,1)}}\end{equation} with $B_u = \F_1 \leq \F_f,
  \B_v = \bk{c'}$ and $a \in A \setminus \bk{c'}$ but with $[a,c']=1$
  and $a_1 \in A$. In particular the first possibility can only occur
  if $\bk{c'}$ isn't malnormal and hence cyclic in $A$. Let $\G(Y)$
  give the JSJ of $\FRSP$. Since $\G(Z)$ is just a collapse of
  $\G(Y)$, $C = \bk{c}$ is an edge group of $\G(Y)$. By Corollary
  \ref{cor:edge-group-centralizers} $C$ must be an edge group adjacent
  to an abelian vertex group of $\G(Y)$. Since the elementary
  splitting is as an HNN extension $C$ is the edge group of a
  non-separating edge in $e \subset Y$ and has infinite intersection
  with an edge group that is incident to an abelian vertex group of
  $\G(Y)$. It follows that the JSJ of $\FRSP$ has more than one vertex
  and by Lemma \ref{lem:abelian-vertex-gp-malnormal} the JSJ of
  $\FRSP$ must have at least two non-abelian vertex groups.

  Now $\B_u=\F_1$ lies in the subgroup corresponding to the
  ``subgraph of groups'' $\G(Y\setminus e)$, which has vertex groups
  $F$, some free group $H$, and an abelian vertex group $A$. It
  therefore follows that $\F_1$ has an induced splitting $D'$ with $F$ as a
  vertex group, moreover since $\bk{c'},\bk{c}$ are elliptic in $\G(Y)$
  we can refine the splitting of $\F_f$ so that it has $F$ as a vertex
  group, contradicting the fact that its JSJ had only one vertex
  group, and this vertex group isn't $F$.

  If $\bk{c}$ and $\bk{c'}$ are malnormal in $A$ then the second
  possibility of (\ref{eqn:2e-induced-hnn})must occur. By Lemma \ref{lem:1v-2e-F}, $\F_f =
  \bk{F,\alpha',\beta'}$ with $\beta \in \bk{F,\alpha'}$ and $\alpha'$
  conjugate to $F$. It therefore follows that in $\FRSP = \bk{A,r \mid
    c^r = c'}$, $\beta$ has exponent sum 0 in $r$. Now $\beta$ is
  conjugate to $\beta'$ in $\FRSP$ which means that $\beta'$ also has
  exponent sum 0 in $r$. In other words $\F_f$ in $\FRSP$ is generated
  by elements with exponent sum 0 in $r$ which is impossible because
  the induced splitting on the right of (\ref{eqn:2e-induced-hnn})
  implies that $\F_f$ contains an element with exponent sum 1 in $r$.

  Suppose now that $\FRSP$ can split as a cyclic free product with
  amalgamation $A*_CB$, with $F \leq A$ and $B$ non-abelian and $\F$
  not elliptic. This means that $\FRSP$ has a one edge maximal abelian
  collapse. It therefore follows from Proposition \ref{prop:2v-class}
  that the induced splitting of $\F$ has one edge class and one vertex
  group $\F_1$. More specifically, since $A*_CB$ is 2-acylindrical,
  the induced splitting will be given by some $\G(X)$-graph \[\B =
  \xymatrix{ u \bullet \ar@/_/[r]_{(1,e,1)} & \ar@/_/[l]_{(b,e^\mo,a)}
    \bullet v}
  \] with $a\in A, b\in B$ and where $\B_u=F_1$ and $\B_v=C$. We see
  that this is only possible if there is some element $b \in B$ that
  doesn't lie in $C$ but centralizes the cyclic edge group $C$. By
  Lemma \ref{lem:abelian-vertex-gp-malnormal} and Proposition
  \ref{prop:2v-class}, $\FRSP$ must have a cyclic splitting of the form
  $F*_{\bk{u}}(H*_{\bk{u}}K)$ with $K$ abelian. So we could chose
  $A=F$ and still have $\F_f$ hyperbolic, but this would imply
  $\F_1 = F$ which is impossible.

  If $\FRSP = A*_CB$ with $B$ abelian of rank 3 then $A = F$, which
  again yields a contradiction. Recall that we are assuming that the
  JSJ of $\F$ has only one vertex group. If $B = \bk{c} \oplus \bk{r}$
  is free abelian of rank 2 then by 2-acylindricity of the JSJ of
  $\FRSP$, the only possible $\G(Z)$ graph is: \[ \B = \xymatrix{u
    \bullet \ar@/^/[r] ^{(a_2,e,b_2)} \ar@/_/[r]_{(a_1,e,b_1)} &
    \bullet v}\] with $a_i \in A; b_j \in B$ since this graph is
  folded we must have that $b_1b_2^\mo \not\in \bk{c}$. Now note that
  we can also write $\FRSP$ as an HNN extension $\bk{A,r \mid c^r
    =c}$. The fact that $a_1b_1b_2^\mo a_2^\mo \in \F$ implies that
  there is an element of $\F$ that has non-zero exponent sum in
  $r$. But by Lemma \ref{lem:1v-2e-F} $\F = \bk{F,\alpha',\beta'}$
  with $\alpha'$ conjugate to $\alpha \in F$ and $\beta'$ conjugate to
  $\beta \in \bk{F,\alpha'}$. So $\F$ is generated by elements with
  exponent sum 0 in $r$ -- contradiction.

  Finally, consider the case where $\FRSP$ is freely decomposable
  modulo $F$. Since $\F_f$ is freely indecomposable modulo $F$, it must
  lie in one of the free factors $H$ of a Grushko decomposition of
  $\FRSP$ modulo $F$. On the other hand we have the relations
  $\pi(s)^\mo \alpha \pi(s) = \alpha'$ and $\pi(t)^\mo \beta \pi(t) =
  \beta'$ which are possible if and only if $s,t \in H$ which implies
  $\FRSP = H$ -- contradiction.
\end{proof}

\begin{lem}\label{lem:abelian-vertex-gp-malnormal}
  If $\FRS$ is as in Proposition \ref{prop:simply-connected-abelian}
  then the centralizers of the edge groups that are incident to $\F$
  are cyclic. Moreover if the JSJ of $\FRS$ has two edges then the
  incident edge groups are conjugacy separated in $\F$.
\end{lem}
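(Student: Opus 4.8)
I would work throughout in the Bass--Serre tree $T$ of the JSJ, using two standing facts about the CSA group $\FRS$: the centralizer of any nontrivial element is the unique maximal abelian subgroup containing it, and a \emph{non-cyclic} abelian subgroup, being elliptic by Convention \ref{conv:abelian-elliptic} and unable to fix an edge (edge groups are cyclic), fixes a \emph{unique} vertex of $T$. Write $\tilde v$ for the vertex with $\mathrm{stab}(\tilde v)=\F$. By Proposition \ref{prop:simply-connected-abelian} the edges at $\tilde v$ either run to translates of the abelian vertex groups $A_i$, or else comprise the single self-loop of the one-cycle graph; these are the only possibilities, since $\F$ is the unique non-abelian vertex group.

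First I would dispose of every edge from $\F$ to an abelian vertex, which covers the one-edge graph, the path graph, and the non-loop edge of the one-cycle graph. Let $e=[\tilde v,\tilde w]$ be such an edge, with $\mathrm{stab}(e)=\langle\alpha\rangle$ and $\mathrm{stab}(\tilde w)=A$ a conjugate of an abelian vertex group, so that $A$ is non-cyclic abelian and $\langle\alpha\rangle\leq A$. Suppose $B:=Z_\F(\alpha)$ were non-cyclic. Then $B$ fixes $\tilde v$ uniquely, and since $B\leq \hat B:=Z_{\FRS}(\alpha)$ with $\hat B$ also non-cyclic abelian (hence fixing a unique vertex), that vertex must be $\tilde v$; thus $\hat B\leq\F$ and $\hat B=B$. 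But $A\leq \hat B=B\leq\F=\mathrm{stab}(\tilde v)$ forces every element of $A$ to fix $\tilde v$. Choosing $a\in A\smallsetminus\langle\alpha\rangle$ (possible as $A$ is non-cyclic while $\langle\alpha\rangle$ is the cyclic edge group), $a$ fixes both $\tilde v$ and $\tilde w$, hence fixes $e$, hence lies in $\langle\alpha\rangle$ --- a contradiction. This proves part (a) in all cases except the self-loop.

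For part (b) in the path case $A_2*_{\langle\beta\rangle}\F*_{\langle\gamma\rangle}A_3$, suppose the two incident edge groups failed to be conjugacy separated, say $g^{-1}\beta^m g=\gamma^n\neq 1$ with $g\in\F$. Since $A_2,A_3$ are maximal abelian, the CSA fact gives $Z_{\FRS}(\beta^m)=A_2$ and $Z_{\FRS}(\gamma^n)=A_3$, whence $A_3=g^{-1}A_2g$. Then the non-cyclic group $A_3$ fixes both its own vertex $\tilde u_3$ and $g^{-1}\tilde u_2$, and uniqueness forces $\tilde u_3=g^{-1}\tilde u_2$, putting the $A_2$- and $A_3$-vertices in a common orbit; this is impossible as they are distinct vertices of the quotient graph. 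The identical computation rules out conjugacy between the $A$-edge group $\langle\delta\rangle$ and either boundary subgroup of the self-loop in the one-cycle case, and the malnormality of the abelian vertex groups (maximal abelian $\Rightarrow$ malnormal) gives the remaining separations.

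The delicate remaining point is the self-loop $\alpha^t=\alpha'$ of the one-cycle graph, whose edge group is incident to $\F$ on \emph{both} sides. Here the tree argument above only yields that $Z_\F(\alpha)$ and $Z_\F(\alpha')$ cannot both be non-cyclic: if $Z_\F(\alpha)=\hat B$ is non-cyclic then $\hat B\leq\F$ as above, and pushing $\hat B$ through the stable letter shows $Z_\F(\alpha')\leq\langle\alpha'\rangle$. The obstruction is genuine, because a rigid vertex group may legitimately contain non-cyclic abelian subgroups of $\FRS$ arising from its own JSJ, so ellipticity alone cannot expel $\alpha$ from such a subgroup, and $2$-acylindricity permits $\alpha$ to fix a length-$2$ arc. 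My plan for this case is to invoke the strict-resolution machinery exactly as in Lemma \ref{lem:exp-sum-zero-abelian}: if $B=Z_\F(\alpha)$ were non-cyclic, Lemma \ref{lem:abelian-exposed} maps $B$ monomorphically into an abelian vertex group carrying an exposed direct summand at some stage of a strict resolution, while $\langle\alpha\rangle$ and the relation $\alpha^t=\alpha'$ survive (strictness is injective on edge groups); projecting off the exposed summand produces an HNN extension in which an exponent-sum count forces $\alpha$ to have exponent sum simultaneously zero and nonzero in the adjoined generator. I expect this exposed-summand/exponent-sum step to be the main obstacle, as it is precisely where the self-loop departs from the amalgamated cases and where the soft tree and acylindricity arguments give out.
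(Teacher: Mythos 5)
Your Bass--Serre/CSA argument for the edges joining $\F$ to abelian vertex groups is correct, and for those cases it is more self-contained than the paper's own treatment (the paper disposes of the path graph by showing $\F=F$ via Proposition \ref{prop:abelian-classification}, and of the single-edge graph by noting that the edge group must be hyperbolic in the generalized JSJ of $\F$); your path-case conjugacy separation is also fine. The genuine gap is exactly the point you defer: the self-loop boundary subgroups $\bk{\alpha},\bk{\alpha'}$ of the one-cycle graph are never handled, only a plan is sketched, and you yourself flag it as the expected obstacle. Note also that your loop-case separation of $\bk{\delta}$ from $\bk{\alpha}$ is not the ``identical computation'' of the path case: there both centralizers were known to be abelian vertex groups, whereas here the computation only yields $\alpha\in g^{-1}\bk{\delta}g$ for some $g\in\F$, and turning that into a contradiction again requires knowing $Z_\F(\alpha)$ is cyclic --- i.e.\ the deferred case. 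So everything in the one-cycle graph hinges on the step you left open.

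Moreover the sketched plan cannot close it as stated, because the exposed-summand mechanism of Lemmas \ref{lem:abelian-exposed} and \ref{lem:exp-sum-zero-abelian} requires the elements of the putative non-cyclic abelian subgroup to have exponent sum zero in \emph{both} generators $\x$ and $\y$. A single cycle in the underlying graph yields only the one linear condition $\sigma_\x(W)\,\sigma_t(\x)+\sigma_\y(W)\,\sigma_t(\y)=0$ on an element $W\in\F$, which does not force $\sigma_\x(W)=\sigma_\y(W)=0$, so no contradiction of the predicted form arises. The idea you are missing --- and which is essentially the whole of the paper's proof in this case --- is to first observe that the abelian vertex group $A$ has rank at most $2$ (a first-Betti-number count via Proposition \ref{prop:b1-complexity}), and then to rewrite $\F*_{\bk{\delta}}A$ as the HNN extension $\bk{\F,r\mid \delta^r=\delta}$. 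The JSJ thereby becomes a double HNN extension over $\F$, i.e.\ a cyclic splitting modulo $F$ with \emph{two} cycles: Lemma \ref{lem:double-hnn-exposum} then gives zero exponent sums in both $\x$ and $\y$ for every element of $\F$, so Corollary \ref{cor:no-ab} (whose proof is precisely the strict-resolution argument you wanted to run) shows that $\F$ contains \emph{no} non-cyclic abelian subgroup at all, while Lemma \ref{lem:2v-2e-abelian} applied to this double HNN, together with the rigidity of $\F$, supplies the conjugacy separation. Without the rank bound and this rewriting into a two-cycle splitting, the self-loop case --- and with it the lemma --- remains unproved.
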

\begin{proof}
  If the JSJ of $\FRS$ has underlying graph\[\xymatrix{ u\bullet \ar@{-}[r] & \bullet v \ar@{-}[r] & \bullet
        w}\] then $\F = F$ by Proposition
      \ref{prop:abelian-classification} and the result follows. If the
      JSJ of $\FRS$ has underlying graph\[\xymatrix{ u\bullet
        \ar@{-}[r] & \bullet v}\] and $\F \neq F$ then $\F$ must not
      have any splittings modulo $F$ and the edge group. It follows
      that the edge group is hyperbolic in the generalized JSJ of
      $\F$, and therefore cannot be non-cyclic abelian. Finally if the
      JSJ of $\FRS$ has underlying graph \[ \xymatrix{ v\bullet
        \ar@{-}[r] \ar@(ul,dl)@{-} & \bullet u} \] then the abelian
      vertex group has rank at most 2, so we may consider this group
      as double HNN extension of $\F$. The result now follows by
      applying Lemma \ref{lem:2v-2e-abelian}.
\end{proof}

\begin{cor}\label{cor:edge-group-centralizers}
  Let $\FRS$ be freely indecomposable modulo $F$ and let $\bk{c}$ be
  one of the edge groups of the JSJ of $\FRS$. If $\bk{c}$ has a
  non-cyclic centralizer $Z(c)$, then $Z(c)$ is in fact an abelian
  vertex group of the JSJ of $\FRS$.
\end{cor}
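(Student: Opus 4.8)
The plan is to identify $Z(c)$ with the abelian vertex group it lies in, using that centralizers are maximal abelian, that non-cyclic abelian subgroups are elliptic, and the classification of vertex groups in Theorem~\ref{thm:jsj}. First I would record that, since $\FRS$ is CSA and commutation transitive, the centralizer $Z(c)$ is the unique maximal abelian subgroup of $\FRS$ containing $c$; by hypothesis it is non-cyclic. By Convention~\ref{conv:abelian-elliptic} every non-cyclic abelian subgroup is elliptic in the JSJ, so $Z(c)$ fixes a vertex of the Bass--Serre tree $T$.

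To pin down which vertex, I would examine $\Fix(c)\subseteq T$. Since $\bk c$ is an edge group it fixes an edge $e$, so $\Fix(c)\supseteq e$; and since the JSJ is $2$-acylindrical (Convention~\ref{conv:abelian-elliptic}) the subtree $\Fix(c)$ has diameter at most $2$. As $\Fix(c)$ contains the edge $e$ its diameter is at least $1$, and it cannot equal $e$ alone, for then $Z(c)$ (acting without inversions) would fix $e$ and hence lie in the cyclic stabilizer $\bk c$, contradicting that $Z(c)$ is non-cyclic. Therefore $\Fix(c)$ has diameter exactly $2$, i.e. it is a star with a unique centre $w$; every edge of $\Fix(c)$---in particular $e$---is incident to $w$, and $Z(c)$, which preserves $\Fix(c)=\Fix(gcg^{-1})$ for all $g\in Z(c)$, fixes $w$. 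Thus, up to conjugacy, $Z(c)\leq X_w$ with $e$ incident to $w$, and by maximality $Z(c)=Z_{X_w}(c)$.

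Next I would rule out all but the abelian possibility for $X_w$. By Theorem~\ref{thm:jsj}(1) the vertex group $X_w$ is QH, maximal abelian, or rigid. It cannot be QH, since a non-abelian QH (surface-type) subgroup contains only cyclic abelian subgroups while $Z(c)\leq X_w$ is non-cyclic. If $X_w$ is abelian then, being a vertex group of the JSJ, it is maximal abelian; as it then centralizes $c$ we obtain $X_w\leq Z(c)\leq X_w$, so $X_w=Z(c)$ is the desired abelian vertex group, incident to $e$.

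The main obstacle is therefore to exclude the case that $X_w$ is rigid while containing the non-cyclic subgroup $Z(c)=Z_{X_w}(c)$, which properly contains the incident edge group $\bk c$. Here I would use that the JSJ $D$ is almost reduced and \emph{unfolded} (Theorem~\ref{thm:jsj} and Definition~\ref{defn:almost-reduced}). The idea is that a maximal abelian subgroup properly containing an edge group of $D$ and sitting inside a rigid vertex group can be peeled off along $\bk c$ by a folding move of Definition~\ref{defn:moves}: one realises $D$ as the result of folding the finer splitting in which $Z(c)$ appears as its own abelian vertex group, joined to the remainder of $X_w$ by an edge with group $\bk c$. This contradicts unfoldedness; equivalently, the finer splitting is a cyclic splitting of $\FRS$ modulo $F$ in which $X_w$ is not elliptic, contradicting the rigidity clause of Theorem~\ref{thm:jsj}(1). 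Verifying that this peeling is legitimate---that $X_w$ genuinely decomposes as an amalgam over $\bk c$ with $Z(c)$ as a factor, using malnormality of $Z(c)$ and the almost-reducedness of $D$---is the delicate point. Once it is in place, the trichotomy above forces $X_w=Z(c)$, completing the proof.
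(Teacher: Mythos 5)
Your reduction is sound up to the trichotomy: $Z(c)$ is maximal abelian by CSA, elliptic by Convention \ref{conv:abelian-elliptic}, the fixed-set argument with 2-acylindricity correctly places $Z(c)$ inside a vertex group $X_w$ with the edge $e$ incident to $w$, the QH case is impossible, and in the abelian case $X_w=Z(c)$ follows as you say. The gap is the rigid case, which is the entire content of the statement, and your proposed exclusion of it does not work. Unfoldedness is not violated by a rigid $X_w$ containing a non-cyclic $Z(c)\supseteq\bk{c}$: the folding move of Definition \ref{defn:moves} \emph{enlarges} edge groups, and the only splitting that ``exposes'' $Z(c)$ and folds back onto the JSJ is $X_w *_{Z(c)}\bigl(Z(c)*_{\bk{c}}X_{w'}\bigr)$, whose new edge group is $Z(c)$ itself --- a non-cyclic abelian group. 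So the ``finer splitting'' you invoke is an abelian splitting, not a cyclic one, and neither the unfoldedness of the cyclic JSJ nor the rigidity clause of Theorem \ref{thm:jsj}(1) (both of which quantify over \emph{cyclic} splittings) applies to it. The other reading of your move --- that $X_w$ genuinely decomposes as an amalgam over $\bk{c}$ with $Z(c)$ as a vertex --- is precisely the assertion needing proof, and nothing in malnormality, almost-reducedness or unfoldedness yields it.

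Note also that the statement is not a soft consequence of properties (1)--(4) of Theorem \ref{thm:jsj}: non-cyclic maximal abelian subgroups can perfectly well sit inside rigid vertex groups of a cyclic JSJ (in Example \ref{eg:D-I-1} the rigid vertex group $\F=F*_{\bk{a}}(\bk{a}\times\bk{t})$ contains one), so any correct proof must use the hypothesis that $\bk{c}$ is an \emph{incident edge group} in an essential way. The paper does this not abstractly but through its classification: when the JSJ of $\FRS$ has a single vertex group, Lemmas \ref{lem:1v-2e-F} and \ref{lem:1v-1e-2genmodF} show that edge-group centralizers are cyclic, so the hypothesis never occurs; when there are at least two non-abelian vertex groups, one inspects the explicit list of possible JSJs in Section \ref{sec:2-nonab}; and the remaining case is Lemma \ref{lem:abelian-vertex-gp-malnormal}. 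To close the rigid case your argument would need an input of this kind (or a strict-resolution argument in the spirit of Lemma \ref{lem:abelian-exposed}); as written, the crucial step is asserted rather than proved.
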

\begin{proof}
  If the JSJ of $\FRS$ has one vertex group then by Lemmas
  \ref{lem:1v-2e-F} and \ref{lem:1v-1e-2genmodF} $Z(c)$ is cyclic. If
  the JSJ $\FRS$ has at least two non-abelian vertex groups, the
  result follows by looking at the non-cyclic abelian subgroups that occur in
  Section \ref{sec:2-nonab}. The remaining case follows from Lemma
  \ref{lem:abelian-vertex-gp-malnormal}.
\end{proof}

\begin{cor}\label{cor:1v-ev-uhd}
  Let $\FRS$ and $\F_f$ be as in Lemma \ref{lem:1v-2e-F}. If $\F$ is
  freely indecomposable modulo $F$, then the JSJ of $\F_f$ has two
  vertex groups $F$ and some free group $H$. It follows that in all
  cases $\uhd(\FRS)\leq 2$.
\end{cor}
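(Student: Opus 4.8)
The plan is to recognize that $\F_f$, being generated by $F$ together with the two elements $\alpha',\beta'$ furnished by Lemma \ref{lem:1v-2e-F}, is itself a fully residually $F$ quotient of $F*\bk{x,y}$ — it is a subgroup of $\FRS$ containing $F$, hence fully residually $F$ — so that the entire classification of Section \ref{sec:2-nonab} may be applied to it. First I would record the inputs from Lemma \ref{lem:1v-2e-F}: $\F_f=\bk{F,\alpha',\beta'}$ has no non-cyclic abelian subgroups, and, since $\F$ is assumed freely indecomposable modulo $F$ (and balancing folds only adjoin roots to cyclic edge groups), $\F_f$ is also freely indecomposable modulo $F$. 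By Lemma \ref{lem:1v-2e-not-one-vertex} the JSJ of $\F_f$ has more than one vertex group; as $\F_f$ contains no non-cyclic abelian subgroup, none of these vertex groups can be abelian, so the JSJ of $\F_f$ has more than one non-abelian vertex group.

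At this point I would invoke Proposition \ref{prop:2v-class}, which forces the JSJ of $\F_f$ into one of the lists of Sections \ref{sec:allfree}, \ref{sec:abvertgr}, or \ref{sec:ICEICE}. The lists in \ref{sec:abvertgr} all contain an abelian vertex group, and those in \ref{sec:ICEICE} require the $F$-containing vertex group to be a rank $1$ centralizer extension; both produce a non-cyclic abelian subgroup and are therefore excluded. Hence the JSJ of $\F_f$ falls under Section \ref{sec:allfree}, every entry of which has exactly two vertex groups, namely $F$ and a free group $H$ of rank $2$ (in the QH case of type A.2 one reads off the same conclusion via Corollary \ref{cor:QH}, since the once-punctured-torus group is free of rank $2$). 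This establishes the first assertion.

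For the bound on depth I would compute $\uhdf$ and then pass to $\uhd$. Since the $F$-containing vertex group of the JSJ of $\F_f$ is $F$ itself and the other vertex group $H$ is free, $\uhdf(\F_f)=\max(\uhdf(F),\uhd(H))+1=1$. Because $\F\leq\F_f$ with $\F$ finitely generated (Lemma \ref{lem:fg-vertex-groups}) and $\F_f$ fully residually $F$, Corollary \ref{cor:uhdf-monotonic} gives $\uhdf(\F)\leq\uhdf(\F_f)=1$, while $\uhdf(\F)\geq 1$ because $\F\neq F$ (Corollary \ref{lem:Ftilde-not-F}); thus $\uhdf(\F)=1$. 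The JSJ of $\FRS$ modulo $F$ has the single vertex group $\F$, so $\uhdf(\FRS)=\uhdf(\F)+1=2$, and finally $\uhd(\FRS)\leq\uhdf(\FRS)=2$ by Lemma \ref{lem:uhd-leq-uhdf}. In the remaining case, where $\F$ is freely decomposable modulo $F$, Corollary \ref{cor:free-decomp-FRS} together with the absence of non-cyclic abelian subgroups forces $\F$ to be free over $F$, whence $\uhdf(\F)=1$ again and the same computation applies; this covers all cases.

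The step I expect to be the main obstacle is the reduction itself: arguing cleanly that $\F_f$ is a freely-indecomposable-modulo-$F$ fully residually $F$ quotient of $F*\bk{x,y}$, so that Proposition \ref{prop:2v-class} genuinely applies to it, and then being certain that the exclusion of every abelian-vertex type in \ref{sec:abvertgr} and every centralizer-extension type in \ref{sec:ICEICE} really does follow from the single hypothesis that $\F_f$ has no non-cyclic abelian subgroup. The bookkeeping that balancing folds preserve both free indecomposability modulo $F$ and the absence of non-cyclic abelian subgroups — so that the JSJ of $\F_f$ is available and of the claimed shape — is the delicate point.
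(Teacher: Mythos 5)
Your proof is correct and follows essentially the same route as the paper: the paper likewise combines Lemma \ref{lem:1v-2e-F} (no non-cyclic abelian subgroups, $2$-generation modulo $F$), Lemma \ref{lem:1v-2e-not-one-vertex} (more than one vertex group), Corollary \ref{cor:uhdf-monotonic} applied to $\F\leq\F_f$, and the decomposable case to conclude $\uhdf(\F)\leq 1$ and hence the depth bound. The only difference is presentational: where you unpack Proposition \ref{prop:2v-class} and exclude the cases of Sections \ref{sec:abvertgr} and \ref{sec:ICEICE} by hand, the paper cites Corollary \ref{cor:nonab-uhd}, which encapsulates exactly that exclusion argument.
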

\begin{proof}
  By Lemma \ref{lem:1v-2e-F}, $\F_f$ has no non-cyclic abelian
  subgroups. By Lemma \ref{lem:1v-2e-not-one-vertex} the JSJ of $\F_f$
  has at least two vertex groups and since it cannot contain any
  non-cyclic abelian subgroups Corollary \ref{cor:nonab-uhd} implies
  that $\uhd_F(\F_f) = 1$. Now $\F$ is a finitely generated subgroup
  of $\F_f$ so by Corollary \ref{cor:uhdf-monotonic} $\uhdf(\F) \leq
  1$ as well and the result follows.
\end{proof}

\subsection{The one edge case}\label{sec:one-edge-case}
In this section we bound the uniform hierarchical depth of $\FRS$ when
its JSJ has one edge and one vertex.  By Lemma
\ref{lem:1v-1e-2genmodF} $\F$ is generated by two elements modulo $F$
and the element $\beta \in \F$ must be hyperbolic in the JSJ of
$\F$. The JSJ of $\F$ either has only one vertex group or is one of
the groups described in Propositions \ref{prop:decomposable} and
Section \ref{sec:2-nonab}.

\begin{lem}\label{lem:strict-split}
  $\FRS, \F, t, \beta, \beta'$ be as in (\ref{eqn:1v-1e}) and let
  $\pi: \FRS \rightarrow \FRSP$be a proper strict epimorphism. If $\F
  \neq F$ is freely indecomposable modulo $F$ then there is no
  essential cyclic or free splitting of $\FRSP$ modulo $\F = \pi(\F)$.
\end{lem}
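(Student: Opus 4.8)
The plan is to argue by contradiction, exploiting that $\pi$ is injective exactly where strictness guarantees it, and that $\FRSP$, being a \emph{proper} quotient, has lost the HNN structure (\ref{eqn:1v-1e}). First I would record what strictness buys. Since the JSJ of $\FRS$ modulo $F$ has the single vertex group $\F$, and $\F$ is non-abelian and freely indecomposable modulo $F$ with $\F\neq F$ (so it is neither a QH nor an abelian vertex group), $\F$ is a rigid vertex group; item (4) of the definition of a strict epimorphism then gives that $\pi$ is injective on $\F$, and item (2) that $\pi$ is injective on the edge group $\bk{\beta}$. Hence I may identify $\F$ with $\pi(\F)$, write $\pi(\beta),\pi(\beta')$ for the images of the edge generators, and note $\pi(\beta)\neq 1$, with $\pi(\beta')=\pi(t)^{-1}\pi(\beta)\pi(t)$ inside $\FRSP=\bk{\pi(\F),\pi(t)}$ (using the paper's convention $\beta^t=t^{-1}\beta t$). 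The force of \emph{properness} is that the natural surjection $\bk{\pi(\F),\pi(t)\mid \pi(\beta)^{\pi(t)}=\pi(\beta')}\twoheadrightarrow \FRSP$ is not an isomorphism: $\FRSP$ does \emph{not} carry the HNN splitting it would if $\pi$ were injective.

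Now suppose for contradiction that $\FRSP$ has an essential cyclic or free splitting $D$ modulo $\pi(\F)$, with Bass--Serre tree $T$ on which $\pi(\F)$ fixes a vertex $v_0$. Because $\FRSP$ is generated by the elliptic subgroup $\pi(\F)$ together with the single extra element $\pi(t)$, the whole splitting is driven by $\pi(t)$. If $\pi(t)$ also fixes $v_0$ then $\FRSP$ fixes $v_0$ and $D$ is trivial, contrary to essentiality. So $\pi(t)$ moves $v_0$; then $\pi(\beta')\in\pi(\F)$ fixes $v_0$ and, since $\pi(\beta')=\pi(t)^{-1}\pi(\beta)\pi(t)$, it also fixes $\pi(t)^{-1}v_0$, hence fixes the whole nondegenerate arc $[v_0,\pi(t)^{-1}v_0]$. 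This immediately disposes of free splittings: a trivial edge stabilizer fixing a nondegenerate arc would force $\pi(\beta')=1$, contradicting injectivity of $\pi$ on $\bk{\beta}$. Therefore the edge groups of $D$ are infinite cyclic.

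The crux is then to show that the arc fixed by the nontrivial element $\pi(\beta')$ has length $1$. I would use that these groups are CSA: an element fixing an arc of length $\geq 2$ would lie in two distinct cyclic edge groups meeting at an interior vertex, which by malnormality of maximal abelian subgroups is impossible once $D$ is taken reduced (and hence arranged to be $1$-acylindrical in the manner of the maximal abelian collapse). With the arc reduced to a single edge $e$ stabilized by $\bk{\pi(\beta')}$, there are two possibilities. If $v_0$ and $\pi(t)^{-1}v_0$ lie in different $\FRSP$-orbits, then $D$ is an amalgam with $\pi(\beta),\pi(\beta')\in\pi(\F)$ conjugate across it, which by the conjugacy criterion for amalgams makes them conjugate in $\pi(\F)$, contradicting Lemma \ref{lem:1v-eoc}. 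If instead $v_0$ and $\pi(t)^{-1}v_0$ lie in the same orbit, then (using the HNN normal form, elliptic elements of $\bk{\pi(\F),\pi(t)}$ lie in $\pi(\F)$) the single vertex orbit $v_0$ has stabilizer $\pi(\F)$ and edge group $\bk{\pi(\beta')}$, exhibiting $\FRSP$ as the genuine HNN extension $\bk{\pi(\F),\pi(t)\mid \pi(\beta)^{\pi(t)}=\pi(\beta')}$. The normal form of this HNN makes the surjection from $\FRS$ injective, i.e. $\pi$ an isomorphism, contradicting properness. The one bookkeeping point is that the recovered edge group could be a proper cyclic overgroup $\bk{\gamma}$ with $\pi(\beta)=\gamma^k$; this is a failure of balancedness, so I would first apply a balancing fold to $\F$ (legitimate by Lemma \ref{lem:adjoin-a-root}, which preserves $2$-generation modulo $F$) and run the argument with the balanced vertex group, after which the edge group is maximal cyclic and the genuine-HNN conclusion applies.

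The hardest step is precisely this length bound and the exact identification of the vertex and edge groups in the cyclic case: turning the statement ``$\FRSP$ splits as a one-edge graph of groups over $\pi(\F)$'' into either ``$\pi(\beta)\sim\pi(\beta')$ in $\pi(\F)$'' (handled by Lemma \ref{lem:1v-eoc}) or ``$\pi$ is an isomorphism'' (contradicting properness). The free case and the trivial-fixed-point case are forced instantly by normal forms, while the cyclic case needs the CSA/acylindricity input together with the balancing-fold reduction to recover the original HNN presentation and so contradict that $\pi$ is a proper quotient.
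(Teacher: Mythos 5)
Your overall target is the right one (derive either conjugacy of $\beta,\beta'$ in $\F$, contradicting Lemma \ref{lem:1v-eoc}, or injectivity of $\pi$, contradicting properness), and your treatment of free splittings — $\pi(\beta')\neq 1$ fixes an edge of the nondegenerate arc $[v_0,\pi(t)^\mo v_0]$ — is correct and matches the spirit of the paper's first case. The proof breaks, however, at the step you yourself flag as the crux: the reduction of the fixed arc to length $1$. If the splitting of $\FRSP$ collapses to an amalgam $A*_CB$, the Bass--Serre tree is bipartite with the two vertex orbits as parts, so $d(v_0,\pi(t)^\mo v_0)$ is \emph{even}: the arc fixed by $\pi(\beta')$ has length at least $2$ and can never be shortened to a single edge. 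These configurations are not pathologies excludable by acylindricity; they are genuine possibilities, and the paper's proof spends most of its length eliminating them case by case: $B$ abelian of rank $3$ (forcing $A=F$, hence $\F=F$), $B$ abelian of rank $2$ (a normal-form argument giving $\pi(t)=b_1g$ with $b_1\in B$, $g\in A$, and then $\FRS\approx\FRSP$), and $B$ non-abelian (which needs Lemma \ref{lem:1v-1e-2genmodF} together with Lemma \ref{lem:2-v-1-e-Fx} to conclude free decomposability modulo $F$). Your appeal to ``$1$-acylindricity arranged in the manner of the maximal abelian collapse'' cannot substitute for this: the collapse enlarges cyclic edge groups to maximal abelian ones, which are non-cyclic precisely in the troublesome cases, so the collapsed splitting is no longer cyclic and your later comparison of the edge stabilizer with $\bk{\pi(\beta')}$ (and the balancing-fold patch via Lemma \ref{lem:adjoin-a-root}) no longer applies.

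Two further steps are asserted rather than proved. First, your amalgam/HNN dichotomy tests whether $v_0$ and $\pi(t)^\mo v_0$ lie in the same $\FRSP$-orbit — but they always do, since they differ by the element $\pi(t)^\mo\in\FRSP$; the amalgam case manifests itself as an even-length arc, not as a pair of vertices in distinct orbits, so your case (i) is vacuous as written and the genuine amalgam case has silently disappeared from the proof. (Moreover, the ``conjugacy criterion for amalgams'' would not give conjugacy of $\pi(\beta)$ and $\pi(\beta')$ \emph{in} $\pi(\F)$: for elements conjugate into the edge group it only yields a chain of conjugations alternating between the two factors.) Second, in the HNN case you need $\textrm{stab}(v_0)=\pi(\F)$ and $\textrm{stab}(e)=\bk{\pi(\beta')}$ to say $\FRSP$ is ``the genuine HNN extension,'' but nothing forces the vertex group of the hypothetical splitting to be that small: it can properly contain $\pi(\F)$. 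The paper's proof is engineered to avoid exactly this issue: it allows an arbitrary vertex group $A\geq\F$, uses Britton's lemma to pin $\beta$ and $\beta'$ into the edge groups $\bk{c}$ and $\bk{c'}$, and then shows that every relation among $\F$ and $\pi(t)$ is a consequence of $\beta^{\pi(t)}=\beta'$; this yields injectivity of $\pi$ — contradicting properness — with no identification of vertex or edge groups at all. To repair your argument you would need to either supply that identification or replace it with a Britton's-lemma argument of this kind, and separately handle the even-length (amalgam) arcs.
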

\begin{proof}
  Suppose $\FRSP = A*B$ with $\F \leq A$. Then since $\FRSP =
  \bk{\F,\pi(t)}$, $\pi(t)$, written as a reduced word in
  $A,B$, should have a syllable in $B$. We must, however, have the
  equality
  \begin{equation}\label{eqn:frsp}
    \pi(t^\mo) \beta \pi(t) = \beta'
  \end{equation} 
  with $\beta,\beta' \in \F \leq A$ which is impossible. 

  We may therefore assume that $\FRSP$ is freely indecomposable and
  that we can collapse its cyclic JSJ to either a free product with
  amalgamation or as an HNN extension modulo $\F$.

  Suppose $\FRSP$ splits as a cyclic HNN extension $\bk{A,r \mid c^r
    =c'}; c,c' \in A$, but not conjugate in $A$, with $\F \leq
  A$. $\FRSP = \bk{\F,\pi(t)}$, so $\pi(t)$ must have exponent sum 1
  in the stable letter $r$. (\ref{eqn:frsp}) implies that
  w.l.o.g. $\beta$ is conjugate in $A$ to $\bk{c}$ and $\beta'$ is
  conjugate in $A$ to $\bk{c'}$. So conjugating boundary monomorphisms
  (or, equivalently, replacing $r$ by $a_1ra_2; a_i \in A$) we can
  arrange so that $\beta \in \bk{c}$ and $\beta' \in \bk{c'}$ which
  means that $\pi(t)$ has a reduced form $r \cdots r$.  Now by
  Britton's Lemma any word in $\F,\pi(t)$ is reduced if and only if it
  doesn't contain the subword $ \pi(t^\mo) \beta \pi(t)$ or $\pi(t)
  \beta' \pi(t^\mo)$. It therefore follows that (\ref{eqn:frsp}) is
  the only non-trivial relation between $\F$ and $\pi(t)$. This implies
  that $\FRS \approx \FRSP$ contradicting the fact that $\pi$ is a
  proper epimorphism.

  Suppose now that $\FRSP$ splits as $A*_CB$ with $C=\bk{c}$, $B$
  abelian, and $\F \leq A$. If $B$ has rank 3 then $A=F \Rightarrow \F
  = F$ -- contradiction. Suppose now that $B$ is abelian of rank 2,
  i.e. $B = \bk{c}\oplus\bk{b}$. Then w.l.o.g. we may assume that
  $\bk{\beta} \leq \bk{c}$. We can also express $A*_CB$ as an HNN
  extensions $\bk{A,r\mid c^r=c}$. Note that $\FRSP = \bk{\F,\pi(t)}$
  and since $\F \leq A$, $\pi(t)$ has exponent sum 1 in $r$. This
  means that if $\pi(t)^\mo \beta \pi(t) = \beta'$ then there must be
  some $g \in A \setminus \F$ such that $g \beta' g^\mo = \beta$. So
  $\pi(t)g^\mo$ centralizes $\beta$ which means by normal forms that
  $\pi(t)g^\mo =b_1 \in B$, hence $\pi(t) = b_1g$. But now looking at
  words in $\F, b_1g$ we see, again that the only non-trivial
  relations are $(b_1g) \beta' (g^\mo \beta^\mo) = \beta$ and $(g^\mo
  b_1^\mo) \beta (b_1g) = \beta'$, again forcing $\FRS \approx \FRSP$.

  Suppose now that $\FRSP$ splits as $A*_CB$ with $C$ cyclic, $B$ non
  abelian and $\F \leq A$. Then $\FRSP$ has a maximal abelian collapse
  with one edge. By Lemma \ref{lem:1v-1e-2genmodF}
  $\FRSP = \bk{F,\pi(\x),\pi(\y)}$ with $F,\pi(\x)$ lying in the same
  vertex group of the maximal abelian collapse. By Lemma
  \ref{lem:2-v-1-e-Fx} this means $\FRSP$ is freely decomposable
  modulo $F$ and since $\F$ is freely indecomposable modulo $F$ we
  have $\FRSP = A*B$ with $\F\leq A$ which, as we saw, is impossible.
\end{proof}

\begin{lem}\label{lem:add-a-root} If $\FRSP$ as in Lemma
  \ref{lem:strict-split} has a one edge cyclic splitting $D$ and if
  $\F$ has an induced one edge cyclic splitting $D_\F$, then $\FRSP$
  can be obtained from $\F$ by adding an element $\sqrt[n]{\eta}$: an
  $n^{th}$ root of the generator $\eta$ of an edge group of $D_\F$.
\end{lem}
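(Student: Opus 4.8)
The plan is to analyse the action of $\F$ on the Bass--Serre tree $T$ of the splitting $D$, and to exploit the fact that $\FRSP=\bk{\F,\pi(t)}$ is generated by $\F$ together with a single further element. (Recall $\FRS=\bk{\F,t}$ from (\ref{eqn:1v-1e}), that $\pi$ is onto, and that we may take $\pi$ injective on $\F$, so that we identify $\F$ with $\pi(\F)$.)

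First I would locate $\F$ inside $D$. Write $D$ as $\FRSP=A*_{\bk{c}}B$ or $\FRSP=\bk{A,r\mid c^r=c'}$, with cyclic edge group $\bk{c}$. By Lemma \ref{lem:strict-split} the group $\FRSP$ has no nontrivial cyclic or free splitting modulo $\F$, so $\F$ cannot be elliptic in $D$; hence $\F$ is hyperbolic and acts on $T$ with minimal invariant subtree $T(\F)$ whose quotient $T(\F)/\F$ is the single-edge graph underlying $D_\F$. The edge stabilizers of this induced action are the intersections $\F\cap g\bk{c}g^\mo$, so after conjugating inside $\F$ the edge group of $D_\F$ is $\bk{\eta}=\F\cap\bk{\zeta}$, where $\zeta:=gcg^\mo\in\FRSP$ generates the conjugate of $\bk{c}$ stabilizing the relevant edge. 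Since $\bk{\eta}$ is a subgroup of the infinite cyclic group $\bk{\zeta}$, there is an integer $n\geq 1$ with $\eta=\zeta^n$; thus $\zeta=\sqrt[n]{\eta}$ is an $n$-th root in $\FRSP$ of the generator $\eta$ of the edge group of $D_\F$.

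Next I would show $\FRSP=\bk{\F,\zeta}$ by a folding argument in the style of Section \ref{sec:graph-folds}. Let $\G(Z)$ be the graph of groups for $D$ and let $\B_\F$ be a folded $\G(Z)$-graph with $\pi_1(\B_\F)=\F$; its associated data is exactly $D_\F$, so its underlying graph is a single edge with edge group $\bk{\eta}$. Attaching a $\pi(t)$-loop at the basepoint produces a $\G(Z)$-graph $\B$ with $\pi_1(\B)=\bk{\F,\pi(t)}=\FRSP$. By Theorem \ref{thm:folded}, folding $\B$ returns the graph-of-groups data of the splitting of $\FRSP$ induced by $\FRSP\leq\pi_1(\G(Z))=\FRSP$, namely $\G(Z)$ itself. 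Tracking the folds, every transmission that would enlarge a vertex group is constrained by the malnormality of $\bk{c}$, the acylindricity of $D$, and Britton's lemma; I would show that the only enlargement required to promote the edge group $\bk{\eta}$ of $\B_\F$ to the edge group $\bk{c}$ of $\G(Z)$ is the adjunction of the single element $\zeta$. Consequently $\pi(t)$ lies in $\bk{\F,\zeta}$ and $\FRSP=\bk{\F,\zeta}$.

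Finally, since $\bk{\eta}=\F\cap\bk{\zeta}$ and $\eta=\zeta^n$, the normal-form structure gives $\FRSP=\bk{\F,\zeta}=\F*_{\bk{\eta}=\bk{\zeta^n}}\bk{\zeta}$, which is precisely the assertion that $\FRSP$ is obtained from $\F$ by adjoining the root $\sqrt[n]{\eta}$. The main obstacle will be the control of the folding process in the previous paragraph: one must verify that passing from $\F$ to $\FRSP$ requires only the single generator $\zeta$ and imposes only the relation $\zeta^n=\eta$, ruling out via Britton's lemma and acylindricity any extra generators or relations, and one must carry this out uniformly for both the amalgam and the HNN forms of $D$.
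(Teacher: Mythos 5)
Your first paragraph (locating $\F$ in $D$, identifying the edge group $\bk{\eta}$ of $D_\F$ as a finite-index subgroup of a conjugate $\bk{\zeta}$ of $\bk{c}$) is correct and matches the paper's setup. The genuine gap is your second paragraph: the assertion that the folding process ``requires only the adjunction of the single element $\zeta$'' \emph{is} the lemma, and you give no argument for it, only a list of constraints (malnormality, acylindricity, Britton's lemma) that hold for \emph{any} generating pair $(\F,x)$ of $\FRSP$ and therefore cannot by themselves force the conclusion. The ingredient you never use is the defining relation of (\ref{eqn:1v-1e}): $\pi(t)$ conjugates $\beta\in\F$ to $\beta'\in\F$, and these elements are hyperbolic in $T$ (otherwise $D_\F$ would refine the JSJ, as in the proof of Lemma \ref{lem:1v-1e-2genmodF}). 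This is the engine of the paper's proof: from $\beta^{\pi(t)}=\beta'$ one gets $\pi(t)\,\axis(\beta')=\axis(\beta)$, and both axes lie in $T(\F)$, so $\pi(t)$ carries some edge $e'$ of $T(\F)$ to an edge $e$ of $T(\F)$; since $D_\F$ has a single edge, $\F$ acts transitively on the edges of $T(\F)$, so there is $g\in\F$ with $ge=e'$, hence $\pi(t)g\in\stab_{\FRSP}(e)$; finally $\stab_{\FRSP}(e)$ is cyclic, so $\bk{\eta,\pi(t)g}=\bk{\sqrt[n]{\eta}}$ and $\FRSP=\bk{\F,\pi(t)g}=\bk{\F,\sqrt[n]{\eta}}$. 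Without invoking the relation, there is no mechanism in your outline forcing the extra generator to lie, modulo $\F$, in an edge stabilizer, so the folding sketch cannot be completed as stated.

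Separately, your final paragraph overclaims: $\FRSP$ is \emph{not} the amalgam $\F*_{\bk{\eta}}\bk{\zeta}$; the lemma asserts only that $\FRSP$ is \emph{generated} by $\F$ and the root. Indeed, writing $\pi(t)g=\zeta^{k}$ as above, the relation $\beta^{\pi(t)}=\beta'$ gives $\zeta^{-k}\beta\zeta^{k}=g^{-1}\beta' g\in\F$ with $\beta\notin\bk{\eta}$ and (unless $\FRSP=\F$, the degenerate case) $\zeta^{k}\notin\bk{\eta}=\F\cap\bk{\zeta}$; this word is reduced of syllable length $3$ in $\F*_{\bk{\eta}}\bk{\zeta}$, so the amalgam normal form fails and $\FRSP$ is a \emph{proper} quotient of $\F*_{\bk{\eta}}\bk{\zeta}$. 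This is not a pedantic point: Corollary \ref{cor:betti-drop} obtains the strict drop $b_1(\FRSP)<b_1(\FRS)$ precisely because, beyond adjoining $[\eta]/n$, the quotient forces $[\beta]=[\beta']$ in homology; if $\FRSP$ were the honest amalgam, $b_1$ would not drop and that corollary would fail.
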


\begin{proof}
  Consider the action of $\FRSP$ and $\F$ on the Bass-Serre tree $T$
  corresponding to the splitting $D$. Abusing notation we use
  $t$ to denote the image of $\pi(t)$ in $\FRSP$. 

  We have hyperbolic elements $\beta,\beta' \in \F$ such that
  $\beta^t=\beta'$. This means that $t \axis(\beta') =
  \axis(\beta)$. Which means that there is are edges $e',e$ $\in
  \axis(\beta'), \axis(\beta)$ respectively such that $t e'=e$. On
  the other hand $\axis(\beta),\axis(\beta') \subset T(\F)$, the
  minimal $\F$-invariant subtree. 

  $D_\F$ has only one edge and $\F$ is
  transitive on the set of edges in $T(\F)$. Let $g \in \F$ be such
  that $ge = e'$. Then $tg \in \stab_\F(e)$. Let
  $\stab_{\FRSP}(e)\cap{\F} = \brakett{\eta}$, then we have that
  $\brakett{\eta,t g } \leq \stab_{\FRSP}(e)$ which is cyclic, so
  $\brakett{\eta, t g} = \brakett{\sqrt[n]{\eta}}$.
\end{proof}

\begin{cor}\label{cor:betti-drop}
  Suppose the JSJ of $\FRS$ has one edge and one vertex and let $\pi:\FRS
  \rightarrow \FRSP$ be a strict epimorphism. If there is a one edge
  cyclic splitting of $\FRSP$ such that the induced splitting of $\F$
  only has one edge then $b_1(\FRSP) < b_1(\FRS)$.
\end{cor}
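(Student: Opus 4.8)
The plan is to combine the structural result of Lemma \ref{lem:add-a-root} with the additivity of first Betti number under centralizer extensions (Lemma \ref{lem:b1-eoc}) and the lower bound (\ref{eqn:b1-bound}). By hypothesis $\FRS$ has JSJ given by (\ref{eqn:1v-1e}), namely $\FRS = \bk{\F,t \mid \beta^t = \beta'}$ with $\beta,\beta'$ hyperbolic in the JSJ of $\F$ (by Lemma \ref{lem:1v-1e-2genmodF}). First I would compute $b_1(\FRS)$ directly from this relative presentation: an HNN extension of $\F$ along a cyclic edge group adds exactly one to the first Betti number coming from the stable letter $t$, so $b_1(\FRS) = b_1(\F)+1$.

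Next I would analyze $b_1(\FRSP)$. By Lemma \ref{lem:add-a-root}, since $\FRSP$ has a one-edge cyclic splitting $D$ inducing a one-edge cyclic splitting $D_\F$ on $\F$, the group $\FRSP$ is obtained from $\F$ by adjoining an $n$-th root $\sqrt[n]{\eta}$ of the generator $\eta$ of an edge group of $D_\F$. The key point is that adjoining such a root does \emph{not} increase the first Betti number the way the stable letter $t$ did in $\FRS$: writing $\FRSP = \F *_{\bk{\eta}} \bk{\sqrt[n]{\eta}}$ as an amalgam over the cyclic subgroup $\bk{\eta}$ with the infinite cyclic group $\bk{\sqrt[n]{\eta}}$, the lower bound (\ref{eqn:b1-bound}) applied to this one-edge, two-vertex splitting gives $b_1(\FRSP) \geq b_1(\F) + b_1(\bk{\sqrt[n]{\eta}}) - 1 = b_1(\F)$, but crucially the amalgamation identifies $\eta$ (a nontrivial homology class in $\F$) with $n$ times the generator of the new $\bk{\sqrt[n]{\eta}}$ factor, so upon abelianizing one sees that no new free rank is contributed; the new generator's class is tied to an existing one. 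I would make this precise by abelianizing the amalgam: in $H_1(\FRSP)$ the image of $\sqrt[n]{\eta}$ satisfies $n\cdot[\sqrt[n]{\eta}] = [\eta]$, which is a torsion-type relation modulo the existing classes and contributes no free rank, so $b_1(\FRSP) = b_1(\F)$.

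Combining the two computations yields $b_1(\FRSP) = b_1(\F) = b_1(\FRS) - 1 < b_1(\FRS)$, which is exactly the desired inequality.

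\textbf{The main obstacle.} The delicate point is verifying that adjoining the root $\sqrt[n]{\eta}$ genuinely fails to raise $b_1$, in contrast to the HNN extension defining $\FRS$ which does raise it by one. The distinction is that in the HNN extension $\bk{\F,t\mid \beta^t=\beta'}$ the stable letter $t$ is a genuinely new generator whose class in $H_1$ is free, whereas in the amalgam $\F *_{\bk{\eta}}\bk{\sqrt[n]{\eta}}$ the new generator is algebraically dependent on $\eta \in \F$ through the relation $(\sqrt[n]{\eta})^n = \eta$. I would need to confirm via the relative presentation and careful abelianization that $[\eta]$ is already a nonzero class in $H_1(\F)$ (which follows because $\eta$ generates a maximal cyclic edge group that is hyperbolic, hence not a proper power and not trivial in homology), so that the relation $n[\sqrt[n]{\eta}]=[\eta]$ pins the new generator to the existing lattice rather than freeing a new dimension. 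Once this homological bookkeeping is handled, the inequality is immediate.
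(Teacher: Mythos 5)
There is a genuine gap, and it sits in your very first computation. You assert that the HNN extension $\FRS = \bk{\F,t \mid \beta^t = \beta'}$ satisfies $b_1(\FRS) = b_1(\F)+1$ because the stable letter is a new generator. But abelianizing an HNN extension adds the generator $[t]$ \emph{and} the relation $[\beta]=[\beta']$: one has $H_1(\FRS)\otimes_\Z\mathbb{Q} \cong \big(H_1(\F)\otimes_\Z\mathbb{Q}\oplus\mathbb{Q}[t]\big)/\bk{[\beta]-[\beta']}$. Hence $b_1(\FRS)=b_1(\F)+1$ holds only when $[\beta]=[\beta']$ already holds in $H_1(\F)\otimes_\Z\mathbb{Q}$; when $[\beta]\neq[\beta']$ the relation kills one dimension and $b_1(\FRS)=b_1(\F)$. (Concretely, $\bk{F(a,b),t \mid a^t = b}$ has first Betti number $2$, not $3$.) In that second case your chain of (in)equalities only yields $b_1(\FRSP)\leq b_1(\F)=b_1(\FRS)$, which is not the strict inequality the corollary claims, so the proof breaks down exactly there.

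This dichotomy is precisely the case distinction the paper's proof makes. When $[\beta]\neq[\beta']$, the paper recovers strictness from a relation you never invoke: since $\pi$ is an epimorphism, $\pi(t)\in\FRSP$ conjugates $\beta$ to $\beta'$, so $[\beta]=[\beta']$ is forced in $H_1(\FRSP)\otimes_\Z\mathbb{Q}$; together with Lemma \ref{lem:add-a-root} this gives $b_1(\FRSP)\leq b_1(\F)-1 < b_1(\F)=b_1(\FRS)$. Two further remarks on your write-up: Lemma \ref{lem:add-a-root} only says that $\FRSP$ is \emph{generated} by $\F$ and $\sqrt[n]{\eta}$, not that $\FRSP$ is the amalgam $\F*_{\bk{\eta}}\bk{\sqrt[n]{\eta}}$ (there may be additional relations, e.g. the one above), so your claimed equality $b_1(\FRSP)=b_1(\F)$ must be weakened to $b_1(\FRSP)\leq b_1(\F)$ --- which is all one needs and, indeed, is false as an equality in the second case. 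Also, the ``main obstacle'' you flag (whether $[\eta]\neq 0$ in homology) is immaterial: over $\mathbb{Q}$, adjoining a class satisfying $n[\sqrt[n]{\eta}]=[\eta]$ never increases rank, whatever $[\eta]$ is. The real missing ingredient is the homological consequence of the conjugation relation, both in computing $b_1(\FRS)$ and in bounding $b_1(\FRSP)$.
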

\begin{proof}
  By Lemma \ref{lem:add-a-root}, looking at abelianizations we
  immediately see that $b_1(\FRSP) \leq b_1(\F)$. Let $\bk{\beta},
  \bk{\beta'}$ be boundary subgroups in $\F$. Consider the
  mapping: \begin{eqnarray*}
    \F & \rightarrow & \F/[\F,\F] \otimes_\Z \mathbb{Q}\\
    g & \mapsto &[g].
  \end{eqnarray*} We consider two cases,  either
  $[\beta] - [\beta'] = 0$ or $[\beta] - [\beta'] \neq 0$. If $[\beta]
  = [\beta']$ then $b_1(\FRS) = b_1(\F) + 1$, which implies
  $b_1(\FRSP) < b_1(\FRS).$
  
  If $[\beta] \neq [\beta']$ then $b_1(\FRS) = b_1(\F)$, but by Lemma
  \ref{lem:add-a-root} we have ${\FRSP/[\FRSP,\FRSP] \otimes_\Z
    \mathbb{Q}}$ is obtained from $\F/[\F,\F] \otimes_\Z \mathbb{Q}$
  by adding $[\eta]/n$ (which doesn't increase the rank) and then
  forcing $[\beta] = [\beta']$ (since $\pi(t)^\mo \beta \pi(t) =
  \beta'$ in $\FRSP$) which decreases the rank by 1. So $b_1(\FRSP) <
  b_1(\F) = b_1(\FRS)$.
\end{proof}

\begin{lem}\label{lem:1v-1e-b1-N+1}
  Suppose the JSJ of $\FRS$ has one vertex and one edge, and suppose
  $b_1(\FRS) =N+1$. Then $\FRS$ has uniform hierarchical depth relative
  to $F$ at most 2. Moreover if $\F$ is freely indecomposable modulo
  $F$ its JSJ has more than one vertex group.
\end{lem}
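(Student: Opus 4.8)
The plan is to reduce the depth bound to the second (``moreover'') assertion, dispatch the homological input, and then attack the structural claim by a strict-resolution argument. Since the JSJ of $\FRS$ has the single vertex group $\F$, the definition of $\uhdf$ gives $\uhdf(\FRS)=\uhdf(\F)+1$, so it suffices to show $\uhdf(\F)\le 1$. First I would pin down $b_1(\F)$. Writing $\FRS=\bk{\F,t\mid \beta^t=\beta'}$ and abelianizing exactly as in the proof of Corollary \ref{cor:betti-drop}, one has $b_1(\FRS)=b_1(\F)+1$ if $[\beta]=[\beta']$ in $\F/[\F,\F]\otimes_\Z\mathbb{Q}$ and $b_1(\FRS)=b_1(\F)$ otherwise. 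If $[\beta]=[\beta']$ then $b_1(\F)=N$, whence $\F=F$ by Proposition \ref{prop:b1-complexity}, contradicting Corollary \ref{lem:Ftilde-not-F}. Hence $[\beta]\ne[\beta']$ and $b_1(\F)=N+1$.

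Granting the ``moreover'' clause, the depth bound follows quickly. The group $\F$ is a finitely generated subgroup of $\FRS$ containing $F$, hence fully residually $F$, and by Lemma \ref{lem:1v-1e-2genmodF} it is generated by $F$ and two elements. If $\F$ is freely decomposable modulo $F$, the corollary to Proposition \ref{prop:decomposable} gives $\uhdf(\F)\le 1$. If $\F$ is freely indecomposable modulo $F$, the ``moreover'' clause says its JSJ has more than one vertex group, so $\F$ falls under Proposition \ref{prop:abelian} or Proposition \ref{prop:2v-class}; combined with $b_1(\F)=N+1$, Corollary \ref{cor:abelian-low-betti-uhd} (in the first case) and Corollary \ref{cor:nonab-uhd} (in the second) yield $\uhdf(\F)=1$. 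Either way $\uhdf(\FRS)=\uhdf(\F)+1\le 2$.

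The heart of the proof is the ``moreover'' clause, which I would prove by contradiction in parallel with Lemma \ref{lem:1v-2e-not-one-vertex}. Suppose $\F$ is freely indecomposable modulo $F$ but its JSJ has a single vertex group $\F^1$; since $\F$ splits as in (\ref{eqn:1v-1e}) or (\ref{eqn:1v-2e}), Corollary \ref{lem:Ftilde-not-F} applied to $\F$ gives $\F^1\ne F$, and collapsing this JSJ to one edge endows $\F$ with an essential one edge cyclic splitting. Using a strict resolution of $\FRS$ together with Lemma \ref{lem:strict-split}, I would produce a strict epimorphism $\pi\colon\FRS\to\FRSP$ that is injective on the rigid vertex group $\F$ (because $\F$ is mapped monomorphically up to the first term where its image splits) and such that $\FRSP$ carries a one edge cyclic splitting whose induced splitting of $\F$ has exactly one edge. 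Corollary \ref{cor:betti-drop} then forces $b_1(\FRSP)<b_1(\FRS)=N+1$; since $\FRSP$ is fully residually $F$ it retracts onto $F$, so $b_1(\FRSP)\ge N$ and hence $b_1(\FRSP)=N$, giving $\FRSP=F$ by Proposition \ref{prop:b1-complexity}. But injectivity of $\pi$ on $\F$ now embeds $\F$ into the free group $F$, so $\F$ is free; being fully residually $F$ it retracts onto $F$, so $F$ is a free factor of $\F$ and free indecomposability modulo $F$ forces $\F=F$ --- contradicting $\F\ne F$.

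The main obstacle is the middle step of the previous paragraph: securing the strict quotient $\FRSP$ and a one edge cyclic splitting of it whose induced splitting of $\F$ has \emph{exactly} one edge, so that the hypotheses of Lemma \ref{lem:add-a-root} and Corollary \ref{cor:betti-drop} are met. This is the same delicate strict-resolution bookkeeping carried out for the two edge case in Lemma \ref{lem:1v-2e-not-one-vertex}, where one uses $2$-acylindricity and the ellipticity and conjugacy behaviour of the edge classes to control the number of edges of the induced splitting. What makes the one edge case tractable here is precisely the hypothesis $b_1(\FRS)=N+1$: the single unit of first Betti number beyond $F$ means that the strict drop furnished by Corollary \ref{cor:betti-drop} collapses $\FRSP$ all the way down to $F$, converting the structural analysis into the clean embedding contradiction above.
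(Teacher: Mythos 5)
Your outline follows the same overall route as the paper (compute $b_1(\F)=N+1$; reduce the depth bound to the ``moreover'' clause via Corollaries \ref{cor:abelian-low-betti-uhd} and \ref{cor:nonab-uhd} applied to $\F$; kill the one-vertex case for $\F$'s JSJ by a strict quotient plus Corollary \ref{cor:betti-drop}), but the step you yourself flag as ``the main obstacle'' is a genuine gap, not bookkeeping you can defer. You assert that one can always produce a strict epimorphism $\pi\colon\FRS\to\FRSP$ together with a one edge cyclic splitting of $\FRSP$ whose induced splitting of $\F$ has \emph{exactly one} edge. That is not available in general, and the paper never claims it is: it first splits into cases according to the JSJ of $\FRSP$. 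If the JSJ of $\FRSP$ has at least two vertex groups (or $\FRSP$ is freely decomposable modulo $F$), there is no control on the number of edges of the induced splitting of $\F$, and Corollary \ref{cor:betti-drop} is simply not invoked; instead the paper gets $\uhdf(\FRSP)\le 1$ from Corollaries \ref{cor:abelian-low-betti-uhd} and \ref{cor:nonab-uhd} (using $b_1(\FRSP)=N+1$) and then $\uhdf(\F)\le 1$ from Corollary \ref{cor:uhdf-monotonic}, which contradicts a one-vertex JSJ of $\F$ since its vertex group $\F^1\neq F$ forces $\uhdf(\F)\ge 2$. Only in the remaining case, where the JSJ of $\FRSP$ has a single vertex group, does the paper run your argument: there the one edge collapse is an HNN extension, Lemmas \ref{lem:1v-2e-F} and \ref{lem:1v-1e-2genmodF} give cyclic centralizers of edge groups and distinct edge classes, Lemma \ref{lem:strict-split} forces $\F$ to be hyperbolic, and (via the acylindricity argument of Lemma \ref{lem:1v-2e-not-one-vertex}) the induced splitting of $\F$ is pinned down to a single loop, so that Corollary \ref{cor:betti-drop} applies. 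Your closing remark that the hypothesis $b_1(\FRS)=N+1$ ``makes the one edge case tractable'' concerns what happens \emph{after} the hypotheses of Corollary \ref{cor:betti-drop} are secured; it does nothing toward securing them, which is exactly where the case analysis and the geometric work live.

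A secondary, repairable flaw: your final contradiction uses ``$F$ is a retract of the free group $\F$, hence a free factor.'' Retracts of free groups need not be free factors (e.g.\ $\bk{a^2b^3}$ is a retract of $F(a,b)$ but is not a free factor), so this inference is invalid as stated. It is easily fixed: since $\pi$ is an $F$-homomorphism injective on $\F$ with $\FRSP=F$, we get $F\le\pi(\F)\le F$, so $\pi|_{\F}$ is an isomorphism $\F\to F$ fixing $F$; then $b_1(\F)=N$, which contradicts either your earlier computation $b_1(\F)=N+1$, or Proposition \ref{prop:b1-complexity} combined with Corollary \ref{lem:Ftilde-not-F}. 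This is how the paper closes the analogous step.
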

\begin{proof}
  Abelianizing relative presentations, we immediately see that $b_1(\F) \leq
  b_1(\FRS)$. Corollary \ref{lem:Ftilde-not-F} and
  Proposition \ref{prop:b1-complexity} imply that $b_1(\F)=N+1$. 
  
  If $\F$ is freely decomposable modulo $F$ then $\uhdf(\F) = 1$ and
  the result holds.

  Let $\pi:\FRS \rightarrow \FRSP$ be a strict epimorphism, then
  $b_1(\FRSP) = m\leq N+1$. If $m = N$ then by Proposition
  \ref{prop:b1-complexity} $\FRSP =F$ and since $\pi$ is injective on
  $\F$ we have a contradiction to Corollary \ref{lem:Ftilde-not-F}.

  Consider the case where $\FRSP$ is freely indecomposable modulo $F$
  and its JSJ has at least two vertex groups. Corollaries
  \ref{cor:abelian-low-betti-uhd} and \ref{cor:nonab-uhd} imply
  $\uhdf(\FRSP) \leq 1$ so by Corollary \ref{cor:uhdf-monotonic},
  $\uhdf(\F) \leq 1$ and the result holds. From this it also follows
  that unless the JSJ of $\F$ has only one vertex group, then
  $\uhdf(\F) \leq 1$.
  
  Otherwise the JSJ of $\FRSP$ has one vertex group. By Lemmas
  \ref{lem:1v-2e-F} and \ref{lem:1v-1e-2genmodF} the centralizers of
  the edge groups of $\FRSP$ are cyclic and distinct edges have
  distinct edge classes. By Lemma \ref{lem:strict-split} any one edge
  cyclic splitting of $\FRSP$ modulo $F$ induces a cyclic of $\F$
  modulo $F$, and this splitting of $\FRSP$ is as an HNN extension
  with cyclic edge stabilizers. Since the JSJ of $\F$ also has only
  one vertex, the edge groups do not lie in non-cyclic abelian
  subgroups the vertex groups, so the the only possible induced
  splitting of $\F$ (considering the argument used for Lemma
  \ref{lem:1v-2e-not-one-vertex}) is of the form
  :\[\xymatrix{u\bullet\ar@(ul,ur)^{(a,e,b)}}\] Corollary
  \ref{cor:betti-drop} therefore applies and gives $b_1(\FRSP) = N$ so
  $\F \leq \FRSP = F$ -- contradiction.
\end{proof}

We can now prove the following:

\begin{proof}[Proof of Proposition \ref{prop:abelian-uhd}]
  We have that $m = b_1(\F) < b_1(\FRS)$. If $m = N$ there is nothing
  to show, otherwise $m=N+1$. If the JSJ of $\F$ has more than two
  vertex groups Corollaries \ref{cor:abelian-low-betti-uhd} and
  \ref{cor:nonab-uhd} imply $\uhdf(\F) \leq 1$. Otherwise $\F$ has
  one vertex group and Lemma \ref{lem:1v-1e-b1-N+1} and Corollary
  \ref{cor:1v-ev-uhd} imply that $\uhdf(\F) \leq 2$.
\end{proof}

\begin{prop}\label{prop:1v-1e-uhd}
  If the JSJ of $\FRS$ has one edge and one vertex then it has uniform
  hierarchical depth relative to $F$ at most 4.
\end{prop}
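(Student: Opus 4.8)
The plan is to unwind the recursive definition of relative uniform hierarchical depth. Since the JSJ of $\FRS$ has a single vertex group $\F$ and no other vertices, the definition of $\uhdf$ gives $\uhdf(\FRS)=\uhdf(\F)+1$, so it suffices to prove $\uhdf(\F)\le 3$. By Lemma \ref{lem:1v-1e-2genmodF} the group $\F$ is generated by $F$ together with two elements, so $\F$ is again a fully residually $F$ quotient of $F*\bk{x,y}$ and its JSJ is one of the types classified in Propositions \ref{prop:abelian}, \ref{prop:2v-class} and \ref{prop:1v}. I would first record the auxiliary principle $(\star)$: \emph{any fully residually $F$ quotient $G$ of $F*\bk{x,y}$ with $b_1(G)=N+1$ satisfies $\uhdf(G)\le 2$}. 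This merely assembles Corollaries \ref{cor:abelian-low-betti-uhd}, \ref{cor:nonab-uhd}, \ref{cor:1v-ev-uhd} and Lemma \ref{lem:1v-1e-b1-N+1}, one for each JSJ type. Since $\FRS\neq F$ and is generated by $F$ and two elements, Proposition \ref{prop:b1-complexity} forces $b_1(\FRS)\in\{N+1,N+2\}$; when $b_1(\FRS)=N+1$ the claim follows directly from Lemma \ref{lem:1v-1e-b1-N+1}, so I assume $b_1(\FRS)=N+2$.

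I would then dispose of every possibility for the JSJ of $\F$ except the self-similar one. If $\F$ is freely decomposable modulo $F$ then $\uhdf(\F)\le 1$; if its JSJ has abelian vertex groups with $\F$ the only non-abelian vertex (Proposition \ref{prop:abelian}) then Proposition \ref{prop:abelian-uhd} gives $\uhdf(\F)\le 3$; if it has at least two non-abelian vertex groups then Corollary \ref{cor:nonab-uhd} gives $\uhdf(\F)\le 2$; and if it has one vertex and two edges then Corollary \ref{cor:1v-ev-uhd} gives $\uhdf(\F)\le 2$. In each of these $\uhdf(\F)\le 3$ and we are done; note that the value $3$ (hence the final bound $4$) is attained exactly in the abelian case. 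The only case left is that the JSJ of $\F$ \emph{also} has one vertex and one edge, say $\F=\bk{\F^1,s\mid \gamma^s=\gamma'}$ with $\F^1$ the $F$-vertex group. A useful observation here is that a single-vertex cyclic JSJ can contain no abelian vertex group; since non-cyclic abelian subgroups are elliptic (Convention \ref{conv:abelian-elliptic}) and a maximal one would appear as an abelian vertex group of the JSJ (Theorem \ref{thm:jsj}), the group $\F$ — and hence its subgroup $\F^1$ — has \emph{no non-cyclic abelian subgroups}. In particular $\F^1$ can never be of the abelian type of Proposition \ref{prop:abelian}.

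With the abelian obstruction removed, it remains to show $\uhdf(\F^1)\le 2$, for then $\uhdf(\F)=\uhdf(\F^1)+1\le 3$. We may assume $\F^1\neq F$ is freely indecomposable modulo $F$, otherwise $\uhdf(\F^1)\le 1$. Since $\F^1$ contains no non-cyclic abelian subgroup and is generated by $F$ and two elements, I run through the classification of its JSJ: if it has at least two non-abelian vertex groups or one vertex and two edges, Corollaries \ref{cor:nonab-uhd} and \ref{cor:1v-ev-uhd} give $\uhdf(\F^1)\le 2$ at once, and if $b_1(\F^1)=N+1$ then $(\star)$ gives the same. The genuinely recursive situation is that $\F^1$ has a one-vertex one-edge JSJ with $b_1(\F^1)=N+2$. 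To handle it I would pass to a proper strict epimorphism $\pi:\F\to \FRSP$. By Lemma \ref{lem:strict-split}, $\FRSP$ is freely indecomposable modulo $F$ and the injected copy of $\F^1$ is non-elliptic in every essential cyclic or free splitting of $\FRSP$. Collapsing the JSJ of $\FRSP$ to a single edge and arguing exactly as in the last paragraph of Lemma \ref{lem:1v-1e-b1-N+1} — using that the edge-group centralizers are cyclic (Lemma \ref{lem:1v-1e-2genmodF}) and that $\F^1$ has a \emph{single-vertex} JSJ, so that its induced splitting is a single loop — Corollary \ref{cor:betti-drop} applies and forces $b_1(\FRSP)<b_1(\F)=N+2$, hence $b_1(\FRSP)=N+1$. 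Then $(\star)$ yields $\uhdf(\FRSP)\le 2$, and since $F\le \F^1\le \FRSP$, Corollary \ref{cor:uhdf-monotonic} gives $\uhdf(\F^1)\le \uhdf(\FRSP)\le 2$, as required.

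The step I expect to be the crux is the one just described: verifying that the strict quotient $\FRSP$ carries a single-edge cyclic splitting in which the induced splitting of $\F^1$ is again a single loop, so that Corollary \ref{cor:betti-drop} is applicable and the first Betti number is guaranteed to drop. This is precisely the mechanism already isolated in the proof of Lemma \ref{lem:1v-1e-b1-N+1}; the point is that the single-vertex structure of the JSJ of $\F^1$, together with the $2$-acylindricity of the collapsed splitting of $\FRSP$, bounds the number of edge-orbits of the induced splitting by one, and one must also confirm that no edge group of $\F^1$ lies in a non-cyclic abelian subgroup — which is exactly what the observation in the second paragraph guarantees. Assembling all cases gives $\uhdf(\F)\le 3$ and therefore $\uhdf(\FRS)\le 4$.
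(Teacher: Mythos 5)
The first half of your argument (the reduction to $\uhdf(\F)\leq 3$, the principle $(\star)$, and the disposal of every JSJ type for $\F$ other than the one-vertex one-edge type) matches the paper. The divergence, and the gap, is in the self-similar case. The paper passes to a strict quotient $\FRSP$ of $\FRS$ and, since $\F$ embeds in $\FRSP$, needs only $\uhdf(\F)\leq\uhdf(\FRSP)\leq 3$ by Corollary \ref{cor:uhdf-monotonic}; the abelian-type bound of $3$ from Proposition \ref{prop:abelian-uhd} is therefore harmless wherever it occurs. You instead recurse vertically into $\F^1$ and must prove the strictly stronger bound $\uhdf(\F^1)\leq 2$, which forces you to exclude the abelian type of Proposition \ref{prop:abelian} for the JSJ of $\F^1$. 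Your justification for that exclusion is invalid: Theorem \ref{thm:jsj} together with Convention \ref{conv:abelian-elliptic} says only that a non-cyclic abelian subgroup is \emph{elliptic}, i.e.\ conjugate into some vertex group; it does not say it appears as an abelian \emph{vertex group}. A maximal non-cyclic abelian subgroup can perfectly well sit inside a rigid vertex group, and the paper's own classification exhibits this: in Section \ref{sec:ICEICE} (see Example \ref{eg:D-I-1}) the rigid vertex group $\F$ of the JSJ is a rank 1 centralizer extension of $F$, which contains non-cyclic abelian subgroups that are not vertex groups of the JSJ. In the one-vertex one-edge case the paper proves only that the centralizers of $\beta,\beta'$ are cyclic (Lemma \ref{lem:1v-1e-2genmodF}); the exponent-sum argument (Corollary \ref{cor:no-ab}) that kills all non-cyclic abelian subgroups in the two-edge case (Lemma \ref{lem:1v-2e-F}) is unavailable here, since elements of $\F$ have zero exponent sum only in the single stable letter, not in $\x$ as well. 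So your observation is unproven, and if it fails you only get $\uhdf(\F^1)\leq 3$, hence $\uhdf(\FRS)\leq 5$.

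The same weakness resurfaces in your final step. Corollary \ref{cor:betti-drop} applies only if there is a one-edge cyclic splitting of $\FRSP$ (your strict quotient of $\F$) whose induced splitting of $\F^1$ has a single edge, and the argument you import from the last paragraph of Lemma \ref{lem:1v-1e-b1-N+1} establishes this only under the hypothesis that the JSJ of $\FRSP$ has a single vertex group --- that hypothesis is what makes the edge-group centralizers cyclic and the collapsed splitting suitably acylindrical. You perform no case analysis on the JSJ of $\FRSP$. If that JSJ is of abelian type with $b_1(\FRSP)=N+2$ (which Lemma \ref{lem:strict-split} does not exclude, since it only forbids splittings in which $\pi(\F^1)$ is elliptic), then neither the betti drop nor any cited result gives better than $\uhdf(\FRSP)\leq 3$, and monotonicity returns only $\uhdf(\F^1)\leq 3$ --- again one more than your target. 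To repair the proof you should abandon the goal $\uhdf(\F^1)\leq 2$ and argue as the paper does: take the strict quotient at the level of $\FRS$, and split into the cases where the JSJ of $\FRSP$ is not one-vertex one-edge (monotonicity gives $\uhdf(\F)\leq 3$) and where it is (the betti drop plus Lemma \ref{lem:1v-1e-b1-N+1} gives $\uhdf(\F)\leq 2$).
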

\begin{proof}
  If $\F$ is freely decomposable modulo $F$ or if the JSJ of $\F$
  doesn't have one vertex and one edge then Corollaries
  \ref{cor:nonab-uhd} and Corollary \ref{cor:1v-ev-uhd} and Proposition
  \ref{prop:abelian-uhd} imply that $\uhdf(\F) \leq 3$.

  By Corollary \ref{cor:uhdf-monotonic}, the same bound holds if
  $\pi:\FRS \rightarrow \FRSP$ is a strict epimorphism and the JSJ of
  $\FRSP$ doesn't have one edge and one vertex.

  The remaining possibility is that the JSJs of $\F$ and $\FRSP$ both
  only have one edge and one vertex. But then as in the proof Lemma
  \ref{lem:1v-1e-b1-N+1} the induced splitting of $\F$ has only one
  edge and  by Corollary \ref{cor:betti-drop} $b_1(\FRSP) \leq N+1$ which
  by Lemma \ref{lem:1v-1e-b1-N+1} and Corollary
  \ref{cor:uhdf-monotonic} imply that $\uhdf(\F) \leq 2.$
\end{proof}

\begin{proof}[Proof of Proposition \ref{prop:1v}]
  The result follows immediately from Corollary \ref{cor:1v-ev-uhd}
  and Proposition \ref{prop:1v-1e-uhd}.
\end{proof}

\bibliographystyle{alpha} \bibliography{biblio.bib}
\end{document}